\documentclass[11pt,a4paper,reqno]{amsart}
\usepackage[T1]{fontenc}
\usepackage{lmodern}
\usepackage[margin=27mm,top=25mm,bottom=25mm,headheight=14pt]{geometry}
\usepackage{amsmath,amssymb,amsthm,mathtools,mathrsfs,bm}
\usepackage{graphicx,xcolor,tikz-cd,enumitem,needspace}
\usepackage[section]{placeins}
\usepackage{etoolbox}
\usetikzlibrary{arrows.meta,calc,positioning,bending,decorations.markings}
\usepackage[unicode,hidelinks]{hyperref}
\hypersetup{pdftitle={Exchange graphs and g-vectors of surface rigid algebras},pdfauthor={Ping He and Bo Le},pdfsubject={v90: annotated corrections and pre-submission language review}}
\numberwithin{equation}{section}
\theoremstyle{plain}
\newtheorem{theorem}{Theorem}[section]
\newtheorem{lemma}[theorem]{Lemma}
\newtheorem{proposition}[theorem]{Proposition}
\newtheorem{corollary}[theorem]{Corollary}
\newtheorem*{mainthma}{Theorem A}
\newtheorem*{mainthmb}{Theorem B}
\newtheorem*{mainthmc}{Theorem C}
\theoremstyle{definition}
\newtheorem{definition}[theorem]{Definition}
\newtheorem{example}[theorem]{Example}
\newtheorem{remark}[theorem]{Remark}

\title[Exchange graphs and $g$-vectors of surface rigid algebras]{Exchange graphs and $g$-vectors of surface rigid algebras}
\author[Ping He]{Ping He\textsuperscript{*}}
\address{Ping He, Beijing Institute of Mathematical Sciences and Applications, Beijing 101408, People's Republic of China}
\email{pinghe@bimsa.cn}
\author{Bo Le}
\address{Bo Le, Department of Mathematical Sciences, Tsinghua University, Beijing 100084, People's Republic of China}
\email{bole\_math@163.com}
\thanks{* Corresponding author.}
\date{}
\subjclass[2020]{Primary 16G20; Secondary 16E35, 05E10}
\keywords{surface rigid algebras, exchange graphs, $g$-vectors, Cartan forms, $\tau$-tilting reduction, punctured marked surfaces}
\begin{document}

\begin{abstract}
We classify the connected components of support $\tau$-tilting graphs of surface
rigid algebras associated with partial tagged triangulations of punctured marked
surfaces with nonempty boundary. For a connected algebra, the graph is
disconnected if and only if the algebra is weakly symmetric, has at least two
simple modules, and has Cartan rank one. In this case it has exactly two
connected components. For every nonzero $\tau$-rigid pair over a connected
surface rigid algebra, the graph of $\tau$-tilting completions is connected.
More generally, we classify the completion components for an arbitrary surface
rigid algebra and a fixed $\tau$-rigid pair.
For partial tagged triangulations with all endpoints at punctures, we express
the Cartan matrix and $g$-vectors by signed endpoint data.
We prove that $\tau$-tilting finiteness is equivalent to positive
definiteness of the Cartan matrix. In the finite case, we determine the primitive
$g$-vector ray generators in endpoint coordinates. Trees give type $A$, and a
criterion on the original partial tagged triangulation distinguishes types
$D$ and $C$ in the unicyclic case.
\end{abstract}

\maketitle
\tableofcontents

\section*{Introduction}

In \cite{AIR}, Adachi, Iyama and Reiten introduced support $\tau$-tilting
theory. They showed that every almost complete $\tau$-rigid pair has exactly two
basic $\tau$-tilting completions. Consequently, replacing one complement by the
other defines mutation over an arbitrary finite-dimensional algebra. The
correspondences with functorially finite torsion classes and basic two-term
silting complexes allow these mutations to be studied in module categories and
derived categories. The $g$-vectors of $\tau$-rigid pairs give a geometric
description: basic $\tau$-tilting pairs correspond to the full-dimensional cones
of a simplicial fan, and mutations correspond to adjacent cones; see
\cite{DIJ,AHIKM}.

The exchange graph has basic $\tau$-tilting pairs as vertices and mutations as
edges. Although mutation is possible at every indecomposable summand, this graph
need not be connected. It is therefore natural to ask which pairs can be joined
by mutations and how many connected components there are. A related question
asks whether two completions of a $\tau$-rigid pair can be joined while keeping
that pair fixed. These questions concern both the global structure of mutation
and mutation with a prescribed collection of summands fixed.

Marked surfaces provide models for the exchange graphs. In \cite{FST}, Fomin,
Shapiro and Thurston described surface cluster combinatorics by tagged
triangulations and flips. The associated cluster categories admit models in
which tagged arcs represent indecomposable rigid objects, intersection numbers
compute extension dimensions, and tagged rotation realizes the suspension; see
\cite{A,BZ,QZ,BQ}. In \cite{HZZ}, He, Zhou and Zhu extended the geometric
approach to endomorphism algebras of arbitrary rigid objects in surface cluster
categories. Surface rigid algebras include skew-gentle algebras, and the
$\tau$-tilting pairs over a surface rigid algebra are represented by dissections
of standard tagged arcs relative to a partial tagged triangulation.

Several connectedness results are already known. In \cite{FGLZ}, Fu, Geng, Liu
and Zhou proved connectedness and the reachable-in-face property for gentle
algebras. In \cite[Theorem~5.16]{HZZ}, He, Zhou and Zhu proved connectedness
when every component of the partial tagged triangulation, with arcs joined
through common endpoints, meets the boundary. For a connected special biserial
algebra $B$, Asai proved in \cite[Corollary~6.6]{Asai} that the mutation graph
of basic two-term silting complexes has at most two connected components, and
every component contains $B$ or $B[1]$. For certain Brauer graph algebras,
Aihara proved that $B$ and $B[1]$ lie in different components even of the full
tilting quiver; see \cite[Theorem~2.6]{Ai}. Combining these results gives
exactly two support $\tau$-tilting exchange components for those Brauer graph
families.

A partial tagged triangulation may contain a component whose endpoints are all
punctures. The exchange graph can then be disconnected. The first assertion of
\cite[Corollary~5.18]{HZZ} is stated without the boundary condition used in the
proof of that corollary. The sparse digon in Section~\ref{subsec:sparse-digon}
is a counterexample to that unrestricted assertion; the skew-gentle conclusion
there is unaffected. Our purpose is to classify the components for arbitrary
surface rigid algebras and for every fixed $\tau$-rigid pair, allowing both
conjugate arcs and configurations that are not sparse.

Let $k$ be an algebraically closed field. Let $\mathbf S$ be a punctured marked
surface with nonempty boundary, and $\mathbf R$ a partial tagged triangulation
of $\mathbf S$. Let $\mathcal C(\mathbf S)$ denote the cluster category of
$\mathbf S$ and $X(\mathbf R)$ the corresponding basic rigid object. The
associated surface rigid algebra is
\[
 A=\Lambda_{\mathbf R}
   =\operatorname{End}_{\mathcal C(\mathbf S)}X(\mathbf R).
\]
For an algebra $B$, let $C_B$ denote the Cartan matrix of $B$. Let $\mathcal B_C(A)$
denote the set of blocks $B$ of $A$ that are weakly symmetric, have at least two
simple modules, and satisfy $\operatorname{rank}_{\mathbb Q}C_B=1$. Let
$\delta_C(A)=|\mathcal B_C(A)|$.

\begin{mainthma}[{Theorem~\ref{res:5.8} and Corollary~\ref{res:5.9}}]
Let $A$ be a surface rigid algebra. If $A$ is connected, the exchange graph of
$A$ is disconnected if and only if $A$ is weakly symmetric, has at least two
simple modules, and has Cartan rank one. In this case there are exactly two
connected components, containing $(A,0)$ and $(0,A)$, respectively.
Consequently, for an arbitrary surface rigid algebra $A$,
\[
 \#\pi_0\operatorname{EG}(s\tau\text{-}\operatorname{tilt}A)
   =2^{\delta_C(A)}.
\]
\end{mainthma}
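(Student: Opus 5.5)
The plan has three stages. First reduce to a connected algebra, then understand $A$ and $A[1]$ via a known result, and finally decide exactly when these two components coincide.

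\emph{Step 1: reduction to blocks.} Write $A=\prod_i B_i$ as a product of blocks. Support $\tau$-tilting pairs decompose blockwise, and so do mutations, so
\[
\operatorname{EG}(s\tau\text{-}\operatorname{tilt}A)\cong\prod_i \operatorname{EG}(s\tau\text{-}\operatorname{tilt}B_i),
\]
a Cartesian product of graphs. Its number of components is therefore the product of the numbers of components of the factors. Each block $B_i$ is again a surface rigid algebra: it corresponds to a connected component of $\mathbf R$, with arcs joined through shared endpoints. Once the connected case is settled, the formula $2^{\delta_C(A)}$ follows immediately.

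\emph{Step 2: at most two components, one through $(A,0)$ and one through $(0,A)$.} For connected $A$, I would show that every vertex is joined by mutations to $(A,0)$ or to $(0,A)$. If some arc of $\mathbf R$ has an endpoint on the boundary, the component meets the boundary, and \cite[Theorem~5.16]{HZZ} gives connectedness outright.

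Otherwise all endpoints are punctures. In that case I would use the dissection model and argue by induction on the number of arcs in a completion that are not conjugate or equal to arcs of $\mathbf R$. The inductive step is to find, for any $\tau$-tilting pair, a mutation that lowers the intersection with $\mathbf R$. This is the analogue of Asai's argument for special biserial algebras; the failure to reach $(A,0)$ occurs only when the dissection is ``trapped'' near the shift.

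This is similar in spirit to \cite[Corollary~6.6]{Asai}. Alternatively, I would use $\tau$-tilting reduction: fix a summand, pass to a smaller surface rigid algebra (cutting the surface along the arc), and apply induction.

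\emph{Step 3: when $(A,0)$ and $(0,A)$ lie in the same component.} This is the main obstacle. For the obstruction, suppose $A$ is weakly symmetric with at least two simples and Cartan rank one. Then $C_A=uv^{T}$ with $u,v$ positive, and I claim the sign of the pairing $\langle g, v\rangle$ (equivalently, the Cartan/Euler form evaluated on $g$-vectors) is an invariant of each component. Indeed, in a mutation $T=M\oplus X\to M\oplus Y$, the exchange relation gives $g(X)+g(Y)$ in the cone of $g(M)$, corrected by the weakly symmetric property ($\nu P\cong P$). This forces the sign of $\langle g(T),v\rangle$, read summand-wise, to be constant across the mutation. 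Since $(A,0)$ and $(0,A)$ have opposite signs, they are disconnected.

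For the converse, suppose $A$ is not weakly symmetric, or has Cartan rank at least two. I would exhibit an explicit mutation path, using the endpoint description of the Cartan matrix and $g$-vectors. Rank at least two, or a non-self-conjugate projective, yields an arc configuration that allows a tagged rotation step crossing from the positive to the negative half. The sparse digon is the model example of the disconnected case.

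The single-simple case (the local algebra $k[x]/(x^2)$ type) is trivially connected, with its graph consisting of one edge.

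Getting the invariant in Step 3 precise — identifying exactly which linear form on $g$-vectors is preserved under mutation in the rank-one weakly symmetric case — is where I expect most of the work to lie.
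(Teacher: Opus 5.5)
Your outline has the same architecture as the paper's: split off factors, show that every vertex reaches $(A,0)$ or $(0,A)$, separate these two by a sign in the rank-one case, and join them otherwise. However, the decisive steps are missing or would fail as written.

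\textbf{The sign invariant.} The argument in Step~3 does not work. The exchange relation gives $v^tg(Y)=v^tg(M')-v^tg(X)$ with $M'\in\operatorname{add}M$, and this has no determined sign, so it cannot show that mutation preserves anything. Appealing to $\nu P\cong P$ does not repair this. The missing ingredient is that $w_i=v^tg(T_i)$ has one constant nonzero sign on the summands of a single $\tau$-tilting pair. Over a weakly symmetric algebra every two-term silting complex is tilting, so with $C_A=q\,vv^t$ the Euler form gives $\dim\operatorname{Hom}(T_i,T_j)=g_i^tC_Ag_j=q\,w_iw_j$. The diagonal entries force $w_i\neq0$, and the off-diagonal entries force $w_iw_j\geq0$. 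Since $|A|\geq2$, adjacent pairs share a summand, so the sign passes across every edge (Proposition~\ref{res:5.2}; in the surface setting, Lemma~\ref{res:4.4} gives the same identity from $2$-periodicity).

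\textbf{Blocks in Step~1.} Step~1 rests on a false identification: blocks are not the connected components of $\mathbf R$. A bare conjugate pair is connected, yet $\Lambda_{\mathbf R}\cong k[x]/(x^2)\times k[y]/(y^2)$. What Step~2 actually needs is that a puncture component splits off as an algebra factor (Lemma~\ref{res:3.1}, from $X(\gamma)[2]\cong X(\gamma)$ and $2$-Calabi--Yau duality). Hence for connected $A$, $\mathbf R$ is either connected to the boundary or a single puncture component.

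\textbf{The converse in Step~3.} It contains no argument: ``a tagged rotation step crossing from the positive to the negative half'' does not produce a flip path. The branch ``$A$ not weakly symmetric'' needs no separate treatment. A single puncture component always gives a weakly symmetric algebra, and the boundary case is already connected. What remains is to show that Cartan rank at least two gives $\mathbf R\sim_{\mathrm{flip}}\rho(\mathbf R)$, and the missing idea is a reduction to two arcs followed by lifting.
\begin{enumerate}
\item From $C_A=B^tB$, rank one means the underlying arcs are distinct and share one endpoint multiset (Corollary~\ref{res:4.2}). Otherwise some connected two-arc subcollection $\mathbf Q$ is a conjugate pair or has two different endpoint multisets.
\item Then $C_{\Lambda_{\mathbf Q}}$ is positive definite, so $\Lambda_{\mathbf Q}$ is $\tau$-tilting finite by the norm bound $g^tCg\in\{2,4\}$. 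Its exchange graph is therefore connected, and $\mathbf Q\sim_{\mathrm{flip}}\rho(\mathbf Q)$.
\item Each flip of $\mathbf Q$-dissections is then extended to a path of $\mathbf R$-dissections fixing the shared arc. This uses connectedness of completion graphs above nonzero pairs (Theorem~\ref{res:3.5}). Cutting one arc of a single puncture component leaves a partial triangulation connected to the boundary (Lemma~\ref{res:3.4}), and reduction realized by cutting identifies the completion graph with the dissection graph of the cut.
\end{enumerate}

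\textbf{Step~2.} The same cutting fact finishes Step~2 once a dissection shares an arc with $\mathbf R$, so no further induction is needed at that point. The real content of Step~2 is reaching such a dissection, or $\rho(\mathbf R)$ itself. This is done by lowering interior intersection numbers with the local flip lemmas of HZZ (Proposition~\ref{res:3.7}). Your sketch does not carry this out. Asai's argument also cannot simply be quoted, because these algebras need not be special biserial.
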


Theorem~A gives a criterion expressed entirely in terms of the algebra.
Geometrically, the blocks
with disconnected exchange graph come from distinct underlying arcs with one
common endpoint multiset: all join two fixed punctures, or all are loops at one
puncture. The Cartan matrices of these two configurations are $2J_n$ and
$4J_n$, respectively, where $J_n$ is the all-ones matrix. The general component
count follows by taking products of connected blocks.

Let $u=(M,P)$ be a basic $\tau$-rigid pair over $A$. Let $\mathcal F_A(u)$
denote the completion graph of $u$: the vertices are the basic $\tau$-tilting
pairs containing $u$ as a direct summand, and the edges are mutations preserving
$u$. For a block $B$ of $A$, let $u_B$ denote the restriction of both parts of
$u$ to $B$.

\begin{mainthmb}[{Theorem~\ref{res:6.1}}]
Let $A$ be a surface rigid algebra and let $u$ be a basic $\tau$-rigid pair.
If $A$ is connected and $u\neq(0,0)$, then $\mathcal F_A(u)$ is connected.
More generally,
\[
 \#\pi_0\mathcal F_A(u)
 =2^{\#\{B\in\mathcal B_C(A):u_B=(0,0)\}}.
\]
\end{mainthmb}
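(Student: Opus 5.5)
Since $A=\prod_B B$ over its blocks, a $\tau$-rigid pair $u$ decomposes as $\bigoplus_B u_B$. The completion graph $\mathcal F_A(u)$ is then the Cartesian product of the graphs $\mathcal F_B(u_B)$: a completion of $u$ is a tuple of completions of the $u_B$, and a mutation preserving $u$ changes one indecomposable summand, which lies in a single block. A product of graphs has as many components as the product of the factors' component counts. So the formula reduces to the connected case, and it is enough to prove two facts. First, $\mathcal F_B(u_B)$ is connected whenever $u_B\neq(0,0)$. Second, when $u_B=(0,0)$, the graph $\mathcal F_B(0,0)=\operatorname{EG}(s\tau\text{-}\operatorname{tilt}B)$ has two components if $B\in\mathcal B_C(A)$ and one otherwise; this second fact is Theorem~A. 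The remaining work is to show that $\mathcal F_A(u)$ is connected for connected $A$ and $u\neq(0,0)$.

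For this I would use $\tau$-tilting reduction (Jasso). The completion graph $\mathcal F_A(u)$ is isomorphic to the exchange graph $\operatorname{EG}(s\tau\text{-}\operatorname{tilt}A_u)$ of the reduced algebra $A_u$, and this isomorphism is compatible with $g$-vectors and mutation. Geometrically, a $\tau$-rigid pair $u$ corresponds to a collection of standard tagged arcs relative to $\mathbf R$, compatible with each other. Cutting along these arcs should identify $A_u$ with a surface rigid algebra $\Lambda_{\mathbf R'}$, where $\mathbf R'$ is a partial tagged triangulation of a cut surface $\mathbf S'$ with nonempty boundary. I would first prove this reduction statement, presumably using the dissection model from \cite{HZZ}. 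By Theorem~A, $\mathcal F_A(u)$ is then connected unless some block $B'$ of $A_u$ is weakly symmetric, has at least two simples, and has Cartan rank one. Geometrically, such a block is a family of at least two distinct arcs sharing one endpoint multiset, which is a pair of punctures or a single puncture for loops, with no other arcs attached.

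The main obstacle is ruling out such a bad block $B'$ when $A$ is connected and $u\neq(0,0)$. I would argue by contradiction. Suppose $B'$ is a bad block of $A_u$, corresponding to arcs $\gamma_1,\dots,\gamma_n$, $n\ge 2$, all with the same puncture endpoints. Because $A$ is connected, the arcs of $\mathbf R$ form one connected component under sharing endpoints. So either the $\gamma_i$ already make up all of $\mathbf R$, or some arc of $\mathbf R$, or a summand of $u$, shares an endpoint with them.

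In the first case, $A$ itself is in $\mathcal B_C$, with Cartan matrix $2J_n$ or $4J_n$. A nonzero summand of $u$ is then a standard tagged arc crossing or abutting the $\gamma_i$. Cutting along it either produces boundary at one of the punctures or separates the $\gamma_i$, and either way destroys the common-endpoint configuration. Concretely, I would check that the reduced Cartan matrix acquires rank at least two, or that its diagonal entries stop being all equal. Alternatively, one can use the $g$-vector criterion: the two components of $\operatorname{EG}(A)$ are separated by the sign of the linear form $(1,\dots,1)\cdot g$, and any nonzero $u$ has $g$-vectors that lie on one side or in the hyperplane. From there one shows that the completions are all reachable, passing through $(A,0)$ or $(0,A)$ via a mutation sequence that fixes $u$.

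In the second case, the cut along $u$ turns one of the punctures into a boundary component, or leaves an extra arc attached. In both situations the block containing the $\gamma_i$ is no longer of the bad form.

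If the cut-surface identification turns out to be messy, I would use a fallback that argues directly in $A$. It rests on the fact from Theorem~A that the only obstruction to connectivity is the separation between $(A,0)$ and $(0,A)$, detected by the Cartan form. I would show that any two completions of $u\neq(0,0)$ can be joined by mutations at summands not in $u$, by moving each to a completion of the form $u\oplus T$ for a canonical $T$, namely the Bongartz completion and the co-Bongartz completion, which are joined by a path in the reduced interval.
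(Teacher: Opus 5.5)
Your reduction to blocks, and the identification $\mathcal F_A(u)\cong\operatorname{EG}(s\tau\text{-}\operatorname{tilt}A_u)$ with $A_u\cong\Lambda_{\mathbf R_u}$ on the cut surface, agree with the paper. The gap is in the step that carries all the content: connectedness of $\mathcal F_A(u)$ for connected $A$ and $u\neq(0,0)$ is not proved.

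Your case analysis treats the arcs $\gamma_i$ of a putative bad block of $A_u$ as arcs of $\mathbf R$, which is where the connectedness of $A$ lives. They are not arcs of $\mathbf R$. The set $\mathbf R_u=F_{\mathbf N_u}(\mathbf L_u)$ consists of cut images of the complement $\mathbf L_u$ in the Bongartz dissection $\mathbf U_u=\mathbf N_u\sqcup\mathbf L_u$, that is, of $\mathbf R$-standard arcs. The real danger is a puncture component of $\mathbf U_u$ that shares no endpoint with $\mathbf N_u$. Cutting does not touch such a component, so ``the cut destroys the configuration'' says nothing about it. Your argument has no bridge from properties of $\mathbf R$ to the shape of $\mathbf U_u$.

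The missing idea is Proposition~\ref{res:3.3}:
\begin{enumerate}
\item $\mathbf R$ is a $\mathbf U$-co-dissection (\cite[Corollary~4.9]{HZZ}), so $X(\mathbf R)$ is maximal rigid in $\operatorname{add}U[-1]*\operatorname{add}U$.
\item A proper puncture component of $\mathbf U$ is orthogonal to the rest in every degree (Lemma~\ref{res:3.1}, using $\rho^2\gamma=\gamma$ for puncture arcs).
\item Lemma~\ref{res:3.2} then splits the extension category, and this forces a nontrivial splitting of $\Lambda_{\mathbf R}$.
\end{enumerate}
Hence, for connected $A$, the Bongartz dissection is either boundary-connected or a single puncture component. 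Lemma~\ref{res:3.4} then shows that cutting the nonempty $\mathbf N_u$ leaves $\mathbf R_u$ connected to the boundary. So $\mathbf R_u$ has no puncture component at all, and \cite[Theorem~5.16]{HZZ} applies directly; the shape of bad blocks never enters.

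Your alternative arguments do not close the gap.
\begin{itemize}
\item Sign argument. In the case $A\in\mathcal B_C(A)$, the form $v^tg$ of Proposition~\ref{res:5.2} is nonzero on every indecomposable $\tau$-rigid pair. This shows that all completions of a nonzero $u$ lie in one component of the full exchange graph, but it produces no path fixing $u$. Also, $(A,0)$ and $(0,A)$ generally do not contain $u$.
\item Fallback. Joining each completion to the Bongartz completion, and then Bongartz to co-Bongartz, presupposes the connectivity being proved. It uses neither $u\neq(0,0)$ nor connectedness of $A$. Under reduction those two completions are $(A_u,0)$ and $(0,A_u)$, which is exactly the pair that cannot be joined for the sparse digon with $u=(0,0)$ (Corollary~\ref{res:6.8}).
\item Circularity. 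Deducing this step from Theorem~A reverses the paper's logic. The implication ``$\delta_C=0\Rightarrow$ connected'' in Theorem~A relies on Theorem~\ref{res:5.7}. Its proof uses Proposition~\ref{res:5.6}, which in turn uses Theorem~\ref{res:3.5}, the very statement you are trying to prove.
\end{itemize}
The boundary-connectedness argument above avoids this circularity. With that step in place, the rest of your proof (product over blocks, Theorem~A for blocks with $u_B=(0,0)$) goes through as in the paper.
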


Thus two completions of a nonzero pair over a connected surface rigid algebra
can be joined while keeping that pair fixed, even when the full exchange graph
is disconnected. This includes pairs $(0,P)$ with $P\neq0$.
For a general surface rigid algebra, only the blocks in $\mathcal B_C(A)$ on which both
parts of $u$ vanish contribute to the component count.
The same result gives the reachable-in-face property: mutation-reachable
completions with a common summand can be joined while keeping that summand fixed.

Our third result concerns partial tagged triangulations whose arcs have both
endpoints at punctures. Let $\Gamma_{\mathbf R}$ denote the signed endpoint
graph, with the incident punctures as vertices and the arcs of $\mathbf R$ as
edges, retaining loops and parallel edges. Encode plain and notched taggings by
$1$ and $-1$. An edge with endpoint taggings $a,b$ has sign $-ab$, and a cycle
is called unbalanced if the product of the edge signs is $-1$. Let $B$ denote
the signed endpoint incidence matrix. The column of $B$ indexed by an arc of
$\mathbf R$ records the two endpoint taggings, adding them when the endpoints
coincide. Let $\Sigma_g(A)$ denote the $g$-vector fan of $\tau$-rigid pairs.
The chamber graph of $\Sigma_g(A)$ is the support $\tau$-tilting exchange graph.

\begin{mainthmc}[{Theorems~\ref{res:4.7}, \ref{res:7.3} and~\ref{res:7.5}}]
Suppose $\mathbf R\neq\varnothing$ and every arc of $\mathbf R$ has both
endpoints at punctures. The following
are equivalent:
\begin{enumerate}
 \item $A=\Lambda_{\mathbf R}$ is $\tau$-tilting finite;
 \item $C_A$ is positive definite;
 \item every component of $\Gamma_{\mathbf R}$ is a tree or has a unique cycle
       which is unbalanced;
 \item $\Sigma_g(A)$ is complete.
\end{enumerate}
When these conditions hold, signed endpoint coordinates identify the primitive
ray generators associated with a component having $n$ edges with a root set of
type $A_n$ in the tree case. In the unicyclic case they form a root set of type
$D_n$ if the unique cycle is a conjugate pair or consists of three distinct
underlying arcs bounding an unpunctured disk, and of type $C_n$ otherwise.
\end{mainthmc}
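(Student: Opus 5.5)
The plan is to first compute the Cartan matrix in endpoint coordinates and then prove the equivalences through the chain $(3)\Rightarrow(2)\Rightarrow(1)\Rightarrow(4)\Rightarrow(3)$. The ray statements come afterwards, by reading off root systems.

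\textbf{Step 1: the Cartan matrix in endpoint coordinates.} For arcs $\gamma,\delta$ of $\mathbf R$ with all endpoints at punctures, $\dim\operatorname{Hom}(X_\gamma,X_\delta)$ in $\mathcal C(\mathbf S)$ is computed by the intersection-number model of \cite{QZ,BQ}. It counts the pairs of endpoints shared by $\gamma$ and $\delta$, weighted by their taggings: a common puncture with equal tags contributes $1$, and opposite tags contribute $0$. Each self-endpoint contributes to the diagonal. This should give
\[
 C_A = B^{T}B,
\]
a signed-graph incidence Gram matrix. For instance, a loop at one puncture gives a column with entry $\pm2$, and hence a diagonal entry $4$, which matches the $4J_n$ remark after Theorem~A. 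A sign convention or a diagonal correction may be needed, and I expect this bookkeeping to be the first delicate point.

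\textbf{Step 2: $(2)\Leftrightarrow(3)$.} This is a standard fact about signed graphs. The matrix $B^TB$ is positive definite if and only if $B$ has full column rank. For a connected signed graph, the incidence matrix with $n$ edges and $v$ vertices has rank $v$ if the graph contains an unbalanced cycle (or a loop, which counts as unbalanced), and rank $v-1$ otherwise. Full column rank $n$ therefore forces either $n=v-1$ (a tree) or $n=v$ with the unique cycle unbalanced.

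\textbf{Step 3: $(2)\Rightarrow(1)$, $(1)\Rightarrow(4)$ and $(4)\Rightarrow(3)$.}
\begin{itemize}
 \item $(1)\Rightarrow(4)$ holds for any algebra by \cite{DIJ,AHIKM}.
 \item For $(4)\Rightarrow(3)$, I would argue by contraposition. The surface rigid algebra of a component that is not of the type in (3) is presented as a reduction, via $\tau$-tilting reduction, of a configuration containing a balanced cycle or two cycles. For a balanced cycle I would exhibit a nonzero vector $v$ with $Bv=0$, realised as the $g$-vector direction of an infinite family of dissections, for example Dehn twists along the cycle. Such a vector lies outside the fan, contradicting completeness.
 \item For $(2)\Rightarrow(1)$, positive definiteness gives finitely many integral vectors of bounded form value. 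I would show that every $g$-vector $g$ of an indecomposable $\tau$-rigid pair satisfies a bound $g^{T}C_A^{-1}g\le c$, or equivalently that the endpoint vector $B^{-T}g$ has entries in $\{0,\pm1,\pm2\}$, computed from the dissection. Finiteness then follows. The natural route is to prove that $g$-vectors expressed in signed endpoint coordinates are given by the endpoint taggings of the corresponding dissection arc.
\end{itemize}

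\textbf{Step 4: the root types.} In the tree case, the signed endpoint vectors of the arcs realise $e_i-e_j$ (up to an orthogonal change of signs), giving type $A_n$. In the unicyclic unbalanced case there are $n$ vertices. The possible endpoint vectors are $\pm e_i\pm e_j$, together with either $\pm2e_i$ (loops or self-folded configurations, giving type $C_n$) or not. The exceptional configurations, a conjugate pair or three underlying arcs bounding an unpunctured disk, are those in which the loop-type vectors $\pm2e_i$ are not realised as primitive ray generators. Those vectors are replaced by vectors such as $e_i$ split between conjugate tags, which yields type $D_n$. I would check this case by case on the dissections.

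The main obstacle is Step 3: producing the $g$-vector formula in endpoint coordinates, which simultaneously yields finiteness and the ray description, and building the infinite family in the balanced case.
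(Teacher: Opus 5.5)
Your Steps~1--2 are on track, but the counting rule in Step~1 is wrong for a conjugate pair $\alpha,\beta$. There the opposite tags must contribute $-1$, because $\alpha$ and $\rho^{-1}\beta$ are again conjugate, so $\operatorname{Hom}(X(\alpha),X(\beta))=0$ rather than $1$. With that correction, $C_A=B^tB$ holds.

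The real gap is in Step~3: you are missing the identity that drives the whole argument. For rigid $X,Y\in\mathcal E_R$ with $X\oplus Y$ rigid,
\[
 (\operatorname{ind}_RX)^tC_A\operatorname{ind}_RY=\dim_k\operatorname{Hom}(X,Y)\geq0 .
\]
This is obtained by applying $\operatorname{Hom}(R,-)$ and $\operatorname{Hom}(-,Y)$ to $R$-presentations. It works because every $\mathbf R$-standard arc has puncture endpoints, so $Y[2]\cong Y$ and $\operatorname{Hom}(X[\pm1],Y)=0$ (Lemma~\ref{res:4.4}).
\begin{itemize}
\item It gives $g^tC_Ag=\dim\operatorname{End}X(\gamma)\in\{2,4\}$. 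Hence (2)$\Rightarrow$(1) follows from finiteness of lattice points of bounded $C_A$-norm and injectivity of $g$-vectors.
\item Your proposed bound targets the wrong quantity. The endpoint vector is $Bg$, not $B^{-T}g$: $B$ is not square for trees, and for a single loop $B^{-T}g=\pm\tfrac12$.
\item The charge formula $Bg=\eta(\gamma)$ is itself derived from this pairing, via $C_Ag=B^t\eta$ and $\|\eta\|^2=g^tC_Ag$. Without an independent geometric formula for relative indices, it is not available as input.
\item The same identity proves (4)$\Rightarrow$(3) directly. In a complete fan each $x\neq0$ is $\sum a_ig_i$ with $a_i\geq0$ and the $g_i$ in one cone, so $x^tC_Ax\geq2\sum a_i^2>0$, and Step~2 applies. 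Your claim that a kernel vector lies outside the fan is true, but this positivity is its proof.
\end{itemize}
The Dehn-twist contrapositive does not work as stated, for four reasons.
\begin{itemize}
\item $\tau$-tilting reduction is cutting, which turns puncture ends into boundary ends and leaves the endpoint setting. Idempotent subalgebras $eAe=\Lambda_{\mathbf R'}$ would be the right tool.
\item You give no mechanism for two unbalanced cycles, e.g.\ plain loops at $p$ and $q$ joined by a plain arc, where $B(1,1,-2)^t=0$.
\item A cycle through punctures must be pushed off to define a twist. If it bounds an unpunctured disk, the inward push-off twists trivially.
\item Above all, nothing shows that twisted arcs remain $\mathbf R$-standard.
\end{itemize}

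Step~4 is essentially unproved. You first need that every integral $z$ with $z^tC_Az=2$ is a $g$-vector. To see this, write $z$ in a unimodular maximal cone of the complete fan with nonnegative integer coefficients and use the nonnegative pairing. Combine this with $Bg=\eta(\gamma)$ and a computation of $B\mathbb Z^n$.
\begin{itemize}
\item For a tree, this yields exactly the charges $\pm(\varepsilon_pe_p-\varepsilon_qe_q)$ with $\varepsilon$ spanning $\ker B^t$. Long charges are excluded because they are not orthogonal to $\varepsilon$.
\item For an unbalanced unicycle, it yields all $\pm e_p\pm e_q$.
\end{itemize}
For a long charge, $z=B^{-1}(2ae_p)$ has $z^tC_Az=4$. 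This forces $z$ to be either a loop $g$-vector or the sum of the $g$-vectors of a compatible conjugate pair. Deciding which case occurs is the real content of the $C_n$/$D_n$ dichotomy, and your plan gives no mechanism for it. The paper's route is as follows.
\begin{itemize}
\item A nonexceptional core makes $\mathbf R^\circ$ sparse, so $A$ is, up to opposite, the multiplicity-one Brauer graph algebra of an odd unicycle. The ray count $2n^2$ from \cite[Theorem~8.4]{AHIKM} then forces every $\pm2e_p$.
\item For an exceptional core, one passes to $eAe$ for the core plus the stem to $p$. One shows that generic differentials of index $B^{-1}(2ae_p)$ split. The open-orbit property of presilting differentials then rules out an indecomposable complex of that index.
\end{itemize}
Checking ``case by case on the dissections'' cannot cover arbitrary attached trees without such a reduction. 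Nothing in your sketch explains why the loop charges actually occur in the nonexceptional case.
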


Under the endpoint hypothesis of Theorem~C, let $\eta(\gamma)$ record the
endpoint taggings of an $\mathbf R$-standard arc $\gamma$ by the same
rule as the columns of $B$. Proposition~\ref{res:4.1} and
Lemma~\ref{res:7.1} give
\[
 C_A=B^tB\qquad\text{and}\qquad Bg_{\mathbf R}(\gamma)=\eta(\gamma).
\]
These identities hold without a finiteness assumption.
The endpoint coordinates in Theorem~C are the images under $B$, with a
coordinate sign change in the tree case. The conventions $D_2=A_1\times A_1$
and $D_3=A_3$ include the small exceptional cases; a single loop gives $C_1$.
Graph components need not be algebra blocks, as a bare conjugate pair shows.
See Theorems~\ref{res:4.7}, \ref{res:7.3} and~\ref{res:7.5}.

The surface criterion in Theorem~C determines whether the unicyclic ray set
has type $D$ or $C$ directly from the original partial tagged triangulation.
It covers conjugate pairs and unpunctured disk triangles with arbitrary attached
trees. The surface embedding can change the ray set while leaving the signed
endpoint matrix and Cartan form unchanged.
For Brauer graph algebras, the tree and odd-cycle finiteness criterion is due
to Adachi, Aihara and Chan \cite[Theorem~6.7]{AAC}. The corresponding type $A$ and $C$ ray
sets are given by \cite[Theorem~8.4 and Proposition~8.16]{AHIKM}.
The local type $D$ models are classical trivial extensions; see \cite{MH,AA}.
These identifications concern primitive ray generators. Tagged compatibility
determines the cones, and convexity of the canonical $g$-polytope also depends
on the cones.

The proof uses cutting to realize reduction within the class of surface rigid
algebras. In \cite{J}, Jasso identified completion graphs with exchange graphs
of reduced algebras, and the corresponding general quotient-fan theorem was
established in \cite[Theorem~4.11]{AHIKM}. To apply these results
geometrically, we construct a partial tagged triangulation on the cut surface
from a Bongartz dissection. The associated surface rigid algebra is the
reduction algebra, and tagged cutting agrees with the canonical index
projection; see Theorem~\ref{res:2.9}. Thus cutting realizes both the reduced
algebra and the quotient-fan map on the surface.

The paper is organized as follows. Sections~1 and~2 recall the models and
establish their compatibility with cutting and reduction. Section~3 proves
connectedness of completion graphs above nonzero pairs and studies flip
reachability. Section~4 develops the endpoint Cartan form, the finiteness criterion
and the comparison with partial-triangulation algebras. Sections~5 and~6
classify the global and completion components. Section~7 determines the finite
endpoint ray sets from the surface geometry and discusses the canonical
$g$-polytope.

\section*{Conventions}

Throughout, $k$ is an algebraically closed field. All additive categories are
$k$-linear, Hom-finite, Krull--Schmidt and idempotent complete. For an object
$T$, let $\operatorname{add}T$ denote the full subcategory of direct summands
of finite direct sums of copies of $T$, and let $|T|$ denote the number of
isomorphism classes of indecomposable summands of $T$. For a basic algebra $A$,
the number $|A|$ is also the number of isomorphism classes of simple
$A$-modules. All algebras are finite-dimensional unless stated otherwise. Let
$\operatorname{mod}A$ and $\operatorname{proj}A$ denote the categories of
finitely generated right $A$-modules and projective right $A$-modules,
respectively. We write $D=\operatorname{Hom}_k(-,k)$ for the $k$-duality and
$\tau$ for the Auslander--Reiten translation. In a composition $fg$, the map
$g$ is applied first, followed by $f$.

Exchange and completion graphs are undirected unless an orientation is
specified. Let $\pi_0G$ denote the set of connected components of a graph $G$,
and let $G_1\square G_2$ denote the Cartesian product of graphs. For simplicial
complexes, $K*L$ denotes the join and $|K|$ the geometric realization. For full
subcategories $\mathcal X,\mathcal Y$ of a triangulated category,
$\mathcal X*\mathcal Y$ consists of the objects $E$ occurring in a triangle
$X\longrightarrow E\longrightarrow Y\longrightarrow X[1]$ with
$X\in\mathcal X$ and $Y\in\mathcal Y$. If $\mathcal W$ is an additive
subcategory, $\mathcal C/[\mathcal W]$ denotes the additive ideal quotient by
morphisms factoring through $\mathcal W$.

We follow the notation of \cite{HZZ}. We write $\mathbf R$ for a partial
tagged triangulation and $\mathbf U$ for an $\mathbf R$-dissection. The
corresponding ordinary letters denote their direct sums in the cluster
category, for example $R=X(\mathbf R)$. The bold letter $\mathbf P$ denotes
the set of punctures. Partial tagged triangulations are sets. Hence $X(\mathbf R)$
and $\Lambda_{\mathbf R}$ are basic. We use
$\Lambda_{\varnothing}=\operatorname{End}(0)=0$, $|0|=0$, and
$\Delta_\tau(0)=\{\varnothing\}$ of dimension $-1$. The graph
$\operatorname{EG}^{\times}(\varnothing)$ has one vertex. For a disconnected
cut surface, cluster categories and surface rigid algebras are interpreted
componentwise. Empty residual components contribute no nonzero algebra factor.

\section{Preliminaries}

\subsection{$\tau$-tilting pairs and completion graphs}
Let $A$ be a finite-dimensional basic $k$-algebra. We recall the pair notation
of \cite{AIR}.

\begin{definition}\label{res:1.1}
Let $M\in\operatorname{mod}A$ and $P\in\operatorname{proj}A$ be basic. The
pair $(M,P)$ is called a $\tau$-rigid pair if
\[
 \operatorname{Hom}_A(M,\tau M)=0,
 \qquad \operatorname{Hom}_A(P,M)=0.
\]
Denote by $|(M,P)|=|M|+|P|$ the number of indecomposable summands of the pair.
A $\tau$-rigid pair $(M,P)$ is called $\tau$-tilting if $|(M,P)|=|A|$, and
almost complete if $|(M,P)|=|A|-1$. Direct sums and direct summands of pairs
are taken componentwise.
\end{definition}

Denote by $\tau\text{-}\operatorname{rigidp}A$ and
$s\tau\text{-}\operatorname{tilt}A$ the sets of isomorphism classes of basic
$\tau$-rigid pairs and basic $\tau$-tilting pairs, respectively. Every almost
complete $\tau$-rigid pair has exactly two basic $\tau$-tilting completions,
which are mutations of one another; see \cite[Theorem~2.18]{AIR}. Let
$\operatorname{EG}(s\tau\text{-}\operatorname{tilt}A)$ be the graph whose
vertices are the isomorphism classes of basic $\tau$-tilting pairs and whose
edges join pairs sharing an almost complete direct summand. The order on
$s\tau\text{-}\operatorname{tilt}A$ is given by $(M,P)\leq(N,Q)$ if
$\operatorname{Fac}M\subseteq\operatorname{Fac}N$, where
$\operatorname{Fac}M$ consists of factor modules of finite direct sums of
copies of $M$.

\begin{definition}\label{res:1.2}
Let $u$ be a basic $\tau$-rigid pair. The completion graph $\mathcal F_A(u)$
is the induced subgraph of
$\operatorname{EG}(s\tau\text{-}\operatorname{tilt}A)$ on the
$\tau$-tilting pairs containing $u$ as a direct summand.

The support $\tau$-tilting complex $\Delta_\tau(A)$ has indecomposable
$\tau$-rigid pairs as vertices and basic $\tau$-rigid direct sums as
simplices. The facets of $\Delta_\tau(A)$ are the $\tau$-tilting pairs. For
$u=(U,Q)$, the residual rank is $r_A(u)=|A|-|U|-|Q|$.
\end{definition}

The link $\operatorname{lk}_{\Delta_\tau(A)}(u)$ consists of the simplices
$v$ whose vertices are disjoint from those of $u$ and for which $u\oplus v$
is $\tau$-rigid. The facets of this link correspond to the completions of
$u$, and the facet-adjacency graph is $\mathcal F_A(u)$.

Fix indecomposable projectives $P_1,\ldots,P_n$ of $A$. Our Cartan convention
is
\[
 C_A=(c_{ij})_{i,j},\qquad
 c_{ij}=\dim_k\operatorname{Hom}_A(P_i,P_j).
\]
Let $\mathcal C$ be a Hom-finite $k$-linear Krull--Schmidt category with split
idempotents, and let $X=\bigoplus_iX_i$ be a basic object whose summands
$X_i$ are pairwise nonisomorphic and indecomposable. For
$A=\operatorname{End}_{\mathcal C}(X)$, we have
\[
 P_i=\operatorname{Hom}_{\mathcal C}(X,X_i)\cong e_iA,
 \qquad
 \operatorname{Hom}_A(P_i,P_j)
 \cong e_jAe_i\cong\operatorname{Hom}_{\mathcal C}(X_i,X_j).
\]
Thus $c_{ij}=\dim_k\operatorname{Hom}_{\mathcal C}(X_i,X_j)$: the first
index is the source and the second is the target. The Nakayama functor on
$\operatorname{proj}A$ is $\nu_A=D\operatorname{Hom}_A(-,A)$. The algebra
$A$ is called weakly symmetric if it is self-injective and the Nakayama
permutation is the identity, that is, $\nu_A(P_i)\cong P_i$ for every
indecomposable projective $P_i$.

An object $T$ of $K^b(\operatorname{proj}A)$ is called presilting if
$\operatorname{Hom}(T,T[i])=0$ for all $i>0$, and silting if, in addition,
the smallest thick subcategory containing $T$ is $K^b(\operatorname{proj}A)$.
The object $T$ is called two-term if $T$ is isomorphic to a complex
concentrated in degrees $-1$ and $0$. For a basic two-term silting complex
$T=T_1\oplus\cdots\oplus T_n$, the $g$-matrix $G_T$ has columns $[T_i]$ in
the basis $[P_1],\ldots,[P_n]$ of $K_0(K^b(\operatorname{proj}A))$. We use
the correspondence between two-term silting complexes and $\tau$-tilting
pairs from \cite{AIR}.

\subsection{Marked surfaces and tagged arcs}
We recall the surface conventions of \cite[Section~3.1]{HZZ}. A punctured
marked surface is a triple $\mathbf S=(S,\mathbf M,\mathbf P)$, where $S$ is a
connected, compact oriented surface with nonempty boundary, and
$\mathbf M\subseteq\partial S$ and $\mathbf P\subseteq S\setminus\partial S$ are
finite sets of points. Every boundary component contains a point of $\mathbf M$. Let
$S^\circ=S\setminus(\partial S\cup\mathbf P)$. The closures of the
components of $\partial S\setminus\mathbf M$ are called boundary segments; denote
their set by $\mathbf B$. As in \cite[Definition~2.1]{FST}, we exclude
unpunctured disks with one, two or three boundary marked points and
once-punctured disks with one boundary marked point. Unpunctured triangular
components produced by cutting contribute the zero category and no nonzero
algebra factor.

An ordinary arc is an immersion $\gamma:[0,1]\longrightarrow S$ with
endpoints in $\mathbf M\cup\mathbf P$ and interior in $S^\circ$, with no
self-intersections in $S^\circ$, and it is neither null-homotopic nor
homotopic to a boundary segment. Arcs are considered up to homotopy relative
to their endpoints, with the other marked points and punctures fixed. An ideal
triangulation is a maximal collection of ordinary arcs with no interior
intersections. A triangle is called self-folded if two sides coincide. The
folded side of a self-folded triangle joins the base to the puncture enclosed
by the non-folded side. Following \cite[Definition~3.2]{HZZ}, a partial ideal
triangulation is a subset of an ideal triangulation that contains the folded
side whenever it contains the corresponding non-folded side.

\begin{definition}[{\cite[Definition~7.1]{FST}}]\label{res:1.3}
A tagged arc is an ordinary arc $\gamma$ that does not cut out a
once-punctured monogon, equipped with a map
\[
 \kappa_\gamma:\{t\in\{0,1\}:\gamma(t)\in\mathbf P\}
 \longrightarrow\{1,-1\}
\]
such that $\kappa_\gamma(0)=\kappa_\gamma(1)$ whenever
$\gamma(0)=\gamma(1)\in\mathbf P$. The value $\kappa_\gamma(t)$ is called
the tagging of $\gamma$ at that end; it is plain if it equals $1$ and notched
if it equals $-1$. In figures, a plain end is unmarked and a notched end
carries a cross $\times$. Denote by $\mathcal A^\times(\mathbf S)$ the set
of tagged arcs. We also use $\gamma$ to denote the underlying ordinary arc
when the taggings are not involved.
\end{definition}

The tagged rotation $\rho(\gamma)$ is obtained from $\gamma$ by moving each
marked endpoint anticlockwise along the boundary to the next marked point and
reversing the taggings at puncture ends; see \cite[Definition~3.4]{BQ}.
If both endpoints of $\gamma$ are punctures, then $\rho(\gamma)$ has the same
underlying arc as $\gamma$. Thus $\rho^2(\gamma)=\gamma$ and
$\rho(\gamma)=\rho^{-1}(\gamma)$. In this case, $\rho(\gamma)$ is called
the adjoint of $\gamma$.

Intersections are counted as pairs of parameters on the two arcs. For ordinary
arcs $\gamma,\eta$ in transverse minimal position, let
\[
 \operatorname{Int}^{\circ}(\gamma,\eta)
 =\#\{(s,t)\in(0,1)^2:\gamma(s)=\eta(t)\}.
\]
Thus $\operatorname{Int}^{\circ}$ counts only geometric crossings in
$S^\circ$. Let $\gamma,\eta$ be tagged arcs in minimal position. A pair
$(s,t)\in\{0,1\}^2$ with $\gamma(s)=\eta(t)\in\mathbf P$ is called a
tagged intersection if the following conditions hold; see
\cite[Definition~3.3]{QZ}.
\begin{enumerate}
 \item $\kappa_\gamma(s)\neq\kappa_\eta(t)$.
 \item If the underlying arcs, oriented from $s$ to $1-s$ and from $t$ to
       $1-t$, respectively, are homotopic, then
       \[
        \gamma(1-s)=\eta(1-t)\in\mathbf P,
        \qquad \kappa_\gamma(1-s)\neq\kappa_\eta(1-t).
       \]
\end{enumerate}
Denote by $\cap(\gamma,\eta)$ the set of interior intersections and tagged
intersections, and by
$\operatorname{Int}(\gamma,\eta)=|\cap(\gamma,\eta)|$ their intersection
number. Two tagged arcs $\gamma,\eta$ are called compatible if
$\operatorname{Int}(\gamma,\eta)=0$. Two distinct compatible tagged arcs
with the same underlying arc are called a conjugate pair. Their underlying
arc has distinct endpoints, and their taggings differ at exactly one puncture
end; see the last picture of Figure~\ref{fig:1}.
A tagged triangulation is a maximal compatible collection of tagged arcs.
A partial tagged triangulation is a subset of a tagged triangulation; see
\cite[Definition~3.6]{HZZ}.

\begin{remark}\label{rem:conjugate-endpoints}
Let $\{\alpha,\beta\}$ be a conjugate pair joining punctures $p,q$, with
common tagging $a$ at $p$ and opposite taggings at $q$.
If a third tagged arc $\gamma$ is compatible
with both $\alpha$ and $\beta$, then $\gamma$ has no endpoint at $q$.
If $\gamma$ meets the pair, it meets both arcs at $p$ with tagging $a$.
Indeed, no third tagging of the underlying arc is compatible with both
$\alpha$ and $\beta$. For a different underlying arc, compatibility at
$q$ would require agreement with two opposite taggings. Compatibility at
$p$ requires tagging $a$.
\end{remark}

\begin{example}[{\cite[Definition~3.3 and Example~3.4]{QZ}}]\label{res:1.4}
Let $\gamma$ be a tagged arc based at a puncture $p$, with the same tagging at
both ends. By definition, $\operatorname{Int}(\gamma,\gamma)=0$. All four
endpoint pairs of $\gamma$ and $\rho(\gamma)$ are tagged intersections. Thus
$\operatorname{Int}(\gamma,\rho(\gamma))=4$; see the first picture in
Figure~\ref{fig:1}. If the two ends of $\gamma$ were assigned different
taggings, the endpoint-counting rule would give two tagged self-intersections;
see the second picture. Such a tagging is excluded by Definition~\ref{res:1.3}.
If $\gamma$ joins distinct punctures, then
$\operatorname{Int}(\gamma,\rho(\gamma))=2$; see the third picture. The
fourth picture illustrates a conjugate pair $\gamma,\eta$, for which
$\operatorname{Int}(\gamma,\eta)=0$. The endpoint marks
$?,!,\sharp$ denote tagging choices, with
$\{?,!\}=\{\varnothing,\times\}$ and $\sharp\in\{\varnothing,\times\}$.
Choices at distinct punctures are independent.
\end{example}

\begin{figure}[htbp]
 \centering
 \includegraphics[width=\linewidth]{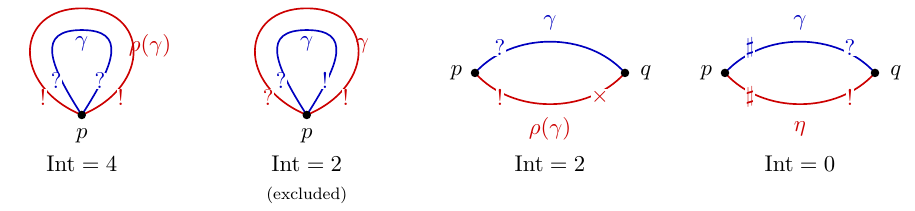}
 \caption{Endpoint intersection counts.}
 \label{fig:1}
\end{figure}

Let $\mathbf U$ be a partial tagged triangulation, and denote by
$\mathbf P_{\mathbf U}\subseteq\mathbf P$ the set of punctures incident
with the arcs of $\mathbf U$. The weak signature of $\mathbf U$ is the map
$\kappa_{\mathbf U}:\mathbf P_{\mathbf U}\longrightarrow\{-1,0,1\}$
whose value is $1$ (respectively, $-1$) when all incident taggings are plain
(respectively, notched), and $0$ when both taggings occur. Denote by
$\mathbf U^\circ$ the associated partial ideal triangulation, obtained by
forgetting the taggings at punctures of nonzero weak signature and replacing
the two tagged arcs at each zero-signature puncture $p$ by a self-folded
triangle enclosing $p$; see \cite[Construction~3.8]{HZZ}. For
$\gamma\in\mathbf U$, denote by $\gamma^\circ_{\mathbf U}$ the
corresponding ideal arc: at a zero-signature puncture $p$, the arc plain at $p$
corresponds to the folded side, and the arc notched at $p$ to the non-folded
side. The subscript may be omitted when $\mathbf U$ is fixed. In general,
$\gamma^\circ_{\mathbf U}$ differs from the underlying arc of $\gamma$.
For any tagged arc $\delta$, denote by $\delta^{\mathbf U}$ the tagged arc
obtained by reversing the taggings of $\delta$ at the punctures
$p\in\mathbf P_{\mathbf U}$ with $\kappa_{\mathbf U}(p)=-1$; see
\cite[Construction~3.9]{HZZ}. The arcs $\delta^{\mathbf U}$ and $\delta$
have the same underlying arc.

A connected component of a partial tagged triangulation is a maximal
subcollection whose arcs are joined by chains of common endpoints. Such a
component is called a puncture component if all endpoints belong to
$\mathbf P$. A partial tagged triangulation is called connected to the
boundary if each connected component contains an arc incident with $\mathbf M$; see
\cite[Definition~5.12]{HZZ}.

\subsection{Cluster categories and the arc--object correspondence}
The cluster category $\mathcal C(\mathbf S)$ is the generalized cluster
category of a surface quiver with potential whose Jacobian algebra is
finite-dimensional; see \cite{A,BZ,QZ} for the construction and geometric
models. We write $\mathcal C$ when the surface is fixed. The category
$\mathcal C(\mathbf S)$ is a Hom-finite $2$-Calabi--Yau triangulated category,
with suspension denoted by $[1]$. Thus there are functorial isomorphisms
\[
 D\operatorname{Hom}_{\mathcal C(\mathbf S)}(Y,Z)
 \cong\operatorname{Hom}_{\mathcal C(\mathbf S)}(Z,Y[2]).
\]
An object $Y$ is called rigid if $\operatorname{Hom}(Y,Y[1])=0$. A rigid
object $T$ is called cluster tilting if
\[
 \operatorname{add}T=\{Y:\operatorname{Hom}(T,Y[1])=0\}.
\]
For a full additive subcategory $\mathcal E\subseteq\mathcal C(\mathbf S)$,
a rigid object in $\mathcal E$ means an object of $\mathcal E$ that is rigid
in $\mathcal C(\mathbf S)$. Such an object $Y$ is called maximal rigid in
$\mathcal E$ if $Y\oplus Z$ rigid with $Z\in\mathcal E$ implies
$Z\in\operatorname{add}Y$. Following \cite[Definition~2.3]{HZZ}, denote by
$\operatorname{rigid}\text{-}\mathcal E$ and
$\operatorname{max\,rigid}\text{-}\mathcal E$ the sets of isomorphism
classes of basic rigid objects and basic maximal rigid objects in $\mathcal E$,
respectively. Denote by $\operatorname{ind\,rigid}\mathcal E$ the set of
isomorphism classes of indecomposable rigid objects in $\mathcal E$.

\begin{theorem}[{\cite[Theorems~5.2, 5.5 and Corollary~5.6]{QZ}}]\label{res:1.5}
There is a bijection
\[
 X:\mathcal A^\times(\mathbf S)\longrightarrow
 \operatorname{ind\,rigid}\mathcal C
\]
satisfying
\[
 X(\rho\gamma)\cong X(\gamma)[1]
 \qquad\text{and}\qquad
 \dim_k\operatorname{Hom}(X(\alpha),X(\beta)[1])
 =\operatorname{Int}(\alpha,\beta).
\]
The bijection $X$ extends to a bijection between partial tagged triangulations
$\mathbf R$ and isomorphism classes of basic rigid objects by
$X(\mathbf R)=\bigoplus_{\gamma\in\mathbf R}X(\gamma)$. Under this
bijection, tagged triangulations correspond precisely to basic cluster tilting
objects.
\end{theorem}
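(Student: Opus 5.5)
The plan is to assemble the statement from the cited results of \cite{QZ} rather than reprove the geometric model. First I would take from \cite[Theorem~5.2]{QZ} the bijection $X$ between tagged arcs and indecomposable rigid objects of $\mathcal C(\mathbf S)$. I would then check that the conventions of Definition~\ref{res:1.3} and of the tagged intersection number match those of \cite[Definition~3.3]{QZ}. Here I would compare the exclusion of arcs cutting out once-punctured monogons and the requirement $\kappa_\gamma(0)=\kappa_\gamma(1)$ at a loop based at a puncture. This check matters because the self-intersection count of Example~\ref{res:1.4} is exactly what makes $\operatorname{Int}(\gamma,\gamma)=0$ characterize rigidity of $X(\gamma)$. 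The shift formula $X(\rho\gamma)\cong X(\gamma)[1]$ and the dimension formula $\dim_k\operatorname{Hom}(X(\alpha),X(\beta)[1])=\operatorname{Int}(\alpha,\beta)$ are \cite[Theorem~5.5]{QZ}. I would cite them with the tagged rotation of \cite[Definition~3.4]{BQ}, after checking that the anticlockwise orientation convention agrees with the one used for $[1]$.

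For the extension to collections, I would use additivity of $\operatorname{Hom}(-,-[1])$. A direct sum $\bigoplus_{\gamma\in\mathbf R}X(\gamma)$ is rigid if and only if $\operatorname{Hom}(X(\alpha),X(\beta)[1])=0$ for all $\alpha,\beta\in\mathbf R$. By the dimension formula, this holds if and only if the arcs of $\mathbf R$ are pairwise compatible. Every pairwise compatible set of tagged arcs extends to a maximal one, namely a tagged triangulation, so it is a partial tagged triangulation. Since $X$ is injective on arcs and objects are taken basic, $\mathbf R\mapsto X(\mathbf R)$ is injective. It is surjective by Krull--Schmidt: each summand of a basic rigid object is indecomposable rigid, hence of the form $X(\gamma)$.

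The step I expect to carry real content is the last assertion, that tagged triangulations correspond exactly to basic cluster tilting objects; this is \cite[Corollary~5.6]{QZ}. Under the bijection, a tagged triangulation is a maximal compatible set of arcs, which corresponds to a basic maximal rigid object. It remains to identify maximal rigid objects with cluster tilting objects. Because $\mathcal C(\mathbf S)$ is a Hom-finite $2$-Calabi--Yau category admitting a cluster tilting object, namely the image of the surface quiver with potential, I would invoke the Zhou--Zhu theorem that maximal rigid objects are cluster tilting in this setting. Alternatively, I would argue directly: every tagged triangulation has the same number $n$ of arcs, and every cluster tilting object has $n$ summands. A rigid object with $n$ summands is then cluster tilting, by a standard mutation or Bongartz-completion argument in the $2$-Calabi--Yau category.
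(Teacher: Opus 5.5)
Your proposal is correct and matches the paper, which gives no independent proof of this statement and simply quotes it from \cite[Theorems~5.2, 5.5 and Corollary~5.6]{QZ}, exactly as you plan to. Your supplementary arguments are sound unpackings of those citations: additivity of $\operatorname{Hom}(-,-[1])$ together with Krull--Schmidt gives the extension to partial tagged triangulations, and the identification of maximal rigid with cluster tilting objects (via the Zhou--Zhu theorem, or via rank counting and Bongartz completion) gives the correspondence for tagged triangulations.
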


\begin{definition}[{\cite[Definition~4.1]{HZZ}}]\label{res:1.6}
The surface rigid algebra associated with a partial tagged triangulation
$\mathbf R$ is
\[
 \Lambda_{\mathbf R}=\operatorname{End}_{\mathcal C(\mathbf S)}X(\mathbf R).
\]
The object $X(\mathbf R)$ is basic, and so is $\Lambda_{\mathbf R}$.
Throughout, we work with these fixed algebras.
\end{definition}

\subsection{Standard arcs, dissections and the relative complex}
We use the laminate description of standard arcs in \cite[Definition~3.13 and
Proposition~4.4]{HZZ}. The laminate description is equivalent to the local
arc-segment definition in \cite[Definition~4.2]{HZZ}.

\begin{definition}\label{res:1.7}
The elementary laminate $e(\delta)$ runs close to a tagged arc $\delta$. At a
boundary end, the endpoint of $e(\delta)$ is displaced slightly clockwise along
the boundary. At a puncture end, $e(\delta)$ spirals clockwise for a plain
tagging and anticlockwise for a notched tagging. The co-elementary laminate
$e^{\operatorname{op}}(\delta)$ uses the opposite boundary displacements and
spiral directions.
\end{definition}

Let $\mathbf R$ be a partial tagged triangulation. For $L=e(\delta)$, let
$L^{\mathbf R}=e(\delta^{\mathbf R})$, and for
$L=e^{\operatorname{op}}(\delta)$, let
$L^{\mathbf R}=e^{\operatorname{op}}(\delta^{\mathbf R})$. Place
$L^{\mathbf R}$ in minimal position with $\mathbf R^\circ$. The arcs of
$\mathbf R^\circ$ cut $L^{\mathbf R}$ into segments. A segment $\eta$ is
said to cut out an angle if, in the surface cut along $\mathbf R^\circ$, it
bounds a triangular disk together with parts of two sides meeting at a corner,
as in Figure~\ref{fig:2}. The interior of this disk contains no marked point
or puncture. The two sides are occurrences of arcs in
$\mathbf R^\circ\cup\mathbf B$; they may come from the same arc.

\begin{figure}[htbp]
 \centering
 \includegraphics[width=100pt]{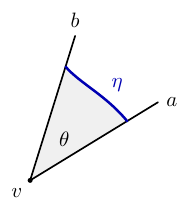}
 \caption{A segment $\eta$ cutting out the angle $\theta$ between $a$ and $b$.}
 \label{fig:2}
\end{figure}

\begin{definition}[{\cite[Definition~3.13 and Proposition~4.4]{HZZ}}]\label{res:1.8}
The laminate $L$ is said to shear $\mathbf R$ if every segment of
$L^{\mathbf R}$ cut by $\mathbf R^\circ$ cuts out an angle. A tagged arc
$\delta$ is called $\mathbf R$-standard (respectively,
$\mathbf R$-co-standard) if $e(\delta)$ (respectively,
$e^{\operatorname{op}}(\delta)$) shears $\mathbf R$. An
$\mathbf R$-dissection (respectively, $\mathbf R$-co-dissection) is a maximal
compatible collection of $\mathbf R$-standard (respectively,
$\mathbf R$-co-standard) tagged arcs; see \cite[Definition~4.6]{HZZ}. Denote
by $\mathcal A^\times_{\mathbf R\text{-st}}$ and
$\mathcal A^\times_{\mathbf R\text{-co-st}}$ the sets of
$\mathbf R$-standard and $\mathbf R$-co-standard arcs, respectively, and by
$\mathcal D(\mathbf R)$ and $\mathcal D^{\operatorname{op}}(\mathbf R)$
the sets of $\mathbf R$-dissections and $\mathbf R$-co-dissections,
respectively.
\end{definition}

The following correspondences identify standard arcs with indecomposable
$\tau$-rigid pairs and dissections with basic $\tau$-tilting pairs.
The algebraic correspondence $\Phi$ below is recalled from
\cite[Theorem~3.2]{CZZ}.

\begin{theorem}[{\cite[Theorem~4.7 and Corollary~4.8]{HZZ}}]\label{res:1.9}
Let $R=X(\mathbf R)$, $A=\Lambda_{\mathbf R}$,
$\mathcal E_R=\operatorname{add}R*\operatorname{add}R[1]$, and
$H=\operatorname{Hom}_{\mathcal C}(R,-)$. The arc--object correspondence
$X$ restricts to bijections
\[
 \mathcal A^\times_{\mathbf R\text{-st}}
 \overset{1-1}{\longleftrightarrow}
 \operatorname{ind\,rigid}\mathcal E_R
 \qquad\text{and}\qquad
 \mathcal A^\times_{\mathbf R\text{-co-st}}
 \overset{1-1}{\longleftrightarrow}
 \operatorname{ind\,rigid}\mathcal E_R[-1].
\]
Under these bijections, compatible collections correspond to basic rigid
objects in the corresponding categories by taking direct sums. There is also
a bijection
\[
 \Phi:\operatorname{rigid}\text{-}\mathcal E_R
 \xrightarrow{\ 1-1\ }\tau\text{-}\operatorname{rigidp}A,
 \qquad W_0\oplus W_1\longmapsto
 \bigl(HW_0,H(W_1[-1])\bigr),
\]
where $W_1$ is the maximal summand of $W$ in $\operatorname{add}R[1]$.
These bijections restrict to
\[
 \mathcal D(\mathbf R)
 \overset{1-1}{\longleftrightarrow}
 \operatorname{max\,rigid}\text{-}\mathcal E_R
 \qquad\text{and}\qquad
 \mathcal D^{\operatorname{op}}(\mathbf R)
 \overset{1-1}{\longleftrightarrow}
 \operatorname{max\,rigid}\text{-}\mathcal E_R[-1].
\]
In particular, $\Phi\circ X$ identifies $\mathcal D(\mathbf R)$ with
$s\tau\text{-}\operatorname{tilt}A$. A compatible collection of
$\mathbf R$-standard (respectively, $\mathbf R$-co-standard) arcs is an
$\mathbf R$-dissection (respectively, $\mathbf R$-co-dissection) if and only
if it has $|\mathbf R|$ arcs.
\end{theorem}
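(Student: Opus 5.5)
The plan is to assemble Theorem~\ref{res:1.9} from three ingredients. The first is the arc--object correspondence of Theorem~\ref{res:1.5}. The second is the algebraic correspondence $\Phi$ between rigid objects in $\mathcal E_R$ and $\tau$-rigid pairs. The third is a geometric identification of which tagged arcs give objects of $\mathcal E_R$.

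I would treat the algebraic part first, since it is independent of the surface. Because $\mathcal C$ is $2$-Calabi--Yau and $R$ is rigid, the functor $H=\operatorname{Hom}_{\mathcal C}(R,-)$ induces an equivalence
\[
 \mathcal E_R/[\operatorname{add}R[1]]\simeq\operatorname{mod}A .
\]
Under this equivalence, the condition $\operatorname{Hom}(W,W[1])=0$ for $W=W_0\oplus W_1$ becomes the pair of conditions $\operatorname{Hom}_A(HW_0,\tau HW_0)=0$ and $\operatorname{Hom}_A(H(W_1[-1]),HW_0)=0$. For the first condition, I would use the Auslander--Reiten formula together with the $2$-CY duality $D\operatorname{Hom}(Y,Z)\cong\operatorname{Hom}(Z,Y[2])$. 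For the second, the relevant maps are those from the summand $W_1[-1]\in\operatorname{add}R$. This is exactly the content of \cite[Theorem~3.2]{CZZ}, which I would invoke to obtain the bijection $\Phi$.

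Next I would apply Bongartz completion in $\operatorname{mod}A$ \cite{AIR}. Every $\tau$-rigid pair completes to one with $|A|$ summands, so maximal rigid objects of $\mathcal E_R$ are precisely those with $|R|=|A|$ summands, and they correspond to $s\tau\text{-}\operatorname{tilt}A$.

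On the geometric side, Theorem~\ref{res:1.5} shows that compatibility of tagged arcs is equivalent to rigidity of direct sums, since $\dim\operatorname{Hom}(X(\alpha),X(\beta)[1])=\operatorname{Int}(\alpha,\beta)$. It therefore suffices to prove the following claim: for a tagged arc $\delta$, the object $X(\delta)$ lies in $\mathcal E_R$ if and only if $e(\delta)$ shears $\mathbf R$. Granting the claim, the bijection for co-standard arcs follows by applying $[-1]$, that is $\rho^{-1}$. Here one checks that $\rho^{-1}$ converts $e^{\operatorname{op}}$-shearing into $e$-shearing, because rotation reverses boundary displacements and puncture spirals, matching Definition~\ref{res:1.7}. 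With the claim, the restriction to dissections and the count criterion follow from the rank statement of the algebraic part.

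The main obstacle is the claim. To prove it, I would first reduce to the case where $\mathbf R$ has weak signature in $\{0,1\}$ by replacing $\delta$ with $\delta^{\mathbf R}$; this reverses the taggings at the notched punctures. I would then work with the partial ideal triangulation $\mathbf R^\circ$ and extend it to an ideal triangulation $T$. With respect to the cluster tilting object $X(T)$, the $T$-index of $X(\delta)$ is computed by shear coordinates of the elementary laminate $e(\delta)$.

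For the implication from shearing to membership, every segment of $e(\delta)$ cuts out an angle between two sides in $\mathbf R^\circ\cup\mathbf B$. These angles determine a minimal $\operatorname{add}R$-approximation together with the triangle $R_1\to R_0\to X(\delta)\to R_1[1]$.

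For the converse, I would argue by contradiction. Suppose some segment of $e(\delta)$ fails to cut out an angle. Then it crosses a region of the cut surface containing a marked point, a puncture, or an arc of $T\setminus\mathbf R^\circ$. This produces a nonzero component of the index outside $\operatorname{add}R\oplus\operatorname{add}R[1]$, or a morphism obstructing the triangle. I expect the self-folded triangles and the conjugate-pair configurations at zero-signature punctures to require separate local case checks. This is where I would rely on the equivalence between the laminate description and the local arc-segment definition from \cite[Proposition~4.4]{HZZ}.
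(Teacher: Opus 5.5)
The paper does not prove this statement. It quotes it from \cite[Theorem~4.7 and Corollary~4.8]{HZZ}, with $\Phi$ taken from \cite[Theorem~3.2]{CZZ}.

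Everything in your outline except the geometric claim is sound:
\begin{itemize}
\item $\Phi$ comes from \cite{CZZ}.
\item Maximal $\tau$-rigid pairs are exactly those with $|A|$ summands, by \cite{AIR}.
\item Compatibility is rigidity of direct sums, by Theorem~\ref{res:1.5}.
\item $\mathcal E_R$ is closed under sums, and under summands because $\operatorname{Hom}(R,R[1])=0$.
\item The co-standard case follows from $e(\rho\delta)\simeq e^{\operatorname{op}}(\delta)$.
\end{itemize}

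The gap is the claim that $X(\delta)\in\mathcal E_R$ exactly when $e(\delta)$ shears $\mathbf R$. You rightly call it the heart of the matter, but you do not prove it, and both mechanisms you sketch fail as stated. For shearing $\Rightarrow$ membership, saying the angles ``determine a minimal $\operatorname{add}R$-approximation together with the triangle'' is not an argument. A right $\operatorname{add}R$-approximation $R_0\to X(\delta)$ always exists; the whole content is that its cocone lies in $\operatorname{add}R$, and the angle picture does not show this. For the converse, your dichotomy is wrong. A segment that does cut out an angle still crosses arcs of $T\setminus\mathbf R^\circ$, namely the fan of arcs at its corner. So meeting such arcs cannot by itself give an obstruction.

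The missing idea is a bridge between membership and the support of the index. Complete $\mathbf R$ to a tagged triangulation $\mathbf T$ and write $X(\mathbf T)=R\oplus R'$. For rigid $Y$, one has $Y\in\mathcal E_R$ if and only if $\operatorname{ind}_{X(\mathbf T)}Y$ is supported on the summands of $R$:
\begin{itemize}
\item An $\operatorname{add}R$-presentation $R_1\to R_0\to Y\to R_1[1]$ is already a right $\operatorname{add}X(\mathbf T)$-approximation, since $\operatorname{Hom}(X(\mathbf T),R_1[1])=0$. Hence the minimal presentation is a direct summand of it.
\item Conversely, the two terms of the minimal presentation of a rigid object share no indecomposable summand (Dehy--Keller). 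So the index determines both terms.
\end{itemize}

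Taking index $=$ shear coordinates as known, you also need sign-coherence. A non-self-crossing laminate cannot have both an $S$-crossing and a $Z$-crossing in the quadrilateral of one arc, because those two crossings intersect. Hence a shear coordinate vanishes exactly when there is no $S/Z$ crossing at that arc.

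The claim then becomes combinatorial. A segment inside a region cut out by $\mathbf R^\circ$ has no $S/Z$ crossing with interior arcs of $\mathbf T^\circ$ if and only if it keeps turning around a single vertex. The swept disk is then a union of triangle corners at that vertex, so it contains no marked point; that is, the segment cuts out an angle. Spirals into punctures outside $\mathbf P_{\mathbf R}$ are excluded, because switching the turning vertex forces an $S/Z$ crossing. Self-folded triangles at zero-signature punctures still need the local check you mention.

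Finally, your reduction to weak signature in $\{0,1\}$ would, as stated, require a tag-switching autoequivalence of $\mathcal C(\mathbf S)$. It is unnecessary. Shear coordinates relative to a tagged triangulation are defined by reversing spirals at notched punctures, which is exactly the twist $\delta\mapsto\delta^{\mathbf R}$ built into Definition~\ref{res:1.8}.
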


The relative complex $\Delta_{\mathbf R}$ has $\mathbf R$-standard tagged
arcs as vertices and compatible collections as simplices. Under $X$, the
vertices of $\Delta_{\mathbf R}$ correspond to indecomposable rigid objects
of $\mathcal E_R$, and the simplices correspond to basic rigid direct sums.
By Theorem~\ref{res:1.9}, the correspondence induces a simplicial isomorphism
$\Delta_{\mathbf R}\cong\Delta_\tau(\Lambda_{\mathbf R})$. The facets of
$\Delta_{\mathbf R}$ are the $\mathbf R$-dissections, and
$\Delta_{\mathbf R}$ is pure of dimension $|\mathbf R|-1$.

\subsection{$g$-fans and chamber graphs}
Let $K_A=K_0(\operatorname{proj}A)$ and
$K_{A,\mathbb R}=K_A\otimes_{\mathbb Z}\mathbb R$. We use the basis of
$K_A$ given by the classes of the indecomposable projective modules. For a
$\tau$-rigid pair $u=(M,P)$, choose a minimal projective presentation
$P_M^{-1}\longrightarrow P_M^0\longrightarrow M\longrightarrow0$. The
$g$-vector of $u$ is defined to be
\[
 g_A(u)=[P_M^0]-[P_M^{-1}]-[P]\in K_A;
\]
see \cite[Section~5.1]{AIR}. For a basic $\tau$-rigid pair
$u=u_1\oplus\cdots\oplus u_s$ with indecomposable summands $u_i$, define
the cone associated with $u$ and the $g$-fan of $A$ to be
\[
 \sigma_u=\operatorname{cone}\{g_A(u_1),\ldots,g_A(u_s)\}
 \qquad\text{and}\qquad
 \Sigma_g(A)=\{\sigma_v:v\in\tau\text{-}\operatorname{rigidp}A\},
\]
respectively. The collection $\Sigma_g(A)$ is a nonsingular fan in
$K_{A,\mathbb R}$ with underlying abstract simplicial complex
$\Delta_\tau(A)$. The primitive ray generators of $\Sigma_g(A)$ are precisely
the $g$-vectors of indecomposable $\tau$-rigid pairs; see \cite[Section~5]{AIR}
and \cite[Definition~4.1 and Proposition~4.2]{AHIKM}.

Let $W\in\mathcal E_R$, and choose an $R$-presentation
$R^1\longrightarrow R^0\longrightarrow W\longrightarrow R^1[1]$ with
$R^0,R^1\in\operatorname{add}R$. The relative index of $W$ is
\[
 \operatorname{ind}_R W=[R^0]-[R^1]\in K_A,
\]
where we identify $K_0(\operatorname{add}R)$ with $K_A$ via
$\operatorname{Hom}(R,-)$. When $W$ is rigid, $\operatorname{ind}_R W$ is
the $g$-vector of the $\tau$-rigid pair $\Phi(W)$ from
Theorem~\ref{res:1.9}; see \cite[Section~5.1]{AIR}. For an
$\mathbf R$-standard tagged arc $\gamma$, write
$g_{\mathbf R}(\gamma)=\operatorname{ind}_R X(\gamma)$.
Theorem~\ref{res:1.9} and the flip--mutation correspondence of
Theorem~\ref{res:2.6} give
\[
\begin{aligned}
 \mathbf R\text{-standard tagged arcs}
 &\longleftrightarrow\text{rays of }\Sigma_g(A),\\
 \text{compatible collections of }\mathbf R\text{-standard arcs}
 &\longleftrightarrow\text{cones of }\Sigma_g(A),\\
 \mathbf R\text{-dissections}
 &\longleftrightarrow\text{full-dimensional cones of }\Sigma_g(A),\\
 \text{flips of }\mathbf R\text{-dissections}
 &\longleftrightarrow\text{adjacent full-dimensional cones}.
\end{aligned}
\]
For a pure fan $\Sigma$ of dimension $d$, let $\operatorname{Ch}(\Sigma)$
denote the graph whose vertices are the $d$-dimensional cones of $\Sigma$.
Two $d$-dimensional cones are adjacent when they share a facet. In particular,
\[
 \operatorname{Ch}(\Sigma_g(A))
 \cong\operatorname{EG}(s\tau\text{-}\operatorname{tilt}A).
\]
Throughout, components of a $g$-fan mean components of the chamber graph. The
support $|\Sigma_g(A)|=\bigcup_{\sigma\in\Sigma_g(A)}\sigma$ is star-shaped
about the origin. A fan is called complete if the support equals the whole
ambient vector space. The fan $\Sigma_g(A)$ is complete if and only if
$\Sigma_g(A)$ is finite, equivalently if $A$ is $\tau$-tilting finite (also
called $g$-finite); see \cite[Proposition~4.2]{AHIKM}.

Let $\sigma$ be a cone of a fan $\Sigma\subseteq V$. The quotient fan of
$\Sigma$ at $\sigma$ is
\[
 \Sigma/\sigma=\{\pi(\tau):\sigma\subseteq\tau\in\Sigma\},
\]
where $\pi:V\longrightarrow V/\operatorname{span}_{\mathbb R}\sigma$ is the
quotient map. Let $A_u$ denote Jasso's reduction algebra at $u$. The quotient
fan $\Sigma_g(A)/\sigma_u$ is identified with $\Sigma_g(A_u)$ by a lattice
isomorphism; see \cite[Theorem~4.11]{AHIKM}.

Following \cite[Definitions~5.1--5.2]{AHIKM}, define the $g$-polytope of $A$
to be
\[
 P(A)=
 \bigcup_{T=T_1\oplus\cdots\oplus T_n\in s\tau\text{-}\operatorname{tilt}A}
 \operatorname{conv}\{0,g_A(T_1),\ldots,g_A(T_n)\}.
\]
The union $P(A)$ need not be convex. The algebra $A$ is called
$g$-convex if $P(A)$ is convex. A primitive ray generator of
$\Sigma_g(A)$ need not be an extreme point of the convex hull of all primitive
ray generators.

\section{Geometric \texorpdfstring{$\tau$}{tau}-tilting reduction}
\label{sec:geometric-reduction}\label{sec:2}

Let $\mathbf R$ be a partial tagged triangulation of $\mathbf S$, and let
\[
 R=X(\mathbf R),\qquad A=\operatorname{End}_{\mathcal C}(R),\qquad
 \mathcal E_R=\operatorname{add}R*\operatorname{add}R[1].
\]
Throughout this section, fix a basic $\tau$-rigid pair $u=(M,P)$ over $A$,
and let $U=\Phi^{-1}(u)$.

\subsection{Categorical reduction and Bongartz completion}
The functor $\operatorname{Hom}(R,-)$ induces an equivalence
\begin{equation}\label{eq:2.1}
 \mathcal E_R/[\operatorname{add}R[1]]
 \overset{\sim}{\longrightarrow}\operatorname{mod}A;
\end{equation}
see \cite[Proposition~6.2(3)]{IY}.
By Theorem~\ref{res:1.9}, $U=X(\mathbf N_u)$ for a compatible collection
$\mathbf N_u$ of $\mathbf R$-standard arcs. Recall that the Bongartz
completion of $u$ is the greatest $\tau$-tilting pair containing $u$ in the
$\operatorname{Fac}$ order.

\begin{proposition}[{\cite[Section~3.2]{HZZrel}}]
\label{res:2.1}
Let $f\in\operatorname{Hom}(U',R[1])$ be a right
$\operatorname{add}U$-approximation of $R[1]$, and complete it to a triangle
\begin{equation}\label{eq:2.2}
 R\overset{a}{\longrightarrow}V\overset{b}{\longrightarrow}U'\overset{f}{\longrightarrow}R[1].
\end{equation}
Let $B_U$ be the basic object determined by
$\operatorname{add}B_U=\operatorname{add}(U\oplus V)$.
Then $B_U$ is maximal rigid in $\mathcal E_R$, and $\Phi(B_U)$ is the
Bongartz completion of $u$. More precisely, $\Phi(B_U)=(T,P)$, where
$T=\operatorname{Hom}(R,B_U)$.
\end{proposition}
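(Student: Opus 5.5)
We work in the $2$-Calabi--Yau category $\mathcal C$ and check three things. First, $B_U$ lies in $\mathcal E_R$. Second, $B_U$ is rigid and maximal rigid in $\mathcal E_R$. Third, $\Phi(B_U)$ is the Bongartz completion $(T,P)$. The template is the Iyama--Yoshino / Adachi--Iyama--Reiten construction of Bongartz completion via approximation triangles, transported through the equivalence \eqref{eq:2.1} and the bijection $\Phi$ of Theorem~\ref{res:1.9}.

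For membership, write $U=U_0\oplus U_1$ with $U_1\in\operatorname{add}R[1]$, so $U_1=P'[1]$ where $H(P'[1][-1])=HP'$ corresponds to $P$. Since $\mathcal E_R$ is extension closed in the relevant sense, the triangle \eqref{eq:2.2} exhibits $V\in\operatorname{add}R*\operatorname{add}U'\subseteq\operatorname{add}R*\mathcal E_R$. One checks that this lies in $\mathcal E_R$: use $\operatorname{add}R*\operatorname{add}R*\operatorname{add}R[1]=\operatorname{add}R*\operatorname{add}R[1]$, which holds because $\operatorname{Hom}(R,R[1])=0$ splits the first extension.

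For rigidity, compute $\operatorname{Hom}(U,V[1])$, $\operatorname{Hom}(V,U[1])$ and $\operatorname{Hom}(V,V[1])$ by applying Hom functors to \eqref{eq:2.2}.
\begin{itemize}
\item For $\operatorname{Hom}(U,V[1])$, the long exact sequence contains $\operatorname{Hom}(U,U')\xrightarrow{f_*}\operatorname{Hom}(U,R[1])\to\operatorname{Hom}(U,V[1])\to\operatorname{Hom}(U,U'[1])=0$. Surjectivity of $f_*$ is exactly the approximation property, so $\operatorname{Hom}(U,V[1])=0$.
\item By $2$-CY duality, $\operatorname{Hom}(V,U[1])\cong D\operatorname{Hom}(U,V[1])=0$.
\item For $\operatorname{Hom}(V,V[1])$, apply $\operatorname{Hom}(-,V[1])$ to \eqref{eq:2.2}. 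Then $\operatorname{Hom}(V,V[1])$ sits between $\operatorname{Hom}(U',V[1])=0$ and $\operatorname{Hom}(R,V[1])$. The latter vanishes because $V\in\operatorname{add}R*\operatorname{add}R[1]$ and $\operatorname{Hom}(R,R[1])=0=\operatorname{Hom}(R,R[2])$. The second vanishing uses $\operatorname{Hom}(R,R[2])\cong D\operatorname{Hom}(R,R)$, so care is needed here. I would instead use $\operatorname{Hom}(R,U'[1])=0$ from rigidity of $U$, since $R[1]$ and $R$ both pair trivially with elements of $\mathcal E_R$ in the required degree.
\end{itemize}

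For maximality, count summands. By Theorem~\ref{res:1.9}, it suffices to show $|B_U|=|R|$. Apply $H=\operatorname{Hom}(R,-)$ to \eqref{eq:2.2}. This gives $A\to HV\to HU'\to 0$, with the last term zero on the $R[1]$-part, and the sequence is an exact sequence in $\operatorname{mod}A$. In module terms this is precisely the AIR universal-extension construction of the Bongartz completion: $HV\oplus HU_0$ is the Bongartz complement, with $\operatorname{add}(HU_0\oplus HV)$ the projectives of ${}^{\perp}\tau HU_0\cap P^{\perp}$. The $\tau$-rigid pair $\Phi(B_U)$ has $T=H B_U$ and second component $P$. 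No new summands of $R[1]$ appear, since $V$ has no summand in $\operatorname{add}R[1]$ beyond those forced, and any such summand would contradict minimality.

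Identifying $\Phi(B_U)$ with the AIR Bongartz completion, $\operatorname{Fac}T={}^{\perp}(\tau HU_0)\cap P^{\perp}$, yields both maximality (by AIR this pair is $\tau$-tilting) and the claim that it is the greatest completion. The main obstacle is this identification. Through \eqref{eq:2.1} and the AR-formula $\operatorname{Hom}_A(HX,\tau HY)\cong D\overline{\operatorname{Hom}}(Y,X[1])$ for $X,Y\in\mathcal E_R$, one translates ${}^\perp\tau$ into vanishing of $\operatorname{Hom}(U,-[1])$ modulo $\operatorname{add}R[1]$, and must show that the category of such objects equals $\operatorname{Fac}H(U\oplus V)$. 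I would prove this directly. Given $X$ with $\operatorname{Hom}(U,X[1])=0$ and $\operatorname{Hom}(P'',X)=0$ on the $R[1]$-part, the map $R\to X$ from an $\operatorname{add}R$-cover extends along $a$, by the vanishing $\operatorname{Hom}(U',X[1])=0$, to $V\to X$. This makes $HV\oplus HU_0\to HX$ surjective.
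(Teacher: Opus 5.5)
There are genuine gaps in the proposal.

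\textbf{The rigidity step for $\operatorname{Hom}(V,V[1])$ rests on a false vanishing.} You correctly observe that $t\mapsto ta$ is injective on $\operatorname{Hom}(V,V[1])$, because $\operatorname{Hom}(U',V[1])=0$. But $\operatorname{Hom}(R,V[1])$ is not zero in general. It is dual to $\operatorname{Hom}(V,R[1])$, which is the image of $b^*\colon\operatorname{Hom}(U',R[1])\to\operatorname{Hom}(V,R[1])$. For instance, for $u=(0,P_i)$ with $\operatorname{Hom}_A(P_i,P_j)\neq0$ for some $j\neq i$, one checks that it is nonzero. Your fallback is also false. If $U'$ has a summand $R_j[1]$, then $\operatorname{Hom}(R,U'[1])$ contains $\operatorname{Hom}(R,R_j[2])\cong D\operatorname{Hom}(R_j,R)\neq0$. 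Objects of $\mathcal E_R$ do not pair trivially with $R$ in degree one.

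What you actually need is $ta=0$ for each individual $t$. Since $\operatorname{Hom}(V,U'[1])\cong D\operatorname{Hom}(U',V[1])=0$, you have $b[1]t=0$. Hence $t=a[1]c$ for some $c\colon V\to R[1]$. Then $ta=a[1](ca)=0$, because $ca\in\operatorname{Hom}(R,R[1])=0$. Injectivity now gives $t=0$. The paper's diagram chase reaches $t=0$ in the same way, additionally factoring $c$ through $b$ and then through $f$.

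\textbf{Maximality and the shape of $\Phi(B_U)$ are asserted rather than proved.} You reduce maximality to $|B_U|=|R|$ but never count summands. Instead you declare the triangle to be AIR's Bongartz construction. The supporting sketch misquotes the translation. For $Y$ without summands in $\operatorname{add}R[1]$, $\operatorname{Hom}_A(HX,\tau HY)$ is dual to the \emph{subspace} of $\operatorname{Hom}(Y,X[1])$ consisting of maps factoring through $\operatorname{add}R[1]$, not to the quotient by it. Moreover, modules in ${}^\perp(\tau M)\cap P^\perp$ need not be of the form $HX$ with $\operatorname{Hom}(U,X[1])=0$. So your extension argument, as written, does not cover that torsion class.

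This route can be repaired. With the correct formula and $HX\in P^\perp$, the obstruction $g[1]\circ f$ to extending $g\colon R\to X$ along $a$ does vanish. You would still need AIR's description of the Ext-projectives of the torsion class via such an approximation sequence. The paper avoids all this. It rotates \eqref{eq:2.2} to get $R\in\operatorname{add}B_U[-1]*\operatorname{add}B_U$. It then applies the rank bound of \cite[Proposition~2.7]{HZZ} twice, $|B_U|\le|R|\le|B_U[-1]|=|B_U|$, and uses its equality criterion.

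\textbf{The projective part is not justified.} Your claim that no new $\operatorname{add}R[1]$-summands appear ``by minimality'' is unsupported, and $f$ is not assumed minimal. The missing fact is this: since $\operatorname{Hom}(R,R'[1])=0$, every map $V\to R'[1]$ factors through $b$. So the identity of any indecomposable summand of $B_U$ lying in $\operatorname{add}R[1]$ factors through $\operatorname{add}U$, and that summand belongs to $U$. Hence the projective part is exactly $P$.

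Once maximality is established, your final extension argument is exactly the paper's proof that $(T,P)$ is the greatest completion. You should apply it only to rigid completions $Y\supseteq U$, where $\operatorname{Hom}(U',Y[1])=0$ genuinely holds.
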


\begin{proof}
The right $\operatorname{add}U$-approximation exists since $\mathcal C$ is
Hom-finite. The rigidity of $R$ and associativity of extension products give
\[
 V\in\operatorname{add}R*\operatorname{add}U
 \subseteq\operatorname{add}R*
   (\operatorname{add}R*\operatorname{add}R[1])=\mathcal E_R.
\]
Applying $\operatorname{Hom}(U,-)$ to \eqref{eq:2.2} and using the
approximation property and rigidity of $U$ gives
$\operatorname{Hom}(U,V[1])=0$.
The $2$-Calabi--Yau property then gives $\operatorname{Hom}(V,U[1])=0$.

For any $t\in\operatorname{Hom}(V,V[1])$, consider the following diagram.
\[
\begin{tikzcd}[column sep=3.5em,row sep=2.5em]
 U'[-1] \arrow[r,"{-f[-1]}"]
 & R \arrow[r,"{a}"]
 & V \arrow[r,"{b}"] \arrow[d,"{t}",near end]
       \arrow[dl,dashed,"{c}" description]
 & U' \arrow[dll,dashed,"{d}" description,near start]
        \arrow[dlll,dashed,bend right=9,"{h}" description,near end]
 \\
 U' \arrow[r,"{f}"']
 & R[1] \arrow[r,"{a[1]}"']
 & V[1] \arrow[r,"{b[1]}"]
 & U'[1]
\end{tikzcd}
\]
Since $b[1]t\in\operatorname{Hom}(V,U'[1])=0$, there exists
$c\in\operatorname{Hom}(V,R[1])$ such that $t=a[1]c$.
Since $ca\in\operatorname{Hom}(R,R[1])=0$, there exists
$d\in\operatorname{Hom}(U',R[1])$ such that $c=db$.
As $f$ is a right $\operatorname{add}U$-approximation of $R[1]$, there
exists $h\in\operatorname{End}(U')$ such that $d=fh$.
Hence $t=a[1]fhb=0$, since $a[1]f=0$.
Together with the vanishing between $U$ and $V[1]$, the equality $t=0$
proves that $B_U$ is rigid.

Rotating \eqref{eq:2.2} gives
$R\in\operatorname{add}B_U[-1]*\operatorname{add}B_U$.
The rank bound in \cite[Proposition~2.7]{HZZ}, applied first with respect to
$R$ and then with respect to $B_U[-1]$, gives
\[
 |B_U|\leq|R|\leq|B_U[-1]|=|B_U|.
\]
The equality criterion in the same proposition shows that $B_U$ is maximal
rigid in $\mathcal E_R$.

For $R'\in\operatorname{add}R$, applying
$\operatorname{Hom}(-,R'[1])$ to \eqref{eq:2.2} shows that every morphism
from $V$ to $R'[1]$ factors through $b$. Therefore
\begin{equation}\label{eq:2.3}
 [\operatorname{add}R[1]](B_U,B_U)
 \subseteq[\operatorname{add}U](B_U,B_U).
\end{equation}
If an indecomposable summand $Z$ of $B_U$ belongs to
$\operatorname{add}R[1]$, then $1_Z$ factors through $\operatorname{add}U$.
Thus $Z$ is a summand of $U$.
Consequently, $B_U$ and $U$ have the same indecomposable summands in
$\operatorname{add}R[1]$. By the definition of $\Phi$, the projective part
of $\Phi(B_U)$ is therefore $P$. Write $\Phi(B_U)=(T,P)$.

Let $Y$ be any maximal rigid object of $\mathcal E_R$ containing $U$.
Since $\operatorname{Hom}(U'[-1],Y)=0$, every morphism from $R$ to $Y$
factors through $a$. Choose a $k$-basis of size $m$ for
$\operatorname{Hom}(R,Y)$ and factor each basis element through $a$.
The resulting morphism $V^m\longrightarrow Y$ induces an epimorphism
\[
 \operatorname{Hom}(R,V)^m\longrightarrow\operatorname{Hom}(R,Y).
\]
Hence
\[
 \operatorname{Fac}\operatorname{Hom}(R,Y)
 \subseteq\operatorname{Fac}\operatorname{Hom}(R,V)
 \subseteq\operatorname{Fac}T.
\]
By Theorem~\ref{res:1.9}, the maximal rigid objects $Y$ containing $U$
correspond to the $\tau$-tilting pairs containing $u$.
The inclusion of factor-module categories therefore makes $(T,P)$ the
greatest such pair.
\end{proof}

Denote by $L$ the complement of $U$ in $B_U$, so that $B_U=U\oplus L$.
Theorem~\ref{res:1.9} gives an $\mathbf R$-dissection
$\mathbf U_u=\mathbf N_u\sqcup\mathbf L_u$ with
$B_U=X(\mathbf U_u)$ and $L=X(\mathbf L_u)$.
Define the reduction algebra $A_u$ by
\begin{equation}\label{eq:2.4}
 A_u=\operatorname{End}_A(T)/[\operatorname{add}M](T,T),
\end{equation}
and let
$\mathcal J(u)=M^{\perp}\cap{}^{\perp}(\tau_A M)\cap P^{\perp}$
denote the $\tau$-perpendicular category of $u$.

The Iyama--Yoshino reduction of $\mathcal C$ at $U$ is
$\mathcal C_U={}^\perp U[1]/[\operatorname{add}U]$.
Since $\operatorname{add}U$ is functorially finite and rigid, $\mathcal C_U$
is a $2$-Calabi--Yau triangulated category; see
\cite[Theorems~4.2 and~4.7]{IY}. By rigidity, $L\in{}^\perp U[1]$.
Write $\overline L$ for the image of $L$ in $\mathcal C_U$.

\begin{proposition}\label{res:2.2}
There are algebra isomorphisms
\[
 A_u\cong\operatorname{End}_{\mathcal C}(B_U)/
    [\operatorname{add}U](B_U,B_U)
 \cong\operatorname{End}_{\mathcal C_U}(\overline L).
\]
Moreover, $\operatorname{mod}A_u\simeq\mathcal J(u)$, and
$\tau$-tilting reduction identifies the $\tau$-tilting pairs over $A$
containing $u$ with $s\tau\text{-}\operatorname{tilt}A_u$ as a poset.
\end{proposition}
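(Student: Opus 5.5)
We prove the three assertions in order: the first isomorphism is purely algebraic, the second is Iyama--Yoshino reduction, and the last two are Jasso's reduction theorem combined with the Bongartz completion of Proposition~\ref{res:2.1}.

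\emph{First isomorphism.} By Proposition~\ref{res:2.1}, $\Phi(B_U)=(T,P)$ with $T=\operatorname{Hom}(R,B_U)$. The equivalence \eqref{eq:2.1} gives
\[
\operatorname{End}_A(T)\cong\operatorname{End}_{\mathcal C}(B_U)/[\operatorname{add}R[1]](B_U,B_U).
\]
Under this identification, the ideal $[\operatorname{add}M](T,T)$ pulls back to the morphisms $B_U\to B_U$ that factor through $\operatorname{add}(U_0\oplus R[1])$ modulo $[\operatorname{add}R[1]]$. Here $U_0$ is the summand of $U$ outside $\operatorname{add}R[1]$, so $\operatorname{Hom}(R,U_0)=M$. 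The summand $U_1$ of $U$ lying in $\operatorname{add}R[1]$ is killed by \eqref{eq:2.1} anyway. Hence the pulled-back ideal is $[\operatorname{add}(U\oplus R[1])](B_U,B_U)$. The inclusion \eqref{eq:2.3} shows that this ideal equals $[\operatorname{add}U](B_U,B_U)$. This yields the first isomorphism.

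\emph{Second isomorphism.} Write $B_U=U\oplus L$. In the Iyama--Yoshino quotient $\mathcal C_U={}^\perp U[1]/[\operatorname{add}U]$ we have $\overline{B_U}\cong\overline L$, since $U$ becomes zero. Moreover, $B_U$ lies in ${}^\perp U[1]$ by rigidity. Therefore
\[
\operatorname{End}_{\mathcal C}(B_U)/[\operatorname{add}U](B_U,B_U)\cong\operatorname{End}_{\mathcal C_U}(\overline L).
\]
This step is essentially formal.

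\emph{Module category and poset statement.} For the remaining assertions we invoke Jasso's $\tau$-tilting reduction \cite{J}. He proves $\mathcal J(u)\simeq\operatorname{mod}C$ with
\[
C=\operatorname{End}_A(T)/[\operatorname{add}M](T,T),
\]
where $(T,P)$ is the Bongartz completion of $u$. By Proposition~\ref{res:2.1}, this is exactly our $(T,P)$, so $C=A_u$ as defined in \eqref{eq:2.4}. Jasso also gives an order-preserving bijection between the support $\tau$-tilting pairs containing $u$ and $s\tau\text{-}\operatorname{tilt}C$. The ordering is by $\operatorname{Fac}$, realized as intersection of torsion classes with $\mathcal J(u)$. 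One point needs care: Jasso states the result for a $\tau$-rigid module $M$, and the support version with $P\neq0$ must be reduced to it. For this we pass to $A/\langle e\rangle$, where $e$ is the idempotent with $eA=P$, and reduce at $M$ there, as in the support-pair version of \cite{J}. We then check that the Bongartz completion over $A/\langle e\rangle$ together with $P$ gives $(T,P)$. It also has to be checked that $\operatorname{End}_A(T)$ agrees with the endomorphism ring over $A/\langle e\rangle$. This holds because $\operatorname{Hom}(P,T)=0$, so $T$ is an $A/\langle e\rangle$-module and the two endomorphism rings coincide.

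I expect the main obstacle to be the bookkeeping in the first isomorphism. One must verify that the preimage of $[\operatorname{add}M]$ under \eqref{eq:2.1} is exactly $[\operatorname{add}(U\oplus R[1])]$. This needs lifts of factorizations through $M=\operatorname{Hom}(R,U_0)$ to factorizations through $U_0$ up to $[\operatorname{add}R[1]]$, which full faithfulness of \eqref{eq:2.1} supplies. Once that is settled, \eqref{eq:2.3} absorbs the $R[1]$ part.
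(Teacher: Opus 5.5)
Your proposal is correct and follows essentially the same route as the paper. The paper also uses fullness of \eqref{eq:2.1} to show that $[\operatorname{add}U](B_U,B_U)$ maps onto $[\operatorname{add}M](T,T)$, absorbs the kernel $[\operatorname{add}R[1]](B_U,B_U)$ via \eqref{eq:2.3}, gets the second isomorphism from $\overline U=0$ in $\mathcal C_U$, and reduces the case $P\neq0$ to Jasso's theorem over $A'=A/AeA$. The only steps you leave implicit are the following: \cite[Lemma~2.1(b)]{AIR}, i.e.\ \eqref{eq:2.5}, makes $M$ $\tau$-rigid over $A'$ and identifies $\mathcal J(u)$ with $M^\perp\cap{}^\perp(\tau_{A'}M)$ in $\operatorname{mod}A'$, and \cite[Proposition~2.3(a)]{AIR} gives the order-preserving lift $(N,qA')\mapsto(N,qA\oplus eA)$ that identifies $T$ with the Bongartz completion of $M$ over $A'$.
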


\begin{proof}
The equivalence \eqref{eq:2.1} induces a surjective algebra homomorphism
$\psi$ from $\operatorname{End}_{\mathcal C}(B_U)$ to
$\operatorname{End}_A(T)$.
An endomorphism $f$ of $B_U$ lies in $\ker\psi$ precisely when the class of $f$ in
$\mathcal E_R/[\operatorname{add}R[1]]$ is zero, that is, when it factors
through $\operatorname{add}R[1]$.
Thus $\ker\psi=[\operatorname{add}R[1]](B_U,B_U)$.

By the definition of $\Phi$, $\operatorname{Hom}(R,U)\cong M$.
A morphism factoring through $\operatorname{add}U$ is sent by
$\operatorname{Hom}(R,-)$ to one factoring through $\operatorname{add}M$.
Conversely, suppose an endomorphism of $T$ factors through
$\operatorname{Hom}(R,U'')$ with $U''\in\operatorname{add}U$.
Fullness of \eqref{eq:2.1} lifts the two factors to morphisms
$B_U\longrightarrow U''\longrightarrow B_U$.
Their composition maps to the given endomorphism. Hence
\[
 \psi\bigl([\operatorname{add}U](B_U,B_U)\bigr)
 =[\operatorname{add}M](T,T).
\]
Therefore $\psi$ induces the isomorphism
\[
 \frac{\operatorname{End}_{\mathcal C}(B_U)}
      {\ker\psi+[\operatorname{add}U](B_U,B_U)}
 \cong\frac{\operatorname{End}_A(T)}{[\operatorname{add}M](T,T)}=A_u.
\]
By \eqref{eq:2.3}, the denominator on the left is
$[\operatorname{add}U](B_U,B_U)$, proving the first isomorphism.
In $\mathcal C_U$, the object $U$ is zero. Thus $B_U=U\oplus L$ is
isomorphic to $\overline L$, which gives the second algebra isomorphism.

To apply Jasso's reduction theorem to a pair with projective part $P$,
choose an idempotent $e$ with $P\cong eA$ and let $A'=A/AeA$.
Then $P^\perp=\operatorname{mod}A'$, regarded as a full subcategory of
$\operatorname{mod}A$, and both $M$ and $T$ are $A'$-modules.
For $A'$-modules $Y,Z$, we have
\begin{equation}\label{eq:2.5}
 \operatorname{Hom}_A(Y,\tau_A Z)=0
 \quad\Longleftrightarrow\quad
 \operatorname{Hom}_{A'}(Y,\tau_{A'}Z)=0,
\end{equation}
see \cite[Lemma~2.1(b)]{AIR}.
Moreover, the support $\tau$-tilting $A'$-modules containing $M$
correspond to the $\tau$-tilting pairs over $A$ containing $(M,P)$,
with the $\operatorname{Fac}$ order preserved; see
\cite[Proposition~2.3(a)]{AIR}.
Explicitly, a pair $(N,qA')$ lifts to $(N,qA\oplus eA)$, where $q$ is a
sum of primitive idempotents orthogonal to $e$.
Thus Proposition~\ref{res:2.1} identifies $T$ with the Bongartz completion
of $M$ over $A'$.
Since $\operatorname{End}_{A'}(T)=\operatorname{End}_A(T)$, the algebra
\eqref{eq:2.4} is Jasso's reduction algebra over $A'$.
Equation~\eqref{eq:2.5} also identifies $\mathcal J(u)$ with
$M^\perp\cap{}^\perp(\tau_{A'}M)$ in $\operatorname{mod}A'$.
The category equivalence and the poset isomorphism therefore follow from
\cite[Theorem~3.8 and Section~3]{J}.

For $u=(0,0)$, choose the triangle with $V=R$. Then $A_u=A$.
If $u$ is $\tau$-tilting, then $B_U=U$, $A_u=0$ and $\mathcal J(u)=0$;
both completion posets have one element.
\end{proof}

\subsection{Cutting and the geometric reduction theorem}

\begin{definition}[{\cite[Section~5.1]{HZZ}; cf.\ \cite[Section~5.2]{QZ}}]
\label{res:2.3}
For a tagged arc $\eta$, let
\[
 \mathcal A^\times(\mathbf S)_{\{\eta\}}
 =\{\gamma\in\mathcal A^\times(\mathbf S)\setminus\{\eta\}:
       \operatorname{Int}(\gamma,\eta)=0\}.
\]
The cut surface $\mathbf S/\{\eta\}$ is obtained from $\mathbf S$ by
cutting along $\eta$. The tagged cutting map is the natural bijection
\[
 F_{\{\eta\}}:\mathcal A^\times(\mathbf S)_{\{\eta\}}
 \longrightarrow\mathcal A^\times(\mathbf S/\{\eta\}).
\]
If the underlying arcs of $\gamma$ and $\eta$ are not homotopic,
$F_{\{\eta\}}(\gamma)$ follows the same curve, with the taggings forgotten
at the puncture ends of $\eta$, which become boundary marked points.
When the underlying arcs of $\gamma$ and $\eta$ are homotopic, the images
are as in Figure~\ref{fig:3}, reproduced from \cite[Figure~10]{HZZ}.

For a partial tagged triangulation $\mathbf N$, let
\[
 \mathcal A^\times(\mathbf S)_{\mathbf N}
 =\{\gamma\in\mathcal A^\times(\mathbf S)\setminus\mathbf N:
   \operatorname{Int}(\gamma,\eta)=0\text{ for all }\eta\in\mathbf N\}.
\]
The construction extends inductively by cutting one arc at a time.
If $\mathbf N=\mathbf N'\cup\{\eta\}$, then
$\mathbf S/\mathbf N=(\mathbf S/\mathbf N')/\{F_{\mathbf N'}(\eta)\}$ and
\[
 F_{\mathbf N}
 =F_{\{F_{\mathbf N'}(\eta)\}}\circ
   F_{\mathbf N'}\big|_{\mathcal A^\times(\mathbf S)_{\mathbf N}}:
 \mathcal A^\times(\mathbf S)_{\mathbf N}
 \longrightarrow\mathcal A^\times(\mathbf S/\mathbf N).
\]
The maps $F_{\mathbf N}$ preserve compatibility in both directions.
The same cut surface $\mathbf S/\mathbf N$ is obtained by cutting along
$\mathbf N^\circ$; see \cite[Remark~5.2]{HZZ}.
\end{definition}

\begin{figure}[tbp]
 \centering
 \includegraphics[width=.96\linewidth]{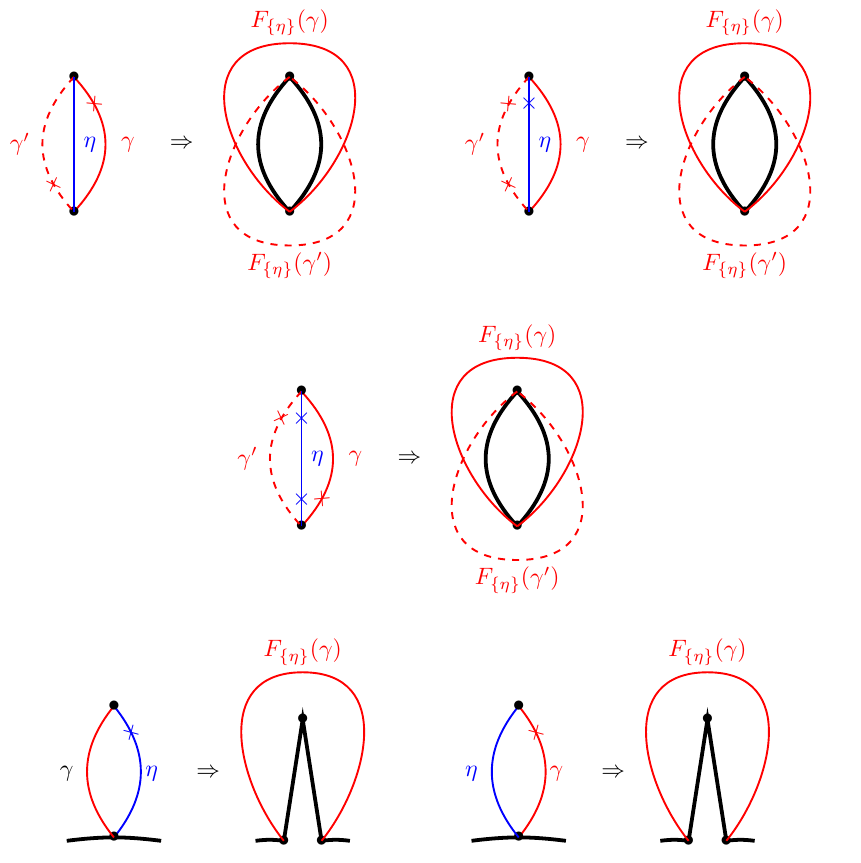}
 \caption{Tagged cutting.}
 \label{fig:3}
\end{figure}

The following result identifies tagged cutting with Iyama--Yoshino reduction.

\begin{theorem}[{\cite[Lemma~5.1]{HZZ}}]\label{res:2.4}
For every partial tagged triangulation $\mathbf N$, there is a triangle
equivalence
\[
 \xi_{\mathbf N}:\mathcal C_{X(\mathbf N)}
 \overset{\sim}{\longrightarrow}\mathcal C(\mathbf S/\mathbf N),\qquad
 \overline{X(\gamma)}\longmapsto X_{\mathbf S/\mathbf N}
                    \bigl(F_{\mathbf N}(\gamma)\bigr)
\]
for each $\gamma\in\mathcal A^\times(\mathbf S)_{\mathbf N}$.
\end{theorem}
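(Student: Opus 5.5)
The plan is to induct on $|\mathbf N|$ and reduce everything to the case of a single arc. For the induction step, write $\mathbf N=\mathbf N'\cup\{\eta\}$ and use that Iyama--Yoshino reductions compose. The image $\overline{X(\eta)}$ is rigid in $\mathcal C_{X(\mathbf N')}$, and there is a triangle equivalence
\[
 \mathcal C_{X(\mathbf N)}\simeq
 \bigl(\mathcal C_{X(\mathbf N')}\bigr)_{\overline{X(\eta)}},
\]
which follows from \cite[Theorem~4.9]{IY}. Composing with the equivalence $\xi_{\mathbf N'}$ from the induction hypothesis, the object $\overline{X(\eta)}$ goes to $X(F_{\mathbf N'}(\eta))$. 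Applying the single-arc case on $\mathbf S/\mathbf N'$ then gives $\xi_{\mathbf N}$. The recursive formula for $F_{\mathbf N}$ in Definition~\ref{res:2.3} shows that the object assignment is the one claimed.

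For a single tagged arc $\eta$, I would choose a tagged triangulation $\mathbf T\ni\eta$. By Theorem~\ref{res:1.5}, $T=X(\mathbf T)$ is cluster tilting, and $\operatorname{End}(T)$ is the Jacobian algebra of the quiver with potential $(Q_{\mathbf T},W_{\mathbf T})$. Its image $\overline T$ in $\mathcal C_{X(\eta)}$ is cluster tilting, and by \cite{IY} its endomorphism algebra is $\operatorname{End}(T)/[X(\eta)]$. This is the Jacobian algebra of the restricted quiver with potential obtained by deleting the vertex $\eta$. Geometrically, this restriction is the quiver with potential of the tagged triangulation $F_{\{\eta\}}(\mathbf T\setminus\{\eta\})$ of $\mathbf S/\{\eta\}$; I would verify this by a local check at the triangles adjacent to $\eta$. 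By the Calabi--Yau reduction theorem for generalized cluster categories (Keller--Yang, Iyama--Yang), $\mathcal C_{X(\eta)}$ is then triangle equivalent to $\mathcal C_{Q',W'}=\mathcal C(\mathbf S/\{\eta\})$. This equivalence sends $\overline T$ to $X(F_{\{\eta\}}(\mathbf T\setminus\{\eta\}))$.

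It remains to identify the images of all other arcs. The equivalence preserves suspension and the dimensions of $\operatorname{Hom}(-,-[1])$. Hence it induces a bijection between indecomposable rigid objects of $\mathcal C_{X(\eta)}$, which are the $\overline{X(\gamma)}$ with $\gamma\in\mathcal A^\times(\mathbf S)_{\{\eta\}}$, and tagged arcs of the cut surface. This bijection is compatible with mutation of cluster tilting objects, since reduction commutes with mutation away from $U$. Every $\gamma$ compatible with $\eta$ lies in a tagged triangulation containing $\eta$ that is reachable from $\mathbf T$ by flips fixing $\eta$, and $F_{\{\eta\}}$ commutes with flips. Matching the two sides along such flip sequences therefore shows that $\overline{X(\gamma)}\mapsto X(F_{\{\eta\}}(\gamma))$.

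I expect the main obstacle to be the tagged and degenerate cases. These include the case where $\gamma$ is homotopic to $\eta$ (conjugate pairs and Figure~\ref{fig:3}), self-folded triangles at zero-signature punctures, and cut components that become small unpunctured disks or triangles contributing the zero category. For these I would fall back on computing the effect on the quiver with potential explicitly. I would also use the flip-connectivity of tagged triangulations containing $\eta$, taking care with the once-punctured-digon components produced by cutting.
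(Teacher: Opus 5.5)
The paper gives no proof here; it quotes \cite[Lemma~5.1]{HZZ}. Your outline is essentially the standard argument behind that citation: induction on $|\mathbf N|$ via transitivity of Iyama--Yoshino reduction, the one-arc case via deleting the vertex $\eta$ from the surface quiver with potential and applying the Calabi--Yau reduction theorem for generalized cluster categories, and then matching objects along flips that fix $\eta$. It is sound, and the real work lies in the local quiver-with-potential checks you already flag: conjugate pairs, self-folded triangles and degenerate cut components.
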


By Theorem~\ref{res:2.4}, the partial tagged triangulation
$\mathbf R_u=F_{\mathbf N_u}(\mathbf L_u)$ satisfies
$\xi_{\mathbf N_u}(\overline L)\cong X_{\mathbf S/\mathbf N_u}(\mathbf R_u)$.

\begin{theorem}\label{res:2.5}
The reduction algebra $A_u$ is isomorphic to the surface rigid algebra
\[
 A_u\cong\operatorname{End}_{\mathcal C(\mathbf S/\mathbf N_u)}
             X_{\mathbf S/\mathbf N_u}(\mathbf R_u)
 =\Lambda_{\mathbf R_u}.
\]
In particular, surface rigid algebras are closed under $\tau$-tilting reduction.
\end{theorem}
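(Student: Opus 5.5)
The isomorphism will come from composing the two identifications already established. By Proposition~\ref{res:2.2}, $A_u\cong\operatorname{End}_{\mathcal C_U}(\overline L)$, where $U=X(\mathbf N_u)$ and $\overline L$ is the image of $L=X(\mathbf L_u)$ in the Iyama--Yoshino reduction $\mathcal C_U={}^\perp U[1]/[\operatorname{add}U]$. Theorem~\ref{res:2.4} gives a triangle equivalence $\xi_{\mathbf N_u}:\mathcal C_U\to\mathcal C(\mathbf S/\mathbf N_u)$. Being a $k$-linear equivalence, it induces an algebra isomorphism
\[
 \operatorname{End}_{\mathcal C_U}(\overline L)\cong
 \operatorname{End}_{\mathcal C(\mathbf S/\mathbf N_u)}\bigl(\xi_{\mathbf N_u}(\overline L)\bigr).
\]
We then need $\xi_{\mathbf N_u}(\overline L)\cong X_{\mathbf S/\mathbf N_u}(\mathbf R_u)$ with $\mathbf R_u=F_{\mathbf N_u}(\mathbf L_u)$ a partial tagged triangulation of the cut surface.

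The step requiring care is checking that Theorem~\ref{res:2.4} applies to every summand of $L$. For each $\gamma\in\mathbf L_u$ we need $\gamma\in\mathcal A^\times(\mathbf S)_{\mathbf N_u}$, that is, $\gamma\notin\mathbf N_u$ and $\operatorname{Int}(\gamma,\eta)=0$ for all $\eta\in\mathbf N_u$.
\begin{itemize}
 \item The condition $\gamma\notin\mathbf N_u$ holds because $\mathbf U_u=\mathbf N_u\sqcup\mathbf L_u$ is a disjoint union.
 \item Compatibility holds because $\mathbf U_u$ is an $\mathbf R$-dissection, hence a compatible collection, equivalently because $B_U$ is rigid.
\end{itemize}
Then $\overline{X(\gamma)}\mapsto X_{\mathbf S/\mathbf N_u}(F_{\mathbf N_u}(\gamma))$. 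The map $F_{\mathbf N_u}$ is injective and preserves compatibility, so $F_{\mathbf N_u}(\mathbf L_u)$ is a compatible set of distinct tagged arcs, i.e. a partial tagged triangulation (Theorem~\ref{res:1.5} on the cut surface). Its associated object is $\bigoplus_{\gamma\in\mathbf L_u}X(F_{\mathbf N_u}(\gamma))\cong\xi_{\mathbf N_u}(\overline L)$, since $\xi_{\mathbf N_u}$ is additive.

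One more point must be checked: the summands $\overline{X(\gamma)}$ stay nonzero and pairwise nonisomorphic in $\mathcal C_U$, so that no summand of $L$ is killed by the quotient. This holds because no summand of $L$ lies in $\operatorname{add}U$, and in an Iyama--Yoshino reduction indecomposables outside $\operatorname{add}U$ remain indecomposable and pairwise nonisomorphic. It is also consistent with the bijectivity of $F_{\mathbf N_u}$.

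Combining the three isomorphisms gives $A_u\cong\Lambda_{\mathbf R_u}$. If the cut surface is disconnected, we read this componentwise, per the Conventions. Unpunctured triangular components and empty residual components contribute zero factors; this matches the degenerate cases of Proposition~\ref{res:2.2}, e.g.\ $u$ $\tau$-tilting gives $\mathbf L_u=\varnothing$ and $A_u=0=\Lambda_\varnothing$. Closure of the class of surface rigid algebras under $\tau$-tilting reduction follows, because $\mathbf S/\mathbf N_u$ is again a (possibly disconnected) punctured marked surface with nonempty boundary. I expect no serious obstacle; the main bookkeeping is the compatibility and disjointness check above, together with the componentwise conventions.
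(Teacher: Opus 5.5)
Your proposal is correct and matches the paper's proof: you compose $A_u\cong\operatorname{End}_{\mathcal C_U}(\overline L)$ from Proposition~\ref{res:2.2} with the triangle equivalence $\xi_{\mathbf N_u}$ of Theorem~\ref{res:2.4}, which sends $\overline L$ to $X_{\mathbf S/\mathbf N_u}(\mathbf R_u)$, and then take endomorphism algebras. Your extra checks are bookkeeping that the paper leaves implicit in the sentence just before the theorem: that $\mathbf L_u\subseteq\mathcal A^\times(\mathbf S)_{\mathbf N_u}$, that the summands of $\overline L$ stay indecomposable and pairwise nonisomorphic, and how disconnected or degenerate cut surfaces are read componentwise.
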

\begin{proof}
Proposition~\ref{res:2.2} gives
$A_u\cong\operatorname{End}_{\mathcal C_U}(\overline L)$, and
Theorem~\ref{res:2.4} sends $\overline L$ to
$X_{\mathbf S/\mathbf N_u}(\mathbf R_u)$.
Taking endomorphism algebras proves the assertion.
\end{proof}

To pass from the reduction poset to the geometric exchange graph, we use the
flip--mutation correspondence. A flip of an $\mathbf R$-dissection replaces
one arc by the unique other $\mathbf R$-standard arc giving an
$\mathbf R$-dissection; see \cite[Proposition~5.8]{HZZ}.
Denote by $\operatorname{EG}^\times(\mathbf R)$ the graph of
$\mathbf R$-dissections and flips.
We write $\mathbf U\sim_{\mathrm{flip}}\mathbf V$ when two
$\mathbf R$-dissections are joined by a finite path in this graph.

\begin{theorem}[{\cite[Proposition~5.8 and Lemma~5.10]{HZZ}}]
\label{res:2.6}
Under the correspondence $\Phi\circ X$, flips of $\mathbf R$-dissections
correspond to mutations of $\tau$-tilting pairs over $\Lambda_{\mathbf R}$.
In particular,
\[
 \operatorname{EG}^\times(\mathbf R)
 \cong\operatorname{EG}(s\tau\text{-}\operatorname{tilt}\Lambda_{\mathbf R}).
\]
\end{theorem}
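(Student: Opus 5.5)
The plan is to deduce the statement formally from Theorem~\ref{res:1.9} and \cite[Theorem~2.18]{AIR}, using no further geometry. Both graphs are facet-adjacency graphs of pure simplicial complexes: $\operatorname{EG}^\times(\mathbf R)$ of $\Delta_{\mathbf R}$, and $\operatorname{EG}(s\tau\text{-}\operatorname{tilt}\Lambda_{\mathbf R})$ of $\Delta_\tau(\Lambda_{\mathbf R})$. So it suffices to show that $\Phi\circ X$ is a simplicial isomorphism $\Delta_{\mathbf R}\cong\Delta_\tau(\Lambda_{\mathbf R})$ that carries facets to facets. I also have to check that "flip" on the left means exactly "share a codimension-one face".

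\emph{Step 1: compatibility with direct sums.} Theorem~\ref{res:1.9} gives a bijection from compatible collections of $\mathbf R$-standard arcs to $\operatorname{rigid}\text{-}\mathcal E_R$, sending $\mathbf N$ to $\bigoplus_{\gamma\in\mathbf N}X(\gamma)$. It also gives the bijection $\Phi$ onto $\tau\text{-}\operatorname{rigidp}A$. I will check that $\Phi$ is additive and reflects summands. Write $W=W_0\oplus W_1$ with $W_1$ the maximal summand in $\operatorname{add}R[1]$. Then for a decomposition $W=W'\oplus W''$ the maximal $\operatorname{add}R[1]$-summand splits accordingly, by Krull--Schmidt. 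Since $H=\operatorname{Hom}(R,-)$ is additive, $\Phi(W'\oplus W'')=\Phi(W')\oplus\Phi(W'')$. Applying this to indecomposables shows that $\Phi\circ X$ sends a compatible collection to the pair whose indecomposable summands are the images of its arcs. Hence $\Phi\circ X$ is a simplicial isomorphism. By the rank statement in Theorem~\ref{res:1.9}, it identifies $\mathbf R$-dissections, namely collections of $|\mathbf R|=|A|$ arcs, with $\tau$-tilting pairs. It also identifies compatible collections of $|\mathbf R|-1$ arcs with almost complete $\tau$-rigid pairs.

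\emph{Step 2: matching edges.} Two $\tau$-tilting pairs are adjacent exactly when they are distinct and share an almost complete summand. By Step~1, this happens exactly when the corresponding dissections $\mathbf U\neq\mathbf V$ share $|\mathbf R|-1$ arcs, that is, when they differ in one arc. \cite[Theorem~2.18]{AIR} says an almost complete pair has exactly two completions. Transported through the isomorphism, every $(|\mathbf R|-1)$-subset $\mathbf U\setminus\{\gamma\}$ of a dissection extends to exactly one other dissection. So replacing $\gamma$ by the unique other $\mathbf R$-standard arc completing $\mathbf U\setminus\{\gamma\}$ is well defined, and it is precisely the flip of \cite[Proposition~5.8]{HZZ}. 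Thus flips correspond to mutations, and $\Phi\circ X$ induces the graph isomorphism.

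\emph{Main obstacle.} The only delicate point is Step~1: that $\Phi$ respects summands, so that faces of simplices correspond. One has to confirm that the splitting $W=W_0\oplus W_1$ is canonical up to isomorphism. It is, because $\operatorname{add}R[1]$ is closed under summands and $\mathcal C$ is Krull--Schmidt. One must also confirm that $H(W_0)$ and $H(W_1[-1])$ are basic with $|W_0|$ and $|W_1|$ indecomposable summands. For $H(W_1[-1])$ this holds because $H$ restricts to an equivalence $\operatorname{add}R\simeq\operatorname{proj}A$. For $H(W_0)$ it follows from the equivalence \eqref{eq:2.1}, since $W_0$ has no summand in $\operatorname{add}R[1]$. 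Once this is checked, the rest is purely combinatorial.
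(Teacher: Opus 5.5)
Your argument is correct, but it takes a different route from the paper. The paper gives no proof of its own here: it imports the statement from \cite[Proposition~5.8 and Lemma~5.10]{HZZ}. As the paper's definition of a flip indicates, the existence and uniqueness of the replacement arc is taken from \cite[Proposition~5.8]{HZZ}, and the match with mutation is cited alongside it.

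You instead deduce everything formally from Theorem~\ref{res:1.9} and \cite[Theorem~2.18]{AIR}:
\begin{itemize}
 \item Additivity of $\Phi$ and the summand count give the simplicial isomorphism $\Delta_{\mathbf R}\cong\Delta_\tau(\Lambda_{\mathbf R})$. The paper itself records this right after Theorem~\ref{res:1.9}.
 \item Both graphs are facet-adjacency graphs of these pure complexes, so the graph isomorphism is immediate.
 \item The uniqueness of the flipped arc becomes a consequence of the two-completion theorem, transported through the isomorphism, rather than a separate input.
\end{itemize}

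Your check that $H(W_0)$ has $|W_0|$ pairwise nonisomorphic indecomposable summands is fine. In the ideal quotient by $[\operatorname{add}R[1]]$, indecomposables outside $\operatorname{add}R[1]$ stay indecomposable and nonisomorphic, because that ideal lies in the radical of their local endomorphism rings. Basicness of $\Phi(W)$ is in any case part of Theorem~\ref{res:1.9}.

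What your route buys is that the statement becomes a purely formal consequence of the arc--object--pair correspondence, with no further surface input. What it gives up is any description of the new arc on the surface. That description is not needed for the graph isomorphism as stated.
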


\begin{corollary}\label{res:2.7}
For every basic $\tau$-rigid pair $u$,
\[
 \mathcal F_A(u)\cong
 \operatorname{EG}(s\tau\text{-}\operatorname{tilt}A_u)
 \cong\operatorname{EG}^\times(\mathbf R_u).
\]
\end{corollary}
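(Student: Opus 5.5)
The proof composes three identifications already established. The completion graph $\mathcal F_A(u)$ is by Definition~\ref{res:1.2} the induced subgraph of $\operatorname{EG}(s\tau\text{-}\operatorname{tilt}A)$ on the $\tau$-tilting pairs containing $u$. We first produce a bijection on vertices and then check that it preserves edges.

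\textbf{Vertices.} Proposition~\ref{res:2.2} gives a poset isomorphism between the set of $\tau$-tilting pairs over $A$ containing $u$ and $s\tau\text{-}\operatorname{tilt}A_u$. This is Jasso's reduction map, passed through $A'=A/AeA$ to absorb the projective part $P$.

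\textbf{Edges.} Two $\tau$-tilting pairs containing $u$ are adjacent in $\mathcal F_A(u)$ exactly when they share an almost complete summand $w$. Such a $w$ necessarily contains $u$: each pair has $|A|$ summands and they differ in precisely one. By \cite[Theorem~3.8 and Section~3]{J}, Jasso's bijection extends to $\tau$-rigid pairs containing $u$ and is compatible with direct summands. It sends the pairs containing $u$ with $|A|-1$ summands to almost complete pairs over $A_u$, since $|A_u|=|A|-|u|$. It therefore carries "sharing an almost complete summand $w\supseteq u$" to "sharing its image". This step needs the summand-compatibility of Jasso's map, not just the poset statement recorded in Proposition~\ref{res:2.2}.

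\textbf{Alternative route for edges.} If one prefers to avoid that extension, note that mutation in $s\tau\text{-}\operatorname{tilt}$ is characterised by the Hasse diagram of the Fac order. By \cite[Theorem~2.33]{AIR}, the exchange graph is the underlying graph of the Hasse quiver. A poset isomorphism preserves covering relations, so it preserves the Hasse diagram. One must still check that covering relations in the interval of completions of $u$ agree with those in the whole poset. Here the completions of $u$ form the interval between the co-Bongartz and Bongartz completions, so edges between two of them are mutations at summands outside $u$, and these coincide with the edges of $\mathcal F_A(u)$. This interval check is the main obstacle. I would handle it through \cite[Theorem~2.18]{AIR}: the two completions of an almost complete pair containing $u$ both contain $u$.

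\textbf{Second isomorphism.} Theorem~\ref{res:2.5} identifies $A_u\cong\Lambda_{\mathbf R_u}$ on the cut surface $\mathbf S/\mathbf N_u$. Since the exchange graph depends only on the isomorphism class of the algebra, applying Theorem~\ref{res:2.6} to $\mathbf R_u$ on $\mathbf S/\mathbf N_u$ gives $\operatorname{EG}(s\tau\text{-}\operatorname{tilt}A_u)\cong\operatorname{EG}^\times(\mathbf R_u)$. If $\mathbf S/\mathbf N_u$ is disconnected, this is applied componentwise following the Conventions, and both sides become Cartesian products of the component graphs.

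\textbf{Degenerate cases.} When $u$ is $\tau$-tilting, $\mathbf R_u=\varnothing$, both graphs are a single vertex, and $\operatorname{EG}^\times(\varnothing)$ has one vertex by convention. When $u=(0,0)$, we have $A_u=A$ and the statement is Theorem~\ref{res:2.6} itself.
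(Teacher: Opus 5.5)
Your proposal is correct and follows the paper's proof. The first isomorphism comes from Jasso's reduction made compatible with mutation; the paper cites \cite[Theorem~3.15 and Corollary~3.16]{J} directly for this and absorbs the projective part via $A/AeA$ as in Proposition~\ref{res:2.2}. The second isomorphism comes from Theorems~\ref{res:2.5} and~\ref{res:2.6}.

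In your alternative Hasse-diagram route, the interval check is immediate from convexity. Any pair lying between two completions of $u$ in the $\operatorname{Fac}$ order lies in the interval, hence contains $u$. So covers in the interval are covers in $s\tau\text{-}\operatorname{tilt}A$, and the appeal to \cite[Theorem~2.18]{AIR} is not what is needed there.
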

\begin{proof}
The first graph isomorphism follows from the mutation compatibility of
$\tau$-tilting reduction, with the projective part included as in
Proposition~\ref{res:2.2}; see
\cite[Theorem~3.15 and Corollary~3.16]{J}.
The isomorphism $A_u\cong\Lambda_{\mathbf R_u}$ and the flip--mutation
correspondence give the second graph isomorphism; see
Theorems~\ref{res:2.5} and~\ref{res:2.6}.
\end{proof}

\subsection{Links, cutting and quotient fans}
Theorem~\ref{res:2.5} realizes the reduction algebra as a surface rigid algebra on the cut
surface. We next identify tagged cutting with the link isomorphism and the
quotient-fan map. Retain the notation of Section~2.1, and write
\[
 R=X_{\mathbf S}(\mathbf R),\qquad U=X_{\mathbf S}(\mathbf N_u),\qquad
 B_U=U\oplus L,\qquad \mathcal Z={}^{\perp}U[1].
\]
Let $q:\mathcal Z\longrightarrow\mathcal C_U$ be the quotient functor, write
$\overline W=q(W)$ for $W\in\mathcal Z$, and denote the suspension of
$\mathcal C_U$ by $\langle1\rangle$. Order the indecomposable summands as
\[
 U=\bigoplus_{i=1}^{s}U_i\qquad\text{and}\qquad
 L=\bigoplus_{j=1}^{r}L_j,
\]
and order $\mathbf R_u$ by the cut arcs corresponding to $L_1,\ldots,L_r$.
Set $n=|A|$.

\begin{corollary}\label{cor:rank-basis-v79}\label{res:2.8}
We have $|A_u|=|\mathbf R_u|=r=r_A(u)$ and $s+r=n$.
The indices of the indecomposable summands of $B_U$ form a lattice basis.
Hence the $g$-matrix of the Bongartz completion, with the indicated ordering,
\[
 G_u=\bigl(\operatorname{ind}_R U_1,\ldots,
 \operatorname{ind}_R U_s\mid
 \operatorname{ind}_R L_1,\ldots,\operatorname{ind}_R L_r\bigr)
\]
belongs to $\mathrm{GL}_n(\mathbb Z)$.
\end{corollary}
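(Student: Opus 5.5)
The rank equalities come first. By Proposition~\ref{res:2.1}, $B_U=U\oplus L$ is maximal rigid in $\mathcal E_R$. Theorem~\ref{res:1.9} shows that $\mathbf U_u=\mathbf N_u\sqcup\mathbf L_u$ is an $\mathbf R$-dissection, so it has exactly $|\mathbf R|=n$ arcs. Since $B_U$ is basic with $|U|=s$ and $|L|=r$, this gives $s+r=n$. The map $\Phi$ is a bijection preserving the number of indecomposable summands, so $|u|=|M|+|P|=s$ and $r_A(u)=n-s=r$. The partial tagged triangulation $\mathbf R_u=F_{\mathbf N_u}(\mathbf L_u)$ is the image of $r$ distinct arcs under the bijection $F_{\mathbf N_u}$, so $|\mathbf R_u|=r$. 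Finally, $\Lambda_{\mathbf R_u}$ is basic with one indecomposable projective per arc of $\mathbf R_u$. Theorem~\ref{res:2.5} then gives $|A_u|=|\Lambda_{\mathbf R_u}|=r$. Alternatively, $|A_u|=|T|-|M|=r$ can be read off from \eqref{eq:2.4}.

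For the lattice basis, I would use the identification of relative indices with $g$-vectors. Since $B_U$ is rigid in $\mathcal E_R$, each $\operatorname{ind}_R$ of an indecomposable summand of $B_U$ equals $g_A$ of the corresponding summand of the $\tau$-tilting pair $\Phi(B_U)$. By \cite[Theorem~5.1]{AIR} (cf.\ \cite{DIJ}), the $g$-vectors of the indecomposable summands of a basic $\tau$-tilting pair form a $\mathbb Z$-basis of $K_A$. Equivalently, the two-term silting complex generates $K^b(\operatorname{proj}A)$, so its summand classes span $K_0$, and there are exactly $n$ of them. This is the same fact used when the paper calls $\Sigma_g(A)$ nonsingular. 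Ordering the columns as $U_1,\dots,U_s,L_1,\dots,L_r$ only permutes a basis, so $G_u$ has integer entries and determinant $\pm1$, that is, $G_u\in\mathrm{GL}_n(\mathbb Z)$.

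The only delicate point is matching conventions. First, $\operatorname{ind}_R$ is computed with respect to $R$-presentations in $\mathcal C$, while $g_A$ is defined via minimal projective presentations together with the projective part $P$. For a summand in $\operatorname{add}R[1]$, say $R_i[1]$, the presentation $R_i\to0\to R_i[1]$ gives $\operatorname{ind}_R=-[R_i]$. This matches the term $-[P]$. For the other summands, applying $\operatorname{Hom}(R,-)$ to the presentation triangle yields a projective presentation, minimal after discarding split summands, which do not change the class. I would cite the remark preceding the $g$-fan discussion, together with \cite[Section~5.1]{AIR}, rather than reprove this.
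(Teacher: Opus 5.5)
Your proposal is correct and follows essentially the same route as the paper: you get the rank identities by counting the arcs of the Bongartz dissection $\mathbf U_u$ (which has $|\mathbf R|$ arcs by Theorem~\ref{res:1.9}), using the bijectivity of $F_{\mathbf N_u}$ and $A_u\cong\Lambda_{\mathbf R_u}$ from Theorem~\ref{res:2.5}, and you get the lattice-basis claim from \cite[Theorem~5.1]{AIR} via the identification of $\operatorname{ind}_R$ with $g$-vectors under $\Phi$. Your extra convention check for summands in $\operatorname{add}R[1]$ and the alternative count $|A_u|=|T|-|M|$ from \eqref{eq:2.4} are both valid, but they go beyond what the paper's short proof records.
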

\begin{proof}
By Theorem~\ref{res:2.5}, the simple $A_u$-modules are indexed by the indecomposable
summands of $X_{\mathbf S/\mathbf N_u}(\mathbf R_u)$, hence by the arcs of
$\mathbf R_u$. Cutting gives $|\mathbf R_u|=|\mathbf L_u|=r$.
Moreover, $|B_U|=|\mathbf R|=|A|$ and $|U|=|M|+|P|$. Thus
\[
 |A_u|=r=|A|-|M|-|P|=r_A(u),\qquad s+r=n.
\]
The lattice basis statement follows from \cite[Theorem~5.1]{AIR} and
Theorem~\ref{res:1.9}.
\end{proof}

Define the index projection by
\begin{equation}\label{eq:2.6}
 p_u:\mathbb Z^n\longrightarrow\mathbb Z^r,
 \qquad p_u(x)=\operatorname{pr}_L(G_u^{-1}x),
\end{equation}
where $\operatorname{pr}_L$ takes the last $r$ coordinates. Let
$\sigma_u=\operatorname{cone}\{\operatorname{ind}_R U_i:1\leq i\leq s\}$.
Then $p_u(\operatorname{ind}_R U_i)=0$ and
$p_u(\operatorname{ind}_R L_j)=e_j$. Consequently, $p_u$ induces a lattice
isomorphism
\[
 \overline p_u:
 \mathbb Z^n\Big/\sum_{i=1}^{s}\mathbb Z\operatorname{ind}_R U_i
 \overset{\sim}{\longrightarrow}\mathbb Z^r.
\]

\begin{theorem}\label{thm:cutting-link-fan-v79}\label{res:2.9}
There are simplicial isomorphisms
\begin{equation}\label{eq:2.7}
 \operatorname{lk}_{\Delta_{\mathbf R}}(\mathbf N_u)
 \cong\operatorname{lk}_{\Delta_\tau(A)}(u)
 \cong\Delta_\tau(A_u)\cong\Delta_{\mathbf R_u},
\end{equation}
whose composite from the first to the last complex sends a vertex $\gamma$
to $F_{\mathbf N_u}(\gamma)$. Moreover,
\begin{equation}\label{eq:2.8}
 p_u\bigl(g_{\mathbf R}(\gamma)\bigr)
 =g_{\mathbf R_u}\bigl(F_{\mathbf N_u}(\gamma)\bigr)
\end{equation}
for every vertex $\gamma$ of
$\operatorname{lk}_{\Delta_{\mathbf R}}(\mathbf N_u)$, and the real extension
of $\overline p_u$ gives an isomorphism of fans
\begin{equation}\label{eq:2.9}
 \Sigma_g(A)/\sigma_u
 \overset{\ \overline p_u\ }{\longrightarrow}
 \Sigma_g(A_u)\cong\Sigma_g(\Lambda_{\mathbf R_u}),
\end{equation}
sending the projected ray of $\gamma$ to the ray of $F_{\mathbf N_u}(\gamma)$. In particular,
$\dim\operatorname{lk}_{\Delta_\tau(A)}(u)=r_A(u)-1$.
\end{theorem}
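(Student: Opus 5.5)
We prove the three parts of Theorem~\ref{res:2.9} in order: first the chain \eqref{eq:2.7} of simplicial isomorphisms, then the index identity \eqref{eq:2.8}, and finally the fan isomorphism \eqref{eq:2.9}, which will be a formal consequence of the first two.

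\textbf{The link isomorphisms \eqref{eq:2.7}.} The first isomorphism is the restriction of $\Delta_{\mathbf R}\cong\Delta_\tau(A)$ from Theorem~\ref{res:1.9} to links, since $\mathbf N_u$ corresponds to $u$. The second is Jasso's bijection from Proposition~\ref{res:2.2}. Those bijections are stated for completions, so we extend them to all faces of the link: a face $v$ of $\operatorname{lk}(u)$ is determined by the facets containing $u\oplus v$, and we use compatibility with direct sums \cite[Section~3]{J}. For the third isomorphism, and for the identification of the composite, we argue geometrically instead of tracking Jasso's map. A vertex $\gamma$ of $\operatorname{lk}_{\Delta_{\mathbf R}}(\mathbf N_u)$ is an $\mathbf R$-standard arc compatible with $\mathbf N_u$, so $X(\gamma)\in\mathcal Z\cap\mathcal E_R$. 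We then show that $\overline{X(\gamma)}$ lies in $\operatorname{add}\overline L*\operatorname{add}\overline L\langle1\rangle$ inside $\mathcal C_U$. To do this, take an $\operatorname{add}B_U$-presentation of $X(\gamma)$; it exists because $B_U$ is maximal rigid in $\mathcal E_R$ and $X(\gamma)\oplus B_U$ is rigid exactly when $\gamma$ lies in a dissection containing $\mathbf N_u$. Then project this presentation to $\mathcal C_U$, where $U$ vanishes. By Theorems~\ref{res:2.4} and~\ref{res:1.9} applied on $\mathbf S/\mathbf N_u$, the arc $F_{\mathbf N_u}(\gamma)$ is $\mathbf R_u$-standard. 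Since $F_{\mathbf N_u}$ preserves compatibility in both directions, it gives a simplicial injection onto a subcomplex of $\Delta_{\mathbf R_u}$ of the same dimension $r-1$, because the rank count of Corollary~\ref{res:2.8} shows the dimensions agree. Surjectivity comes from Corollary~\ref{res:2.7}: the facets match bijectively, and every vertex of $\Delta_{\mathbf R_u}$ lies in a facet because the complex is pure.

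\textbf{The index identity \eqref{eq:2.8}.} Choose a triangle $B^1\to B^0\to X(\gamma)\to B^1[1]$ with $B^i\in\operatorname{add}B_U$ and no common summands. This is a two-term silting presentation relative to the Bongartz completion. Expanding $\operatorname{ind}_R X(\gamma)$ in the basis of indices of summands of $B_U$ gives coefficients $[B^0]-[B^1]$, because $\operatorname{ind}_R$ is additive on such triangles when both ends lie in $\mathcal E_R$ and the connecting map factors suitably. \emph{This additivity is the main obstacle.} We would justify it by applying $\operatorname{Hom}(R,-)$ and the $\operatorname{Hom}(B_U,-)$-exactness coming from $\operatorname{Hom}(B_U,X(\gamma)[1])=0$. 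Deleting the $U$-summands, $p_u(g_{\mathbf R}(\gamma))=[\overline{B^0}]-[\overline{B^1}]$ expressed in $\operatorname{add}\overline L$. Applying $\xi_{\mathbf N_u}$ turns this into an $X(\mathbf R_u)$-presentation of $X(F_{\mathbf N_u}(\gamma))$ in $\mathcal C(\mathbf S/\mathbf N_u)$. Its index is therefore $g_{\mathbf R_u}(F_{\mathbf N_u}(\gamma))$, with coordinates ordered by $L_1,\dots,L_r$ as prescribed.

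\textbf{The fan isomorphism \eqref{eq:2.9} and the dimension claim.} The cones of $\Sigma_g(A)/\sigma_u$ are $\pi(\sigma_{u\oplus v})=\operatorname{cone}\{\pi g(\gamma):\gamma\in v\}$. By \eqref{eq:2.8}, $\overline p_u$ sends this cone to $\sigma_{F(v)}$ in $\Sigma_g(\Lambda_{\mathbf R_u})$. Combining this with \eqref{eq:2.7} gives a bijection on cones. It is a lattice isomorphism because the indices of the summands of $B_U$ form a basis by Corollary~\ref{res:2.8}. The identification with $\Sigma_g(A_u)$ is Theorem~\ref{res:2.5}, and it is consistent with \cite[Theorem~4.11]{AHIKM}. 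Finally, the dimension statement follows from purity of $\Delta_{\mathbf R_u}$, which has dimension $|\mathbf R_u|-1=r_A(u)-1$.
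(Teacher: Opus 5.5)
Your overall plan is reasonable, but two steps fail as written, and the fan statement depends on the first.

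\textbf{Surjectivity.} Corollary~\ref{res:2.7} gives only an abstract isomorphism $\mathcal F_A(u)\cong\operatorname{EG}^\times(\mathbf R_u)$. That isomorphism is obtained through Jasso's reduction and $A_u\cong\Lambda_{\mathbf R_u}$. Nothing there says it is induced by $F_{\mathbf N_u}$, and identifying reduction with cutting is exactly what this theorem is meant to establish. When $A_u$ is $\tau$-tilting infinite, cutting sends facets of the link injectively to $\mathbf R_u$-dissections. However, an injection between two infinite sets that happen to be in bijection need not be onto. A full subcomplex of the same dimension need not be the whole complex either. So your argument proves \eqref{eq:2.7} only in the $\tau$-tilting finite case. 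Since you deduce the cone bijection in \eqref{eq:2.9} from \eqref{eq:2.7}, the fan statement inherits the gap.

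The paper runs the logic the other way. It first shows that $\overline p_u$ \emph{is} the canonical lattice map of \cite[Theorem~4.11]{AHIKM}. This uses the $H^0$-isomorphism \eqref{eq:2.13} between $\operatorname{End}(\widehat B_U)/[\operatorname{add}\widehat U]$ and $A_u$, which shows $[\widehat U_i]\mapsto0$ and $[\widehat L_j]\mapsto[Q_j]$, where $Q_j$ is the projective of the $j$th cut arc. Thus \eqref{eq:2.9} is a bijection on cones independently of cutting. Then, for an $\mathbf R_u$-standard arc $\delta$, the preimage of its ray is the projection of an $(s+1)$-dimensional cone containing $\sigma_u$. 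This gives a vertex $\gamma$ of the link with $g_{\mathbf R_u}(F_{\mathbf N_u}\gamma)=g_{\mathbf R_u}(\delta)$, by \eqref{eq:2.8} and primitivity. Injectivity of $g$-vectors then gives $F_{\mathbf N_u}\gamma=\delta$.

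\textbf{The presentation and additivity.} Your reason for the existence of an $\operatorname{add}B_U$-presentation of $X(\gamma)$ is false. Because $B_U$ is maximal rigid in $\mathcal E_R$, the object $X(\gamma)\oplus B_U$ is rigid only when $\gamma\in\mathbf U_u$. Moreover, maximal rigidity in $\mathcal E_R$, unlike cluster tilting in $\mathcal C$, does not produce presentations. What you need is $\mathcal E_R\cap\mathcal Z\subseteq\operatorname{add}B_U*\operatorname{add}B_U[1]$. The paper proves this constructively. It starts from an $R$-presentation $R_1\to R_0\to W$ and factors $R_0\to W$ through the Bongartz triangle $R_0\to V_0\to U_0'\to R_0[1]$. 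It then adds a right $\operatorname{add}U$-approximation of $W$. The octahedral axiom places the cocone $W_1$ in $(\operatorname{add}R*\operatorname{add}U)\cap\mathcal Z$, which lies in $\operatorname{add}B_U$ by Lemma~\ref{res:new-2.10}.

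The same construction settles the additivity you flag as the main obstacle. Your sketch via $\operatorname{Hom}(R,-)$ does not establish it, since $\operatorname{ind}_R$ is not additive on arbitrary triangles of $\mathcal C$. In the paper, the index is computed only along triangles $R'\to V\to Z\to R'[1]$ with $R'\in\operatorname{add}R$, where \eqref{eq:2.12} is elementary. This yields $\operatorname{ind}_R(V_0\oplus U_0)-\operatorname{ind}_R W_1=\operatorname{ind}_R W$. If you want to use an arbitrary $\operatorname{add}B_U$-presentation, note that it is automatically a right $\operatorname{add}B_U$-approximation by rigidity of $B_U$. Hence it differs from the constructed one only by trivial summands.
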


The abstract link isomorphism is part of $\tau$-tilting reduction, and the
quotient fan together with the lattice map are also known; see
\cite[Theorems~3.6 and~3.7]{BM} and \cite[Theorem~4.11]{AHIKM}.
The corresponding index and projection statements hold in the framework of
$0$-Auslander extriangulated categories; see
\cite[Lemmas~2.7,~2.8 and~2.11]{BK}.
To identify these reduction maps with tagged cutting, we construct
presentations in $\mathcal C_U$ and compute their indices.

\Needspace{6\baselineskip}
\begin{lemma}\label{lem:extension-membership-v79}\label{res:new-2.10}
Every object of $(\operatorname{add}R*\operatorname{add}U)\cap\mathcal Z$
belongs to $\operatorname{add}B_U$.
\end{lemma}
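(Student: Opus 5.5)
Let $E\in(\operatorname{add}R*\operatorname{add}U)\cap\mathcal Z$, so there is a triangle
\[
 R_0\overset{\alpha}{\longrightarrow}E\overset{\beta}{\longrightarrow}U_0\overset{\varepsilon}{\longrightarrow}R_0[1]
\]
with $R_0\in\operatorname{add}R$, $U_0\in\operatorname{add}U$, and $\operatorname{Hom}(E,U[1])=0$. The plan is to compare this triangle with a direct sum of copies of the Bongartz triangle \eqref{eq:2.2}, then conclude by rigidity and maximality. First I note that $E\in\mathcal E_R$, by the same inclusion argument used in the proof of Proposition~\ref{res:2.1}. I will then show that $E\oplus B_U$ is rigid. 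Maximality of $B_U$ in $\mathcal E_R$ (Proposition~\ref{res:2.1}) will then force $E\in\operatorname{add}B_U$.

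\textbf{Step 1: $\operatorname{Hom}(U,E[1])=0$.} This follows from the hypothesis $\operatorname{Hom}(E,U[1])=0$ and the $2$-Calabi--Yau duality.

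\textbf{Step 2: $\operatorname{Hom}(V,E[1])=0$ and $\operatorname{Hom}(E,V[1])=0$.} For the first, apply $\operatorname{Hom}(-,E[1])$ to \eqref{eq:2.2}. A morphism $V\to E[1]$ composed with $a$ lies in $\operatorname{Hom}(R,E[1])$. This group vanishes: apply $\operatorname{Hom}(R,-)$ to the triangle for $E$, shifted, and use $\operatorname{Hom}(R,R_0[1])=0$ together with $\operatorname{Hom}(R,U_0[1])=0$. The latter vanishing holds because $U\in\mathcal E_R$ is rigid and $\operatorname{Hom}(R,W[1])=0$ for all $W\in\mathcal E_R$. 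Hence every such morphism factors as $V\xrightarrow{b}U'\to E[1]$, and it vanishes by Step 1. The second vanishing is dual by the $2$-CY property.

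\textbf{Step 3: $\operatorname{Hom}(E,E[1])=0$.} This is the main step, and I would imitate the diagram chase of Proposition~\ref{res:2.1}. Take $t\colon E\to E[1]$. The composite $\beta[1]t$ lies in $\operatorname{Hom}(E,U_0[1])=0$, so $t=\alpha[1]c$ for some $c\colon E\to R_0[1]$. Next, $c\alpha\in\operatorname{Hom}(R_0,R_0[1])=0$, so $c=d\beta$ for some $d\colon U_0\to R_0[1]$. The remaining point is to show that $\alpha[1]d\colon U_0\to E[1]$ vanishes, which Step 1 gives directly. Hence $t=0$.

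The obstacle I expect is that the Proposition~\ref{res:2.1} chase needed the approximation property of $f$, whereas here the role of approximation is played by the hypothesis $E\in\mathcal Z$. If Step 1 turned out insufficient, I would instead compare $\varepsilon$ with $f$. Since $f$ is a right $\operatorname{add}U$-approximation, $\varepsilon$ factors through $f^{m}$ for suitable $m$, and the octahedral axiom expresses $E$ in terms of $V^{m}$ and an object of $\operatorname{add}U$ via a split triangle. Combining Steps 1--3 with the vanishing between $U$ and $V$, $E\oplus B_U$ is rigid in $\mathcal E_R$, so $E\in\operatorname{add}B_U$.
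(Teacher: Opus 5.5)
Your overall plan is sound, and Steps~1 and~3 are correct as written: show $E\in\mathcal E_R$, show $E\oplus B_U$ rigid, then invoke maximal rigidity of $B_U$ in $\mathcal E_R$. In Step~3 the composite $\alpha[1]d$ lies in $\operatorname{Hom}(U_0,E[1])=0$, so the obstacle you anticipate does not arise.

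The gap is in Step~2. Your claim that $\operatorname{Hom}(R,W[1])=0$ for all $W\in\mathcal E_R$ is false. Already $W=R[1]$ gives $\operatorname{Hom}(R,R[2])\cong D\operatorname{End}(R)\neq0$. Consequently $\operatorname{Hom}(R,E[1])$ need not vanish for objects covered by the lemma. Take $E=U$ with $R_0=0$; it satisfies the hypotheses. If $u=(M,P)$ has $P\neq0$, then $U$ has a summand $R_P[1]$ with $0\neq R_P\in\operatorname{add}R$. So $\operatorname{Hom}(R,U[1])$ contains $\operatorname{Hom}(R,R_P[2])\cong D\operatorname{Hom}(R_P,R)\neq0$. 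Your derivation of $\operatorname{Hom}(V,E[1])=0$ by applying $\operatorname{Hom}(-,E[1])$ to \eqref{eq:2.2} therefore does not go through.

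The conclusion of Step~2 is still true; chase through the triangle of $E$ instead. Take $t\colon V\to E[1]$. The composite $\beta[1]t$ lies in $\operatorname{Hom}(V,U_0[1])=0$ by rigidity of $B_U$, so $t=\alpha[1]c$ for some $c\colon V\to R_0[1]$. Applying $\operatorname{Hom}(-,R_0[1])$ to \eqref{eq:2.2} and using $\operatorname{Hom}(R,R_0[1])=0$ gives $c=gb$ for some $g\colon U'\to R_0[1]$. Then $\alpha[1]g\in\operatorname{Hom}(U',E[1])=0$ by Step~1, so $t=0$. The dual vanishing follows by $2$-Calabi--Yau duality.

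With this repair your argument is complete, but it differs from the paper's proof, which is exactly your fallback:
\begin{itemize}
\item factor $\varepsilon$ through the approximation $f_1\colon U'_1\to R_0[1]$ from the Bongartz triangle for $R_0$;
\item take the homotopy pullback to get a triangle $E\to V_1\oplus U_0\to U'_1\to E[1]$;
\item note that its last map lies in $\operatorname{Hom}(U'_1,E[1])\cong D\operatorname{Hom}(E,U'_1[1])=0$, so $E$ is a summand of $V_1\oplus U_0\in\operatorname{add}B_U$.
\end{itemize}
That splitting argument uses only the approximation property and $E\in\mathcal Z$. Your route additionally needs $E\in\mathcal E_R$ and the maximal rigidity of $B_U$ in $\mathcal E_R$, whose proof in Proposition~\ref{res:2.1} relies on the cited rank bound.
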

\begin{proof}
Let $Y$ belong to the stated intersection, and choose a triangle
\[
 R_1\longrightarrow Y\longrightarrow U''
 \overset{d}{\longrightarrow}R_1[1]
\]
with $R_1\in\operatorname{add}R$ and $U''\in\operatorname{add}U$.
Choose the approximation triangle of Proposition~\ref{res:2.1} for $R_1$:
\[
 R_1\longrightarrow V_1\longrightarrow U'_1
 \overset{f_1}{\longrightarrow}R_1[1],
\]
with $V_1\in\operatorname{add}B_U$ and $U'_1\in\operatorname{add}U$.
Since $f_1$ is a right $\operatorname{add}U$-approximation, $d$ factors
through $f_1$. The resulting homotopy pullback gives a triangle
\[
 Y\longrightarrow V_1\oplus U''\longrightarrow U'_1\longrightarrow Y[1].
\]
The last map of this triangle is zero, since
\[
 \operatorname{Hom}(U'_1,Y[1])
 \cong D\operatorname{Hom}(Y,U'_1[1])=0.
\]
Thus $Y$ is a direct summand of $V_1\oplus U''$, and belongs to
$\operatorname{add}B_U$.
\end{proof}

We use Lemma~\ref{lem:extension-membership-v79} to construct a
presentation of every object obtained by reduction.

\Needspace{10\baselineskip}
\begin{lemma}\label{lem:quotient-presentations-v79}\label{res:2.10}
For every $W\in\mathcal E_R\cap\mathcal Z$, we have
\[
 \overline W\in
 \operatorname{add}\overline L*_{\mathcal C_U}
 \operatorname{add}\overline L\langle1\rangle.
\]
\end{lemma}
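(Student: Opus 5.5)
Let $W\in\mathcal E_R\cap\mathcal Z$. Since $W\in\mathcal E_R$, there is a triangle
\[
 R_1\xrightarrow{\ \alpha\ }W\longrightarrow R_0[1]\xrightarrow{\ \beta\ }R_1[1]
\]
with $R_0,R_1\in\operatorname{add}R$; equivalently $R_0\to R_1\to W\to R_0[1]$. The plan is to replace $R_0,R_1$ by objects of $\operatorname{add}B_U$, using the approximation triangles of Proposition~\ref{res:2.1}, and obtain a triangle
\[
 B^1\longrightarrow B^0\oplus U^\sharp\longrightarrow W\longrightarrow B^1[1]
\]
in $\mathcal C$ with $B^0,B^1\in\operatorname{add}B_U$, $U^\sharp\in\operatorname{add}U$, all terms lying in $\mathcal Z$. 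Then I would pass it to $\mathcal C_U$. Since $\mathcal C_U$ kills $\operatorname{add}U$ and $\overline{B_U}\cong\overline L$, this gives $\overline W\in\operatorname{add}\overline L*\operatorname{add}\overline L\langle1\rangle$.

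The first step absorbs $R_1$. Take the approximation triangle $R_1\to V_1\to U'_1\xrightarrow{f_1}R_1[1]$ with $V_1\in\operatorname{add}B_U$. Form the octahedron, or homotopy pushout, of $R_1\to W$ and $R_1\to V_1$. This gives an object $W'$ with triangles $V_1\to W'\to R_0[1]\to V_1[1]$ and $W\to W'\to U'_1\to W[1]$. The connecting map $U'_1\to W[1]$ vanishes because $\operatorname{Hom}(U'_1,W[1])\cong D\operatorname{Hom}(W,U'_1[1])=0$, as $W\in\mathcal Z$. Hence $W'\cong W\oplus U'_1$, and $W'$ is still in $\mathcal Z$. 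So, modulo $\operatorname{add}U$, I may assume the first term of the presentation is $V_1\in\operatorname{add}B_U$. In detail, $W\oplus U'_1\in\operatorname{add}V_1*\operatorname{add}R_0[1]$.

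The second step absorbs $R_0[1]$ dually, and I expect it to be the main obstacle. I would rotate the approximation triangle for $R_0$, namely $R_0\to V_0\to U'_0\to R_0[1]$, to get a map $U'_0\to R_0[1]$ with cone $V_0[1]$. I would then compose with $W'\to R_0[1]$ to build a triangle $V_1\to Y\to V_0[1]\oplus ?$. The difficulty is that the cone of $U'_0\to R_0[1]$ is $V_0[1]$, while $U'_0$ lifts only up to a term in $\operatorname{add}U$, which is killed in $\mathcal C_U$. To control this I would work directly in $\mathcal C_U$. For $X,Y\in\mathcal Z$, the triangles in $\mathcal C_U$ come from triangles $X\to Y\to Z\to X[1]$ in $\mathcal C$ with $Z\in\mathcal Z$, and $\langle1\rangle$ is computed by a left $\operatorname{add}U$-approximation $X\to U^X$ with cone $X\langle1\rangle$. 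From the triangle $R_0\to V_0\to U'_0\to R_0[1]$ and the vanishing $\operatorname{Hom}(R_0,U[1])=0$, it follows that $V_0\to U'_0$ is, up to $\operatorname{add}U$-summands, a right approximation. Hence $\overline{R_0[1]}$ relates to $\overline{V_0}\langle1\rangle$ modulo $\operatorname{add}U$, and Lemma~\ref{lem:extension-membership-v79} handles the objects landing in $\operatorname{add}R*\operatorname{add}U$.

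Concretely, the fallback is as follows. Take a left $\operatorname{add}U$-approximation $W'\to U^W$ and let $C$ be its cocone shifted so that $C\in\mathcal Z$. Using the octahedral axiom with $V_1\to W'$, one shows that the resulting object $C$ lies in $(\operatorname{add}R*\operatorname{add}U)\cap\mathcal Z$ up to shift. Lemma~\ref{lem:extension-membership-v79} then gives $C\in\operatorname{add}B_U$. This yields a triangle $\overline{V_1}\to\overline W\to\overline C\langle1\rangle$ in $\mathcal C_U$, which is what is needed. The verification that the intermediate object genuinely lies in $\operatorname{add}R*\operatorname{add}U$ and in $\mathcal Z$ is where the care goes: it uses the rigidity of $R$ and $U$ and the $2$-Calabi--Yau duality.
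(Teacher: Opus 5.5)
\paragraph{Verdict.} Your proposal has a genuine gap: the second step, which is the heart of the lemma, is never carried out.

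\paragraph{What is correct, and where it breaks.} Your first step is correct. Write $a_1\colon R_1\to V_1$ for the first map of the approximation triangle. The homotopy pushout, together with $\operatorname{Hom}(U_1',W[1])\cong D\operatorname{Hom}(W,U_1'[1])=0$, amounts to factoring $\alpha=h a_1$ with $h\colon V_1\to W$, and this is how the paper begins. The problem is the triangle $V_1\to W\oplus U_1'\to R_0[1]\to V_1[1]$ that you then want to pass to $\mathcal C_U$. It cannot be passed down, because $R_0[1]\notin\mathcal Z$ in general: $\operatorname{Hom}(R_0[1],U[1])=\operatorname{Hom}(R_0,U)$ need not vanish. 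The tools you offer for the second step do not repair this.
\begin{itemize}
\item The claim $\operatorname{Hom}(R_0,U[1])=0$ is false in general. If $u$ has nonzero projective part, then $U$ has a summand $R'[1]$ with $0\neq R'\in\operatorname{add}R$, and $\operatorname{Hom}(R',R'[2])\cong D\operatorname{End}(R')\neq0$.
\item In the fallback, the shifted cocone of a left $\operatorname{add}U$-approximation of $W'$ is $W'\langle1\rangle$. This object has no reason to lie in $\operatorname{add}B_U$: for $W=L$ it would put $\overline L\langle1\rangle$ into $\operatorname{add}\overline L$.
\item Knowing that an object lies in $(\operatorname{add}R*\operatorname{add}U)\cap\mathcal Z$ only ``up to shift'' does not let you apply Lemma~\ref{lem:extension-membership-v79}. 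Neither $\mathcal Z$ nor $\operatorname{add}R*\operatorname{add}U$ is closed under $[\pm1]$.
\end{itemize}

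\paragraph{The missing idea.} Enlarge the cover by a right $\operatorname{add}U$-approximation $u\colon U_0\to W$, and take the cocone of $(h,u)$:
\[
 W_1\longrightarrow V_1\oplus U_0\xrightarrow{(h,u)}W\longrightarrow W_1[1].
\]
Then proceed as follows.
\begin{enumerate}
\item Surjectivity of $\operatorname{Hom}(U,(h,u))$ and rigidity of $B_U$ give $\operatorname{Hom}(U,W_1[1])=0$. By $2$-Calabi--Yau duality, $W_1\in\mathcal Z$.
\item Apply the octahedral axiom to $R_1\xrightarrow{(a_1,0)^t}V_1\oplus U_0\xrightarrow{(h,u)}W$. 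The composite is $\alpha$, and the three cones are $U_1'\oplus U_0$, $R_0[1]$ and $W_1[1]$. This yields a triangle $R_0\to W_1\to U_1'\oplus U_0\to R_0[1]$.
\item Hence $W_1$ itself, with no shift, lies in $(\operatorname{add}R*\operatorname{add}U)\cap\mathcal Z$. Lemma~\ref{lem:extension-membership-v79} then gives $W_1\in\operatorname{add}B_U$.
\item All three terms of the displayed triangle now lie in $\mathcal Z$. By Iyama--Yoshino, it induces $\overline{W_1}\to\overline{V_1}\to\overline W\to\overline{W_1}\langle1\rangle$ in $\mathcal C_U$, which is the assertion.
\end{enumerate}
This is the paper's proof. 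The key point is that the $R_0[1]$-end is never absorbed separately; it is replaced by the cocone of the enlarged cover $V_1\oplus U_0\to W$.
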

\begin{proof}
Choose an $R$-presentation
$R_1\longrightarrow R_0\overset{p}{\longrightarrow}W\longrightarrow R_1[1]$.
By taking direct sums of the approximation triangles in Proposition~\ref{res:2.1},
we obtain a triangle
\[
 R_0\overset{a_0}{\longrightarrow}V_0\longrightarrow U'_0\longrightarrow R_0[1]
\]
with $V_0\in\operatorname{add}B_U$ and $U'_0\in\operatorname{add}U$.
Since $\operatorname{Hom}(U'_0[-1],W)=0$, the map $p$ factors through $a_0$.
Add a right $\operatorname{add}U$-approximation $U_0\longrightarrow W$ to
the resulting morphism $V_0\longrightarrow W$, and complete it to a triangle
\begin{equation}\label{eq:2.10}
 W_1\longrightarrow V_0\oplus U_0\overset{h}{\longrightarrow}W\longrightarrow W_1[1].
\end{equation}
The surjectivity of $\operatorname{Hom}(U,h)$ and the rigidity of $B_U$
give $\operatorname{Hom}(U,W_1[1])=0$.
By the $2$-Calabi--Yau property,
$\operatorname{Hom}(W_1,U[1])=0$. Hence $W_1\in\mathcal Z$.
The octahedral axiom applied to
$R_0\xrightarrow{(a_0,0)}V_0\oplus U_0\overset{h}{\longrightarrow}W$ gives a triangle
\[
 R_1\longrightarrow W_1\longrightarrow U'_0\oplus U_0\longrightarrow R_1[1].
\]
Hence $W_1\in(\operatorname{add}R*\operatorname{add}U)\cap\mathcal Z$,
and Lemma~\ref{lem:extension-membership-v79} gives
$W_1\in\operatorname{add}B_U$.
All three terms of \eqref{eq:2.10} lie in $\mathcal Z$.
Since $\operatorname{Hom}(W,U[1])=0$, applying
$\operatorname{Hom}(-,U)$ to \eqref{eq:2.10} gives a surjection
\[
 \operatorname{Hom}(V_0\oplus U_0,U)
 \longrightarrow\operatorname{Hom}(W_1,U).
\]
By \cite[Definition~4.1 and Theorem~4.2]{IY}, this triangle
induces the $\overline L$-presentation
\[
 \overline W_1\longrightarrow\overline V_0\longrightarrow\overline W
 \longrightarrow\overline W_1\langle1\rangle
\]
in $\mathcal C_U$.
\end{proof}

We next compute the relative index of the quotient presentation in the
ordered $\overline L$-basis.

\Needspace{12\baselineskip}
\begin{lemma}\label{lem:quotient-indices-v79}\label{res:2.11}
Identify the target lattice using
$[\overline L_1],\ldots,[\overline L_r]$.
For every $W\in\mathcal E_R\cap\mathcal Z$, we have
\begin{equation}\label{eq:2.11}
 \operatorname{ind}_{\overline L}\overline W
 =p_u(\operatorname{ind}_R W).
\end{equation}
\end{lemma}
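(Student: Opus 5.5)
We will compute both sides of \eqref{eq:2.11} from the triangle \eqref{eq:2.10} constructed in the proof of Lemma~\ref{res:2.10}. Recall that this triangle has the form
\[
 W_1\longrightarrow V_0\oplus U_0\overset{h}{\longrightarrow}W\longrightarrow W_1[1],
\]
with $V_0,W_1\in\operatorname{add}B_U$ and $U_0\in\operatorname{add}U$. It induces the $\overline L$-presentation $\overline W_1\to\overline V_0\to\overline W\to\overline W_1\langle1\rangle$ in $\mathcal C_U$. By definition, $\operatorname{ind}_{\overline L}\overline W=[\overline V_0]-[\overline W_1]$ in the basis $[\overline L_j]$. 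Since $q$ kills $\operatorname{add}U$, this equals the image of $[V_0\oplus U_0]-[W_1]\in K_0(\operatorname{add}B_U)$ under the projection $K_0(\operatorname{add}B_U)\to\mathbb Z^r$ that forgets the $U_i$-coordinates. We do need the index to be well defined, that is, independent of the presentation. In $\mathcal C_U$ the object $\overline L$ is rigid, $\overline W$ lies in $\operatorname{add}\overline L*\operatorname{add}\overline L\langle1\rangle$ by Lemma~\ref{res:2.10}, and the standard argument of \cite{AIR} (Palu's index in a $2$-Calabi--Yau category) applies there.

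Next we compare with $\operatorname{ind}_R W$. The key step is additivity of $\operatorname{ind}_R$ on triangles of $\mathcal E_R$ whose connecting morphism is killed appropriately. For each indecomposable summand $Y$ of $B_U$, write $\operatorname{ind}_R Y$ for the corresponding column of $G_u$. Then $G_u^{-1}$ applied to $\operatorname{ind}_R Y$ is the coordinate vector of $Y$ in $K_0(\operatorname{add}B_U)$. Applying $p_u$ to $\operatorname{ind}_R(V_0\oplus U_0)-\operatorname{ind}_R(W_1)$ therefore gives exactly the projection computed above. It remains to prove
\[
 \operatorname{ind}_R W=\operatorname{ind}_R(V_0\oplus U_0)-\operatorname{ind}_R W_1 .
\]
The index $\operatorname{ind}_R$ is additive on a triangle $X\to Y\to Z\xrightarrow{w}X[1]$ in $\mathcal E_R$ whenever $\operatorname{Hom}(R,w)$, or rather the induced map $H(Z)\to H(X[1])$, vanishes. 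This is the standard index error term of Palu: the defect is the class of the image of $H(w)$ measured through a presentation of $\operatorname{Im}H(w)$.

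So we check that $H(w)=0$ for the connecting morphism $w:W\to W_1[1]$ in \eqref{eq:2.10}. Equivalently, we show that $H(h):H(V_0\oplus U_0)\to H(W)$ is surjective. This holds by construction: the $R$-presentation map $p:R_0\to W$ factors through $a_0:R_0\to V_0$, hence through $h$. Since $R_0\to W$ is a right $\operatorname{add}R$-approximation (it induces a surjection $H(R_0)\to H(W)$), $H(h)$ is surjective. Additivity then follows. One way to see this is to choose presentations and apply the octahedral axiom, exactly as in the proof of Lemma~\ref{res:2.10}, which produced the triangle $R_1\to W_1\to U'_0\oplus U_0\to R_1[1]$. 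This computes the identity directly:
\[
 \operatorname{ind}_R W_1=[R_1]\text{-part}+\operatorname{ind}_R(U'_0\oplus U_0).
\]
Combined with the approximation triangle $R_0\to V_0\to U'_0$, which gives $\operatorname{ind}_R V_0=[R_0]+\operatorname{ind}_R U'_0$, we obtain the same identity.

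We expect the main obstacle to be this bookkeeping step: verifying that the indices of the extension triangles $R_0\to V_0\to U'_0$ and $R_1\to W_1\to U'_0\oplus U_0$ are additive. Their connecting maps land in $\operatorname{add}R[1]$ and need not vanish under $H$. The first thing we would try is to use that these connecting maps factor through the approximation $f$, and to compute $\operatorname{ind}_R$ of $U'_0$ directly from its own $R$-presentation, checking that the error terms cancel in the difference $[V_0\oplus U_0]-[W_1]$.
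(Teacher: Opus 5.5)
Your outline follows the paper: the same triangle \eqref{eq:2.10}, the same observation that $p_u$ turns $\operatorname{ind}_R$ of objects of $\operatorname{add}B_U$ into their $\overline L$-coordinates (with $\overline{U_0}=0$), and the same reduction to the identity $\operatorname{ind}_R W=\operatorname{ind}_R(V_0\oplus U_0)-\operatorname{ind}_R W_1$. That identity is the whole content of the lemma, and you do not establish it.

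Your first justification cites Palu's error-term formula. That formula is stated for a cluster-tilting object in a $2$-Calabi--Yau category. Here $R$ is only rigid, and $\operatorname{ind}_R$ is defined only on $\mathcal E_R$, so the citation does not apply as stated. Your fallback through the two auxiliary triangles is exactly the paper's route, but you leave it as an open obstacle, and the reason you give is mistaken. The connecting maps $U'_0\to R_0[1]$ and $U'_0\oplus U_0\to R_1[1]$ do vanish under $H$, because rigidity of $R$ gives $H(R'[1])=\operatorname{Hom}(R,R'[1])=0$ for every $R'\in\operatorname{add}R$. So there are no error terms to cancel. You also do not need to factor through $f$ or to compute $\operatorname{ind}_R U'_0$ separately.

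The missing step is the elementary lemma \eqref{eq:2.12}. For a triangle $R'\to V\to Z\xrightarrow{\gamma}R'[1]$ with $R'\in\operatorname{add}R$ and $Z\in\mathcal E_R$, one has $\operatorname{ind}_R V=[R']+\operatorname{ind}_R Z$. To prove it:
\begin{enumerate}
\item Take an $R$-presentation $R_Z^1\to R_Z^0\xrightarrow{\pi}Z\to R_Z^1[1]$.
\item Since $\gamma\pi\in\operatorname{Hom}(R_Z^0,R'[1])=0$, the map $\pi$ lifts to $V$.
\item The homotopy pullback of $V\to Z$ along $\pi$ is therefore $R'\oplus R_Z^0$. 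This gives the $R$-presentation $R_Z^1\to R'\oplus R_Z^0\to V\to R_Z^1[1]$.
\end{enumerate}
Apply this to $R_0\to V_0\to U'_0$ and to $R_1\to W_1\to U'_0\oplus U_0$. You get $\operatorname{ind}_R V_0=[R_0]+\operatorname{ind}_R U'_0$ and $\operatorname{ind}_R W_1=[R_1]+\operatorname{ind}_R U'_0+\operatorname{ind}_R U_0$. Subtracting, the $U'_0$ terms cancel, leaving $[R_0]-[R_1]=\operatorname{ind}_R W$.

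Your own first route can also be completed with this lemma. Pull back \eqref{eq:2.10} along $p:R_0\to W$; this is possible because $p$ factors through $h$, which is your surjectivity of $H(h)$. The pullback gives a triangle $R_1\to W_1\oplus R_0\to V_0\oplus U_0\to R_1[1]$. A single application of \eqref{eq:2.12} then yields the identity, without the octahedral triangle. Either way, this lemma is what has to be supplied.
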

\begin{proof}
For a triangle $R'\longrightarrow V\longrightarrow Z\longrightarrow R'[1]$
with $R'\in\operatorname{add}R$ and $Z\in\mathcal E_R$, we have
\begin{equation}\label{eq:2.12}
 \operatorname{ind}_R V=[R']+\operatorname{ind}_R Z.
\end{equation}
Indeed, choose an $R$-presentation
$R_Z^1\longrightarrow R_Z^0\longrightarrow Z\longrightarrow R_Z^1[1]$.
The morphism $R_Z^0\longrightarrow Z$ lifts to $V$, since
$\operatorname{Hom}(R_Z^0,R'[1])=0$.
The resulting homotopy pullback gives the $R$-presentation
\[
 R_Z^1\longrightarrow R'\oplus R_Z^0\longrightarrow V
 \longrightarrow R_Z^1[1],
\]
which proves \eqref{eq:2.12}.

In the construction of Lemma~\ref{lem:quotient-presentations-v79}, apply
\eqref{eq:2.12} to the triangles
$R_0\longrightarrow V_0\longrightarrow U'_0\longrightarrow R_0[1]$ and
$R_1\longrightarrow W_1\longrightarrow U'_0\oplus U_0\longrightarrow R_1[1]$.
We obtain
\[
 \operatorname{ind}_R V_0=[R_0]+\operatorname{ind}_R U'_0,
 \qquad
 \operatorname{ind}_R W_1=[R_1]+\operatorname{ind}_R U'_0
                              +\operatorname{ind}_R U_0.
\]
Consequently,
\[
 \operatorname{ind}_R(V_0\oplus U_0)-\operatorname{ind}_R W_1
 =[R_0]-[R_1]=\operatorname{ind}_R W.
\]
For $T\in\operatorname{add}B_U$, the vector $p_u(\operatorname{ind}_R T)$
is $[\overline T]$ in the $\overline L$-basis.
Applying $p_u$ to the preceding equality gives
\[
 p_u(\operatorname{ind}_R W)
 =[\overline V_0]-[\overline W_1]
 =\operatorname{ind}_{\overline L}\overline W,
\]
where the last equality uses the presentation in
Lemma~\ref{lem:quotient-presentations-v79}.
\end{proof}

\begin{proof}[Proof of Theorem~\ref{thm:cutting-link-fan-v79}]
For a vertex $\gamma$ of
$\operatorname{lk}_{\Delta_{\mathbf R}}(\mathbf N_u)$,
compatibility gives $X_{\mathbf S}(\gamma)\in\mathcal E_R\cap\mathcal Z$.
By Lemma~\ref{lem:quotient-presentations-v79}, the object $\overline{X_{\mathbf S}(\gamma)}$
admits an $\overline L$-presentation.
The triangle equivalence $\xi_{\mathbf N_u}$ of Theorem~\ref{res:2.4} sends
$\overline L$ to $X_{\mathbf S/\mathbf N_u}(\mathbf R_u)$ and
$\overline{X_{\mathbf S}(\gamma)}$ to
$X_{\mathbf S/\mathbf N_u}(F_{\mathbf N_u}(\gamma))$.
Thus $F_{\mathbf N_u}(\gamma)$ is $\mathbf R_u$-standard.
The equivalence preserves relative indices. In the ordered bases fixed above,
Lemma~\ref{lem:quotient-indices-v79} therefore gives
\begin{align*}
 g_{\mathbf R_u}\bigl(F_{\mathbf N_u}(\gamma)\bigr)
 &=\operatorname{ind}_{X_{\mathbf S/\mathbf N_u}(\mathbf R_u)}
      X_{\mathbf S/\mathbf N_u}\bigl(F_{\mathbf N_u}(\gamma)\bigr)\\
 &=\operatorname{ind}_{\overline L}\overline{X_{\mathbf S}(\gamma)}\\
 &=p_u\bigl(\operatorname{ind}_R X_{\mathbf S}(\gamma)\bigr)
  =p_u\bigl(g_{\mathbf R}(\gamma)\bigr).
\end{align*}
This proves \eqref{eq:2.8}.
Tagged cutting is injective and preserves compatibility in both directions.
It therefore defines a full simplicial embedding
\[
 f:\operatorname{lk}_{\Delta_{\mathbf R}}(\mathbf N_u)
 \longrightarrow\Delta_{\mathbf R_u},
 \qquad \gamma\longmapsto F_{\mathbf N_u}(\gamma).
\]

Let $\widehat U$ and $\widehat L$ be the two-term presilting complexes
corresponding to $U$ and $L$. Their direct sum
$\widehat B_U=\widehat U\oplus\widehat L$ is the two-term silting
Bongartz completion of $\widehat U$.
Zeroth cohomology induces an algebra isomorphism
\begin{equation}\label{eq:2.13}
 \operatorname{End}_{K^b(\operatorname{proj}A)}(\widehat B_U)
       /[\operatorname{add}\widehat U]
 \overset{\sim}{\longrightarrow}
 \operatorname{End}_A(T)/[\operatorname{add}M]=A_u.
\end{equation}
Indeed, $H^0$ is full on two-term projective complexes, with kernel given by
maps through $\operatorname{add}A[1]$; see \cite[Lemma~2.1(c)]{BM}.
The proof of \cite[Lemma~4.14]{J} applies also when $u$ has a nonzero
projective part: the Bongartz triangle
$A\longrightarrow\widehat V\longrightarrow\widehat U'
\longrightarrow A[1]$, with $\widehat V\in\operatorname{add}\widehat B_U$
and $\widehat U'\in\operatorname{add}\widehat U$, gives
$\operatorname{Hom}(\widehat B_U,\widehat V[1])=0$ by presilting.
Hence every map $\widehat B_U\longrightarrow A[1]$ factors through
$\widehat U'$.
Thus the kernel is contained in $[\operatorname{add}\widehat U]$.
Fullness sends this ideal onto $[\operatorname{add}M]$, proving
\eqref{eq:2.13}.
The isomorphism is induced by $H^0$ and preserves the summand idempotents.
By Proposition~\ref{res:2.1}, the projective part of the completion is $P$. Thus
$T_j=H^0(\widehat L_j)=\operatorname{Hom}(R,L_j)\neq0$ for every $j$.

Let $e_j\in A_u$ be the image of the projection onto the summand $T_j$,
and put $Q_j=e_jA_u$.
The lattice map of \cite[Theorem~4.11]{AHIKM} sends
$[\widehat U_i]$ to zero and $[\widehat L_j]$ to $[Q_j]$.
Under the algebra isomorphisms of Proposition~\ref{res:2.2} and Theorem~\ref{res:2.5},
\[
 Q_j\cong
 \operatorname{Hom}_{\mathcal C_U}(\overline L,\overline L_j)
 \cong
 \operatorname{Hom}_{\mathcal C(\mathbf S/\mathbf N_u)}
 \bigl(X_{\mathbf S/\mathbf N_u}(\mathbf R_u),
       \xi_{\mathbf N_u}(\overline L_j)\bigr).
\]
The latter is the indecomposable projective
$\Lambda_{\mathbf R_u}$-module indexed by the corresponding cut arc of
$\mathbf R_u$.
With the prescribed ordering, $[Q_j]$ is therefore the $j$th coordinate vector
of the target lattice.
Since $[\widehat U_i]=\operatorname{ind}_R U_i$ and
$[\widehat L_j]=\operatorname{ind}_R L_j$, the canonical lattice map and $p_u$
agree on the basis given by Corollary~\ref{cor:rank-basis-v79}.
Their induced maps on the quotient lattice are consequently equal to
$\overline p_u$. This proves \eqref{eq:2.9}.

It remains to show that $f$ is surjective.
Let $\delta$ be a vertex of $\Delta_{\mathbf R_u}$, and take the preimage
of the ray $\mathbb R_{\geq0}g_{\mathbf R_u}(\delta)$ under
\eqref{eq:2.9}.
By the definition of the quotient fan, this preimage is the projection of a
cone $\tau$ containing $\sigma_u$.
Since $\dim\sigma_u=s$ and the projected cone is one-dimensional,
$\dim\tau=s+1$.
The original $g$-fan is simplicial. Hence $\tau$ is generated by the
$U_i$-indices and one additional vector $g_{\mathbf R}(\gamma)$,
where $\gamma$ is a vertex of
$\operatorname{lk}_{\Delta_{\mathbf R}}(\mathbf N_u)$.
By \eqref{eq:2.8},
$g_{\mathbf R_u}(f\gamma)$ and $g_{\mathbf R_u}(\delta)$ generate the same
ray. Both are primitive lattice vectors. Thus they are equal.
Injectivity of $g$-vectors for $\tau$-rigid pairs now gives $f\gamma=\delta$;
see \cite[Theorem~5.5]{AIR} and Theorem~\ref{res:1.9}.
Thus $f$ is surjective on vertices.
Since $f$ is a full simplicial embedding, it is a simplicial isomorphism.
The correspondences $X$ and $\Phi$ give \eqref{eq:2.7}.
Finally, $\Delta_\tau(A_u)$ is pure with $|A_u|=r_A(u)$ vertices in each
facet. Thus $\dim\Delta_\tau(A_u)=r_A(u)-1$.
\end{proof}

\begin{corollary}\label{cor:finite-link-sphere-v79}\label{res:2.12}
Let $r_A(u)\geq1$. If $A_u$ is $\tau$-tilting finite, then
\[
 \operatorname{lk}_{\Delta_\tau(A)}(u)\cong S^{r_A(u)-1}
\]
as topological spaces.
\end{corollary}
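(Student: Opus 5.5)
The plan is to transfer the problem to the reduced algebra by Theorem~\ref{res:2.9}, and then use the fact that a complete simplicial fan has a sphere as its link at the origin. By \eqref{eq:2.7}, $\operatorname{lk}_{\Delta_\tau(A)}(u)\cong\Delta_\tau(A_u)$ as simplicial complexes. It therefore suffices to show that $|\Delta_\tau(A_u)|\cong S^{r-1}$ for $r=r_A(u)=|A_u|\geq1$. This uses Corollary~\ref{res:2.8}, which gives $|A_u|=r$.

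Since $A_u$ is $\tau$-tilting finite, the fan $\Sigma_g(A_u)$ is finite. By \cite[Proposition~4.2]{AHIKM} it is also complete in $K_{A_u,\mathbb R}\cong\mathbb R^r$. The fan $\Sigma_g(A_u)$ is nonsingular, in particular simplicial, and its underlying abstract simplicial complex is $\Delta_\tau(A_u)$.

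Next I realize $\Delta_\tau(A_u)$ geometrically inside the unit sphere $S^{r-1}\subset\mathbb R^r$. Send each vertex, an indecomposable $\tau$-rigid pair $v$, to $g_{A_u}(v)$. Extend linearly on each simplex to obtain a map $\iota:|\Delta_\tau(A_u)|\to\mathbb R^r\setminus\{0\}$. Then compose with radial projection $x\mapsto x/\lVert x\rVert$.

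On each simplex, the map is injective and its image is $\sigma_v\cap S^{r-1}$. This holds because the generators of $\sigma_v$ are linearly independent, since the fan is nonsingular, so $0$ does not lie in the convex hull of the generators.

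Two different simplices have images meeting exactly in the image of their common face. This follows because the cones of a fan intersect in common faces and the face structure of $\Sigma_g(A_u)$ is that of $\Delta_\tau(A_u)$.

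Hence the composite is a continuous injection from the compact space $|\Delta_\tau(A_u)|$ into the Hausdorff space $S^{r-1}$. Its image is $|\Sigma_g(A_u)|\cap S^{r-1}$, which by completeness is all of $S^{r-1}$. A continuous bijection from a compact space to a Hausdorff space is a homeomorphism, so $|\Delta_\tau(A_u)|\cong S^{r-1}$. Combining this with the link isomorphism gives the claim.

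The main point needing care is the identification of the face structure: intersections of cones must correspond exactly to common faces of simplices. This is part of the statement that $\Sigma_g(A_u)$ is a fan with underlying complex $\Delta_\tau(A_u)$, so I would cite \cite[Section~5]{AIR} and \cite[Proposition~4.2]{AHIKM} for it rather than reprove it.

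For the small case $r=1$, the algebra $A_u$ is local. There are exactly two indecomposable $\tau$-rigid pairs, with $g$-vectors $\pm e_1$, so $\Delta_\tau(A_u)$ consists of two points, which is $S^0$. This case agrees with the general argument.
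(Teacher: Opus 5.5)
Your proposal is correct and follows the paper's route: Theorem~\ref{res:2.9} identifies $\operatorname{lk}_{\Delta_\tau(A)}(u)$ with $\Delta_\tau(A_u)$, which has dimension $|A_u|-1=r_A(u)-1$, and $\tau$-tilting finiteness of $A_u$ then makes this complex a sphere. The only difference is that the paper cites \cite[Theorem~5.4]{DIJ} for the sphere step, while you reprove it by radially projecting the complete simplicial $g$-fan of $A_u$ onto $S^{r-1}$.
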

\begin{proof}
By Theorem~\ref{thm:cutting-link-fan-v79}, the link is $\Delta_\tau(A_u)$
and has dimension $|A_u|-1$. The assertion follows from
\cite[Theorem~5.4]{DIJ}.
\end{proof}

The cut surface may be disconnected. To describe the corresponding link
componentwise, we record how the support $\tau$-tilting complex behaves
under algebra products.

\begin{proposition}\label{prop:product-complex-v79}\label{res:2.13}
If $A=A_1\times\cdots\times A_m$, then
\[
 \Delta_\tau(A)\cong
 \Delta_\tau(A_1)*\cdots*\Delta_\tau(A_m).
\]
\end{proposition}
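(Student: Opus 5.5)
By induction on $m$ it suffices to treat $A=A_1\times A_2$, writing $1=\varepsilon_1+\varepsilon_2$ for the corresponding central idempotents. The plan is to show that every indecomposable $\tau$-rigid pair over $A$ lives over exactly one factor, and that a direct sum of such pairs is $\tau$-rigid if and only if its restriction to each factor is. Together these say precisely that $\Delta_\tau(A)$ is the join.

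First, use the standard decomposition $\operatorname{mod}A\simeq\operatorname{mod}A_1\times\operatorname{mod}A_2$, given by $M\mapsto(M\varepsilon_1,M\varepsilon_2)$, and the analogous one for $\operatorname{proj}A$. An indecomposable module or projective is therefore concentrated in a single factor. The Auslander--Reiten translation is computed factorwise, $\tau_AM\cong\tau_{A_1}(M\varepsilon_1)\oplus\tau_{A_2}(M\varepsilon_2)$. This holds because minimal projective presentations, $\operatorname{Hom}_A(-,A)$ and $D$ all decompose along the factors. Finally, $\operatorname{Hom}_A$ between modules supported on different factors vanishes.

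From these facts I would conclude that a pair $(M,P)$ is $\tau$-rigid over $A$ exactly when each $(M\varepsilon_i,P\varepsilon_i)$ is $\tau$-rigid over $A_i$. The reason is that
\[
 \operatorname{Hom}_A(M,\tau_AM)=\bigoplus_i\operatorname{Hom}_{A_i}(M\varepsilon_i,\tau_{A_i}M\varepsilon_i),
\]
and similarly for $\operatorname{Hom}_A(P,M)$. Consequently the vertex set of $\Delta_\tau(A)$ is the disjoint union of the vertex sets of the $\Delta_\tau(A_i)$. A finite set of vertices is a simplex precisely when its part in each factor is a simplex, and the empty set is allowed there. This is exactly the simplex description of the join $\Delta_\tau(A_1)*\Delta_\tau(A_2)$.

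The only delicate point is the boundary convention when some factor is zero. The paper uses $\Delta_\tau(0)=\{\varnothing\}$, and this complex is the unit for the join, so the formula persists in that case. I will also check that facets match: the join of facets has size $|A_1|+|A_2|=|A|$, which agrees with the $\tau$-tilting condition. This is a consistency check rather than an obstacle. I expect no step to be genuinely hard; the most care is needed in justifying that $\tau_A$ commutes with the product decomposition, and I would cite this directly from the factorwise computation of the transpose.
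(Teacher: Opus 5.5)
Your proposal is correct and follows essentially the same route as the paper. Both decompose modules, projectives and $\tau$ over the factors, deduce that $\tau$-rigidity is checked coordinatewise, and read off that each simplex is the disjoint union of uniquely determined coordinate simplices; your induction to $m=2$ and your remark that $\Delta_\tau(0)=\{\varnothing\}$ is the unit for the join only make explicit what the paper leaves implicit.
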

\begin{proof}
Modules, projectives and Auslander--Reiten translation decompose over a finite
direct product. Thus a pair is $\tau$-rigid precisely when every coordinate
pair is $\tau$-rigid. Each simplex is the union of uniquely determined
coordinate simplices, giving the asserted join.
\end{proof}

\begin{corollary}\label{cor:cut-components-v79}\label{res:2.14}
Assume $\mathbf R_u\neq\varnothing$.
Write $S/\mathbf N_u=S_1\sqcup\cdots\sqcup S_t$ for the decomposition into
connected components, and define
\[
 \mathbf R_i=\mathbf R_u\cap S_i
 \qquad\text{and}\qquad I_u=\{i:\mathbf R_i\neq\varnothing\}.
\]
Then
\[
 A_u\cong\prod_{i\in I_u}\Lambda_{\mathbf R_i}
 \qquad\text{and}\qquad
 \operatorname{lk}_{\Delta_\tau(A)}(u)
 \cong\mathop{*}_{i\in I_u}\Delta_{\mathbf R_i}.
\]
\end{corollary}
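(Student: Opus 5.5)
The plan is to combine Theorem~\ref{res:2.5} with the componentwise convention for disconnected cut surfaces, and then apply Proposition~\ref{res:2.13} together with Theorem~\ref{res:2.9}.

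\emph{The algebra.} By Theorem~\ref{res:2.5}, $A_u\cong\operatorname{End}_{\mathcal C(\mathbf S/\mathbf N_u)}X(\mathbf R_u)$. The cluster category of the disconnected surface $\mathbf S/\mathbf N_u$ is interpreted componentwise, so it decomposes as a product $\prod_i\mathcal C(S_i)$. Every arc of $\mathbf R_u$ lies in exactly one component $S_i$, which gives
\[
 X(\mathbf R_u)=\bigoplus_{i}X_{S_i}(\mathbf R_i).
\]
There are no nonzero morphisms between objects supported on different components, so the endomorphism algebra splits as $\prod_i\Lambda_{\mathbf R_i}$. By convention $\Lambda_{\varnothing}=0$, and this factor contributes nothing. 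The same holds for unpunctured triangular components, which carry the zero category. Hence only the indices $i\in I_u$ survive. Since $\mathbf R_u\neq\varnothing$, the set $I_u$ is nonempty, so the product is a genuine nonzero algebra.

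\emph{The link.} Theorem~\ref{res:2.9} gives $\operatorname{lk}_{\Delta_\tau(A)}(u)\cong\Delta_\tau(A_u)$. Proposition~\ref{res:2.13} applied to the product above gives
\[
 \Delta_\tau(A_u)\cong\mathop{*}_{i\in I_u}\Delta_\tau(\Lambda_{\mathbf R_i}).
\]
Finally, the simplicial isomorphism $\Delta_{\mathbf R_i}\cong\Delta_\tau(\Lambda_{\mathbf R_i})$ from Section~1.4 identifies each factor with $\Delta_{\mathbf R_i}$. As a consistency check, $\Delta_{\mathbf R_u}$ is itself the join of the $\Delta_{\mathbf R_i}$. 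The reason is that an $\mathbf R_u$-standard arc lies in a single component, and compatibility across different components is automatic.

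\emph{Main obstacle.} The only step that needs care is justifying the componentwise decomposition of the cluster category, and of relative standardness, when the cut surface is disconnected. I would take this as the stated convention, under which $\mathcal C(\mathbf S/\mathbf N_u)$ is the product of the $\mathcal C(S_i)$. One should check that the equivalence $\xi_{\mathbf N_u}$ of Theorem~\ref{res:2.4} is compatible with this product, but that is built into the convention. The remaining steps are formal.
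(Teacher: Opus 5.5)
Your proposal is correct and is essentially the paper's argument. The paper also uses that the cluster category of the disconnected cut surface is the product of the component cluster categories, and then combines Theorem~\ref{res:2.9} (which includes the identification $A_u\cong\Lambda_{\mathbf R_u}$ of Theorem~\ref{res:2.5}) with the join formula of Proposition~\ref{res:2.13}; your remarks on empty components and on $\Delta_{\mathbf R_u}$ are harmless additions.
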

\begin{proof}
Since the cluster category of a finite disjoint union is the product of the
component cluster categories, the assertion follows from
Theorem~\ref{thm:cutting-link-fan-v79} and
Proposition~\ref{prop:product-complex-v79}.
\end{proof}

\section{Cutting and geometric connectedness}
\label{sec:3}

Throughout this section, let $\mathbf R$ be a partial tagged triangulation and
$\mathbf U$ an $\mathbf R$-dissection. When $\Lambda_{\mathbf R}$ is connected,
we prove that cutting a nonempty subcollection of $\mathbf U$ produces a partial
tagged triangulation connected to the boundary. The connectedness theorem of
\cite{HZZ} then gives connectedness of the completion graph above every nonzero
$\tau$-rigid pair. For a single puncture component, we also prove that every
$\mathbf R$-dissection is flip-connected to $\mathbf R$ or $\rho(\mathbf R)$.
Section~\ref{sec:5} determines when these two dissections can be joined.

\subsection{Nonzero faces and cutting}
Completion graphs are dissection graphs of the partial tagged triangulations
obtained by cutting, see Section~\ref{sec:2}. To apply
\cite[Theorem~5.16]{HZZ}, we show that these partial tagged triangulations
are connected to the boundary. We first show that a proper puncture component
of $\mathbf U$ gives a nontrivial product decomposition of $\Lambda_{\mathbf R}$.

\Needspace{11\baselineskip}
\begin{lemma}\label{res:3.1}
Let $\mathbf R_1$ be a puncture component of $\mathbf R$, and let
$\mathbf R_2=\mathbf R\setminus\mathbf R_1$. Then
\[
 \operatorname{Hom}\bigl(X(\mathbf R_1),X(\mathbf R_2)[m]\bigr)
 =0=
 \operatorname{Hom}\bigl(X(\mathbf R_2),X(\mathbf R_1)[m]\bigr),
 \quad m\in\mathbb Z.
\]
In particular, $\Lambda_{\mathbf R}\cong
\Lambda_{\mathbf R_1}\times\Lambda_{\mathbf R_2}$.
\end{lemma}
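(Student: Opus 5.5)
The plan is to reduce every Hom-space in question to an intersection number via Theorem~\ref{res:1.5}, moving all shifts onto the arcs of the puncture component. For arcs of $\mathbf R_1$, tagged rotation does not change the underlying arc.

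First, fix $\alpha\in\mathbf R_1$ and $\beta\in\mathbf R_2$; by additivity it suffices to treat these indecomposable summands. Since both endpoints of $\alpha$ are punctures, $\rho(\alpha)$ is the adjoint of $\alpha$. It has the same underlying arc, and $\rho^2(\alpha)=\alpha$. By Theorem~\ref{res:1.5}, $X(\rho\alpha)\cong X(\alpha)[1]$, so for every $j\in\mathbb Z$ we have $X(\alpha)[j]\cong X(\rho^{j}\alpha)$, where $\rho^j\alpha\in\{\alpha,\rho\alpha\}$. Hence, for every $m$,
\[
 \operatorname{Hom}\bigl(X(\alpha),X(\beta)[m]\bigr)
 \cong\operatorname{Hom}\bigl(X(\alpha)[1-m],X(\beta)[1]\bigr)
 \cong\operatorname{Hom}\bigl(X(\rho^{1-m}\alpha),X(\beta)[1]\bigr),
\]
and by Theorem~\ref{res:1.5} its dimension is $\operatorname{Int}(\rho^{1-m}\alpha,\beta)$. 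Symmetrically,
\[
 \dim_k\operatorname{Hom}\bigl(X(\beta),X(\alpha)[m]\bigr)
 =\dim_k\operatorname{Hom}\bigl(X(\beta),X(\rho^{m-1}\alpha)[1]\bigr)
 =\operatorname{Int}(\beta,\rho^{m-1}\alpha).
\]

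Second, I show $\operatorname{Int}(\alpha',\beta)=0$ for $\alpha'\in\{\alpha,\rho\alpha\}$, in both orders. The intersection number has two contributions.
\begin{enumerate}
 \item Interior crossings depend only on the underlying arcs. The underlying arc of $\alpha'$ is that of $\alpha$, and $\alpha,\beta$ are compatible, since both lie in the partial tagged triangulation $\mathbf R$. So $\operatorname{Int}^\circ=0$.
 \item Tagged intersections occur only at punctures that are common endpoints of $\alpha'$ and $\beta$. The endpoints of $\alpha'$ are those of $\alpha$. If $\beta$ had an endpoint among them, then $\beta$ would be joined to $\alpha$ through a common endpoint. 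By maximality of the connected component $\mathbf R_1$, this would force $\beta\in\mathbf R_1$, a contradiction.
\end{enumerate}
Hence all the Hom-spaces vanish, for every $m\in\mathbb Z$.

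Finally, take $m=0$. Then $\operatorname{Hom}(X(\mathbf R_1),X(\mathbf R_2))=0=\operatorname{Hom}(X(\mathbf R_2),X(\mathbf R_1))$, so the endomorphism algebra of $X(\mathbf R)=X(\mathbf R_1)\oplus X(\mathbf R_2)$ has vanishing off-diagonal blocks. This gives $\Lambda_{\mathbf R}\cong\Lambda_{\mathbf R_1}\times\Lambda_{\mathbf R_2}$.

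The only point needing care is the first step: the shift must be transferred onto $\alpha$ rather than $\beta$. Rotating $\beta$ could move its boundary endpoints and create interior crossings with $\alpha$, whereas rotating $\alpha$ leaves its underlying arc fixed. The interior-crossing count is then that of the underlying arcs in minimal position, which is unaffected by the taggings.
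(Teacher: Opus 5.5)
Your proposal is correct and is essentially the paper's argument. Both proofs move the shift onto the arc of the puncture component, use $\rho^2\alpha=\alpha$ with unchanged underlying arc, and get vanishing intersection numbers from compatibility and the absence of common endpoints. The only difference is cosmetic: you compute both directions directly from intersection numbers, whereas the paper proves one direction and obtains the other from $2$-Calabi--Yau duality.
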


\begin{proof}
Take $\gamma\in\mathbf R_1$ and $\eta\in\mathbf R_2$. The arcs $\gamma$ and
$\eta$ have no common endpoints or interior intersections. Since both endpoints
of $\gamma$ are punctures, $\rho^{-1}\gamma$ has the same underlying arc as $\gamma$, with both taggings
reversed. Consequently,
\[
 \operatorname{Int}(\eta,\gamma)=0=\operatorname{Int}(\eta,\rho^{-1}\gamma).
\]
Theorem~\ref{res:1.5} gives
\[
 \operatorname{Hom}\bigl(X(\eta),X(\gamma)[1]\bigr)=0
 \qquad\text{and}\qquad
 \operatorname{Hom}\bigl(X(\eta),X(\gamma)\bigr)=0.
\]
Moreover, $\rho^2\gamma=\gamma$ gives $X(\gamma)[2]\cong X(\gamma)$, see
Theorem~\ref{res:1.5}. For every $m\in\mathbb Z$, the object $X(\gamma)[m]$
is therefore isomorphic to $X(\gamma)$ if $m$ is even and to $X(\gamma)[1]$ if $m$
is odd. Hence $\operatorname{Hom}(X(\eta),X(\gamma)[m])=0$ for every $m\in\mathbb Z$.
The $2$-Calabi--Yau duality gives
\[
 \operatorname{Hom}\bigl(X(\gamma),X(\eta)[m]\bigr)
 \cong D\operatorname{Hom}\bigl(X(\eta),X(\gamma)[2-m]\bigr)=0.
\]
Taking direct sums proves the asserted vanishing. The algebra decomposition
follows by taking endomorphisms in degree zero.
\end{proof}

The partial tagged triangulation $\mathbf R$ is a $\mathbf U$-co-dissection,
see \cite[Corollary~4.9]{HZZ}. Hence $X(\mathbf R)$ lies in
$\operatorname{add}X(\mathbf U)[-1]*\operatorname{add}X(\mathbf U)$.
The following lemma decomposes this extension category when $X(\mathbf U)$
has mutually orthogonal summands.

\begin{lemma}\label{res:3.2}
Let $V_1,V_2$ be rigid objects such that
\[
 \operatorname{Hom}(V_1,V_2[m])=0=
 \operatorname{Hom}(V_2,V_1[m]),
 \quad m\in\mathbb Z.
\]
Let $V=V_1\oplus V_2$, and define
$\mathcal X_i=\operatorname{add}V_i[-1]*\operatorname{add}V_i$ for $i=1,2$.
Then
\[
 \operatorname{Hom}(\mathcal X_1,\mathcal X_2[m])=0=
 \operatorname{Hom}(\mathcal X_2,\mathcal X_1[m]),
 \quad m\in\mathbb Z,
\]
and the direct sum functor induces an equivalence
\[
 \mathcal X_1\times\mathcal X_2
 \overset{\sim}{\longrightarrow}\operatorname{add}V[-1]*\operatorname{add}V.
\]
\end{lemma}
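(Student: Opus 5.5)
We prove the vanishing first and then deduce the equivalence from it. For the vanishing, fix objects $X_1\in\mathcal X_1$ and $X_2\in\mathcal X_2$, together with triangles
\[
 V_i'[-1]\longrightarrow X_i\longrightarrow V_i''\longrightarrow V_i'
\]
with $V_i',V_i''\in\operatorname{add}V_i$. For fixed $m$, apply $\operatorname{Hom}(-,X_2[m])$ to the triangle for $X_1$. The outer terms are $\operatorname{Hom}(V_1'',X_2[m])$ and $\operatorname{Hom}(V_1'[-1],X_2[m])=\operatorname{Hom}(V_1',X_2[m+1])$, so it suffices to show $\operatorname{Hom}(V_1'',X_2[m'])=0$ for all $m'$. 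To see this, apply $\operatorname{Hom}(V_1'',-)$ to the shifted triangle for $X_2$. The outer terms are now $\operatorname{Hom}(V_1'',V_2'[m'-1])$ and $\operatorname{Hom}(V_1'',V_2''[m'])$, and both vanish by hypothesis. The long exact sequences therefore give $\operatorname{Hom}(\mathcal X_1,\mathcal X_2[m])=0$ for every $m\in\mathbb Z$. The other vanishing follows by exchanging the indices, since the hypothesis is symmetric.

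For the equivalence, first note that $\mathcal X_1\oplus\mathcal X_2\subseteq\operatorname{add}V[-1]*\operatorname{add}V$, because direct sums of triangles are triangles. The functor $(X_1,X_2)\mapsto X_1\oplus X_2$ is fully faithful, since $\operatorname{Hom}(X_1,X_2)=0=\operatorname{Hom}(X_2,X_1)$ by the case $m=0$. It remains to prove essential surjectivity.

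Let $E$ fit into a triangle $W'[-1]\to E\to W''\xrightarrow{h}W'$ with $W',W''\in\operatorname{add}V$. Decompose $W'=W_1'\oplus W_2'$ and $W''=W_1''\oplus W_2''$ with $W_i',W_i''\in\operatorname{add}V_i$. The off-diagonal components of $h$ lie in $\operatorname{Hom}(V_1,V_2)=0$ or $\operatorname{Hom}(V_2,V_1)=0$, so $h=h_1\oplus h_2$. The cocone of a direct sum of morphisms is the direct sum of the cocones, so $E\cong E_1\oplus E_2$, where $E_i$ is the cocone of $h_i$ and lies in $\mathcal X_i$.

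The only point that needs care is this essential surjectivity step: it requires uniqueness of the cocone up to isomorphism, together with the identification of the cocone of $h_1\oplus h_2$ with the direct sum of the cocones. Both are standard in triangulated categories, since a direct sum of triangles is a triangle.
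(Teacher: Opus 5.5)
Your proposal is correct and follows essentially the same route as the paper. The paper proves the vanishing by the same two-step argument, phrased as a diagram chase through factorizations of the triangle maps rather than as long exact $\operatorname{Hom}$ sequences, and proves the equivalence the same way: full faithfulness from degree-zero orthogonality, and essential surjectivity because the connecting morphism is diagonal, so the triangle splits as a direct sum of triangles.
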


\begin{proof}
Take $Y_i\in\mathcal X_i$, $m\in\mathbb Z$, and
$f\in\operatorname{Hom}(Y_1,Y_2[m])$. Choose defining triangles as in the following
diagram, with $A_i,B_i\in\operatorname{add}V_i$.
\[
\begin{tikzcd}[column sep=3em,row sep=2.5em]
 A_1[-1]\arrow[r,"{a_1}"] &
 Y_1\arrow[r,"{b_1}"]\arrow[d,"{f}"]
 &
 B_1\arrow[r]\arrow[dl,dashed,"{g}"']\arrow[dll,dashed,bend right=10,"{h}"',near end]
 & A_1 \\
 A_2[m-1]\arrow[r,"{a_2[m]}"'] &
 Y_2[m]\arrow[r,"{b_2[m]}"'] &
 B_2[m]\arrow[r] & A_2[m].
\end{tikzcd}
\]
Orthogonality gives $b_2[m]fa_1=0$. Hence there exists
$r\in\operatorname{Hom}(A_1[-1],A_2[m-1])$ such that $fa_1=a_2[m]r$.
Since $\operatorname{Hom}(A_1[-1],A_2[m-1])=0$, we obtain $fa_1=0$.
Exactness for the first triangle now gives
$g\in\operatorname{Hom}(B_1,Y_2[m])$ with $f=gb_1$.
Again, since $\operatorname{Hom}(B_1,B_2[m])=0$, there exists
$h\in\operatorname{Hom}(B_1,A_2[m-1])$ such that $g=a_2[m]h$.
Since $\operatorname{Hom}(B_1,A_2[m-1])=0$, we obtain $h=0$.
So $g=0$, and hence $f=0$. Interchanging the indices proves the reverse vanishing.

For an object $Y\in\operatorname{add}V[-1]*\operatorname{add}V$, choose
a triangle
\[
 A[-1]\longrightarrow Y\longrightarrow B\overset{\delta}{\longrightarrow}A,
 \qquad A,B\in\operatorname{add}V.
\]
Write $A=A_1\oplus A_2$ and $B=B_1\oplus B_2$, with
$A_i,B_i\in\operatorname{add}V_i$. Orthogonality makes $\delta$ diagonal.
The triangle therefore decomposes as a direct sum of triangles, giving
$Y\cong Y_1\oplus Y_2$ with $Y_i\in\mathcal X_i$.
Thus the direct sum functor is essentially surjective. The degree-zero
orthogonality of $\mathcal X_1$ and $\mathcal X_2$ makes the functor fully
faithful, proving the equivalence.
\end{proof}

The two lemmas now turn a proper puncture component of $\mathbf U$ into a
nontrivial product decomposition of $\Lambda_{\mathbf R}$.

\begin{proposition}\label{res:3.3}
A nonempty proper puncture component of $\mathbf U$ induces a nontrivial
direct product decomposition of $\Lambda_{\mathbf R}$. In particular, if
$\Lambda_{\mathbf R}$ is connected, then $\mathbf U$ is connected to the
boundary or consists of one puncture component.
\end{proposition}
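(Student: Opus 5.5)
Let $\mathbf U_1$ be a nonempty proper puncture component of $\mathbf U$, and put $\mathbf U_2=\mathbf U\setminus\mathbf U_1\neq\varnothing$. Write $V_i=X(\mathbf U_i)$ and $V=V_1\oplus V_2=X(\mathbf U)$. The plan has three steps: transfer the orthogonality of Lemma~\ref{res:3.1} from $\mathbf R$ to $\mathbf U$, split the extension category containing $X(\mathbf R)$, and show that both pieces of $X(\mathbf R)$ are nonzero.

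\emph{Step 1: orthogonality.} The proof of Lemma~\ref{res:3.1} never used that $\mathbf R$ is a partial tagged triangulation of a special kind. It only used that $\mathbf U_1\cup\mathbf U_2$ is a compatible collection in which the arcs of $\mathbf U_1$ have punctures as both endpoints and share no endpoint with arcs of $\mathbf U_2$. Since $\mathbf U$ is an $\mathbf R$-dissection, it is compatible, so the same argument applies verbatim. This gives $\operatorname{Hom}(V_1,V_2[m])=0=\operatorname{Hom}(V_2,V_1[m])$ for all $m\in\mathbb Z$.

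\emph{Step 2: splitting.} Lemma~\ref{res:3.2} then gives an orthogonal decomposition
\[
 \operatorname{add}V[-1]*\operatorname{add}V\simeq\mathcal X_1\times\mathcal X_2 .
\]
By \cite[Corollary~4.9]{HZZ}, $\mathbf R$ is a $\mathbf U$-co-dissection, so $R=X(\mathbf R)$ lies in this category. Hence $R\cong R_1\oplus R_2$ with $R_i\in\mathcal X_i$. Each indecomposable summand $X(\gamma)$ with $\gamma\in\mathbf R$ lies in exactly one $\mathcal X_i$, which partitions $\mathbf R=\mathbf R'_1\sqcup\mathbf R'_2$. Orthogonality in degree zero makes $\operatorname{End}(R)$ block diagonal, so
\[
 \Lambda_{\mathbf R}\cong\operatorname{End}(R_1)\times\operatorname{End}(R_2).
\]

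\emph{Step 3: nontriviality, the main obstacle.} It remains to show that both $R_1$ and $R_2$ are nonzero. By Theorem~\ref{res:1.9}, $|\mathbf U|=|\mathbf R|$. I would count rank within each factor by applying the rank bound of \cite[Proposition~2.7]{HZZ}, as in the proof of Proposition~\ref{res:2.1}. Dually, $\mathbf U$ is standard with respect to $\mathbf R$, so $U\in\operatorname{add}R*\operatorname{add}R[1]$, and projecting onto the $\mathcal X_i$ parts gives $V_i\in\operatorname{add}R_i*\operatorname{add}R_i[1]$ as well. The rank bound in both directions then yields $|V_i|\le|R_i|$ and $|R_i|\le|V_i|$, so $|R_i|=|V_i|=|\mathbf U_i|\ge1$.

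To make the projection step rigorous, apply the splitting of Lemma~\ref{res:3.2} to the triangle $R^1\to R^0\to V_i\to R^1[1]$. Decomposing $R^0$ and $R^1$ along $R=R_1\oplus R_2$, the $\mathcal X_{3-i}$-component of the triangle has zero third term. Its first map is therefore an isomorphism and can be cancelled. Alternatively, one can argue that $V_i$ is zero in $\mathcal X_{3-i}$ directly by orthogonality.

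Either route gives a nontrivial product decomposition of $\Lambda_{\mathbf R}$. For the final assertion, suppose $\Lambda_{\mathbf R}$ is connected. Then $\mathbf U$ has no nonempty proper puncture component. So either every component of $\mathbf U$ meets the boundary, or $\mathbf U$ equals its unique component, which is a puncture component.
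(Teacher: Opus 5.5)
Your argument is correct, and Steps~1 and~2 match the paper. Lemma~\ref{res:3.1} even applies to $\mathbf U$ directly, since a compatible collection is a partial tagged triangulation.

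The difference is in the nonvanishing step. The paper argues from maximality. If $T_1=0$, then $U_1$ is orthogonal in every degree to $X(\mathbf R)\cong T_2$, so $X(\mathbf R)\oplus U_1$ is rigid in $\operatorname{add}U[-1]*\operatorname{add}U$. Maximal rigidity of the co-dissection then forces $U_1\in\operatorname{add}X(\mathbf R)$, which contradicts $\operatorname{Hom}(U_1,X(\mathbf R))=0$.

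You instead use the opposite inclusion $X(\mathbf U)\in\operatorname{add}R*\operatorname{add}R[1]$. That is, you rely on standardness of $\mathbf U$ rather than maximality of $X(\mathbf R)$. For nontriviality alone the rank bound is unnecessary. The cleanest form of your ``alternative'' is as follows. If $R_i=0$, then in a presentation $R^1\to R^0\xrightarrow{g}V_i\xrightarrow{h}R^1[1]$ both $g$ and $h$ vanish by Lemma~\ref{res:3.2}. Hence $1_{V_i}$ factors through $g=0$, and $V_i=0$. This avoids decomposing the triangle. If you do decompose it, you need the standard fact that direct summands of distinguished triangles are distinguished.

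What your route buys is a sharper count, once the projection is made rigorous and both rank bounds of \cite[Proposition~2.7]{HZZ} are applied. You get $|R_i|=|\mathbf U_i|$, so each factor has exactly as many simple modules as the corresponding part of $\mathbf U$. The paper's argument gives only $T_i\neq0$.
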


\begin{proof}
Suppose $\mathbf U=\mathbf U_1\sqcup\mathbf U_2$, where $\mathbf U_1$ is a
nonempty proper puncture component. Let $U_i=X(\mathbf U_i)$ for $i=1,2$
and $U=X(\mathbf U)=U_1\oplus U_2$.
Lemma~\ref{res:3.1} shows that $U_1$ and $U_2$ are orthogonal in every degree.
By \cite[Corollary~4.9]{HZZ}, $\mathbf R$ is a $\mathbf U$-co-dissection.
Theorem~\ref{res:1.9} therefore makes $X(\mathbf R)$ maximal rigid in
$\operatorname{add}U[-1]*\operatorname{add}U$. Lemma~\ref{res:3.2} gives
$X(\mathbf R)\cong T_1\oplus T_2$ with
$T_i\in\operatorname{add}U_i[-1]*\operatorname{add}U_i$ for $i=1,2$.
Both $T_i$ are nonzero. Indeed, suppose $T_1=0$. Then $X(\mathbf R)\cong T_2$.
Since $U_1$ belongs to $\operatorname{add}U_1[-1]*\operatorname{add}U_1$,
Lemma~\ref{res:3.2} gives
\[
 \operatorname{Hom}(U_1,T_2[m])=0=
 \operatorname{Hom}(T_2,U_1[m]),
 \quad m\in\mathbb Z.
\]
The objects $X(\mathbf R)$ and $U_1$ are rigid. Hence their direct sum is rigid.
Moreover, $U_1\in\operatorname{add}U$ and the extension category
$\operatorname{add}U[-1]*\operatorname{add}U$ is closed under finite direct
sums. Thus
\[
 X(\mathbf R)\oplus U_1\in
 \operatorname{add}U[-1]*\operatorname{add}U.
\]
By maximal rigidity of $X(\mathbf R)$, we have $U_1\in\operatorname{add}X(\mathbf R)$.
But $\operatorname{Hom}(U_1,X(\mathbf R))=0$. Thus a split inclusion of $U_1$
into a finite direct sum of copies of $X(\mathbf R)$ would be zero.
This contradicts $U_1\neq0$. The same argument proves $T_2\neq0$.

Lemma~\ref{res:3.2} also gives zero Hom-spaces between $T_1$ and $T_2$.
Hence
\[
 \Lambda_{\mathbf R}\cong\operatorname{End}(T_1)\times\operatorname{End}(T_2)
\]
is a nontrivial direct product. Finally, if $\mathbf U$ is not connected to
the boundary, it has a puncture component. When $\Lambda_{\mathbf R}$ is
connected, that component is all of $\mathbf U$.
\end{proof}

It remains to show that cutting removes the possible puncture component.
Cutting preserves connectedness to the boundary by \cite[Lemma~5.13]{HZZ}.
For a single puncture component, the following lemma shows that the first
cut connects every remaining component to the new boundary.

\begin{lemma}\label{res:3.4}
Let $\eta\in\mathbf R$. If $\mathbf R$ consists of one puncture component, then
\[
 \mathbf R/\{\eta\}=F_{\{\eta\}}\bigl(\mathbf R\setminus\{\eta\}\bigr)
\]
is connected to the boundary. In particular, if $\Lambda_{\mathbf R}$ is
connected, then $F_{\mathbf N}(\mathbf U\setminus\mathbf N)$ is connected to
the boundary for every nonempty $\mathbf N\subseteq\mathbf U$.
\end{lemma}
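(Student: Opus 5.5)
The proof splits into the geometric first assertion and the algebraic ``in particular'' part.

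For the first assertion, let $\mathbf R$ be a single puncture component and $\eta\in\mathbf R$. Both endpoints of $\eta$ lie in $\mathbf P$. In $\mathbf S/\{\eta\}$ these punctures become marked points on a new boundary component created by the cut, by Definition~\ref{res:2.3}. Let $\mathbf K$ be a connected component of $\mathbf R/\{\eta\}$. By the definition of a component, $\mathbf K=F_{\{\eta\}}(\mathbf K')$ for some subset $\mathbf K'\subseteq\mathbf R\setminus\{\eta\}$. Since $\mathbf R$ is connected through common endpoints, there is a chain of arcs in $\mathbf R$ from an arc of $\mathbf K'$ to $\eta$ with consecutive arcs sharing an endpoint. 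Choose the first arc $\gamma$ in this chain, after possibly shortening it, that shares an endpoint $p$ with $\eta$.

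The key point is that tagged cutting keeps endpoints in place. If $\gamma$ and $\eta$ have non-homotopic underlying arcs, $F_{\{\eta\}}(\gamma)$ follows the same curve, and its end at $p$ becomes an end at a boundary marked point. If they are homotopic, so that $\gamma$ and $\eta$ form a conjugate pair, I will read off from Figure~\ref{fig:3} that the image still has an endpoint on the new boundary. The arcs earlier in the chain do not meet $\eta$ at punctures. Their images keep the same curves, so they still share endpoints with one another, and their common endpoints are either punctures or points now on the boundary. Hence the chain survives in $\mathbf S/\{\eta\}$, and $\mathbf K$ contains an arc incident with $\mathbf M$. The main obstacle is the homotopic (conjugate pair) case. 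There I must check the cases of Figure~\ref{fig:3} and use Remark~\ref{rem:conjugate-endpoints} to see that the other arcs at the conjugate pair meet it only at the common-tag puncture $p$. This guarantees that $p$ remains a shared endpoint after cutting.

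For the ``in particular'' part, assume $\Lambda_{\mathbf R}$ is connected and $\emptyset\neq\mathbf N\subseteq\mathbf U$. By Proposition~\ref{res:3.3}, $\mathbf U$ is either connected to the boundary or consists of one puncture component. In both cases, choose $\eta\in\mathbf N$, write $\mathbf N=\mathbf N'\cup\{\eta\}$, and cut along $\eta$ first. In the puncture case, the first assertion applied to the partial tagged triangulation $\mathbf U$ shows that $F_{\{\eta\}}(\mathbf U\setminus\{\eta\})$ is connected to the boundary. In the boundary case, the same conclusion follows from \cite[Lemma~5.13]{HZZ}. The remaining arcs of $\mathbf N$ are then cut one at a time, using the inductive description of $F_{\mathbf N}$ in Definition~\ref{res:2.3}. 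By \cite[Lemma~5.13]{HZZ}, each of these cuts preserves connectedness to the boundary. This shows that $F_{\mathbf N}(\mathbf U\setminus\mathbf N)$ is connected to the boundary, and the case where the remaining set is empty holds vacuously.
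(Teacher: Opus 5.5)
Your proposal is correct and follows the paper's proof. In the first part, a shortest chain to $\eta$ has earlier arcs avoiding the endpoints of $\eta$, so their incidences survive the cut, and the last arc lands on the new boundary, including the conjugate case read off from Figure~\ref{fig:3}. The second part likewise combines Proposition~\ref{res:3.3}, cutting an arc of $\mathbf N$ first, and \cite[Lemma~5.13]{HZZ}.

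The extra appeal to Remark~\ref{rem:conjugate-endpoints} is unnecessary. Minimality of the chain forces a $\gamma$ conjugate to $\eta$ to be the starting arc.
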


\begin{proof}
For any $\gamma\in\mathbf R\setminus\{\eta\}$, choose a shortest chain
\[
 \gamma=\gamma_0,\gamma_1,\ldots,\gamma_{r-1},\gamma_r=\eta
\]
in $\mathbf R$ such that $\gamma_j$ and $\gamma_{j+1}$ share an endpoint
for $0\leq j<r$.
For $j<r-1$, $\gamma_j$ has no endpoint in common with $\eta$ by the minimality
of this chain.
Thus the incidences in $\gamma_0,\ldots,\gamma_{r-1}$ are preserved by cutting.
The last arc $\gamma_{r-1}$ shares an endpoint with $\eta$. Hence $F_{\{\eta\}}(\gamma_{r-1})$
meets a marked point on the new boundary. This also holds when
$\gamma_{r-1}$ and $\eta$ have the same underlying arc and different taggings,
as in Definition~\ref{res:2.3} and Figure~\ref{fig:3}.
The cut chain therefore joins $F_{\{\eta\}}(\gamma)$ to the boundary.

For the final assertion, Proposition~\ref{res:3.3} shows that $\mathbf U$
is connected to the boundary or consists of one puncture component.
In the first case, successive cuts preserve boundary-connectedness by
\cite[Lemma~5.13]{HZZ}. In the second case, cut an arc of $\mathbf N$ first.
The discussion above shows that the resulting partial triangulation is
connected to the boundary. Subsequent cuts preserve this property by
\cite[Lemma~5.13]{HZZ}.
\end{proof}

When $\Lambda_{\mathbf R}$ is connected, Lemma~\ref{res:3.4} makes every
nonempty cut connected to the boundary. We can therefore apply the graph
identification in Section~\ref{sec:2} and the connectedness theorem of
\cite{HZZ} to completion graphs above nonzero pairs.

\begin{theorem}\label{res:3.5}
Let $A=\Lambda_{\mathbf R}$ be connected. For every nonzero basic $\tau$-rigid
pair $u$, the completion graph $\mathcal F_A(u)$ is connected. Equivalently,
$\operatorname{EG}(s\tau\text{-}\operatorname{tilt}A_u)$ is connected.
In particular, $A$ has the reachable-in-face property of \cite{FGLZ}.
\end{theorem}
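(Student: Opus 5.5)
The plan is to reduce everything to the geometric connectedness theorem \cite[Theorem~5.16]{HZZ} through the cutting machinery of Section~\ref{sec:2}. Let $u\neq(0,0)$ be a basic $\tau$-rigid pair over the connected algebra $A=\Lambda_{\mathbf R}$. Write $U=\Phi^{-1}(u)=X(\mathbf N_u)$, and let $\mathbf U_u=\mathbf N_u\sqcup\mathbf L_u$ be the $\mathbf R$-dissection corresponding to the Bongartz completion, as in Proposition~\ref{res:2.1}. Since $u$ is nonzero, $\mathbf N_u$ is a nonempty subcollection of the dissection $\mathbf U_u$. By Corollary~\ref{res:2.7} we have
\[
 \mathcal F_A(u)\cong\operatorname{EG}(s\tau\text{-}\operatorname{tilt}A_u)
 \cong\operatorname{EG}^\times(\mathbf R_u),
 \qquad \mathbf R_u=F_{\mathbf N_u}(\mathbf L_u).
\]
This already gives the stated equivalence of the two formulations. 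It therefore suffices to show that $\operatorname{EG}^\times(\mathbf R_u)$ is connected.

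The key step is to apply Lemma~\ref{res:3.4} with $\mathbf U=\mathbf U_u$ and $\mathbf N=\mathbf N_u$. Since $\Lambda_{\mathbf R}$ is connected, the lemma shows that $F_{\mathbf N_u}(\mathbf U_u\setminus\mathbf N_u)=\mathbf R_u$ is connected to the boundary on the cut surface $\mathbf S/\mathbf N_u$. The cut surface may be disconnected, so I would pass to components using Corollary~\ref{res:2.14}. This gives
\[
 \operatorname{EG}^\times(\mathbf R_u)\cong\mathop{\square}_{i\in I_u}\operatorname{EG}^\times(\mathbf R_i),
\]
since the facet-adjacency graph of a join of pure complexes is the Cartesian product of their facet-adjacency graphs. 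Each $\mathbf R_i$ is connected to the boundary inside its component $S_i$, because its connected components are among those of $\mathbf R_u$. Hence \cite[Theorem~5.16]{HZZ} shows that each factor is connected, and a Cartesian product of connected graphs is connected.

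Two degenerate cases need separate treatment. If $u$ is $\tau$-tilting, then $\mathbf R_u=\varnothing$ and the graph has exactly one vertex. The same holds when $I_u$ is empty. A further point to check is that the cut components are admissible surfaces for \cite[Theorem~5.16]{HZZ}, meaning they have nonempty boundary and are not among the excluded small cases. The boundary condition is automatic, since cutting creates boundary. Unpunctured triangles contribute nothing, following the conventions.

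Finally, the reachable-in-face property is a formal consequence of the connectedness just proved. Suppose two $\tau$-tilting pairs share a common summand $u$. If $u\neq0$, they lie in $\mathcal F_A(u)$, which is connected, so they can be joined by mutations that keep $u$ fixed. If $u=0$ the condition is vacuous, since the face is the whole graph and the pairs were assumed mutation-reachable.

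I expect the main obstacle to be confirming that Lemma~\ref{res:3.4} really applies to the Bongartz dissection $\mathbf U_u$. This is where connectedness of $A$ enters, through Proposition~\ref{res:3.3}. A second point needing care is that \cite[Theorem~5.16]{HZZ} applies componentwise on a possibly disconnected cut surface.
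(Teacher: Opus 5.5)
Your proposal is correct and follows the paper's proof essentially verbatim. Both use the nonempty $\mathbf N_u$ in the Bongartz dissection, then Lemma~\ref{res:3.4} to make $\mathbf R_u$ connected to the boundary, then \cite[Theorem~5.16]{HZZ} and Corollary~\ref{res:2.7}, and both give the same two-case argument for the reachable-in-face property. Your componentwise reduction via Corollary~\ref{res:2.14} and the Cartesian product is extra care; the paper instead relies on its standing convention that disconnected cut surfaces are treated componentwise.
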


\begin{proof}
The Bongartz dissection $\mathbf U_u=\mathbf N_u\sqcup\mathbf L_u$ has
$\mathbf N_u\neq\varnothing$. By Lemma~\ref{res:3.4}, the partial tagged
triangulation $\mathbf R_u=F_{\mathbf N_u}(\mathbf L_u)$ is connected to the
boundary. The dissection graph of $\mathbf R_u$ is therefore connected by
\cite[Theorem~5.16]{HZZ}. Corollary~\ref{res:2.7} identifies the dissection
graph with $\mathcal F_A(u)$ and with
$\operatorname{EG}(s\tau\text{-}\operatorname{tilt}A_u)$.

For the reachable-in-face assertion, let $T,T'$ be mutation-reachable basic
$\tau$-tilting pairs with a common direct summand $u$.
If $u\neq(0,0)$, the connected graph $\mathcal F_A(u)$ joins $T$ and $T'$
while keeping $u$ fixed. If $u=(0,0)$, any path witnessing mutation
reachability lies in the face of $u$, which is the whole exchange graph.
Thus $A$ has the reachable-in-face property in the sense of
\cite[Definition~1.4]{FGLZ}.
\end{proof}

\subsection{Dissections of a puncture component}
When $\mathbf R$ consists of one puncture component, an arc $\eta$ common
to $\mathbf R$ and an $\mathbf R$-dissection allows reduction to the boundary
case by cutting. We therefore seek a sequence of flips from an arbitrary
$\mathbf R$-dissection to one containing an arc of $\mathbf R$.
The only additional possibility is $\rho(\mathbf R)$.

Use the notation $\mathbf U^\circ$, $\ell_{\mathbf U}^\circ$, and
$\gamma^{\mathbf U}$ of Section~1.2.
Write $\operatorname{Int}^\circ(\gamma^{\mathbf U},\mathbf U^\circ)$ for the
sum of the interior intersection numbers between $\gamma^{\mathbf U}$ and the ideal arcs of
$\mathbf U^\circ$. For $\ell\in\mathbf U$, similarly set
$\operatorname{Int}^\circ(\mathbf R,\ell_{\mathbf U}^\circ)
=\sum_{\gamma\in\mathbf R}
\operatorname{Int}^\circ(\gamma^{\mathbf U},\ell_{\mathbf U}^\circ)$.
Define the total interior intersection number by
\begin{equation}\label{eq:3.1}
 \operatorname{Int}^\circ(\mathbf R,\mathbf U^\circ)
 =\sum_{\gamma\in\mathbf R}\sum_{\ell\in\mathbf U}
   \operatorname{Int}^\circ(\gamma^{\mathbf U},\ell_{\mathbf U}^\circ)
 \in\mathbb Z_{\geq0}.
\end{equation}
The following local steps allow induction on
$\operatorname{Int}^\circ(\mathbf R,\mathbf U^\circ)$.
They apply to any partial tagged triangulation $\mathbf R$, including arcs
with boundary endpoints.

\begin{lemma}\label{res:3.6}
Let $\mathbf R$ be a partial tagged triangulation and let $\mathbf U$ be an
$\mathbf R$-dissection with $\mathbf R\cap\mathbf U=\varnothing$.
\begin{enumerate}
\item If $\gamma\in\mathbf R$ is not adjoint to any arc of $\mathbf U$ and
$\operatorname{Int}^\circ(\gamma^{\mathbf U},\mathbf U^\circ)=0$, there exists
an $\mathbf R$-dissection $\mathbf V$ flip-connected to $\mathbf U$
with $\gamma\in\mathbf V$.
\item If $\operatorname{Int}^\circ(\mathbf R,\mathbf U^\circ)>0$, there exists
an $\mathbf R$-dissection $\mathbf V$ flip-connected to $\mathbf U$ such that
$\mathbf V\cap\mathbf R\neq\varnothing$ or
$\operatorname{Int}^\circ(\mathbf R,\mathbf V^\circ)<
\operatorname{Int}^\circ(\mathbf R,\mathbf U^\circ)$.
\end{enumerate}
\end{lemma}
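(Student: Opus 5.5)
The plan is to work inside the cluster category, using the correspondences of Theorem~\ref{res:1.9} and Theorem~\ref{res:2.6}. There, flips of $\mathbf R$-dissections are mutations of maximal rigid objects of $\mathcal E_R$, and the interior intersection numbers with $\mathbf U^\circ$ control the $\mathbf U$-presentation of $X(\gamma)$.

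\textbf{Part (1).} The hypothesis $\operatorname{Int}^\circ(\gamma^{\mathbf U},\mathbf U^\circ)=0$ says that $\gamma^{\mathbf U}$ crosses no arc of $\mathbf U^\circ$ in the interior. So the only possible intersections between $\gamma$ and $\mathbf U$ are tagged intersections at punctures.

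First, I would show that $\gamma$ is compatible with every arc of $\mathbf U$ up to adjointness. An endpoint conflict at a puncture $p$ with $\kappa_{\mathbf U}(p)\neq0$ is removed by the twist $\gamma\mapsto\gamma^{\mathbf U}$. At a zero-signature puncture, the self-folded replacement in $\mathbf U^\circ$ converts a tagged conflict into an interior crossing with the folded side. That would contradict the hypothesis, unless $\gamma$ shares its underlying arc with an arc of $\mathbf U$ carrying reversed taggings, which is exactly the adjoint case we exclude.

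Granting this, $\mathbf U\cup\{\gamma\}$ is a compatible set of $\mathbf R$-standard arcs. Arcs of $\mathbf R$ are $\mathbf R$-standard, since $R\in\mathcal E_R$. By the rank statement in Theorem~\ref{res:1.9}, $\mathbf U\cup\{\gamma\}$ would then have $|\mathbf R|+1$ arcs, which is impossible unless $\gamma\in\mathbf U$. But $\gamma\notin\mathbf U$ because $\mathbf R\cap\mathbf U=\varnothing$.

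So the real content is that some tagged intersections remain, and these must be resolved by flips. Concretely, I would flip the arcs $\ell\in\mathbf U$ that meet $\gamma$ only at a puncture with conflicting tagging. The flip of $\ell$ in the dissection replaces it by the arc obtained by changing the tagging at that end, or by rotating it around the puncture inside the punctured digon cut out by $\mathbf U^\circ$. This is compatible with $\gamma$, and it does not create interior crossings of $\gamma^{\mathbf V}$ with $\mathbf V^\circ$. After finitely many such flips at the punctures of $\gamma$, the arc $\gamma$ becomes compatible with the new dissection $\mathbf V$. Maximality then forces $\gamma\in\mathbf V$. The puncture count gives termination, since each flip reduces the number of tagged conflicts with $\gamma$.

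\textbf{Part (2).} Choose $\gamma\in\mathbf R$ and $\ell\in\mathbf U$ with $\operatorname{Int}^\circ(\gamma^{\mathbf U},\ell^\circ_{\mathbf U})>0$. I would take a crossing of $\gamma^{\mathbf U}$ with $\mathbf U^\circ$ nearest to an endpoint of $\gamma$. Follow $\gamma^{\mathbf U}$ from that endpoint until it first enters a triangle (or self-folded piece) of $\mathbf U^\circ$. Because $\gamma$ is $\mathbf R$-standard, hence shears $\mathbf R$, and $\mathbf R$ is a $\mathbf U$-co-dissection (by \cite[Corollary~4.9]{HZZ}), the first segment cuts out an angle. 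This identifies an arc $\ell$ of $\mathbf U$ adjacent to the endpoint of $\gamma$ that $\gamma$ crosses first.

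Let $\mathbf V$ be the flip of $\mathbf U$ at $\ell$. In the quadrilateral of $\mathbf U^\circ$ containing $\ell$, the new diagonal $\ell'$ avoids the first crossing. The segments of every $\delta^{\mathbf U}$ with $\delta\in\mathbf R$ inside this quadrilateral each cut out an angle. A local count then gives $\operatorname{Int}^\circ(\delta^{\mathbf V},\ell'^\circ)\le\operatorname{Int}^\circ(\delta^{\mathbf U},\ell^\circ)$ for all $\delta\in\mathbf R$, with strict inequality for $\gamma$. Hence the total count drops.

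There is an alternative outcome: the flip may change a weak signature, so that $\mathbf V^\circ$ and the twist $(-)^{\mathbf V}$ differ from those of $\mathbf U$, or $\ell'$ may coincide with an arc of $\mathbf R$. In those cases I would check that either $\mathbf V\cap\mathbf R\neq\varnothing$, or the count still decreases.

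\textbf{Main obstacle.} I expect the hardest part to be the self-folded and zero-signature cases, in both (1) and (2). There the relation between $\gamma^{\mathbf U}$, $\mathbf U^\circ$ and tagged flips is not a plain quadrilateral flip. I would handle these separately using the local pictures of Figure~\ref{fig:3} and Remark~\ref{rem:conjugate-endpoints}.
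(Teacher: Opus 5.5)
\textbf{Part (2) has a genuine gap.} It rests on a model of flips that does not apply to dissections. An $\mathbf R$-dissection has only $|\mathbf R|$ arcs, so the cells of $\mathbf U^\circ$ are general polygons. There is no ``quadrilateral of $\mathbf U^\circ$ containing $\ell$''. The flip replaces $\ell$ by the unique other $\mathbf R$-standard arc completing $\mathbf U\setminus\{\ell\}$; for example, for $\mathbf R=\{\alpha\}$ with both ends at punctures it is $\rho(\alpha)$, as in the proof of Theorem~\ref{res:5.7}. Your local count, giving $\operatorname{Int}^\circ(\delta^{\mathbf V},(\ell')^\circ)\le\operatorname{Int}^\circ(\delta^{\mathbf U},\ell^\circ)$ for every $\delta\in\mathbf R$ with strict inequality for $\gamma$, is therefore unsupported. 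The crossings of the new arc with $\mathbf R$ need not be dominated by those of $\ell$, and a flip need not decrease $N(\mathbf U)=\operatorname{Int}^\circ(\mathbf R,\mathbf U^\circ)$.

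The paper's proof is organized around exactly this difficulty:
\begin{enumerate}
\item It flips a \emph{maximal} arc $\ell$, one with the largest number of interior crossings with $\mathbf R$.
\item It first disposes of the common-arc cases. An arc of $\mathbf R$ adjoint to $\ell$ is inserted by \cite[Lemma~5.20]{HZZ}, and a non-adjoint arc with no interior crossings is inserted by part (1).
\item Only then are the alternative intersections of \cite[Lemma~4.10, Remark~4.5]{HZZ} interior, by \cite[Lemma~5.21]{HZZ}. Even so, \cite[Lemma~5.22]{HZZ} yields only $N(f_\ell(\mathbf U))\le N(\mathbf U)$.
\item Equality is not ruled out. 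The neutral flips are handled by a second induction on the auxiliary integer $m$, the least length of a run of consecutive maximal arcs crossed by an arc of $\mathbf R$ with only the last crossing alternative, following the case analysis in the proof of \cite[Lemma~5.15]{HZZ}. Either $m$ drops with $N$ fixed or a common arc appears, and at $m=1$ the flip is strictly decreasing.
\end{enumerate}
Choosing $\ell$ as the first arc crossed near an endpoint of $\gamma$ gives neither the weak inequality nor any mechanism for the neutral case. The real obstacle is these neutral flips, not the self-folded configurations.

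\textbf{Part (1) has the same character.} Your opening claim that conflicts at punctures of nonzero signature are ``removed by the twist'' is wrong. The map $\gamma\mapsto\gamma^{\mathbf U}$ only re-encodes taggings relative to $\kappa_{\mathbf U}$, so such a conflict persists and is simply invisible to $\operatorname{Int}^\circ$. That is why your own rank count immediately contradicts the claimed compatibility.

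The flips you then propose are asserted rather than constructed. Changing the tagging of $\ell$ at $p$ gives an arc compatible with $\mathbf U\setminus\{\ell\}$ only if no other arc of $\mathbf U$ with a different underlying arc ends at $p$, so the valency at $p$ must first be reduced. For each step you would need to show three things:
\begin{enumerate}
\item the replacement is the actual dissection flip, i.e.\ it is $\mathbf R$-standard;
\item $\gamma^{\mathbf V}$ gains no interior crossings with $\mathbf V^\circ$, noting that both change when a signature changes;
\item the process terminates. A rotation about $p$ keeps the endpoint at $p$, so the number of conflicts need not drop.
\end{enumerate}
The ingredient that makes such local flips work is that $\gamma$ is $\mathbf U$-co-standard, because $\mathbf R$ is a $\mathbf U$-co-dissection \cite[Corollary~4.9]{HZZ}. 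The paper obtains (1) from the local flips of \cite[Lemma~5.19]{HZZ}, which use exactly this property together with the hypotheses of (1). Your sketch never uses it.
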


\begin{proof}
By \cite[Corollary~4.9]{HZZ}, $\mathbf R$ is a $\mathbf U$-co-dissection.
\textup{(1)}
The local flips in \cite[Lemma~5.19]{HZZ} insert the arc $\gamma$ in (1).
The proof uses only this co-standard property and the hypotheses of (1).

\textup{(2)} Write $N(\mathbf W)=\operatorname{Int}^\circ(\mathbf R,\mathbf W^\circ)$
for an $\mathbf R$-dissection $\mathbf W$, and choose a maximal arc
$\ell\in\mathbf U$, meaning that $\ell^\circ_{\mathbf U}$ has a maximal number
of interior intersections with $\mathbf R$.
If an arc $\gamma\in\mathbf R$ is adjoint to $\ell$,
\cite[Lemma~5.20]{HZZ} inserts $\gamma$, since
$\operatorname{Int}^\circ(\mathbf R,\ell_{\mathbf U}^\circ)>0$.
If some $\gamma\in\mathbf R$ is not adjoint to any arc of $\mathbf U$
and has no interior intersections with $\mathbf U^\circ$, apply (1). Both cases give a common arc with $\mathbf R$.
Otherwise, \cite[Lemma~4.10 and Remark~4.5]{HZZ} give an alternative
intersection at each maximal arc. These intersections are interior by
\cite[Lemma~5.21]{HZZ}. Thus \cite[Lemma~5.22]{HZZ} gives a flip
$f_\ell$ satisfying
\[
 N(f_\ell(\mathbf U))\leq N(\mathbf U).
\]
A strict inequality gives the second alternative in (2).

For equality, use the auxiliary induction in
\cite[proof of Lemma~5.15, pp.~37--41]{HZZ}.
The auxiliary integer $m$ is the least length of a consecutive run of maximal arcs
crossed by an arc of $\mathbf R$, with only the last crossing alternative
in the sense of \cite[Remark~4.5]{HZZ}; the preceding crossed arc, if any,
is not maximal. The neutral-flip calculations decrease $m$ while keeping
$N$ fixed, or reach one of the common-arc cases above.
In their Case~(3)(b)(ii), $m$ is unchanged, but the new configuration
falls under Case~(2) or~(3)(a), where the preceding reduction applies.
At $m=1$, neutrality is impossible, so the next flip strictly decreases
$N$. Thus the local induction ends with a common arc or a strict decrease
of $N$, giving the required $\mathbf V$.
Boundary-connectedness is used only in the zero-intersection base case
of the cited proof. The argument here stops at a strict decrease and
does not use that base case.
\end{proof}

When all endpoints of $\mathbf R$ are punctures, the zero-intersection
case can give $\mathbf U=\rho(\mathbf R)$. The next proposition handles
this case separately.

\begin{proposition}\label{res:3.7}
Let $\mathbf R$ be a partial tagged triangulation with all endpoints at
punctures. Every $\mathbf R$-dissection $\mathbf U$ is
flip-connected to an $\mathbf R$-dissection $\mathbf V$ with
$\mathbf V\cap\mathbf R\neq\varnothing$ or $\mathbf V=\rho(\mathbf R)$.
\end{proposition}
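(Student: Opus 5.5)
We argue by induction on the nonnegative integer $N(\mathbf U)=\operatorname{Int}^\circ(\mathbf R,\mathbf U^\circ)$ of \eqref{eq:3.1}, using Lemma~\ref{res:3.6} for the inductive step. If $\mathbf U\cap\mathbf R\neq\varnothing$ we take $\mathbf V=\mathbf U$. Assume therefore $\mathbf U\cap\mathbf R=\varnothing$. If $N(\mathbf U)>0$, Lemma~\ref{res:3.6}(2) gives a dissection $\mathbf V'$ flip-connected to $\mathbf U$ with either $\mathbf V'\cap\mathbf R\neq\varnothing$, in which case we stop, or $N(\mathbf V')<N(\mathbf U)$, in which case the induction hypothesis applies to $\mathbf V'$. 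Flip-connectedness is transitive, so it remains to treat the base case $N(\mathbf U)=0$ with $\mathbf U\cap\mathbf R=\varnothing$.

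In the base case, suppose first that some $\gamma\in\mathbf R$ is not adjoint to any arc of $\mathbf U$. Since $N(\mathbf U)=0$, we have $\operatorname{Int}^\circ(\gamma^{\mathbf U},\mathbf U^\circ)=0$. Lemma~\ref{res:3.6}(1) then produces $\mathbf V\sim_{\mathrm{flip}}\mathbf U$ with $\gamma\in\mathbf V$. Otherwise, every $\gamma\in\mathbf R$ is adjoint to some arc of $\mathbf U$. All endpoints of $\mathbf R$ are punctures, so the adjoint $\rho(\gamma)$ is a well-defined tagged arc with the same underlying arc. Hence $\rho(\mathbf R)\subseteq\mathbf U$.

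The map $\rho$ is injective and $|\mathbf U|=|\mathbf R|$ by Theorem~\ref{res:1.9}. Counting gives $\mathbf U=\rho(\mathbf R)$, which is the second alternative. Here I must make sure that $\rho(\mathbf R)$ consists of $|\mathbf R|$ distinct arcs and that distinct $\gamma$ cannot be adjoint to the same arc of $\mathbf U$. Both follow from $\rho^2=\mathrm{id}$ on arcs with both endpoints at punctures.

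The main obstacle is checking that Lemma~\ref{res:3.6} genuinely applies without boundary-connectedness, in particular in the part of (2) that reduces to the auxiliary induction of \cite{HZZ}. The proof of that lemma already records that boundary-connectedness is used only in the zero-intersection base case of the cited argument, and that base case is exactly what I handle separately above. I would also verify that in part (1) the hypothesis ``not adjoint to any arc of $\mathbf U$'' is precisely what excludes the obstruction $\rho(\mathbf R)$. This is plausible, since $\rho(\mathbf R)$ is an $\mathbf R$-dissection (it corresponds to $(0,A)$) and has zero interior intersection with $\mathbf R$.
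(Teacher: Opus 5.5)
Your proposal is correct and follows the paper's proof essentially step for step: induction on $\operatorname{Int}^\circ(\mathbf R,\mathbf U^\circ)$ using Lemma~\ref{res:3.6}(2), with the zero-intersection base case split into two cases. If some arc of $\mathbf R$ is adjoint to no arc of $\mathbf U$, Lemma~\ref{res:3.6}(1) applies; otherwise $\rho(\mathbf R)\subseteq\mathbf U$, and the cardinality count from Theorem~\ref{res:1.9} forces $\mathbf U=\rho(\mathbf R)$, where your check that $\rho$ is injective (since $\rho^2=\mathrm{id}$ on arcs with puncture endpoints) makes explicit what the paper's counting step uses implicitly.
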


\begin{proof}
If $\mathbf U\cap\mathbf R\neq\varnothing$, take $\mathbf V=\mathbf U$.
Otherwise, induct on $\operatorname{Int}^\circ(\mathbf R,\mathbf U^\circ)$.
Suppose first that $\operatorname{Int}^\circ(\mathbf R,\mathbf U^\circ)=0$.
If some $\gamma\in\mathbf R$ is not adjoint to an arc of $\mathbf U$,
Lemma~\ref{res:3.6}(1) gives an $\mathbf R$-dissection $\mathbf V$
flip-connected to $\mathbf U$ and containing $\gamma$.
Otherwise, every $\gamma\in\mathbf R$ is adjoint to an arc of $\mathbf U$.
Since the endpoints of $\gamma$ are punctures, the adjoint of $\gamma$
is $\rho(\gamma)$. Thus $\rho(\mathbf R)\subseteq\mathbf U$.
Both partial tagged triangulations have cardinality $|\mathbf R|$,
see Theorem~\ref{res:1.9}. Therefore $\mathbf U=\rho(\mathbf R)$.

If the intersection number is positive, apply Lemma~\ref{res:3.6}(2).
When the resulting $\mathbf V$ meets $\mathbf R$, it is the required
dissection. Otherwise $\operatorname{Int}^\circ(\mathbf R,\mathbf V^\circ)$
is smaller, and the induction hypothesis applied to $\mathbf V$ proves
the assertion.
\end{proof}

When $\mathbf R$ consists of one puncture component, an arc common to
$\mathbf R$ and an $\mathbf R$-dissection can be cut to obtain a partial tagged
triangulation connected to the boundary. The connectedness theorem of
\cite{HZZ} then joins that dissection to $\mathbf R$.

\begin{theorem}\label{res:3.8}
Let $\mathbf R$ consist of one puncture component.
Every $\mathbf R$-dissection is flip-connected to $\mathbf R$ or to
$\rho(\mathbf R)$. Consequently,
\[
 \#\pi_0\operatorname{EG}^{\times}(\mathbf R)
 =
 \begin{cases}
  1,&\mathbf R\sim_{\mathrm{flip}}\rho(\mathbf R),\\
  2,&\mathbf R\not\sim_{\mathrm{flip}}\rho(\mathbf R).
 \end{cases}
\]
\end{theorem}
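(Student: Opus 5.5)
The plan is to combine Proposition~\ref{res:3.7} with the connectedness theorem of \cite{HZZ} on the cut surface. Let $\mathbf U$ be an $\mathbf R$-dissection. Since $\mathbf R$ is a single puncture component, all endpoints of $\mathbf R$ are punctures. Proposition~\ref{res:3.7} therefore gives an $\mathbf R$-dissection $\mathbf V$ with $\mathbf U\sim_{\mathrm{flip}}\mathbf V$ and either $\mathbf V=\rho(\mathbf R)$ or $\mathbf V\cap\mathbf R\neq\varnothing$. In the first case there is nothing more to prove. In the second case, we must show $\mathbf V\sim_{\mathrm{flip}}\mathbf R$.

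For the second case, fix $\eta\in\mathbf V\cap\mathbf R$. Since $\mathbf R$ is itself an $\mathbf R$-dissection (it is the pair $(A,0)$), $\mathbf V$ and $\mathbf R$ are both vertices of the completion graph of the indecomposable $\tau$-rigid pair $u$ corresponding to $\eta$. By Corollary~\ref{res:2.7}, $\mathcal F_A(u)\cong\operatorname{EG}^\times(\mathbf R_u)$. Here $\mathbf N_u=\{\eta\}$, and $\mathbf R_u$ is obtained by cutting the complement of $\eta$ in the Bongartz dissection along $\eta$.

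The step needing most care is checking that $\mathbf R_u$ is connected to the boundary. First, the Bongartz completion of $u$ is $\mathbf R$ itself. Indeed, $(A,0)$ is the greatest element in the $\operatorname{Fac}$ order and contains $u$, since $X(\eta)\in\operatorname{add}R$ corresponds to a projective module. So $\mathbf U_u=\mathbf R$, $\mathbf L_u=\mathbf R\setminus\{\eta\}$, and $\mathbf R_u=F_{\{\eta\}}(\mathbf R\setminus\{\eta\})$. Lemma~\ref{res:3.4} applies directly because $\mathbf R$ is one puncture component, so $\mathbf R_u$ is connected to the boundary. This argument does not need connectedness of $\Lambda_{\mathbf R}$. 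Then \cite[Theorem~5.16]{HZZ} gives connectedness of $\operatorname{EG}^\times(\mathbf R_u)$. Hence $\mathcal F_A(u)$ is connected, and $\mathbf V\sim_{\mathrm{flip}}\mathbf R$ by a path fixing $\eta$. If $\mathbf R_u=\varnothing$ (so $|\mathbf R|=1$), the graph has one vertex, and then $\mathbf V=\mathbf R$.

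It remains to deduce the component count. Every vertex lies in the component of $\mathbf R$ or that of $\rho(\mathbf R)$, so there are at most two components. There is exactly one component if $\mathbf R\sim_{\mathrm{flip}}\rho(\mathbf R)$, and otherwise exactly two. We also need $\rho(\mathbf R)$ to be an $\mathbf R$-dissection, but this is implicit in the statement, as it corresponds to $(0,A)$ via $X(\rho\gamma)\cong X(\gamma)[1]$ and Theorem~\ref{res:1.9}.
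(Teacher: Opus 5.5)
Your proof is correct and follows the paper's argument. Proposition~\ref{res:3.7} produces a dissection $\mathbf V$ that either equals $\rho(\mathbf R)$ or shares an arc $\eta$ with $\mathbf R$. Lemma~\ref{res:3.4} makes $\mathbf R/\{\eta\}$ connected to the boundary without assuming $\Lambda_{\mathbf R}$ connected, and \cite[Theorem~5.16]{HZZ} supplies the path.

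The only difference is how you transport that path. You go through Corollary~\ref{res:2.7}, after correctly noting that the Bongartz completion of the projective pair attached to $\eta$ is $(A,0)$, so $\mathbf R_u=\mathbf R/\{\eta\}$. The paper instead lifts the path directly with \cite[Lemma~5.14]{HZZ}.
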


\begin{proof}
Let $\mathbf U$ be an $\mathbf R$-dissection.
By Proposition~\ref{res:3.7}, there is an $\mathbf R$-dissection
$\mathbf V\sim_{\mathrm{flip}}\mathbf U$ such that
$\mathbf V=\rho(\mathbf R)$ or
$\eta\in\mathbf V\cap\mathbf R$ for some arc $\eta$.
The first case gives the assertion.
In the second case, Lemma~\ref{res:3.4} shows that
$\mathbf R/\{\eta\}$ is connected to the boundary.
Hence \cite[Theorem~5.16]{HZZ} gives a path from
$F_{\{\eta\}}(\mathbf V\setminus\{\eta\})$ to
$\mathbf R/\{\eta\}$ in
$\operatorname{EG}^{\times}(\mathbf R/\{\eta\})$.
By \cite[Lemma~5.14]{HZZ}, this path lifts to a path from $\mathbf V$
to $\mathbf R$ that keeps $\eta$ fixed.
Thus $\mathbf U\sim_{\mathrm{flip}}\mathbf V
\sim_{\mathrm{flip}}\mathbf R$.
So every $\mathbf R$-dissection lies in the component of
$\mathbf R$ or of $\rho(\mathbf R)$, giving the stated count.
\end{proof}

\section{Endpoint forms and finiteness}
\label{sec:4}

Throughout this section, let $\mathbf R=\{\gamma_1,\ldots,\gamma_n\}$ be a
nonempty partial tagged triangulation with all endpoints at punctures.
Let $R_i=X(\gamma_i)$, $R=X(\mathbf R)$ and $A=\Lambda_{\mathbf R}$.
We compute the Cartan matrix of $A$ from the endpoints of $\mathbf R$
and characterize $\tau$-tilting finiteness.

\subsection{The signed endpoint form}
Define the \emph{signed endpoint matrix} of $\mathbf R$ by
\[
 B=(b_{pi})_{p\in\mathbf P_{\mathbf R},\,1\leq i\leq n},
 \qquad\text{where}\qquad
 b_{pi}=\sum_{\substack{t\in\{0,1\}\\\gamma_i(t)=p}}
       \kappa_{\gamma_i}(t).
\]
An arc from $p$ to $q\neq p$ with taggings $a,b$ has column $ae_p+be_q$,
whereas a loop at $p$ with tagging $a$ has column $2ae_p$.

\begin{proposition}\label{res:4.1}
The algebra $A$ is weakly symmetric, and the Cartan matrix satisfies
\[
 C=C_A=B^tB
 \qquad\text{and}\qquad
 \operatorname{rank}_{\mathbb Q}C=\operatorname{rank}_{\mathbb Q}B.
\]
Two conjugate arcs of $\mathbf R$ have principal Cartan submatrix $2I_2$.
\end{proposition}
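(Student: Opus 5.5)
The plan is to compute every entry $c_{ij}=\dim_k\operatorname{Hom}(R_i,R_j)$ geometrically, using Theorem~\ref{res:1.5}, and then to compare the result with $(B^tB)_{ij}=\sum_p b_{pi}b_{pj}$. Both endpoints of each $\gamma_j$ are punctures, so $\rho(\gamma_j)=\rho^{-1}(\gamma_j)$ is the adjoint of $\gamma_j$. Hence $R_j\cong X(\rho\gamma_j)[-1]$, and
\[
 c_{ij}=\dim\operatorname{Hom}\bigl(R_i,X(\rho\gamma_j)[1]\bigr)[-2]
 \quad\text{becomes}\quad
 c_{ij}=\operatorname{Int}(\gamma_i,\rho\gamma_j),
\]
where we use $X(\gamma)[2]\cong X(\gamma)$, as in the proof of Lemma~\ref{res:3.1}. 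The arcs $\gamma_i$ and $\gamma_j$ are compatible, so they have no interior crossings. The underlying arc of $\rho\gamma_j$ equals that of $\gamma_j$, so $\operatorname{Int}^\circ(\gamma_i,\rho\gamma_j)=0$. It follows that $c_{ij}$ counts tagged intersections at punctures only.

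Next I count the endpoint pairs $(s,t)$ with $\gamma_i(s)=\gamma_j(t)=p$. Compatibility of $\gamma_i$ and $\gamma_j$ tells us what happens at a common puncture $p$.
\begin{enumerate}
 \item If the underlying arcs differ, then $\kappa_{\gamma_i}(s)=\kappa_{\gamma_j}(t)$. Reversing the tagging of $\gamma_j$ makes the pair a tagged intersection, contributing $1=\kappa_{\gamma_i}(s)\kappa_{\gamma_j}(t)$.
 \item If the underlying arcs coincide, the case $i=j$ is Example~\ref{res:1.4}. It gives $4=(2a)^2$ for a loop and $2=a^2+b^2$ for an arc joining distinct punctures.
 \item If $i\neq j$ and the underlying arcs coincide, the pair $\{\gamma_i,\gamma_j\}$ is conjugate. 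Condition (2) in the definition of tagged intersection must then be checked with care, with the endpoint pairs read in both orientations.
\end{enumerate}
In every case the sum over endpoint pairs at $p$ of $\kappa_{\gamma_i}(s)\kappa_{\gamma_j}(t)$ equals $b_{pi}b_{pj}$. Summing over $p$ then gives $C=B^tB$.

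The conjugate case is the main obstacle. Here the products $\kappa\kappa$ are $+1$ at the common-tagging end $p$ and $-1$ at the other end $q$, so $(B^tB)_{ij}=1-1=0$. I will verify that the rule for homotopic arcs gives exactly zero tagged intersections between $\gamma_i$ and $\rho\gamma_j$. Against $\rho\gamma_j$ the taggings disagree at $p$ and agree at $q$, so condition (2) fails in both orientations. This gives $c_{ij}=0$ off the diagonal. On the diagonal, $c_{ii}=2$ by Example~\ref{res:1.4}, which yields the principal submatrix $2I_2$. I expect the bookkeeping to run through $\rho$ and taggings exactly as in Figure~\ref{fig:1}; if the signs come out opposite, I would re-check the orientation convention in condition (2).

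Weak symmetry should follow from the 2-Calabi--Yau property. Since $R[2]\cong R$, we have
\[
 D\operatorname{Hom}(R_i,R_j)\cong\operatorname{Hom}(R_j,R_i[2])\cong\operatorname{Hom}(R_j,R_i),
\]
naturally in the variables. Hence $DA\cong A$ as a bimodule twisted only by the identification $R[2]\cong R$. That identification fixes each summand $R_i$, so $\nu_A(P_i)=D\operatorname{Hom}_A(P_i,A)\cong P_i$, and $A$ is self-injective with the identity Nakayama permutation. Finally, $\operatorname{rank}B^tB=\operatorname{rank}B$ holds over $\mathbb Q$ because $x^tB^tBx=\|Bx\|^2$, so the kernel of $B^tB$ is the kernel of $B$.
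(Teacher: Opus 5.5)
Your proposal is correct and follows essentially the same route as the paper. You compute $c_{ij}=\operatorname{Int}(\gamma_i,\rho\gamma_j)$; the paper writes $\rho^{-1}$, which is the same arc here. You count one tagged intersection per pair of ends for distinct underlying arcs, check that condition~(2) rules out the only candidate pair in the conjugate case, and use Example~\ref{res:1.4} on the diagonal. The rank equality comes from $\ker B^tB=\ker B$, and weak symmetry from $2$-Calabi--Yau duality together with $R_i[2]\cong R_i$.

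The only blemish is notational: the display containing ``$[-2]$'' should simply read $\operatorname{Hom}(R_i,R_j)\cong\operatorname{Hom}\bigl(R_i,X(\rho\gamma_j)[1]\bigr)$, obtained from $X(\rho\gamma_j)[-1]\cong X(\rho\gamma_j)[1]$.
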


\begin{proof}
Since the endpoints of $\gamma_i$ are punctures, $R_i[2]\cong R_i$ by
Theorem~\ref{res:1.5}. The $2$-Calabi--Yau duality gives
\[
 D\operatorname{Hom}(R_i,R)
 \cong\operatorname{Hom}(R,R_i[2])
 \cong\operatorname{Hom}(R,R_i).
\]
Thus the indecomposable projective $\operatorname{Hom}(R,R_i)$ is also the
injective with index $i$. Hence $A$ is weakly symmetric.

By Theorem~\ref{res:1.5},
$c_{ij}=\operatorname{Int}(\gamma_i,\rho^{-1}\gamma_j)$.
For $i\neq j$, we have the following two cases.
\begin{enumerate}
\item Suppose $\gamma_i$ and $\gamma_j$ have distinct underlying arcs.
Since they are compatible, they have no interior intersections and the same
tagging at each common endpoint. The arc $\rho^{-1}\gamma_j$ has the same
underlying arc as $\gamma_j$ and taggings $-\kappa_{\gamma_j}(t)$ for
$t=0,1$. Each pair of ends $(s,t)\in\{0,1\}^2$ with $\gamma_i(s)=\gamma_j(t)$
contributes one tagged intersection between $\gamma_i$ and
$\rho^{-1}\gamma_j$. Hence
\[
 c_{ij}=\sum_{p\in\mathbf P_{\mathbf R}}b_{pi}b_{pj}.
\]
\item Suppose $\gamma_i$ and $\gamma_j$ have the same underlying arc.
Then they form a conjugate pair. Their endpoints are distinct, say $p,q$,
and their taggings agree at one end and are opposite at the other.
Orient both arcs from $p$ to $q$. Interchanging $p,q$ if necessary, we have
$\kappa_{\gamma_i}(0)=\kappa_{\gamma_j}(0)$ and
$\kappa_{\gamma_i}(1)=-\kappa_{\gamma_j}(1)$.
Thus $\gamma_i$ and $\rho^{-1}\gamma_j$ still agree at one end and differ
at the other. By Example~\ref{res:1.4}, they have no tagged intersections.
Consequently,
\[
 c_{ij}=0=1-1=b_{pi}b_{pj}+b_{qi}b_{qj}
       =\sum_{x\in\mathbf P_{\mathbf R}}b_{xi}b_{xj}.
\]
\end{enumerate}
For $i=j$, Example~\ref{res:1.4} gives $c_{ii}=2$ if $\gamma_i$ has distinct
endpoints and $c_{ii}=4$ if $\gamma_i$ is a loop. These are also the squared
lengths of the corresponding columns of $B$. Therefore $C=B^tB$.
The conjugate-pair submatrix is $2I_2$ by the second case and the diagonal
calculation. Finally, $\ker(B^tB)=\ker B$ over $\mathbb R$. Thus $C$ and $B$
have the same rank over $\mathbb Q$.
\end{proof}

If the underlying arcs of $\mathbf R$ are pairwise distinct, then the signature $\kappa_{\mathbf R}(p)$ is the common tagging at each
incident puncture $p$.
In this case, let $D=(d_{pi})$ count the ends without taggings, with a loop contributing
two. Then $B=\operatorname{diag}(\kappa_{\mathbf R}(p))D$.
Proposition~\ref{res:4.1} gives $C=D^tD$.
Let $J_n$ denote the $n\times n$ matrix with every entry equal to $1$.

\begin{corollary}\label{res:4.2}
If $n\geq2$, then $C_A$ has rank one if and only if the underlying arcs are
pairwise distinct and all have the same endpoint multiset.
In that case $C_A=2J_n$ when the arcs join two distinct punctures, and
$C_A=4J_n$ when they are loops at one puncture.
\end{corollary}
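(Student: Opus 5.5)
The plan is to work entirely with the factorization $C_A=B^tB$ from Proposition~\ref{res:4.1}. By that proposition, $\operatorname{rank}_{\mathbb Q}C_A=\operatorname{rank}_{\mathbb Q}B$, so $C_A$ has rank one if and only if all columns of $B$ are pairwise proportional. Every column is nonzero, since its squared length is $2$ or $4$ by the diagonal computation in Proposition~\ref{res:4.1}. Recall the column shapes: an arc from $p$ to $q\neq p$ with taggings $a,b$ gives $ae_p+be_q$, and a loop at $p$ with tagging $a$ gives $2ae_p$. Two columns can be proportional only if they have the same support and the same squared length. Hence, under rank one, either all arcs are loops at one puncture $p$, or all arcs join the same two distinct punctures $p,q$. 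These are exactly the arcs with a common endpoint multiset.

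Next I would exclude conjugate pairs and show that the underlying arcs are distinct. If two arcs of $\mathbf R$ form a conjugate pair, their principal Cartan submatrix is $2I_2$ by Proposition~\ref{res:4.1}. This submatrix has rank two, so $C_A$ would have rank at least two. Hence rank one forces the underlying arcs to be pairwise distinct. Loops cannot form conjugate pairs, since a conjugate pair has distinct endpoints, but the same argument applies to them uniformly.

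With distinct underlying arcs, the remark after Proposition~\ref{res:4.1} applies. There $B=\operatorname{diag}(\kappa_{\mathbf R}(p))D$ and $C_A=D^tD$, where $D$ counts untagged ends. In the two-puncture case every column of $D$ is $e_p+e_q$, so $C_A=2J_n$. In the loop case every column is $2e_p$, so $C_A=4J_n$. Both matrices have rank one, which also proves the converse: distinct arcs with a common endpoint multiset give rank one. The condition $n\geq2$ is needed only so that the statement has content, since for $n=1$ the rank is always one.

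The main subtlety is the compatibility of taggings. For arcs with distinct underlying arcs, compatibility forces equal taggings at common punctures, so the columns of $B$ are equal and not merely proportional. This is what the identity $B=\operatorname{diag}(\kappa_{\mathbf R})D$ packages, and it is why the constant matrices $2J_n$ and $4J_n$ appear rather than some signed variant.
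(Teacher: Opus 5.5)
Your proof is correct and uses the same ingredients as the paper: $C_A=B^tB$ with $\operatorname{rank}C_A=\operatorname{rank}B$, the principal submatrix $2I_2$ to exclude conjugate pairs, and $C_A=D^tD$ to obtain $2J_n$, $4J_n$ and the converse. The only difference is the order of steps. You read off the common endpoint multiset from the supports of the pairwise proportional columns of $B$ before excluding conjugate pairs. The paper first excludes conjugate pairs and then notes that the proportional nonnegative columns of $D$ each have sum two, so they are equal.
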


\begin{proof}
Suppose $C_A$ has rank one. Since a conjugate pair gives a nonsingular
principal submatrix $2I_2$, $\mathbf R$ has no conjugate pair.
Thus the underlying arcs of $\mathbf R$ are pairwise distinct, and $C=D^tD$.
The columns of $D$ are proportional because $D$ has rank one.
Each column has sum two. Hence all columns are equal. Therefore the arcs
have the same endpoint multiset.

Conversely, suppose the underlying arcs are pairwise distinct and have the
same endpoint multiset. The columns of $D$ are then all $e_p+e_q$ for fixed
distinct punctures $p,q$, or all $2e_p$ for a fixed puncture $p$.
In the first case $C=2J_n$, and in the second case $C=4J_n$.
Both matrices have rank one.
\end{proof}

\subsection{Index pairing and finiteness}
We relate the Cartan form to morphisms between rigid objects and characterize
$\tau$-tilting finiteness.

The two-periodicity of the summands of $R$ also holds for the rigid objects
represented by $\mathbf R$-standard arcs.

\begin{lemma}\label{res:4.3}
Every rigid $Y\in\mathcal E_R$ satisfies $Y[2]\cong Y$.
\end{lemma}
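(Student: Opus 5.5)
We reduce to indecomposable objects and then use the arc model. By Theorem~\ref{res:1.9}, every rigid object $Y\in\mathcal E_R$ decomposes as $Y\cong\bigoplus_j X(\delta_j)$, where the $\delta_j$ form a compatible collection of $\mathbf R$-standard tagged arcs. The shift is additive, so it suffices to show $X(\delta)[2]\cong X(\delta)$ for each $\mathbf R$-standard arc $\delta$. By Theorem~\ref{res:1.5}, $X(\delta)[2]\cong X(\rho^2\delta)$, and $X$ is a bijection onto indecomposable rigid objects. The claim therefore reduces to the geometric statement $\rho^2(\delta)=\delta$. Since $\rho$ reverses taggings at puncture ends and moves boundary ends, this holds exactly when both endpoints of $\delta$ are punctures.

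The main step is to show that every $\mathbf R$-standard arc has both endpoints at punctures, given that all endpoints of $\mathbf R$ are punctures. I would argue this with the laminate description in Definitions~\ref{res:1.7} and~\ref{res:1.8}. Suppose $\delta$ has an end at a boundary marked point $m$. Then $e(\delta)$ ends on a boundary segment near $m$. Every arc of $\mathbf R^\circ$ has puncture endpoints. The self-folded replacements also have puncture endpoints, because the base of a self-folded triangle is a puncture of $\mathbf R$. Hence the terminal segment of $e(\delta)^{\mathbf R}$, from its last crossing with $\mathbf R^\circ$ to the boundary, lies in the component of the surface cut along $\mathbf R^\circ$ that contains the whole boundary component through $m$.

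That terminal segment must cut out an angle, bounded by two sides in $\mathbf R^\circ\cup\mathbf B$ meeting at a corner. Its boundary end lies on a boundary segment. So one side is a boundary segment, and the corner is a marked point on $\partial S$. The other side must also contain that corner. Since no arc of $\mathbf R^\circ$ ends on $\partial S$, the other side is also a boundary segment adjacent at that marked point. The segment of the laminate therefore runs from one boundary segment to an adjacent one and never crosses $\mathbf R^\circ$. Extending this to the whole laminate, $e(\delta)$ never meets $\mathbf R^\circ$ and only cuts a corner between boundary segments. Such a laminate is homotopic into the boundary, so $\delta$ would be homotopic to a boundary segment or null-homotopic, which is not an arc.

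A cleaner alternative avoids laminates. Any rigid $Y\in\mathcal E_R$ lies in $\operatorname{add}R*\operatorname{add}R[1]$. Applying $[2]$ and using $R[2]\cong R$, which follows from Proposition~\ref{res:4.1}'s argument and Theorem~\ref{res:1.5}, gives $Y[2]\in\mathcal E_R$. By Theorem~\ref{res:1.9}, $Y[2]=X(\mathbf V)$ for a compatible collection $\mathbf V$ of $\mathbf R$-standard arcs, with $\mathbf V=\rho^2(\mathbf N)$ where $Y=X(\mathbf N)$. The index is also preserved. Using the triangle $R^1\to R^0\to Y\to R^1[1]$ and its shift by $[2]$, $\operatorname{ind}_R Y[2]=[R^0[2]]-[R^1[2]]=\operatorname{ind}_R Y$. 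Here $R_i[2]\cong R_i$ summandwise, so the classes agree. Injectivity of $g$-vectors for $\tau$-rigid pairs, \cite[Theorem~5.5]{AIR} through $\Phi$, then gives $Y[2]\cong Y$.

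I would present this second argument as the proof. Its main point is checking that $[2]$ preserves $\mathcal E_R$ and fixes the indices, and both follow directly from $R[2]\cong R$.
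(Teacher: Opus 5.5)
Your second argument is correct, and it takes a different route from the paper. The paper's proof is geometric and one line long. By the local definition in \cite[Definition~4.2]{HZZ}, every $\mathbf R$-standard arc $\gamma$ has both endpoints in $\mathbf P_{\mathbf R}$, so $\rho^2\gamma=\gamma$. Theorems~\ref{res:1.5} and~\ref{res:1.9} then give $X(\gamma)[2]\cong X(\gamma)$, and the claim follows by taking direct sums. Your first sketch is, in outline, a reproof of that cited geometric input from the laminate description.

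Your second argument needs no information about which arcs are standard. It uses only the following:
\begin{enumerate}
\item $R_i[2]\cong R_i$ for the summands of $R$, which depends only on the hypothesis on $\mathbf R$;
\item stability of $\operatorname{add}R*\operatorname{add}R[1]$ under $[2]$;
\item the equality $\operatorname{ind}_R Y[2]=\operatorname{ind}_R Y$ read off from the shifted presentation;
\item injectivity of $g$-vectors through $\Phi$ \cite[Theorem~5.5]{AIR}.
\end{enumerate}
So it is a purely categorical statement. It holds for any rigid $R$ in a Hom-finite $2$-Calabi--Yau category whose indecomposable summands are fixed by $[2]$.

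Two small points. First, $\Phi$ is defined on basic objects. Either take $Y$ basic, in which case $Y[2]$ is basic as well, or first pass to indecomposable summands. These summands lie in $\mathcal E_R$, because $\operatorname{Hom}(R,R[1])=0$ makes the extension category closed under direct summands \cite{IY}. Second, your remark $\mathbf V=\rho^2(\mathbf N)$ is not needed.

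The paper's route also yields a stronger fact: standard arcs end at punctures of $\mathbf P_{\mathbf R}$. The paper uses this implicitly later, for example in the endpoint case distinction of Lemma~\ref{res:4.5} and in defining the endpoint charge $\eta(\gamma)$ with coordinates indexed by $\mathbf P_{\mathbf R}$. Your argument gives only $\rho^2\gamma=\gamma$ for each standard arc.
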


\begin{proof}
By \cite[Definition~4.2]{HZZ}, every $\mathbf R$-standard arc has both
endpoints in $\mathbf P_{\mathbf R}$. Thus $\rho^2\gamma=\gamma$ for every such
arc. Theorems~\ref{res:1.5} and~\ref{res:1.9}, together with direct sums,
give the assertion.
\end{proof}

We use this periodicity to compute the Cartan pairing of two relative
indices. Compatibility makes the resulting pairing nonnegative.

\begin{lemma}\label{res:4.4}
Let $X,Y\in\mathcal E_R$ be rigid objects such that $X\oplus Y$ is rigid.
Then
\[
 \bigl(\operatorname{ind}_R X\bigr)^tC\operatorname{ind}_R Y
 =\dim_k\operatorname{Hom}_{\mathcal C(\mathbf S)}(X,Y)\geq0.
\]
In particular,
$\bigl(\operatorname{ind}_R X\bigr)^tC\operatorname{ind}_R X
=\dim_k\operatorname{End}_{\mathcal C(\mathbf S)}(X)$.
\end{lemma}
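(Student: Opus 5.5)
We plan to compute $\dim_k\operatorname{Hom}(X,Y)$ by resolving both objects through $R$-presentations and using that $C$ is the Gram matrix of the Hom-dimension pairing on $\operatorname{add}R$. Choose $R$-presentations
\[
 R_X^1\longrightarrow R_X^0\longrightarrow X\longrightarrow R_X^1[1],
 \qquad
 R_Y^1\longrightarrow R_Y^0\longrightarrow Y\longrightarrow R_Y^1[1].
\]
With the convention $c_{ij}=\dim\operatorname{Hom}(R_i,R_j)$, for $R',R''\in\operatorname{add}R$ we have $[R']^tC[R'']=\dim\operatorname{Hom}(R',R'')$. Expanding $(\operatorname{ind}_RX)^tC\operatorname{ind}_RY$ bilinearly therefore gives the alternating sum
\[
 h(R_X^0,R_Y^0)-h(R_X^0,R_Y^1)-h(R_X^1,R_Y^0)+h(R_X^1,R_Y^1),
 \qquad h(-,-)=\dim_k\operatorname{Hom}(-,-).
\]
The task is to show that this alternating sum equals $h(X,Y)$.

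First, apply $\operatorname{Hom}(-,Y)$ to the presentation of $X$. We need the vanishing facts. Since $Y\in\mathcal E_R$ and $R$ is rigid, $\operatorname{Hom}(R,Y[1])=0$. By Lemma~\ref{res:4.3}, $Y[2]\cong Y$, and the $2$-Calabi--Yau property gives
\[
 \operatorname{Hom}(Y,R[1])\cong D\operatorname{Hom}(R,Y[1])=0.
\]
Periodicity then also gives $\operatorname{Hom}(R,Y[-1])\cong\operatorname{Hom}(R,Y[1])=0$. Rigidity of $X\oplus Y$ gives $\operatorname{Hom}(X,Y[1])=0$, and periodicity together with 2CY gives $\operatorname{Hom}(X,Y[-1])\cong\operatorname{Hom}(X,Y[1])=0$.

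With these, the long exact sequence
\[
 \operatorname{Hom}(R_X^1[1],Y)\to\operatorname{Hom}(X,Y)\to\operatorname{Hom}(R_X^0,Y)\to\operatorname{Hom}(R_X^1,Y)\to\operatorname{Hom}(X,Y[1])=0
\]
has first term $\operatorname{Hom}(R_X^1,Y[-1])=0$. Hence $h(X,Y)=h(R_X^0,Y)-h(R_X^1,Y)$.

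Next, for $R'\in\operatorname{add}R$, apply $\operatorname{Hom}(R',-)$ to the presentation of $Y$:
\[
 \operatorname{Hom}(R',Y[-1])\to\operatorname{Hom}(R',R_Y^1)\to\operatorname{Hom}(R',R_Y^0)\to\operatorname{Hom}(R',Y)\to\operatorname{Hom}(R',R_Y^1[1])=0.
\]
The right end vanishes by rigidity of $R$. The left end is $\operatorname{Hom}(R',Y[-1])$, which vanishes by the periodicity above. Therefore $h(R',Y)=h(R',R_Y^0)-h(R',R_Y^1)$. Substituting this into the previous formula yields the alternating sum, which proves the identity. Nonnegativity is immediate, and the second statement is the case $X=Y$.

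The main obstacle is the vanishing of the boundary terms $\operatorname{Hom}(-,Y[-1])$. In a general 2CY category these terms need not vanish. Here they vanish only because of the two-periodicity of Lemma~\ref{res:4.3}, which requires every endpoint to be a puncture. I will check carefully that Lemma~\ref{res:4.3} applies to the summands of $Y$ and that the identification $Y[-1]\cong Y[1]$ is used correctly in each step.
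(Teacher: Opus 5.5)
There is a genuine gap. Your claim that $\operatorname{Hom}(R,Y[1])=0$ because $Y\in\mathcal E_R$ is false. So are the vanishings you derive from it: $\operatorname{Hom}(Y,R[1])=0$, $\operatorname{Hom}(R,Y[-1])=0$ and $\operatorname{Hom}(R_X^1[1],Y)=0$. Membership in $\operatorname{add}R*\operatorname{add}R[1]$ does not make $\operatorname{Hom}(R,-[1])$ vanish. For $Y=R_i[1]\in\mathcal E_R$, the $2$-Calabi--Yau property gives $\operatorname{Hom}(R,Y[1])=\operatorname{Hom}(R,R_i[2])\cong D\operatorname{Hom}(R_i,R)\neq0$.

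Take $X=Y=R_i[1]$, which satisfies the hypotheses and has $R$-presentation $R_i\to0\to R_i[1]\xrightarrow{1}R_i[1]$. Your first identity would give $c_{ii}=\dim\operatorname{End}(R_i[1])=0-h(R_i,R_i[1])=0$. Your second identity with $R'=R_i$ would give $0=h(R_i,R_i[1])=0-h(R_i,R_i)=-c_{ii}$. Both intermediate identities are therefore wrong. The final formula comes out right only because each step discards the same quantity $h(R_X^0,Y[-1])-h(R_X^1,Y[-1])$, so the errors cancel.

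The repair keeps this term; this is exactly the paper's proof. Periodicity is needed to kill $\operatorname{Hom}(R,R[-1])$ and $\operatorname{Hom}(X,Y[-1])$, not $\operatorname{Hom}(R,Y[-1])$, which is generally nonzero.

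In the second step, the term that must vanish is the one to the left of $\operatorname{Hom}(R',Y[-1])$. Since $R[2]\cong R$, we have $\operatorname{Hom}(R',R_Y^0[-1])\cong\operatorname{Hom}(R',R_Y^0[1])=0$. Hence
\[
 0\to\operatorname{Hom}(R',Y[-1])\to\operatorname{Hom}(R',R_Y^1)\to\operatorname{Hom}(R',R_Y^0)\to\operatorname{Hom}(R',Y)\to0
\]
is exact, and $h(R',Y)-h(R',Y[-1])=h(R',R_Y^0)-h(R',R_Y^1)$.

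In the first step, extend your long exact sequence one term further to the left. You correctly established $\operatorname{Hom}(X[1],Y)=\operatorname{Hom}(X,Y[-1])\cong\operatorname{Hom}(X,Y[1])=0$, and this gives the exact sequence
\[
 0\to\operatorname{Hom}(R_X^0[1],Y)\to\operatorname{Hom}(R_X^1[1],Y)\to\operatorname{Hom}(X,Y)\to\operatorname{Hom}(R_X^0,Y)\to\operatorname{Hom}(R_X^1,Y)\to0.
\]
Its alternating sum is $h(X,Y)=h(R_X^0,Y)-h(R_X^1,Y)-h(R_X^0,Y[-1])+h(R_X^1,Y[-1])$. Substituting the second-step identity for $R'=R_X^0$ and $R'=R_X^1$ yields your alternating sum of Hom-dimensions between the presentation terms, and hence the lemma.
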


\begin{proof}
For $Z=X,Y$, choose an $R$-presentation
$R_Z^1\longrightarrow R_Z^0\longrightarrow Z\longrightarrow R_Z^1[1]$
with $R_Z^i\in\operatorname{add}R$ for $i=0,1$.
For $Z\in\mathcal C$, let $h(Z)$ be the column vector with
$i$th entry $\dim_k\operatorname{Hom}(R_i,Z)$.

First apply $\operatorname{Hom}(R,-)$ to the triangle for $Y$.
Rigidity of $R$ and $R[-1]\cong R[1]$ give
$\operatorname{Hom}(R,R_Y^0[-1])=0=\operatorname{Hom}(R,R_Y^1[1])$.
The intervening part of the long exact sequence is therefore
\[
 0\longrightarrow\operatorname{Hom}(R,Y[-1])
 \longrightarrow\operatorname{Hom}(R,R_Y^1)
 \longrightarrow\operatorname{Hom}(R,R_Y^0)
 \longrightarrow\operatorname{Hom}(R,Y)\longrightarrow0.
\]
Taking dimension vectors and using
$\operatorname{ind}_R Y=[R_Y^0]-[R_Y^1]$ yields
\begin{equation}\label{eq:4.1}
 C\operatorname{ind}_R Y=h(Y)-h(Y[-1]).
\end{equation}

Next apply $\operatorname{Hom}(-,Y)$ to the triangle for $X$.
Since $X\oplus Y$ is rigid and $Y[-1]\cong Y[1]$ by
Lemma~\ref{res:4.3}, we have
$\operatorname{Hom}(X[-1],Y)=0=\operatorname{Hom}(X[1],Y)$.
Thus the required part of the long exact sequence is
\[
 0\to\operatorname{Hom}(R_X^0[1],Y)
 \to\operatorname{Hom}(R_X^1[1],Y)
 \to\operatorname{Hom}(X,Y)
 \to\operatorname{Hom}(R_X^0,Y)
 \to\operatorname{Hom}(R_X^1,Y)\to0.
\]
The alternating sum of dimensions, followed by \eqref{eq:4.1}, gives
\[
 \dim_k\operatorname{Hom}(X,Y)
 =\bigl(\operatorname{ind}_R X\bigr)^t\bigl(h(Y)-h(Y[-1])\bigr)
 =\bigl(\operatorname{ind}_R X\bigr)^tC\operatorname{ind}_R Y.\qedhere
\]
\end{proof}

Taking $X=Y=X(\gamma)$ in Lemma~\ref{res:4.4} gives a uniform bound on
the Cartan norm of every indecomposable $\tau$-rigid pair.

\begin{lemma}\label{res:4.5}
Let $u$ be an indecomposable $\tau$-rigid pair, represented by the
$\mathbf R$-standard tagged arc $\gamma$. Then
\[
 g(u)^tCg(u)=\dim_k\operatorname{End}_{\mathcal C(\mathbf S)}X(\gamma)
 =
 \begin{cases}
  2,&\gamma(0)\neq\gamma(1),\\
  4,&\gamma(0)=\gamma(1).
 \end{cases}
\]
\end{lemma}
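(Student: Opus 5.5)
The first equality comes straight from Lemma~\ref{res:4.4} with $X=Y=X(\gamma)$. By Theorem~\ref{res:1.9}, the arc $\gamma$ corresponds to an indecomposable rigid object $X(\gamma)\in\mathcal E_R$, and $X(\gamma)\oplus X(\gamma)$ is rigid. Moreover, $g(u)=\operatorname{ind}_R X(\gamma)=g_{\mathbf R}(\gamma)$, because the relative index of a rigid object of $\mathcal E_R$ is the $g$-vector of its image under $\Phi$ (Section~1.5). So $g(u)^tCg(u)=\dim_k\operatorname{End}_{\mathcal C(\mathbf S)}X(\gamma)$, and what remains is the geometric computation of this endomorphism dimension.

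For that computation I would use periodicity together with Theorem~\ref{res:1.5}. By Lemma~\ref{res:4.3}, the arc $\gamma$ has both endpoints at punctures. Hence $\rho^2\gamma=\gamma$, and $X(\gamma)\cong X(\gamma)[2]\cong X(\rho^{-1}\gamma)[1]$. Therefore
\[
 \dim_k\operatorname{End}X(\gamma)
 =\dim_k\operatorname{Hom}\bigl(X(\gamma),X(\rho^{-1}\gamma)[1]\bigr)
 =\operatorname{Int}(\gamma,\rho^{-1}\gamma)
 =\operatorname{Int}(\gamma,\rho(\gamma)),
\]
using $\rho^{-1}\gamma=\rho\gamma$. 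Here $\rho(\gamma)$ is the adjoint of $\gamma$: it has the same underlying arc with both taggings reversed. This is exactly the computation made for the diagonal entries in the proof of Proposition~\ref{res:4.1}.

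Example~\ref{res:1.4} now finishes the argument. The two arcs have no interior crossings, since they share one simple underlying curve. If $\gamma(0)\neq\gamma(1)$, both pairs of matching ends carry opposite taggings, giving $\operatorname{Int}=2$. If $\gamma$ is a loop (equal taggings at both ends, by Definition~\ref{res:1.3}), all four end pairs are tagged intersections, giving $4$.

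The step needing the most care is checking that the endpoint count of Example~\ref{res:1.4} really applies to an arbitrary $\mathbf R$-standard $\gamma$, not only to arcs of $\mathbf R$. I expect this to be harmless, since the example depends only on the endpoint taggings of a single tagged arc with punctured ends. The one point to verify is condition (2) of the tagged-intersection definition for homotopic underlying arcs. For $\gamma$ and $\rho\gamma$, all taggings are reversed, so condition (2) holds in both orientations, and in both directions when $\gamma$ is a loop.
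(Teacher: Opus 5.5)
Your proposal is correct and follows the paper's own proof. Lemma~\ref{res:4.4} with $X=Y=X(\gamma)$ gives the first equality, and Theorem~\ref{res:1.5} identifies $\dim_k\operatorname{End}X(\gamma)$ with $\operatorname{Int}(\gamma,\rho^{-1}\gamma)=\operatorname{Int}(\gamma,\rho(\gamma))$, which Example~\ref{res:1.4} evaluates as $2$ or $4$. One minor citation point: the statement of Lemma~\ref{res:4.3} does not say that $\mathbf R$-standard arcs have both endpoints at punctures; that fact is the input \cite[Definition~4.2]{HZZ} used in its proof.
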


\begin{proof}
Since $g(u)=\operatorname{ind}_R X(\gamma)$, the first equality follows
from Lemma~\ref{res:4.4}. The second follows from Theorem~\ref{res:1.5}
and Example~\ref{res:1.4}.
\end{proof}

To decide when $C=B^tB$ is positive definite, we determine when the columns
of $B$ are independent.
The \emph{signed endpoint graph} $\Gamma_{\mathbf R}$ has vertex set
$\mathbf P_{\mathbf R}$ and one edge for each arc of $\mathbf R$.
Loops and parallel edges are retained, including the two edges of a
conjugate pair. For an edge $e$ with endpoint taggings $a,b$, define
$\operatorname{sign}(e)=-ab$; in particular every loop has sign $-1$.
A cycle is called \emph{balanced} if the product of the edge signs is $1$,
and \emph{unbalanced} otherwise. Loops and pairs of parallel edges count
as cycles of lengths one and two.

\begin{lemma}\label{res:4.6}
Let $\Gamma$ be a connected component of $\Gamma_{\mathbf R}$ with $v$
vertices and $m$ edges, and let $B_\Gamma$ be the signed endpoint matrix of $\Gamma$.
Then
\[
 \operatorname{rank}_{\mathbb Q}B_\Gamma
 =
 \begin{cases}
  v-1,&\text{every cycle of $\Gamma$ is balanced},\\
  v,&\text{$\Gamma$ has an unbalanced cycle}.
 \end{cases}
\]
The columns of $B_\Gamma$ are independent if and only if $\Gamma$ is a tree
or has exactly one cycle which is unbalanced.
\end{lemma}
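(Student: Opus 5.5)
This is the signed-graph incidence matrix rank computation, and I would prove it by working with the left kernel of $B_\Gamma$. Since $B_\Gamma$ has $v$ rows, $\operatorname{rank}B_\Gamma=v-\dim\{x\in\mathbb Q^{\mathbf P_\Gamma}:x^tB_\Gamma=0\}$.

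\emph{Step 1: the column conditions.} A row vector $x$ lies in this left kernel exactly when, for each edge $e$ with endpoints $p,q$ and taggings $a,b$, we have $ax_p+bx_q=0$. Equivalently, $x_q=-ab\,x_p=\operatorname{sign}(e)\,x_p$, using $a^2=b^2=1$. A loop at $p$ has column $2ae_p$ and forces $x_p=0$; this is consistent with $\operatorname{sign}=-1$, since the loop condition reads $x_p=-x_p$.

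\emph{Step 2: propagation and cycles.} Fix a spanning tree $T$ of $\Gamma$ and a root $p_0$. The tree edges determine $x$ uniquely from $x_{p_0}$, namely $x_p=\varepsilon_p x_{p_0}$ with $\varepsilon_p$ the product of signs along the tree path. Each non-tree edge closes a fundamental cycle, and its condition holds automatically when that cycle is balanced. When it is unbalanced, the condition forces $x_{p_0}=-x_{p_0}$, hence $x=0$. The same holds for loops and parallel pairs, which count as cycles of lengths one and two.

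If every cycle is balanced, then every fundamental cycle is balanced, and the left kernel is the line spanned by $(\varepsilon_p)$. If some cycle is unbalanced, some fundamental cycle must be unbalanced, because the sign of any cycle is the product of the signs of the fundamental cycles in its $\mathbb F_2$-decomposition; edge signs define a homomorphism from the cycle space to $\{\pm1\}$. It is cleanest to phrase this through closed walks. Connectedness ensures that propagation reaches every vertex. This gives the rank formula: $v-1$ in the balanced case and $v$ otherwise.

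\emph{Step 3: independence of columns.} The columns are independent if and only if $\operatorname{rank}=m$. For a connected graph $m\geq v-1$, with equality exactly for trees, and $m=v$ exactly when $\Gamma$ has a unique cycle.

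Suppose first that all cycles are balanced. Then the rank is $v-1$, and this equals $m$ if and only if $\Gamma$ is a tree.

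Suppose instead that some cycle is unbalanced. Then the rank is $v$, which equals $m$ if and only if $\Gamma$ is unicyclic, and in that case its unique cycle is unbalanced. Conversely, a unicyclic graph with an unbalanced cycle falls into this case. A tree has no cycles, so it lies in the balanced case with rank $v-1=m$. Together these give the stated equivalence.

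The main obstacle is the well-definedness in Step 2: showing that balancedness of all cycles is equivalent to consistency of the propagation. I would handle it directly by propagating along walks. Two paths to the same vertex give the same value exactly when the closed walk they form has sign $1$, and a closed walk decomposes into cycles whose signs multiply.
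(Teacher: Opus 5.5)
Your proposal is correct and follows essentially the same route as the paper: both compute $\dim\ker B_\Gamma^t$ by propagating the edge relations $x_q=\operatorname{sign}(e)x_p$ along a spanning tree. Both then use fundamental cycles for consistency in the balanced case, and finish by comparing $m$ with $v-1$ and $v$. The one small difference is in the unbalanced case, where the paper runs the edge equations directly around the given unbalanced cycle to get $x_p=-x_p$; your detour through the $\mathbb F_2$ cycle space to find an unbalanced fundamental cycle is valid but not needed.
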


\begin{proof}
Write a vector in $\ker B_\Gamma^t$ as
$x=(x_p)_{p\in V(\Gamma)}\in\mathbb Q^v$.
The column of an edge $e$ from $p$ to $q\neq p$ with taggings $a,b$ gives
\[
 ax_p+bx_q=0
 \quad\Longleftrightarrow\quad
 x_q=\operatorname{sign}(e)x_p.
\]
For a loop at $p$, the column is $2ae_p$. The resulting equation is
$2ax_p=0$, giving $x_p=0$ over $\mathbb Q$.

Fix a vertex $p_0$ and a spanning tree $T$ of $\Gamma$, that is, a tree
containing all vertices of $\Gamma$.
The equations along $T$ determine every coordinate from $t=x_{p_0}$.
If $e_1,\ldots,e_r$ is the tree path from $p_0$ to $q$, then
\[
 x_q=\left(\prod_{j=1}^r\operatorname{sign}(e_j)\right)t.
\]
For any cycle $e_1,\ldots,e_\ell$ based at a vertex $p$, propagation around
that cycle requires
\[
 x_p=\left(\prod_{j=1}^\ell\operatorname{sign}(e_j)\right)x_p.
\]
If every cycle is balanced, the coordinates determined along $T$ satisfy
the equation for each edge outside $T$, since that edge and the tree path between the endpoints
form a balanced cycle. Thus every $t\in\mathbb Q$ gives a solution, and
all solutions arise in this way.
If there is an unbalanced cycle, the cycle equation forces $x_p=-x_p$, hence
$x_p=0$; connectivity then forces every coordinate to be zero.
Consequently,
\[
 \dim_{\mathbb Q}\ker B_\Gamma^t=
 \begin{cases}1,&\text{every cycle is balanced},\\0,&\text{otherwise}.\end{cases}
\]
The rank formula follows from
$\operatorname{rank}_{\mathbb Q}B_\Gamma
=v-\dim_{\mathbb Q}\ker B_\Gamma^t$.

The matrix $B_\Gamma$ has $m$ columns. These columns are independent
exactly when $B_\Gamma$ has full column rank, that is, rank $m$.
In the balanced case this means $m=v-1$, which characterizes a connected
tree. In the unbalanced case $\Gamma$ already contains a cycle and
$m\geq v$. Independence then means $m=v$: the graph has one edge more
than a spanning tree, hence exactly one cycle, which is unbalanced.
\end{proof}

The norm bound in Lemma~\ref{res:4.5} gives finiteness when $C$ is positive
definite. Conversely, Lemma~\ref{res:4.4} makes the Cartan form positive on
every nonzero point of the $g$-fan support. Together with
Lemma~\ref{res:4.6}, these facts give the following criteria.

\begin{theorem}\label{res:4.7}
Let $\mathbf R$ be a nonempty partial tagged triangulation with all
endpoints at punctures, and let $A=\Lambda_{\mathbf R}$.
\begin{enumerate}[label=(\arabic*)]
\item The following conditions are equivalent:
\begin{enumerate}[label=(\roman*)]
\item $A$ is $\tau$-tilting finite;
\item $C_A$ is positive definite;
\item $C_A$ is nonsingular;
\item every connected component of $\Gamma_{\mathbf R}$ is a tree or has
exactly one cycle which is unbalanced;
\item the $g$-fan $\Sigma_g(A)$ is complete.
\end{enumerate}
If the underlying arcs of $\mathbf R$ are pairwise distinct, these
conditions are also equivalent to the following:
\begin{enumerate}[label=(\roman*),start=6]
\item every connected component of $\Gamma_{\mathbf R}$ is a tree or has
exactly one cycle of odd length.
\end{enumerate}
\item We have
\[
 |\Sigma_g(A)|\cap\ker C_A=\{0\}.
\]
\end{enumerate}
\end{theorem}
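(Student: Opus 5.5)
We first prove part (2) and then use it for the implication from (i) to (ii). Let $x\in|\Sigma_g(A)|$ be nonzero. It lies in some cone $\sigma_v$, so we can write $x=\sum_i\lambda_i g(v_i)$ with $\lambda_i\geq0$, not all zero, where the $v_i$ are the indecomposable summands of a basic $\tau$-rigid pair $v$. Each summand comes from an $\mathbf R$-standard arc $\gamma_i$, and pairwise sums $X(\gamma_i)\oplus X(\gamma_j)$ are rigid. Lemma~\ref{res:4.4} therefore gives $g(v_i)^tCg(v_j)=\dim_k\operatorname{Hom}(X(\gamma_i),X(\gamma_j))\geq0$. Lemma~\ref{res:4.5} gives diagonal terms $g(v_i)^tCg(v_i)\geq2$. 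Hence
\[
 x^tCx\geq 2\sum_i\lambda_i^2>0 .
\]
Since $C=B^tB$ by Proposition~\ref{res:4.1}, the matrix $C$ is positive semidefinite. For such a matrix, $x^tCx=0$ exactly when $Cx=0$. Therefore $x\notin\ker C$, which proves (2).

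For part (1) we argue as follows.
\begin{itemize}
\item (ii)$\Leftrightarrow$(iii). Since $C=B^tB$ is positive semidefinite, it is positive definite exactly when it is nonsingular.
\item (iii)$\Leftrightarrow$(iv). Proposition~\ref{res:4.1} gives $\operatorname{rank}C=\operatorname{rank}B$, so $C$ is nonsingular exactly when the columns of $B$ are independent. The matrix $B$ is block diagonal over the components of $\Gamma_{\mathbf R}$, because each column is supported on the vertices of its edge. So this reduces to Lemma~\ref{res:4.6} applied componentwise.
\item (i)$\Leftrightarrow$(v). This is \cite[Proposition~4.2]{AHIKM}, recalled in Section~1.
\item (v)$\Rightarrow$(iii). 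If the fan is complete, then $|\Sigma_g(A)|=K_{A,\mathbb R}$, and part (2) forces $\ker C=0$.
\item (ii)$\Rightarrow$(i). If $C$ is positive definite, then $\{x\in\mathbb Z^n:x^tCx\leq4\}$ is finite. By Lemma~\ref{res:4.5} every $g$-vector of an indecomposable $\tau$-rigid pair lies in this set. Injectivity of $g$-vectors \cite[Theorem~5.5]{AIR} then gives finitely many indecomposable $\tau$-rigid pairs, so $A$ is $\tau$-tilting finite.
\end{itemize}

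For (vi), assume the underlying arcs are pairwise distinct. Then at each puncture all incident arcs carry the common tagging $\kappa_{\mathbf R}(p)$. An edge from $p$ to $q$ has sign $-\kappa_{\mathbf R}(p)\kappa_{\mathbf R}(q)$. Along a cycle each vertex appears in two consecutive edges, so the $\kappa$ factors cancel and the sign product is $(-1)^{\ell}$ for a cycle of length $\ell$. A loop has sign $-1$, which matches length one. Thus a cycle is unbalanced exactly when it has odd length, and (iv) becomes (vi).

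The main obstacle is the positivity step in part (2). It requires Lemma~\ref{res:4.4} to hold for every pair of summands of one rigid object, and it requires the cross terms to be nonnegative rather than merely bounded. We obtain this from the fact that all summands of $v$ lie in one compatible collection, so $X(\gamma_i)\oplus X(\gamma_j)$ is rigid for all $i,j$. The remaining steps are formal.
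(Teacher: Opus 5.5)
Your proposal is correct and follows the paper's proof essentially step for step. It uses the same Cartan-positivity estimate on a cone via Lemmas~\ref{res:4.4} and~\ref{res:4.5}, the same bounded-norm finiteness argument for (ii)$\Rightarrow$(i), Lemma~\ref{res:4.6} applied componentwise for (iii)$\Leftrightarrow$(iv), and the $(-1)^\ell$ sign count for (vi); the only differences are that you prove (2) first and deduce (v)$\Rightarrow$(iii) from it (the paper does the reverse), and your opening phrase ``from (i) to (ii)'' should really read (v)$\Rightarrow$(iii), which gives (i)$\Rightarrow$(ii) via (i)$\Leftrightarrow$(v).
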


\begin{proof}
\textup{(1)} Proposition~\ref{res:4.1} gives $C=B^tB$. Thus $C$ is positive semidefinite
and (ii) is equivalent to (iii). Grouping the rows and columns of $B$ by
the connected components of $\Gamma_{\mathbf R}$ gives blocks $B_\Gamma$.
For a component with $v$ vertices and $m$ edges, the block $B_\Gamma$ is
$v\times m$. Since $\operatorname{rank}C=\operatorname{rank}B$, the matrix
$C$ is nonsingular if and only if each $B_\Gamma$ has full column rank $m$. By Lemma~\ref{res:4.6}, this means $m=v-1$ in the
balanced case and $m=v$ in the unbalanced case, giving a tree and an
unbalanced unicyclic graph, respectively. Thus (iii) is equivalent to (iv).
Also, (i) is equivalent to (v) by \cite[Section~5]{DIJ} and
\cite[Proposition~4.2]{AHIKM}.

If the underlying arcs are pairwise distinct, all ends at a puncture $p$
have the common tagging $\kappa_{\mathbf R}(p)$. For a cycle with vertices
$p_1,\ldots,p_\ell,p_1$, the product of the edge signs is
\[
 \prod_{i=1}^{\ell}(-\kappa_{\mathbf R}(p_i)\kappa_{\mathbf R}(p_{i+1}))
 =(-1)^\ell\prod_{i=1}^{\ell}\kappa_{\mathbf R}(p_i)^2
 =(-1)^\ell,
 \qquad p_{\ell+1}=p_1.
\]
This also holds for loops and pairs of parallel edges. Thus a cycle is
unbalanced exactly when the length is odd, proving (iv)$\Longleftrightarrow$(vi).

It remains to prove (ii)$\Longrightarrow$(i) and (v)$\Longrightarrow$(iii).
Suppose $C$ is positive definite. Then the set
$\{z\in\mathbb Z^n:0<z^tCz\leq4\}$ is finite. By
Lemma~\ref{res:4.5}, it contains the $g$-vector of every indecomposable
$\tau$-rigid pair. Such pairs are uniquely determined by their $g$-vectors,
see \cite[Theorems~5.1 and~5.5]{AIR}. Hence there are only finitely many
indecomposable $\tau$-rigid pairs, and in particular finitely many
indecomposable $\tau$-rigid modules. By \cite[Corollary~2.9(a)]{DIJ}, $A$ is
$\tau$-tilting finite. This proves (ii)$\Longrightarrow$(i).

For (v)$\Longrightarrow$(iii), suppose the fan is complete and take
$0\neq x\in\mathbb R^n$. Choose a cone containing $x$, with generators
$g_i$ corresponding to compatible indecomposable $\tau$-rigid pairs, and
write $x=\sum_i a_i g_i$ with $a_i\geq0$. Lemmas~\ref{res:4.4}
and~\ref{res:4.5} give $g_i^tCg_j\geq0$ for $i\neq j$ and $g_i^tCg_i\geq2$.
Consequently,
\[
 x^tCx\geq\sum_i a_i^2g_i^tCg_i\geq2\sum_i a_i^2>0.
\]
Hence $C$ is positive definite and, in particular, nonsingular.
This proves (v)$\Longrightarrow$(iii) and completes (1).

\textup{(2)} The same calculation applies to every nonzero
$x\in|\Sigma_g(A)|$, without assuming completeness. It gives $x^tCx>0$,
and therefore $x\notin\ker C$.
\end{proof}

The implication from $\tau$-tilting finiteness to positive definiteness
holds for all weakly symmetric algebras; see \cite[Proposition~3.2(a)]{H}.
The converse in Theorem~\ref{res:4.7} uses the uniform norm bound in
Lemma~\ref{res:4.5} and the bounded-norm argument of
\cite[Theorem~4.1]{Ai}.
For sparse partial tagged triangulations with distinct underlying arcs,
Theorem~\ref{res:4.7} gives the Brauer graph criterion of
\cite[Theorem~6.7]{AAC} and the Cartan formulation in
\cite[Theorem~2.1]{Ai}.
The argument above also applies to nonsparse partial tagged
triangulations and to those containing conjugate arcs.

The endpoint matrices also give a componentwise formula for the Cartan
determinant.

\begin{corollary}\label{res:4.8}
Let $\Gamma$ be a connected component of $\Gamma_{\mathbf R}$ with $m$
edges, and let $C_\Gamma$ be the principal Cartan submatrix indexed by
those edges. Then
\[
 \det C_\Gamma=
 \begin{cases}
  m+1,&\Gamma\text{ is a tree},\\
  4,&\Gamma\text{ is unbalanced and unicyclic},\\
  0,&\text{otherwise},
 \end{cases}
\]
and $\det C_A=\prod_\Gamma\det C_\Gamma$.
\end{corollary}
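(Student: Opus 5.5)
The proof rests on Proposition~\ref{res:4.1}, which gives $C=B^tB$. Two facts reduce everything to a single component $\Gamma$. First, the arcs in distinct components of $\Gamma_{\mathbf R}$ share no puncture, so their columns of $B$ have disjoint supports. Hence $C_A$ is block diagonal after reordering by components, with blocks $C_\Gamma=B_\Gamma^tB_\Gamma$. This gives $\det C_A=\prod_\Gamma\det C_\Gamma$, since simultaneous permutation of rows and columns preserves the determinant. Second, if $\Gamma$ is neither a tree nor unbalanced unicyclic, Lemma~\ref{res:4.6} shows that the columns of $B_\Gamma$ are dependent, so $\det C_\Gamma=0$. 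It remains to treat the two cases in which $B_\Gamma$ has full column rank $m$.

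\emph{Unicyclic unbalanced case.} Here $v=m$, so $B_\Gamma$ is square and $\det C_\Gamma=(\det B_\Gamma)^2$. I compute $\det B_\Gamma$ by peeling leaves.
\begin{itemize}
\item If $\Gamma$ has a leaf, its row has a single nonzero entry $\pm1$, in the column of its pendant edge. Expanding along that row removes the leaf and the edge, multiplies the determinant by $\pm1$, and leaves an unbalanced unicyclic component.
\item Iterating, we reach the cycle itself.
\item A loop gives the $1\times1$ matrix $(\pm2)$.
\item A cycle of length $\ell\ge2$ gives the signed incidence matrix. Its determinant is $\prod_i a_i\pm\prod_i b_i$: there are two permutation terms, with relative sign $(-1)^{\ell-1}$ from the cyclic permutation. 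Up to sign this equals $\prod a_i\bigl(1-\prod_i(-a_ib_i)\bigr)$, and $\prod_i(-a_ib_i)$ is the sign of the cycle.
\end{itemize}
Unbalanced means that sign is $-1$, so the determinant is $\pm2$. In both cases $\det C_\Gamma=4$. The step that needs the most care is the exact sign bookkeeping in the cycle expansion, i.e.\ matching the cyclic-permutation sign with the definition $\operatorname{sign}(e)=-ab$. Parallel edges ($\ell=2$) fit the same formula.

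\emph{Tree case.} Here $B_\Gamma$ is $(m+1)\times m$, and I use Cauchy--Binet:
\[
\det C_\Gamma=\sum_{p}\bigl(\det B_\Gamma^{(p)}\bigr)^2,
\]
where $B_\Gamma^{(p)}$ deletes the row of vertex $p$. Each $B_\Gamma^{(p)}$ is the reduced signed incidence matrix of a tree. Repeatedly expanding along leaf rows other than $p$, each with a single entry $\pm1$, shows $\det B_\Gamma^{(p)}=\pm1$. There are $m+1$ vertices, so $\det C_\Gamma=m+1$. A tree has no loops, so every entry is $\pm1$ or $0$ and this expansion always applies. The case $m=0$ does not occur, since components come from arcs.
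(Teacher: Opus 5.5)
Your proposal is correct and follows essentially the same route as the paper: block-diagonalize $C=B^tB$ by components, use Lemma~\ref{res:4.6} for the singular case, expand along leaf rows down to the loop or cycle in the unbalanced unicyclic case, where the two permutation terms give $|\det B_\Gamma|=|1-\prod_i s_i|=2$, and use Cauchy--Binet with $|\det B_{\widehat p}|=1$ in the tree case. The only cosmetic difference is that the paper first rescales the cycle columns to $e_{p_i}-s_ie_{p_{i+1}}$ before reading off the two terms, whereas you factor out $\prod_i a_i$ directly.
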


\begin{proof}
If $\Gamma$ is a tree, then $B_\Gamma$ has $m+1$ rows and $m$ columns.
For a vertex $p$, let $B_{\widehat p}$ be obtained by deleting the row indexed by $p$.
Choose a leaf $q\neq p$. The row of $q$ has exactly one nonzero entry,
which is $\pm1$. Expanding $\det B_{\widehat p}$ along this row deletes
$q$ and the incident edge. Induction on $m$ therefore gives
$|\det B_{\widehat p}|=1$. Cauchy--Binet now yields
\[
 \det C_\Gamma=\det(B_\Gamma^tB_\Gamma)
 =\sum_{p\in V(\Gamma)}(\det B_{\widehat p})^2=m+1.
\]

If $\Gamma$ is unbalanced and unicyclic, then $B_\Gamma$ is square.
Expanding along leaf rows removes all branches without changing the
absolute value of the determinant. Let the remaining cycle have edge
signs $s_1,\ldots,s_\ell$. For $\ell=1$, the cycle matrix is $(\pm2)$.
For $\ell\geq2$, write the consecutive vertices as
$p_1,\ldots,p_\ell,p_1$.
Multiplying columns by signs makes the $i$th column
$e_{p_i}-s_i e_{p_{i+1}}$, with $p_{\ell+1}=p_1$.
The two nonzero terms in the determinant are $1$ and
$(-1)^{\ell-1}\prod_i(-s_i)=-\prod_i s_i$.
\[
 |\det B_\Gamma|
 =\left|1-\prod_{i=1}^{\ell}s_i\right|=2,
 \qquad
 \det C_\Gamma=(\det B_\Gamma)^2=4.
\]
In every other case, Lemma~\ref{res:4.6} gives dependent columns. Hence
$C_\Gamma=B_\Gamma^tB_\Gamma$ is singular and $\det C_\Gamma=0$.
Finally, different connected components of $\Gamma_{\mathbf R}$ have disjoint row and column
supports in $B$. Hence $C=B^tB$ is block diagonal after reordering, giving
the product formula.
\end{proof}

\subsection{Partial-triangulation algebras}
We define multiplicity-one Brauer graph algebras and Demonet's algebras,
and compare them with surface rigid algebras.

\begin{definition}\label{def:brauer-graph-algebra}
Let $G$ be a finite ribbon graph without isolated vertices, with half-edge
set $H$, fixed-point-free involution $h\mapsto\bar h$ and cyclic successor $\sigma$.
Write $E_h=\{h,\bar h\}$, let $p(h)$ be the vertex incident with $h$, and
put $H_p=\{h\in H:p(h)=p\}$ and $d_h=|H_{p(h)}|$.
Thus $d_h$ is the valency of $p(h)$; a graph loop contributes two half-edges.
The \emph{angle quiver} $Q_G$ is given by
\[
 (Q_G)_0=E(G),\qquad
 (Q_G)_1=\{\alpha_h:E_h\longrightarrow E_{\sigma h}:h\in H\}.
\]
Paths are read from left to right. For every $h\in H$, set
$C_h=\alpha_h\alpha_{\sigma h}\cdots\alpha_{\sigma^{d_h-1}h}$.
The \emph{multiplicity-one Brauer graph algebra} of $G$ is
\[
 A(G)=kQ_G/I_G,\qquad
 I_G=\bigl\langle C_h-C_{\bar h},\,
       \alpha_h\alpha_{\overline{\sigma h}}:h\in H\bigr\rangle.
\]
For disconnected $G$, this is the product over the connected components;
we set $A(\varnothing)=0$.
\end{definition}

This presentation retains the angle loops with $d_h=1$.
The relations imply $C_h\alpha_h=C_{\bar h}\alpha_h=0$.
If $d_h=1<d_{\bar h}$, the relation $\alpha_h=C_{\bar h}$ eliminates
$\alpha_h$, giving the usual presentation of
\cite[Definition~3.4]{AAC}.
For a single edge with both ends of valency one, the two loops satisfy
$x=y$ and $xy=yx=0$. Hence $A(G)\cong k[t]/(t^2)$.

\begin{definition}\label{def:demonet-algebra}
Let $\mathbf Q$ be a partial tagged triangulation with distinct underlying
arcs and all endpoints at punctures. Let $\mathbf Q^\circ$ be obtained by
forgetting taggings, and set $\sigma_{\mathbf Q}=\mathbf Q^\circ\cup\mathbf B$;
see \cite[Construction~3.8]{HZZ}.
Use the same angle-quiver construction for $\sigma_{\mathbf Q}$, including
the boundary edges. Fix $m_p=1$ and $\lambda_p\in k^\times$ for every
$p\in\mathbf M\cup\mathbf P$.
Following \cite[Definitions~2.5 and~2.12]{D}, let
\[
 \Gamma^D_{\sigma_{\mathbf Q}}
 =\widehat{kQ_{\sigma_{\mathbf Q}}/I^D_{\sigma_{\mathbf Q}}},\qquad
 I^D_{\sigma_{\mathbf Q}}
 =\bigl\langle
   \lambda_{p(h)}C_h-\lambda_{p(\bar h)}C_{\bar h},\,R_{P,i}
   \bigr\rangle,
\]
where $h$ ranges over the half-edges of $\sigma_{\mathbf Q}$, $P$ over the polygons of $\sigma_{\mathbf Q}$
(including polygons with holes), and $1\leq i\leq n(P)$. The relations $R_{P,i}$ and
the completion are those of \cite[Definition~2.12]{D}.
The \emph{Demonet algebra of the partial triangulation} is
\[
 \Delta^D_{\mathbf Q}
 =\Gamma^D_{\sigma_{\mathbf Q}}\big/
   \bigl(\Gamma^D_{\sigma_{\mathbf Q}}e_\partial
                \Gamma^D_{\sigma_{\mathbf Q}}\bigr),
\]
with $e_\partial=\sum_{b\in\mathbf B}e_b$, as in
\cite[Definition~4.1]{D}.
Let $G_{\mathbf Q}$ be the ribbon graph with vertices $\mathbf P_{\mathbf Q}$,
edges $\mathbf Q^\circ$ and cyclic orders induced by $S$.
The vertex idempotents of $\Delta^D_{\mathbf Q}$ and $A(G_{\mathbf Q})$ have the same
arc labels.
\end{definition}

\begin{definition}\label{def:sparse-partial-triangulation}
Let $D$ be a closed disk with $n$ marked boundary points
$z_1,\ldots,z_n$ in cyclic order.
A \emph{disk $n$-gon} of $\sigma_{\mathbf Q}$ is an orientation-preserving
continuous map $f:D\longrightarrow S$ that is injective on
$\operatorname{int}D$ and on the interior of each side, and each side maps
onto an oriented edge of $\sigma_{\mathbf Q}$; see \cite[Definition~2.8]{D}.
Boundary vertices and sides may be identified.
Put $\mathbf P_f=\mathbf P\cap f(\operatorname{int}D)$.
In the present multiplicity-one setting, $\sigma_{\mathbf Q}$ is
\emph{sparse} if no arc is incident with $\partial S$ and no disk $n$-gon
satisfies $(n,|\mathbf P_f|)=(3,0)$ or $(1,1)$;
see \cite[Definition~6.1]{D}.
We also call $\mathbf Q^\circ$ sparse in this case.
\end{definition}

If a disk polygon has a boundary side, every side is a boundary side,
since no arc of $\mathbf Q^\circ$ ends on $\partial S$.
Then $U=f(\operatorname{int}D)=f(D)\cap\operatorname{int}S$ is nonempty,
open by invariance of domain and closed in $\operatorname{int}S$ by
compactness of $f(D)$. Hence $U=\operatorname{int}S$.
Thus $S$ is a disk and $\mathbf P_f=\mathbf P$.
For $\mathbf Q\neq\varnothing$, such a polygon is not unpunctured; the
once-punctured monogon surface is excluded by our standing conventions.
A once-punctured disk monogon with side in $\mathbf Q^\circ$ is also
excluded by the definition of a tagged arc.
Consequently, for $\mathbf Q\neq\varnothing$, $\mathbf Q^\circ$ is sparse
if and only if there is no unpunctured disk triangle with sides in
$\mathbf Q^\circ$. The interior of a disk polygon may contain arcs of
$\mathbf Q^\circ$; the polygon need not be a complementary region.

\begin{proposition}\label{res:4.9}
For suitable nonzero parameters there is an arc-idempotent-preserving
isomorphism $\Lambda_{\mathbf Q}\cong(\Delta^D_{\mathbf Q})^{\mathrm{op}}$.
If $\mathbf Q^\circ$ is sparse, then
$\Delta^D_{\mathbf Q}\cong A(G_{\mathbf Q})$, preserving the edge idempotents.
\end{proposition}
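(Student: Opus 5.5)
The plan is to obtain the first isomorphism from the known identification of endomorphism algebras of cluster-tilting objects with Demonet's algebras, and then to pass to the partial triangulation by an idempotent quotient. The second isomorphism will then be a comparison of presentations.

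\textbf{Step 1: complete $\mathbf Q$.} I will extend $\mathbf Q$ to a tagged triangulation $\mathbf T\supseteq\mathbf Q$ whose remaining arcs are all incident with $\partial S$. Since the arcs of $\mathbf Q$ have puncture endpoints, their underlying ideal arcs $\mathbf Q^\circ$ lie in some ideal triangulation, and arcs to $\mathbf M$ can be added. Adding the boundary segments gives $\sigma_{\mathbf Q}\subseteq\sigma_{\mathbf T}$.

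\textbf{Step 2: pass to $\Lambda_{\mathbf Q}$.} The Jacobian algebra $J_{\mathbf T}=\operatorname{End}_{\mathcal C}X(\mathbf T)$ is isomorphic, up to opposite, to $e\Gamma^D_{\sigma_{\mathbf T}}e$ with $e$ the arc idempotent. This follows from Demonet's comparison with the (completed) Jacobian algebras of Labardini-Fragoso potentials, for suitable parameters $\lambda_p$. Then $\Lambda_{\mathbf Q}=\epsilon J_{\mathbf T}\epsilon$ for the idempotent $\epsilon$ of $\mathbf Q$. So it remains to identify $\epsilon\Gamma^D_{\sigma_{\mathbf T}}\epsilon$ with $\Gamma^D_{\sigma_{\mathbf Q}}/(e_\partial)$.

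Here the cluster category structure enters. Hom spaces in $\mathcal C$ between objects of $\mathbf Q$ are not quotients of Jacobian-algebra Hom spaces by paths through boundary arcs. However, $\operatorname{Hom}_{\mathcal C}(X(\gamma_i),X(\gamma_j))$ has dimension $\operatorname{Int}(\gamma_i,\rho^{-1}\gamma_j)$, which Proposition~\ref{res:4.1} computes as $(B^tB)_{ij}$. I expect this to be the \textbf{main obstacle}: showing that Demonet's gluing of polygons of $\sigma_{\mathbf T}$ into polygons of $\sigma_{\mathbf Q}$ (his Definition~4.1 and Theorem~4.x) exactly realizes the passage from the cluster-tilting object to the rigid summand.

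I will first try to cite Demonet's theorem that $\Delta^D$ of a partial triangulation is the endomorphism algebra in the corresponding stable/cluster category. I then need to check that his category agrees with $\mathcal C(\mathbf S)$ and that his orientation convention gives the opposite algebra. If the citation does not match precisely, I will compare dimensions, using $c_{ij}=\sum_p b_{pi}b_{pj}$ against the basis of Demonet's algebra given by subpaths of the cycles $C_h$ at punctures. I will then construct a surjection from generators: angle arrows at punctures map to the corresponding irreducible morphisms in $\mathcal C$, which correspond to tagged intersections between $\gamma_i$ and $\rho^{-1}\gamma_j$ at common endpoints. Equal dimensions then give the isomorphism.

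\textbf{Step 3: the sparse case.} Assume $\mathbf Q^\circ$ is sparse. By the remark preceding the proposition, the only disk polygons of $\sigma_{\mathbf Q}$ are complementary regions containing boundary edges or punctures, apart from excluded types. Demonet's relations $R_{P,i}$ then reduce, after killing $e_\partial$, to the zero relations $\alpha_h\alpha_{\overline{\sigma h}}$. The special relations for triangles $(3,0)$ and monogons $(1,1)$ do not occur. The commutativity relations $\lambda_{p(h)}C_h=\lambda_{p(\bar h)}C_{\bar h}$ become $C_h=C_{\bar h}$ after rescaling arrows, which is possible on each component by choosing square-root-free rescalings along a spanning tree; this can be done because $k$ is algebraically closed. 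Completion is unnecessary because the quotient is finite-dimensional, since each $C_h\alpha_h=0$. Edge idempotents are preserved by construction.
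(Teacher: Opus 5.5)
Your overall architecture matches the paper's: complete $\mathbf Q$ to a tagged triangulation, compare its Jacobian algebra with Demonet's algebra, restrict to the summand $X(\mathbf Q)$, then use Demonet's sparse theorem and a rescaling. However, Step~2 has a genuine gap exactly where you flag the main obstacle, and part of it comes from a wrong identification.

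The algebra $J_{\mathbf T}=\operatorname{End}_{\mathcal C}X(\mathbf T)$ is not the corner $e\Gamma^D_{\sigma_{\mathbf T}}e$ at the arc idempotent. Up to opposite, it is the boundary quotient $\Delta^D_{\mathbf T}=\Gamma^D_{\sigma_{\mathbf T}}/(\Gamma^D_{\sigma_{\mathbf T}}e_\partial\Gamma^D_{\sigma_{\mathbf T}})$; the corner instead retains paths through boundary-segment vertices. So your reduced goal $\epsilon\Gamma^D_{\sigma_{\mathbf T}}\epsilon\cong\Gamma^D_{\sigma_{\mathbf Q}}/(e_\partial)$ is the wrong statement. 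Your discussion of ``paths through boundary arcs'' also conflates two different things: the boundary segments $\mathbf B$, which $e_\partial$ kills in both $\sigma_{\mathbf T}$ and $\sigma_{\mathbf Q}$, and the arcs of $\mathbf T\setminus\mathbf Q$, which are removed by passing to a corner.

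The missing ingredient is purely algebraic. Demonet's algebras are stable under idempotent truncation: $e\Delta^D_{\mathbf T}e\cong\Delta^D_{\mathbf Q}$ with the same parameters, for $e=\sum_{\gamma\in\mathbf Q}e_\gamma$ \cite[Corollary~4.2]{D}. The gluing of polygons you worry about is built into this result. Combined with $J_{\mathbf T}\cong(\Delta^D_{\mathbf T})^{\mathrm{op}}$ and $\Lambda_{\mathbf Q}=eJ_{\mathbf T}e$, it gives the first isomorphism immediately, with no comparison of categories. The theorem you intend to cite (Demonet's $\Delta^D$ as an endomorphism algebra in a cluster or stable category) is not what is available. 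Your fallback is also not carried out: it would need a basis of $\Delta^D_{\mathbf Q}$ in the nonsparse case and a well-defined surjection, i.e.\ a check of the $\lambda$-commutativity and polygon relations $R_{P,i}$ for chosen morphisms of $\mathcal C(\mathbf S)$. Angle arrows also need not be irreducible: for an unpunctured disk triangle, the polygon relations express three of the six angle arrows as composites of the others.

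The triangulation step also needs more than ``suitable $\lambda_p$''. In Step~1 the added arcs cannot in general all be incident with $\partial S$: if a loop of $\mathbf Q$ bounds a disk containing two further punctures, the arcs of $\mathbf T$ inside it join punctures. What is needed is an extension $\mathbf T$ of nonzero signature agreeing with $\kappa_{\mathbf Q}$ and plain at unused punctures \cite[Remark~3.10]{HZZ}. Then $\mathbf T^\circ$ has no self-folded triangle and contains $\mathbf Q^\circ$. This is what makes your inclusion $\sigma_{\mathbf Q}\subseteq\sigma_{\mathbf T}$ valid, since at a zero-signature puncture $\gamma^\circ_{\mathbf T}$ may differ from the underlying arc of $\gamma$. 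With this extension, the parameter matching goes as follows:
\begin{enumerate}
\item take $\lambda_p=-\kappa_{\mathbf T}(p)x_p$;
\item match Demonet's potential with Labardini-Fragoso's unreduced surface potential after reversing path order, which is the source of the opposite algebra;
\item discard the trivial summand;
\item use that nonzero puncture coefficients give right-equivalent potentials because $\partial S\neq\varnothing$.
\end{enumerate}

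Step~3 is essentially the paper's argument, but you should cite the reduction of the relations $R_{P,i}$ to the zero relations from \cite[Theorem~6.2]{D} rather than assert it. The rescaling is also simpler than you suggest. Each arrow lies on exactly one puncture cycle, so choosing $t_h$ with $\prod_{h\in H_p}t_h=\lambda_p$ at each puncture turns $\lambda_{p(h)}C_h=\lambda_{p(\bar h)}C_{\bar h}$ into $C'_h=C'_{\bar h}$. No spanning tree or algebraic closure is needed.
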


\begin{proof}
The assertion holds for $\mathbf Q=\varnothing$, since all three algebras
are zero. Assume $\mathbf Q\neq\varnothing$.
The once-punctured digon excluded in \cite{LF} cannot occur: a disk with
only one puncture admits no essential arc with both ends at punctures.
Extend $\mathbf Q$ to a tagged triangulation $\mathbf T$ with nonzero
signatures, with $\kappa_{\mathbf T}(p)=\kappa_{\mathbf Q}(p)$ for
$p\in\mathbf P_{\mathbf Q}$ and $\kappa_{\mathbf T}(p)=1$ at unused punctures;
see \cite[Remark~3.10]{HZZ}.
Then $\mathbf T^\circ$ has no self-folded triangle and contains
$\mathbf Q^\circ$.
Write $J_{\mathbf T}=\operatorname{End}_{\mathcal C(\mathbf S)}X(\mathbf T)$
for the surface Jacobian algebra.
For the nonzero surface-potential coefficients $x_p$, take
$\lambda_p=-\kappa_{\mathbf T}(p)x_p$ at punctures and $\lambda_v=1$ at
boundary marked points. Let $\Delta^D_{\mathbf T}$ denote Demonet's algebra
for $\mathbf T^\circ\cup\mathbf B$ with these parameters.

Let $F$ identify the arrows of Demonet's quiver after the boundary quotient
with those of the unreduced surface quiver, relabeling ideal arcs by tagged arcs.
Demonet reads paths from left to right, whereas the surface Jacobian
algebra uses function-composition order. Thus
\[
 F(a_1\cdots a_r)=F(a_r)\cdots F(a_1),\qquad F(uv)=F(v)F(u).
\]
Under $F$, the potential in \cite[Corollary~5.2]{D} is the unreduced
surface potential in \cite[Definitions~3.1 and~3.2]{LF}: the triangle coefficients
are $1$ and the puncture coefficients are
$-\lambda_p=\kappa_{\mathbf T}(p)x_p$.
Removing a trivial summand $\sum_i u_i v_i$ preserves the Jacobian algebra
and the vertex idempotents, since the cyclic derivatives of the trivial summand are
$\partial_{u_i}(\sum_j u_jv_j)=v_i$ and
$\partial_{v_i}(\sum_j u_jv_j)=u_i$;
see \cite[Proposition~4.5 and Theorem~4.6]{DWZ}.
By \cite[Proposition~10.2]{LF}, any two nonzero choices of puncture
coefficients give right-equivalent potentials because $\partial S\neq\varnothing$.
It follows that $J_{\mathbf T}\cong(\Delta^D_{\mathbf T})^{\mathrm{op}}$,
preserving the arc idempotents.

Let $e=\sum_{\gamma\in\mathbf Q}e_\gamma$.
Since $X(\mathbf T)=X(\mathbf Q)\oplus Z$, we have
$eJ_{\mathbf T}e=\operatorname{End}_{\mathcal C(\mathbf S)}X(\mathbf Q)$.
Also, \cite[Corollary~4.2]{D} gives
$e\Delta^D_{\mathbf T}e\cong\Delta^D_{\mathbf Q}$ with the same parameters.
Therefore
\begin{equation}\label{eq:4.2}
 \Lambda_{\mathbf Q}\cong eJ_{\mathbf T}e
 \cong(e\Delta^D_{\mathbf T}e)^{\mathrm{op}}
 \cong(\Delta^D_{\mathbf Q})^{\mathrm{op}}.
\end{equation}
All these isomorphisms preserve the arc idempotents.

If $\mathbf Q^\circ$ is sparse, let $H$ be the half-edge set of
$G_{\mathbf Q}$. By \cite[Theorem~6.2]{D},
\[
 \Delta^D_{\mathbf Q}\cong
 kQ_{G_{\mathbf Q}}\big/
 \bigl\langle
 \lambda_{p(h)}C_h-\lambda_{p(\bar h)}C_{\bar h},\,
 \alpha_h\alpha_{\overline{\sigma h}}:h\in H
 \bigr\rangle.
\]
Choose $t_h\in k^\times$ with $\prod_{h\in H_p}t_h=\lambda_p$ at each
incident puncture $p$, and set $\alpha'_h=t_h\alpha_h$.
For $p=p(h)$ and $q=p(\bar h)$, we obtain
\[
 C'_h=\lambda_pC_h
 \quad\text{and}\quad
 \lambda_pC_h=\lambda_qC_{\bar h}
 \quad\Longleftrightarrow\quad C'_h=C'_{\bar h}.
\]
The zero relations are preserved by these nonzero scalings. This gives the
presentation of $A(G_{\mathbf Q})$.
\end{proof}

Let $G^{\mathrm{rev}}$ be obtained from $G$ by replacing $\sigma$ with
$\sigma^{-1}$. Reversing cyclic orders realizes the opposite algebra:
\[
 A(G^{\mathrm{rev}})\cong A(G)^{\mathrm{op}},\qquad
 \alpha_h^{\mathrm{rev}}\longmapsto
 \alpha_{\sigma^{-1}h}^{\mathrm{op}},
\]
with all edge idempotents fixed.

\section{Classification of exchange components}\label{sec:5}

We classify the connected components of exchange graphs of surface rigid
algebras. For a connected algebra, we characterize disconnectedness by weak
symmetry and Cartan rank one.

\subsection{Products and the Cartan sign}

We use the standard product formula for $g$-fans
\cite[Theorem~4.21]{AHIKM} and the resulting product of chamber graphs.

\begin{lemma}\label{res:5.1}
Let $A=A_1\times\cdots\times A_t$. Then
\[
 \Sigma_g(A)\cong\prod_{i=1}^{t}\Sigma_g(A_i)
 \qquad\text{and}\qquad
 s\tau\text{-}\operatorname{tilt}A
 \cong\prod_{i=1}^{t}s\tau\text{-}\operatorname{tilt}A_i
\]
as fans and partially ordered sets, respectively. The chamber graphs satisfy
\[
 \operatorname{Ch}(\Sigma_g(A))
 \cong\mathop{\square}_{i=1}^{t}\operatorname{Ch}(\Sigma_g(A_i)).
\]
Consequently,
\[
 \#\pi_0\operatorname{EG}(s\tau\text{-}\operatorname{tilt}A)
 =\prod_{i=1}^{t}
   \#\pi_0\operatorname{EG}(s\tau\text{-}\operatorname{tilt}A_i).
\]
\end{lemma}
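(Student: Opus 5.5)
The plan is to reduce everything to the fact that modules, projectives and Auslander--Reiten translation decompose over a finite product. This fact was already used in Proposition~\ref{res:2.13}. By induction on $t$ it suffices to treat $A=A_1\times A_2$. Let $\epsilon_1,\epsilon_2$ be the central idempotents. Then $\operatorname{mod}A\simeq\operatorname{mod}A_1\times\operatorname{mod}A_2$ and $\operatorname{proj}A\simeq\operatorname{proj}A_1\times\operatorname{proj}A_2$. Moreover $\tau_A(M_1\oplus M_2)\cong\tau_{A_1}M_1\oplus\tau_{A_2}M_2$, and there are no nonzero morphisms between modules supported on different factors.

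First I treat the pairs. A pair $(M,P)$ is $\tau$-rigid if and only if each $(\epsilon_iM,\epsilon_iP)$ is $\tau$-rigid over $A_i$. Since $|A|=|A_1|+|A_2|$, and $|(\epsilon_iM,\epsilon_iP)|\le|A_i|$ by \cite{AIR}, the pair is $\tau$-tilting exactly when both components are. This gives a bijection $s\tau\text{-}\operatorname{tilt}A\cong s\tau\text{-}\operatorname{tilt}A_1\times s\tau\text{-}\operatorname{tilt}A_2$. The order is preserved because $\operatorname{Fac}(M_1\oplus M_2)=\operatorname{Fac}M_1\times\operatorname{Fac}M_2$. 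Hence the inclusion of factor categories holds if and only if it holds coordinatewise, giving the product poset.

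Next I treat the fan. We have $K_A=K_{A_1}\oplus K_{A_2}$ in the bases of indecomposable projectives. A minimal projective presentation of $M_1\oplus M_2$ is the direct sum of the minimal presentations, so $g_A(u)=(g_{A_1}(u_1),g_{A_2}(u_2))$. Every indecomposable $\tau$-rigid pair lives on a single factor, so its $g$-vector lies in one summand $K_{A_i}$. Hence the cone $\sigma_u$ equals $\sigma_{u_1}\times\sigma_{u_2}$. This is the product fan, which is \cite[Theorem~4.21]{AHIKM}. I also expect to cite that result directly.

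For the chamber graphs, I check when two $\tau$-tilting pairs $(T_1,T_2)$ and $(T_1',T_2')$ share an almost complete summand. Such a common summand misses exactly one indecomposable summand of each pair, and that summand lies in a single factor. So adjacency holds if and only if one coordinate is equal and the other coordinates are adjacent in their own exchange graphs. This is precisely the definition of the Cartesian product, and it matches the facet adjacency of product cones. The component count then follows from the general fact that $\pi_0(G_1\square G_2)=\pi_0G_1\times\pi_0G_2$. Any path in the product projects to paths in each factor. Conversely, one can move in one coordinate at a time.

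The only step needing care is the equivalence of $\tau$-tilting for the pair with $\tau$-tilting for each component. For this I rely on the bound $|(M,P)|\le|A|$ for $\tau$-rigid pairs over each factor. Everything else is a routine verification.
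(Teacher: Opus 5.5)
Your proposal is correct and follows essentially the same route as the paper. Both arguments decompose modules, projectives, Hom-spaces and $\tau$ over the factors, take the product fan from \cite[Theorem~4.21]{AHIKM}, note that a mutation changes one summand in a single factor, and multiply components in the Cartesian product graph; you add the details the paper leaves implicit, namely the rank count $|(M_i,P_i)|\le|A_i|$ forcing each component to be $\tau$-tilting and the check that a common almost complete summand is $\tau$-tilting in all but one factor.
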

\begin{proof}
Modules and projectives decompose over the factors, as do Hom-spaces,
Auslander--Reiten translation and torsion classes. These decompositions give
the poset and fan products. A mutation changes one indecomposable summand in
one factor. Thus the chamber graph is the Cartesian product of the factor
graphs. The connected components of the product graph are the products of
connected components of the factor graphs.
\end{proof}

The Cartan matrix of a direct product is block diagonal, and each nonzero
factor has positive Cartan rank. Thus a nonzero algebra whose Cartan matrix
has rank one cannot have more than one block, and is connected. For a weakly
symmetric algebra, rank one also gives a sign on $\tau$-tilting pairs which
is preserved by mutation.

\begin{proposition}\label{res:5.2}
Let $A$ be a basic weakly symmetric algebra with $n=|A|\geq2$ and
$\operatorname{rank}_{\mathbb Q}C_A=1$. Every basic $\tau$-tilting pair
$(M,P)$ has a sign $\varepsilon_A(M,P)\in\{+1,-1\}$ preserved by irreducible
mutation, with $\varepsilon_A(A,0)=+1$ and $\varepsilon_A(0,A)=-1$.
In particular, $(A,0)$ and $(0,A)$ lie in distinct connected components of
$\operatorname{Ch}(\Sigma_g(A))$, or equivalently of
$\operatorname{EG}(s\tau\text{-}\operatorname{tilt}A)$.
\end{proposition}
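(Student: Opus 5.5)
The plan is to build the sign from $g$-vectors and the Cartan form. Since $A$ is weakly symmetric with $\operatorname{rank}C_A=1$, and $C_A$ has positive diagonal and nonnegative entries, I will first show $C_A=ww^t$ for a vector $w\in\mathbb Q^n$ with all entries positive (after a global sign). Symmetry of $C_A$ comes from weak symmetry: $c_{ij}=\dim\operatorname{Hom}(P_i,P_j)=\dim\operatorname{Hom}(P_j,\nu P_i)=\dim\operatorname{Hom}(P_j,P_i)=c_{ji}$. A symmetric rank-one matrix with positive diagonal is $\pm ww^t$; positivity of the diagonal forces the plus sign. Nonnegativity of the off-diagonal entries then lets me choose every $w_i>0$. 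Define the linear form $\ell(x)=w^tx$ on $K_{A,\mathbb R}$.

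The key input is the following claim. For a $\tau$-rigid pair $u$, write $g=g_A(u)$. Then
\[
 g^tC_Ag=\ell(g)^2 .
\]
Moreover, for compatible indecomposables $u_i,u_j$, we should have $g_i^tC_Ag_j=\ell(g_i)\ell(g_j)\geq 0$, and each indecomposable has $\ell(g_i)\neq0$.

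For weakly symmetric algebras I plan to prove the analogue of Lemma~\ref{res:4.4} algebraically. Take a two-term presilting complex $T$ and form the Euler-type pairing
\[
 \langle [T],[T']\rangle=\sum_i(-1)^i\dim\operatorname{Hom}_{K^b}(T,T'[i]).
\]
Writing $\nu$ for the Nakayama functor, I will use $\operatorname{Hom}(P,Q)\cong D\operatorname{Hom}(Q,\nu P)$ together with $\nu P_i\cong P_i$ to show that this pairing equals $[T]^tC_A[T']$ on two-term complexes. Presilting of $T\oplus T'$ kills $\operatorname{Hom}(T,T'[1])$, and Serre-type duality $\operatorname{Hom}(T,T'[-1])\cong D\operatorname{Hom}(T',\nu T[1])$, with $\nu T\cong T$ up to summands, kills the negative degree. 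This yields $g^tCg'=\dim\operatorname{Hom}_{K^b}(T,T')\geq0$, and for indecomposable $T$ the value is at least $1$. Consequently $\ell(g)^2\geq1$, so $\ell(g)\neq0$. Also $\ell(g_i)\ell(g_j)\geq0$, so all summands of a $\tau$-tilting pair have $\ell$-values of one common sign, because nonzero numbers with pairwise nonnegative products share a sign. I expect this duality computation, that is, making sure the negative-degree Hom vanishes using weak symmetry, to be the main obstacle. If the direct approach fails, I would fall back on citing the standard fact that for symmetric $C$ and two-term presilting $T\oplus T'$ one has $[T]^tC[T']=\dim\operatorname{Hom}(T,T')-\dim\operatorname{Hom}(T,T'[-1])$, and then show the last term vanishes when $\nu$ fixes all projectives.

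Define $\varepsilon_A(M,P)$ to be the common sign of $\ell(g_i)$ over the summands. For $(A,0)$ the $g$-vectors are the $e_i$, with $\ell(e_i)=w_i>0$, so the sign is $+1$. For $(0,A)$ they are the $-e_i$, giving $-1$. For mutation, two $\tau$-tilting pairs differing in one summand share an almost complete summand. When $n\geq2$ that summand is nonempty, and both pairs take its sign. Hence $\varepsilon$ is constant along edges of $\operatorname{EG}(s\tau\text{-}\operatorname{tilt}A)$, and by $\operatorname{Ch}(\Sigma_g(A))\cong\operatorname{EG}$ the two vertices lie in different components.
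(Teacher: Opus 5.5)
Your architecture is the paper's: factor $C_A$ as a positive rank-one form, read $g_i^tC_Ag_j$ as $\dim\operatorname{Hom}_{K^b}(T_i,T_j)$ through the Euler form, deduce a common sign, and propagate it across mutations through the $n-1\geq1$ shared summands. There is a small slip. The vector $w$ need not be rational: $C_A=2J_n$ needs $w=\sqrt2(1,\ldots,1)$. So take $w\in\mathbb R^n$, or write $C_A=qvv^t$ with $q=c_{11}$ and $v_i=c_{1i}/c_{11}$, as the paper does. Also, identifying $\chi$ with $x^tC_Ay$ needs only bilinearity and $\chi(P_i,P_j)=c_{ij}$; no duality is involved there.

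The genuine gap is the step you flag yourself: the vanishing of $\operatorname{Hom}_{K^b}(T,T'[-1])$. Everything depends on it, including $\ell(g_i)\neq0$, which you get from $\ell(g_i)^2=\dim\operatorname{End}(T_i)\geq1$. Your justification, that $\nu T\cong T$ ``up to summands'' because $\nu$ fixes all projectives, does not follow. The isomorphisms $\nu P_i\cong P_i$ only fix the terms of $T$. On morphisms, $\nu$ acts through a Nakayama automorphism, so the differential of $\nu T$ is a twisted version of that of $T$. For an arbitrary two-term complex, $\nu T$ need not be isomorphic to $T$.

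The missing idea is to use presilting and $g$-vectors. For self-injective $A$, $\nu$ is a triangle autoequivalence of $K^b(\operatorname{proj}A)$, so $\nu T$ is again two-term presilting. Since the Nakayama permutation is trivial, $[\nu T]=[T]$. Injectivity of $g$-vectors on two-term presilting complexes \cite[Theorem~5.5]{AIR} then gives $\nu T\cong T$. Hence $\operatorname{Hom}(T,T'[-1])\cong D\operatorname{Hom}(T',T[1])=0$ whenever $T\oplus T'$ is presilting. Equivalently, two-term silting complexes over weakly symmetric algebras are tilting, which is what the paper imports from \cite[Proposition~3.6]{AD}. With this step inserted, your proof is complete and coincides with the paper's.
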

\begin{proof}
Fix indecomposable projectives $P_1,\ldots,P_n$. Weak symmetry gives
\[
 D\operatorname{Hom}_A(P_i,P_j)
 \cong\operatorname{Hom}_A(P_j,\nu_AP_i)
 \cong\operatorname{Hom}_A(P_j,P_i).
\]
Hence $C_A=(c_{ij})$ is a symmetric rank-one matrix with nonnegative entries
and positive diagonal entries. The rank-one condition gives
$c_{1i}^2=c_{11}c_{ii}>0$ for every $i$. Therefore $C_A=qvv^t$ with
$q=c_{11}>0$ and $v_i=c_{1i}/c_{11}>0$.

Let $(M,P)$ be a basic $\tau$-tilting pair and take a minimal projective
presentation $P_M^{-1}\overset{d}{\longrightarrow}P_M^0\longrightarrow M\longrightarrow0$.
The corresponding complex
\[
 T=(P_M^{-1}\overset{d}{\longrightarrow}P_M^0)\oplus P[1]
   =T_1\oplus\cdots\oplus T_n
\]
is a basic two-term silting complex; see \cite[Theorem~3.2]{AIR}.
Since $A$ is weakly symmetric, $T$ is tilting by
\cite[Proposition~3.6]{AD}. Hence
\[
 \operatorname{Hom}_{K^b(\operatorname{proj}A)}(T_i,T_j[\ell])=0,
 \quad \ell\in\mathbb Z\setminus\{0\}.
\]
Let $g_i=[T_i]$ in the Grothendieck group $K_0(\operatorname{proj}A)$.
We write $g_i$ as a coordinate column vector in the basis
$[P_1],\ldots,[P_n]$. Thus $[T_i]=[T_i^0]-[T_i^{-1}]$.
Let $w_i=v^tg_i$. The Euler form
\[
 \chi(U,V)=\sum_{\ell\in\mathbb Z}(-1)^\ell
 \dim_k\operatorname{Hom}_{K^b(\operatorname{proj}A)}(U,V[\ell])
\]
has matrix $C_A$ in this basis. Thus
\[
 \dim_k\operatorname{Hom}_{K^b(\operatorname{proj}A)}(T_i,T_j)
 =g_i^tC_Ag_j=qw_iw_j.
\]
For every $i$, the identity of $T_i$ gives
$\dim_k\operatorname{Hom}_{K^b(\operatorname{proj}A)}(T_i,T_i)>0$.
Hence $w_i\neq0$. For every $i,j$, the same formula gives $w_iw_j\geq0$.
Hence all $w_i$ have the same sign. Define $\varepsilon_A(M,P)$ to be this
sign.

Under an irreducible mutation corresponding to $\mu_k$, the complexes $T$
and $\mu_k(T)$ have the common summand $\bigoplus_{i\neq k}T_i$.
Each $T_i$ with $i\neq k$ has the same nonzero value $v^tg_i$ in both
complexes. Since $n\geq2$, at least one such summand $T_i$ exists, and the value
$v^tg_i$ determines the sign in both complexes. Thus mutation preserves
$\varepsilon_A$. Finally, $(A,0)$ corresponds to the complex $A$,
whose summands have coordinate vectors $e_i$, and $(0,A)$ corresponds to
$A[1]$, whose summands have coordinate vectors $-e_i$.
Since every $v_i>0$, these pairs have signs $+1$ and $-1$, respectively.
\end{proof}

We apply this sign construction separately to the blocks whose Cartan
matrices have rank one.

\begin{definition}\label{res:5.3}
For a basic finite-dimensional algebra $A$, define $\delta_C(A)$ to be the
number of blocks $B$ satisfying
\[
 |B|\geq2,\qquad B\text{ is weakly symmetric},\qquad
 \operatorname{rank}_{\mathbb Q}C_B=1.
\]
Here a block is an indecomposable direct product factor of $A$, equivalently
a connected component of the Gabriel quiver of $A$. Denote the
set of these blocks by $\mathcal B_C(A)$. Thus
$\delta_C(A)=|\mathcal B_C(A)|$.
\end{definition}

Morita equivalences preserve blocks, their numbers of simple modules, weak
symmetry and Cartan ranks. Hence we use the same definitions for an arbitrary
finite-dimensional algebra by passing to a Morita equivalent basic algebra;
the value of $\delta_C$ is independent of this choice.
The blocks in $\mathcal B_C(A)$ index the independent signs in the component
and completion formulas below.

\begin{corollary}\label{res:5.4}
For every finite-dimensional algebra $A$,
\[
 \#\pi_0\operatorname{Ch}(\Sigma_g(A))
 =\#\pi_0\operatorname{EG}(s\tau\text{-}\operatorname{tilt}A)
 \geq2^{\delta_C(A)}.
\]
\end{corollary}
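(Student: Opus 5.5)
The plan is to pass to a basic algebra, split into blocks with Lemma~\ref{res:5.1}, and apply Proposition~\ref{res:5.2} to each block of $\mathcal B_C(A)$. First, the quantities involved are Morita invariant. The chamber graph of $\Sigma_g(A)$ and the support $\tau$-tilting exchange graph depend only on $\operatorname{mod}A$ and $\operatorname{proj}A$ up to equivalence, and so does $\delta_C(A)$ by the remark following Definition~\ref{res:5.3}. We may therefore assume $A$ is basic. The equality $\#\pi_0\operatorname{Ch}(\Sigma_g(A))=\#\pi_0\operatorname{EG}(s\tau\text{-}\operatorname{tilt}A)$ is the graph isomorphism $\operatorname{Ch}(\Sigma_g(A))\cong\operatorname{EG}(s\tau\text{-}\operatorname{tilt}A)$ recalled in Section~1.5. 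That identification uses only the general facts from \cite{AIR,DIJ,AHIKM} and does not depend on surfaces.

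Next, write $A=B_1\times\cdots\times B_t$ as the product of its blocks. Lemma~\ref{res:5.1} gives
\[
 \#\pi_0\operatorname{EG}(s\tau\text{-}\operatorname{tilt}A)
 =\prod_{i=1}^{t}\#\pi_0\operatorname{EG}(s\tau\text{-}\operatorname{tilt}B_i).
\]
Every factor is at least $1$, because each exchange graph is nonempty: it contains $(B_i,0)$. It therefore suffices to show that each $B_i\in\mathcal B_C(A)$ contributes a factor of at least $2$.

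For such a block, $B_i$ is basic and weakly symmetric, with $|B_i|\geq2$ and $\operatorname{rank}_{\mathbb Q}C_{B_i}=1$. These are exactly the hypotheses of Proposition~\ref{res:5.2}. That proposition puts $(B_i,0)$ and $(0,B_i)$ in distinct connected components, so the $i$th factor is at least $2$. Multiplying over the $\delta_C(A)$ such blocks gives the bound $2^{\delta_C(A)}$.

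There is no serious obstacle. The only points needing care are the Morita-invariance bookkeeping and checking that basicness of $A$ passes to its blocks, since Proposition~\ref{res:5.2} is stated for basic algebras. Blocks of a basic algebra are basic, so this is immediate.
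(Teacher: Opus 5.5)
Your proof is correct and follows the same route as the paper: reduce to the basic case by Morita invariance, apply the block product formula of Lemma~\ref{res:5.1}, and use Proposition~\ref{res:5.2} to get at least two components for each block in $\mathcal B_C(A)$. You also spell out two points the paper leaves implicit: each factor graph is nonempty, and $\operatorname{Ch}(\Sigma_g(A))\cong\operatorname{EG}(s\tau\text{-}\operatorname{tilt}A)$ holds for arbitrary algebras, not only surface rigid ones.
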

\begin{proof}
Each block counted by $\delta_C(A)$ has at least two exchange components,
see Proposition~\ref{res:5.2}. The product formula in
Lemma~\ref{res:5.1} and Morita invariance of the exchange graph then give the
asserted lower bound.
\end{proof}

\subsection{Two-arc partial triangulations and lifting}

We first determine when a two-arc partial tagged triangulation $\mathbf Q$
admits a flip path from $\mathbf Q$ to $\rho(\mathbf Q)$. We then lift such
a path to a larger puncture component $\mathbf R$.

Figure~\ref{fig:4} lists the possible endpoint configurations of a connected
two-arc partial tagged triangulation. The endpoint marks
$?,!,\sharp\in\{\varnothing,\times\}$ are chosen independently at
distinct punctures, as in Example~\ref{res:1.4}. In case (i),
$\{?,!\}=\{\varnothing,\times\}$.

\begin{figure}[htbp]
\centering
\includegraphics[width=.95\linewidth]{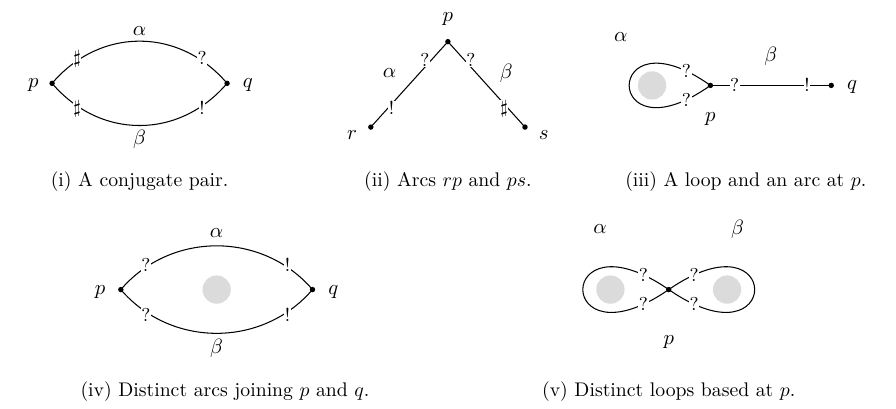}
\caption{Connected two-arc configurations.}
\label{fig:4}
\end{figure}

\begin{proposition}\label{res:5.5}
Let $\mathbf Q=\{\alpha,\beta\}$ be a connected two-arc partial tagged
triangulation with all endpoints at punctures. Then
\[
 \mathbf Q\not\sim_{\mathrm{flip}}\rho(\mathbf Q)
 \quad\Longleftrightarrow\quad
 \alpha,\beta\text{ have distinct underlying arcs and the same endpoint multiset}.
\]
\end{proposition}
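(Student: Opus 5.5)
We prove the two implications separately. The non-reachability direction is algebraic and uses the Cartan sign of Proposition~\ref{res:5.2}. The reachability direction is a finite geometric check over the configurations of Figure~\ref{fig:4}.

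For ($\Leftarrow$), suppose $\alpha,\beta$ have distinct underlying arcs and the same endpoint multiset. Corollary~\ref{res:4.2} with $n=2$ gives $C_{\Lambda_{\mathbf Q}}=2J_2$ or $4J_2$, so the Cartan rank is one. Proposition~\ref{res:4.1} shows that $\Lambda_{\mathbf Q}$ is weakly symmetric, and $|\Lambda_{\mathbf Q}|=2$. Proposition~\ref{res:5.2} then puts $(A,0)$ and $(0,A)$ in distinct components of the exchange graph.

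Under $\Phi\circ X$, the pair $(A,0)$ corresponds to the dissection $\mathbf Q$, since $X(\mathbf Q)=R\in\operatorname{add}R$. The pair $(0,A)$ corresponds to $\rho(\mathbf Q)$, since $X(\rho\mathbf Q)=R[1]$ by Theorem~\ref{res:1.5}. Theorem~\ref{res:2.6} identifies flips with mutations, so $\mathbf Q\not\sim_{\mathrm{flip}}\rho(\mathbf Q)$.

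For ($\Rightarrow$), we show the contrapositive: if $\alpha,\beta$ form a conjugate pair, or have different endpoint multisets, then $\mathbf Q\sim_{\mathrm{flip}}\rho(\mathbf Q)$. Here is a uniform algebraic route worth trying first.

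In these cases $C=B^tB$ has rank two, so it is nonsingular; the conjugate case has $C=2I_2$. By Theorem~\ref{res:4.7}, $A$ is then $\tau$-tilting finite, so $\Sigma_g(A)$ is a complete fan in $\mathbb R^2$. A complete simplicial fan in the plane has a cyclic chamber graph, so it is connected. This already gives the claim without any geometry, and it also covers the conjugate-pair configuration (i). We need only check that Theorem~\ref{res:4.7}(iii)$\Rightarrow$(i) applies here: $\mathbf Q$ is nonempty with all endpoints at punctures, so it does.

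Chamber-graph connectivity for complete two-dimensional fans is elementary: the rays are cyclically ordered, and consecutive chambers share a ray. Thus every dissection, and in particular $\rho(\mathbf Q)$, is reachable from $\mathbf Q$ by flips.

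The main obstacle is confirming that the remaining configurations of Figure~\ref{fig:4} really have nonsingular Cartan matrix. The difficult-looking cases are a loop together with an arc, two loops at different punctures sharing nothing, and two arcs sharing one puncture. In each, compute the columns of $B$; for example, $2ae_p$ and $ae_p+be_q$ with $q\neq p$. Check that they are independent whenever the endpoint multisets differ. Since distinct endpoint multisets in a connected two-edge graph $\Gamma_{\mathbf Q}$ give a tree or a single loop plus a pendant edge, Lemma~\ref{res:4.6} supplies the independence directly.

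If one prefers a geometric witness instead, an explicit flip path $\mathbf Q\to\rho(\mathbf Q)$ of length at most four can be drawn for each configuration of Figure~\ref{fig:4}. That route is unnecessary once the finiteness argument is in place.
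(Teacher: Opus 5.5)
Your proof is correct and follows the paper's argument: Proposition~\ref{res:5.2} separates $(\Lambda_{\mathbf Q},0)$ from $(0,\Lambda_{\mathbf Q})$ in the rank-one cases, and in the remaining cases the nonsingularity of $C=B^tB$ gives $\tau$-tilting finiteness via Theorem~\ref{res:4.7}, hence a connected exchange graph containing both $\mathbf Q$ and $\rho(\mathbf Q)$. The differences are cosmetic. You get nonsingularity from Corollary~\ref{res:4.2} (or Lemma~\ref{res:4.6}) instead of the explicit Cartan table, and you replace \cite[Corollary~2.38]{AIR} by the elementary fact that a complete simplicial fan in $\mathbb R^2$ has a cyclic chamber graph. (Two loops at different punctures do not form a connected configuration, but your argument never uses that case.)
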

\begin{proof}
By Proposition~\ref{res:4.1},
$C_{\Lambda_{\mathbf Q}}=B_{\mathbf Q}^tB_{\mathbf Q}$, where $B_{\mathbf Q}$
is the signed endpoint matrix. For the configurations in Figure~\ref{fig:4},
this gives
\[
\renewcommand{\arraystretch}{1.35}
\begin{array}{c|ccccc}
\text{case}&\text{(i)}&\text{(ii)}&\text{(iii)}&\text{(iv)}&\text{(v)}\\ \hline
C_{\Lambda_{\mathbf Q}}&
2I_2&
\begin{pmatrix}2&1\\1&2\end{pmatrix}&
\begin{pmatrix}4&2\\2&2\end{pmatrix}&
2J_2&4J_2\\
\det C_{\Lambda_{\mathbf Q}}&4&3&4&0&0.
\end{array}
\]
In cases (i)--(iii), the matrix is positive definite. Thus
$\Lambda_{\mathbf Q}$ is $\tau$-tilting finite by Theorem~\ref{res:4.7},
and the exchange graph of $\Lambda_{\mathbf Q}$ is connected by \cite[Corollary~2.38]{AIR}.
Under the flip--mutation correspondence of Theorem~\ref{res:2.6}, the
pairs $(\Lambda_{\mathbf Q},0)$ and $(0,\Lambda_{\mathbf Q})$ correspond to
$\mathbf Q$ and $\rho(\mathbf Q)$, respectively. Hence
$\mathbf Q\sim_{\mathrm{flip}}\rho(\mathbf Q)$.

In cases (iv) and (v), Proposition~\ref{res:4.1} gives weak symmetry,
and the Cartan matrix has rank one. Proposition~\ref{res:5.2} puts
$(\Lambda_{\mathbf Q},0)$ and $(0,\Lambda_{\mathbf Q})$ in different
exchange components. Hence
$\mathbf Q\not\sim_{\mathrm{flip}}\rho(\mathbf Q)$.
These two cases are exactly those in which $\alpha,\beta$ have distinct
underlying arcs and the same endpoint multiset.
\end{proof}

We next lift a two-arc flip path to a path of $\mathbf R$-dissections.

\begin{proposition}\label{res:5.6}
Let $\mathbf R$ be a connected partial tagged triangulation with all
endpoints at punctures and $|\mathbf R|\geq3$. If a connected two-arc
subcollection $\mathbf Q\subseteq\mathbf R$ satisfies
$\mathbf Q\sim_{\mathrm{flip}}\rho(\mathbf Q)$, then
$\mathbf R\sim_{\mathrm{flip}}\rho(\mathbf R)$.
\end{proposition}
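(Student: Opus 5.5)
We reduce to a path in a cut surface. Write $\mathbf R'=\mathbf R\setminus\mathbf Q$; since $|\mathbf R|\geq3$, $\mathbf R'\neq\varnothing$. The idea is to move $\mathbf Q$ to $\rho(\mathbf Q)$ while $\mathbf R'$ stays fixed, and then to move $\mathbf R'$ to $\rho(\mathbf R')$ while $\rho(\mathbf Q)$ stays fixed. The order of the steps is chosen so that at every stage some arc is fixed, and then the dissection graph of the cut partial triangulation is connected by Lemma~\ref{res:3.4} and \cite[Theorem~5.16]{HZZ}.

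\emph{Step 1.} Consider the mixed collection $\mathbf W=\rho(\mathbf Q)\cup\mathbf R'$. We must check that it is an $\mathbf R$-dissection. Each arc of $\mathbf R$ and its adjoint is $\mathbf R$-standard, since these correspond to $R_i$ and $R_i[1]$ in $\mathcal E_R$. We also need compatibility, $\operatorname{Int}(\rho\alpha,\gamma)=0$ for $\alpha\in\mathbf Q$ and $\gamma\in\mathbf R'$. This fails whenever $\alpha$ and $\gamma$ share an endpoint, by the computation in Proposition~\ref{res:4.1}. So $\mathbf W$ is not a dissection in general, and the naive mixing fails.

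Instead we work algebraically with $\tau$-rigid pairs. The pair $(\Lambda_{\mathbf R}e_{\mathbf R'},\Lambda_{\mathbf R}e_{\mathbf Q})$, that is, $X(\mathbf R')\oplus X(\mathbf Q)[1]$, is $\tau$-tilting. It corresponds to a genuine $\mathbf R$-dissection, namely $\mathbf R'\cup\rho(\mathbf Q)$ read through $\Phi$, because $\operatorname{Hom}(R,-)$ of projectives is $\tau$-rigid automatically. The resolution is that compatibility in $\mathcal E_R$ means rigidity of the direct sum. Rigidity needs $\operatorname{Hom}(X(\mathbf R'),X(\mathbf Q)[2])=0$, which is false. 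So we do not use this dissection either.

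\emph{Revised plan.} We use the reduction of Corollary~\ref{res:2.7} with $u=(P_{\mathbf R'},0)$, the projective part on $\mathbf R'$ viewed as the $\tau$-rigid module $X(\mathbf R')$. Then $\mathbf N_u=\mathbf R'$, and the completion graph $\mathcal F_A(u)$ is isomorphic to $\operatorname{EG}^\times(\mathbf R_u)$. Its vertices include $\mathbf R$ itself, the top completion.

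We then compare $\mathbf R_u$ with $\mathbf Q$ after cutting along $\mathbf R'$. Theorem~\ref{res:2.5} gives $A_u\cong A/\langle e_{\mathbf R'}\rangle$-type data, namely $\operatorname{End}$ of the Bongartz complement modulo $\operatorname{add}X(\mathbf R')$. Theorem~\ref{res:2.9} identifies the $g$-fan of $A_u$ with the quotient fan. The key claim is that $\mathcal F_A(u)$ contains a completion $\mathbf R'\cup\mathbf Q''$ in which $\mathbf Q''$ plays the role of $\rho(\mathbf Q)$ after cutting, and that $\mathbf Q\sim_{\mathrm{flip}}\rho(\mathbf Q)$ lifts to a path from $\mathbf R$ to it. Rather than cutting $\mathbf R'$, it is cleaner to cut a single arc.

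\emph{Final plan, by induction on $|\mathbf R|$.} Since $\mathbf R$ is connected, choose $\eta\in\mathbf R\setminus\mathbf Q$ such that $\mathbf R\setminus\{\eta\}$ is still connected and contains $\mathbf Q$. Such an $\eta$ exists: take a leaf of a spanning tree of the line graph avoiding the two vertices of $\mathbf Q$, which is possible since $\mathbf Q$ is connected and $|\mathbf R|\geq3$. The proof then proceeds in two stages.
\begin{enumerate}
\item \emph{Inductive hypothesis on the smaller collection.} By induction (the base case $|\mathbf R\setminus\{\eta\}|=2$ is the hypothesis itself), $\mathbf R\setminus\{\eta\}\sim_{\mathrm{flip}}\rho(\mathbf R\setminus\{\eta\})$ as dissections relative to $\mathbf R\setminus\{\eta\}$. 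This dissection graph is the completion graph of the projective pair $(0,e_\eta A)$ over $\Lambda_{\mathbf R}$, since $A/Ae_\eta A$ is the relevant algebra by Proposition~\ref{res:2.2}. That completion graph is isomorphic to $\operatorname{EG}(s\tau\text{-}\operatorname{tilt}\,\Lambda_{\mathbf R}/\langle e_\eta\rangle)$. One must verify that this quotient is $\Lambda_{\mathbf R\setminus\{\eta\}}$, up to exchange-graph isomorphism preserving the identification of $(B,0)$ and $(0,B)$, where $B$ denotes the quotient algebra. This is the main obstacle: the quotient by an idempotent differs from the idempotent subalgebra $\Lambda_{\mathbf R\setminus\{\eta\}}$. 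To handle it, I would instead realize the smaller algebra as $A_u$ via cutting: $\Lambda$ of $F_{\{\eta\}}(\mathbf R\setminus\{\eta\})$ on $\mathbf S/\{\eta\}$. This collection is connected to the boundary by Lemma~\ref{res:3.4}, so its graph is connected by Theorem~\ref{res:3.5}. That directly gives a path in $\mathcal F_A((0,e_\eta A))$ from $(A e_{\neq\eta},e_\eta A)$ to $(0,A)$.
\item \emph{Connecting $\mathbf R$.} Similarly, with $u=(e_\eta A,0)$ nonzero, Theorem~\ref{res:3.5} connects $(A,0)$ to any completion containing $e_\eta A$.
\end{enumerate}

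It therefore suffices to find one $\tau$-tilting pair containing $e_\eta A$ that is flip-connected to one containing $(0,e_\eta A)$. For this I would use the hypothesis on $\mathbf Q$ together with a neighbour $\beta\in\mathbf Q$ of $\eta$: mutate at $\eta$ within a completion that contains suitable adjoint arcs of $\mathbf Q$. Checking that this mutation exists is the delicate step. The hypothesis $\mathbf Q\sim_{\mathrm{flip}}\rho(\mathbf Q)$, with $\mathbf Q$ not of rank-one type by Proposition~\ref{res:5.5}, enters exactly here, and in the end the argument should rest on Theorem~\ref{res:3.5} applied to nonzero pairs.
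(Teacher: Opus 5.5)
There is a genuine gap: the only step that uses the hypothesis $\mathbf Q\sim_{\mathrm{flip}}\rho(\mathbf Q)$ is left as ``the delicate step'', and the reduction leading up to it has no content.

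\begin{itemize}
\item \emph{The reduction is a restatement.} Suppose $\mathcal F_A((e_\eta A,0))$ and $\mathcal F_A((0,e_\eta A))$ are both connected. Then asking for a completion of the first that is flip-connected to a completion of the second is equivalent to $(A,0)\sim(0,A)$. That is the statement itself: take $(A,0)$ and $(0,A)$.

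\item \emph{The suggested mechanism fails.} Let $\beta\in\mathbf Q$ share an endpoint with $\eta$ without being conjugate to it. Then $\eta$ and $\rho(\beta)$ have a tagged intersection there. Equivalently, $\operatorname{Hom}_A(e_\beta A,e_\eta A)\neq0$, since $c_{\beta\eta}>0$ by Proposition~\ref{res:4.1}. So no completion contains both $X(\eta)$ and $X(\rho\beta)$.

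\item \emph{The induction is idle.} In stage (1) you state the inductive hypothesis and then replace it by Theorem~\ref{res:3.5}. Moreover, $F_{\{\eta\}}(\mathbf R\setminus\{\eta\})$ is the reduction at $(e_\eta A,0)$, whose Bongartz completion is $\mathbf R$. It is not the reduction at $(0,e_\eta A)$.

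\item \emph{Connectedness of $\Lambda_{\mathbf R}$ is never proved.} Connectedness of $\mathcal F_A((0,e_\eta A))$ has to come from Theorem~\ref{res:3.5}, which requires $\Lambda_{\mathbf R}$ to be connected. This is exactly where $|\mathbf R|\geq3$ matters: a bare conjugate pair gives $k[x]/(x^2)\times k[y]/(y^2)$. The paper proves it in two steps. Deleting the edges between conjugate arcs from the arc-adjacency graph keeps that graph connected, by Remark~\ref{rem:conjugate-endpoints}. Every remaining edge has a positive Cartan entry, so all arc idempotents lie in one block.
\end{itemize}

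The missing idea is to hold fixed, at each step, an arc of the two-arc path itself rather than an arc of $\mathbf R\setminus\mathbf Q$.

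\begin{itemize}
\item Choose a flip path $\mathbf Q=\mathbf Q_0\leftrightarrow\cdots\leftrightarrow\mathbf Q_m=\rho(\mathbf Q)$ of $\mathbf Q$-dissections.
\item Since $X(\mathbf Q_i)\in\operatorname{add}X(\mathbf Q)*\operatorname{add}X(\mathbf Q)[1]\subseteq\mathcal E_R$, each $\mathbf Q_i$ is a compatible collection of $\mathbf R$-standard arcs. It therefore completes to an $\mathbf R$-dissection $\mathbf U_i$, and one may take $\mathbf U_0=\mathbf R$ and $\mathbf U_m=\rho(\mathbf R)$.
\item Consecutive $\mathbf Q_i,\mathbf Q_{i+1}$ share an arc $\eta_i$. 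Hence $\mathbf U_i$ and $\mathbf U_{i+1}$ are both completions of the nonzero pair $\Phi(X(\eta_i))$.
\item Theorem~\ref{res:3.5}, applied after establishing that $\Lambda_{\mathbf R}$ is connected, joins $\mathbf U_i$ and $\mathbf U_{i+1}$ inside $\mathcal F_A(\Phi(X(\eta_i)))$.
\end{itemize}

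Concatenating these paths gives $\mathbf R\sim_{\mathrm{flip}}\rho(\mathbf R)$. No induction and no auxiliary arc $\eta\notin\mathbf Q$ are needed.
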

\begin{proof}
We first verify that $\Lambda_{\mathbf R}$ is connected, as required by
Theorem~\ref{res:3.5}. Let $H$ be the simple graph with vertex set
$\mathbf R$, where two distinct vertices are adjacent if the corresponding
arcs share an endpoint. The connectedness of $\mathbf R$ means that $H$
is connected. Let $H'$ have the same vertices as $H$, with the edges
between conjugate arcs removed.

For a removed edge $\{\alpha,\beta\}$, choose the endpoints $p,q$ and orient
both arcs from $p$ to $q$ so that
$\kappa_\alpha(0)=\kappa_\beta(0)=a$ and
$\kappa_\alpha(1)=-\kappa_\beta(1)=b$, with $a,b\in\{\pm1\}$.
Since $|\mathbf R|\geq3$ and $H$ is connected, there is an arc
$\gamma\in\mathbf R\setminus\{\alpha,\beta\}$ adjacent to at least one of
$\alpha,\beta$.
By Remark~\ref{rem:conjugate-endpoints},
$\gamma$ meets both arcs at $p$ with tagging $a$.
Consequently,
\[
 \{\alpha,\gamma\},\{\gamma,\beta\}\in E(H').
\]
Every removed edge is therefore replaced by a two-edge path in $H'$. Thus
$H'$ is connected.

For every edge $\{\delta,\eta\}\in E(H')$, Proposition~\ref{res:4.1}
gives $c_{\delta\eta}>0$, hence
$e_\eta\Lambda_{\mathbf R}e_\delta\neq0$. Thus $e_\delta$ and $e_\eta$
lie in the same block. Since $H'$ is connected, all primitive idempotents
lie in one block and $\Lambda_{\mathbf R}$ is connected.

Choose a flip path of $\mathbf Q$-dissections
\[
 \mathbf Q=\mathbf Q_0\longleftrightarrow\mathbf Q_1
 \longleftrightarrow\cdots\longleftrightarrow
 \mathbf Q_m=\rho(\mathbf Q).
\]
Theorem~\ref{res:1.9} gives
\begin{equation}\label{eq:5.1}
 X(\mathbf Q_i)\in
 \operatorname{add}X(\mathbf Q)*\operatorname{add}X(\mathbf Q)[1]
 \subseteq
 \operatorname{add}X(\mathbf R)*\operatorname{add}X(\mathbf R)[1].
\end{equation}
Thus every $\mathbf Q_i$ is a compatible collection of
$\mathbf R$-standard arcs. Proposition~\ref{res:2.1} and
Theorem~\ref{res:1.9} allow us to complete each $\mathbf Q_i$ to an
$\mathbf R$-dissection $\mathbf U_i$. Since $\mathbf Q\subseteq\mathbf R$
and $\rho(\mathbf Q)\subseteq\rho(\mathbf R)$, we choose
$\mathbf U_0=\mathbf R$ and $\mathbf U_m=\rho(\mathbf R)$.

For each $i$, consecutive two-arc dissections satisfy
\[
 \mathbf Q_i\cap\mathbf Q_{i+1}=\{\eta_i\}
 \subseteq\mathbf U_i\cap\mathbf U_{i+1}.
\]
By Theorem~\ref{res:1.9}, the arc $\eta_i$ represents a nonzero
indecomposable $\tau$-rigid pair $v_i=\Phi(X(\eta_i))$ over
$\Lambda_{\mathbf R}$. Both $\Phi(X(\mathbf U_i))$ and
$\Phi(X(\mathbf U_{i+1}))$ are $\tau$-tilting completions of $v_i$.
Since $\Lambda_{\mathbf R}$ is connected, Theorem~\ref{res:3.5} gives a
path in $\mathcal F_{\Lambda_{\mathbf R}}(v_i)$ from
$\Phi(X(\mathbf U_i))$ to $\Phi(X(\mathbf U_{i+1}))$.
The flip--mutation correspondence turns this into a path of
$\mathbf R$-flips from $\mathbf U_i$ to $\mathbf U_{i+1}$ that keeps
$\eta_i$ fixed; see Figure~\ref{fig:5}.

\begin{figure}[htbp]
\centering
\includegraphics[width=.91\linewidth]{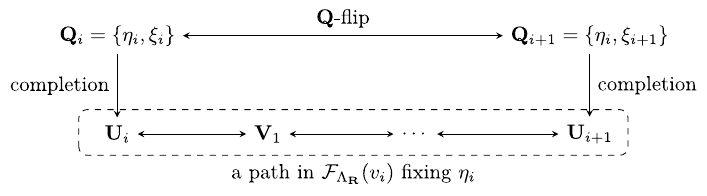}
\caption{Extending a two-arc flip to a path of completions.}
\label{fig:5}
\end{figure}

Concatenating these paths gives
\[
 \mathbf R=\mathbf U_0\sim_{\mathrm{flip}}\mathbf U_1
 \sim_{\mathrm{flip}}\cdots\sim_{\mathrm{flip}}
 \mathbf U_m=\rho(\mathbf R).
\]
\end{proof}

\subsection{The classification and component formula}

We now determine exactly which puncture components have
$\mathbf R$ and $\rho(\mathbf R)$ in different flip components.

\begin{theorem}\label{res:5.7}
Let $\mathbf R$ be a connected partial tagged triangulation with all endpoints
at punctures. The following are equivalent.
\begin{enumerate}
\item $\mathbf R\not\sim_{\mathrm{flip}}\rho(\mathbf R)$.
\item $|\mathbf R|\geq2$ and
      $\operatorname{rank}_{\mathbb Q}C_{\Lambda_{\mathbf R}}=1$.
\item $|\mathbf R|\geq2$, the underlying arcs are pairwise distinct, and they
      all have the same endpoint multiset.
\end{enumerate}
In this case, all arcs join one fixed pair of distinct punctures, or all are
loops based at one puncture. The two configurations have Cartan matrices
$2J_n$ and $4J_n$, respectively, where $n=|\mathbf R|$.
\end{theorem}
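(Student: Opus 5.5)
The plan is to prove (2)$\Leftrightarrow$(3) directly from the Cartan computation, then (2)$\Rightarrow$(1) by the sign argument, and finally (1)$\Rightarrow$(3) by contraposition using the two-arc analysis and the lifting result.

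\emph{(2)$\Leftrightarrow$(3).} For $n=|\mathbf R|\geq2$ this is exactly Corollary~\ref{res:4.2}. The same corollary gives the closing description: arcs joining two fixed distinct punctures, with $C=2J_n$, or loops at one puncture, with $C=4J_n$. Note that (3) forces no conjugate pairs, because conjugate arcs share an underlying arc.

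\emph{(2)$\Rightarrow$(1).} By Proposition~\ref{res:4.1}, $\Lambda_{\mathbf R}$ is weakly symmetric, and by assumption $|\Lambda_{\mathbf R}|=n\geq2$ with Cartan rank one. Proposition~\ref{res:5.2} then separates $(\Lambda_{\mathbf R},0)$ and $(0,\Lambda_{\mathbf R})$ into different exchange components. Under Theorem~\ref{res:2.6}, these pairs correspond to the dissections $\mathbf R$ and $\rho(\mathbf R)$, exactly as in the proof of Proposition~\ref{res:5.5}. Hence $\mathbf R\not\sim_{\mathrm{flip}}\rho(\mathbf R)$.

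\emph{(1)$\Rightarrow$(3), by contraposition.} Assume (3) fails and show $\mathbf R\sim_{\mathrm{flip}}\rho(\mathbf R)$.
\begin{enumerate}
\item If $n=1$, then $\Lambda_{\mathbf R}$ has one simple module. Its exchange graph consists of the two pairs $(\Lambda,0)$ and $(0,\Lambda)$ joined by a mutation, so $\mathbf R\sim_{\mathrm{flip}}\rho(\mathbf R)$.
\item If $n=2$, apply Proposition~\ref{res:5.5} directly.
\item If $n\geq3$, it suffices by Proposition~\ref{res:5.6} to find a connected two-arc subcollection $\mathbf Q\subseteq\mathbf R$ with $\mathbf Q\sim_{\mathrm{flip}}\rho(\mathbf Q)$. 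By Proposition~\ref{res:5.5}, this means two arcs sharing an endpoint that either form a conjugate pair or have different endpoint multisets.
\end{enumerate}

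The step I expect to be the main combinatorial obstacle is finding this pair $\mathbf Q$ when $n\geq3$. Suppose no such pair exists. Then every pair of arcs sharing an endpoint has distinct underlying arcs and equal endpoint multisets. Since $\mathbf R$ is connected, take a chain of arcs joined by common endpoints. Consecutive arcs then have equal endpoint multisets, so by induction along the chain all arcs have the same endpoint multiset. In addition, adjacent arcs are never conjugate, and any conjugate pair would share both endpoints and hence be adjacent, so there are no conjugate pairs at all. Thus all underlying arcs are pairwise distinct and share one endpoint multiset, which is (3), a contradiction. This yields the required $\mathbf Q$, and Proposition~\ref{res:5.6} gives $\mathbf R\sim_{\mathrm{flip}}\rho(\mathbf R)$.
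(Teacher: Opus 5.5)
Your proposal is correct and follows the paper's proof essentially step for step. The paper uses the same ingredients in the same roles: Corollary~\ref{res:4.2} for (2)$\Leftrightarrow$(3) and for the closing Cartan description, weak symmetry together with the Cartan sign of Proposition~\ref{res:5.2} for (2)$\Rightarrow$(1), and for (1)$\Rightarrow$(3) the one-arc mutation argument, Proposition~\ref{res:5.5} when $n=2$, and Proposition~\ref{res:5.6} with propagation of endpoint multisets along chains when $n\geq3$. Phrasing the last implication as a global contrapositive, instead of applying the contrapositive of Proposition~\ref{res:5.6} to each two-arc subcollection, is only a cosmetic difference.
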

\begin{proof}
Corollary~\ref{res:4.2} gives $(2)\Longleftrightarrow(3)$.
We prove $(2)\Longrightarrow(1)$ and $(1)\Longrightarrow(3)$.

Suppose (2) holds. The algebra $\Lambda_{\mathbf R}$ is weakly symmetric
by Proposition~\ref{res:4.1}. Proposition~\ref{res:5.2} separates
$(\Lambda_{\mathbf R},0)$ from $(0,\Lambda_{\mathbf R})$.
Theorem~\ref{res:2.6} identifies these pairs with $\mathbf R$ and
$\rho(\mathbf R)$, giving (1).

Suppose (1) holds. If $\mathbf R=\{\alpha\}$, the algebra
$\Lambda_{\{\alpha\}}$ has one simple module. Thus the zero pair is almost
complete. The corresponding two $\tau$-tilting completions are
$(\Lambda_{\{\alpha\}},0)$ and $(0,\Lambda_{\{\alpha\}})$.
They are mutations of one another by
\cite[Theorem~2.18 and Definition~2.19]{AIR}.
Theorem~\ref{res:2.6} would then give a flip from $\{\alpha\}$ to
$\{\rho(\alpha)\}$, contrary to (1). Thus $|\mathbf R|\geq2$.
If $|\mathbf R|=2$, Proposition~\ref{res:5.5} gives (3).

Assume $|\mathbf R|\geq3$. For every connected two-arc subcollection
$\{\alpha,\beta\}\subseteq\mathbf R$, the contrapositive of
Proposition~\ref{res:5.6} gives
\[
 \{\alpha,\beta\}\not\sim_{\mathrm{flip}}
 \rho(\{\alpha,\beta\}).
\]
Proposition~\ref{res:5.5} shows that $\alpha$ and $\beta$ have distinct
underlying arcs and the same endpoint multiset. In particular,
$\mathbf R$ has no conjugate pair. Hence the underlying arcs are pairwise
distinct. Since $\mathbf R$ is connected, any two arcs can be linked by a
chain whose consecutive arcs share an endpoint. Equality of the endpoint
multisets along the chain shows that all arcs have the same endpoint
multiset. This proves (3).

By (3), all arcs join fixed distinct punctures $p,q$, or all are loops at
one puncture $p$. Corollary~\ref{res:4.2} gives the Cartan matrices $2J_n$
and $4J_n$, respectively.
\end{proof}

Theorem~\ref{res:5.7} classifies each puncture component. To combine these
components with the boundary part, we use the product formula in
Lemma~\ref{res:5.1}.

For an arbitrary partial tagged triangulation $\mathbf R$, let
$\mathbf C_1,\ldots,\mathbf C_s$ denote the puncture components of $\mathbf R$.
Define the remaining partial tagged triangulation $\mathbf R^\partial$
and the integer $d(\mathbf R)$ by
\[
 \mathbf R^\partial=\mathbf R\setminus
    (\mathbf C_1\sqcup\cdots\sqcup\mathbf C_s)
 \qquad\text{and}\qquad
 d(\mathbf R)=\#\{i:\mathbf C_i\not\sim_{\mathrm{flip}}\rho(\mathbf C_i)\}.
\]
The partial tagged triangulation $\mathbf R^\partial$ is connected to the
boundary.

For a basic finite-dimensional algebra $A$ and a block
$B\in\mathcal B_C(A)$, restrict a basic
$\tau$-tilting pair $(M,P)$ over $A$ to the pair $(M_B,P_B)$ over $B$.
The sign $\varepsilon_B(M_B,P_B)$ from Proposition~\ref{res:5.2} is constant
on mutation components. Collecting the block signs defines
\[
 \varepsilon_A:
 \pi_0\operatorname{EG}(s\tau\text{-}\operatorname{tilt}A)
 \longrightarrow\{\pm1\}^{\mathcal B_C(A)}.
\]

\begin{theorem}\label{res:5.8}
Let $A$ be a surface rigid algebra. Then
\[
 \#\pi_0\operatorname{Ch}(\Sigma_g(A))
 =\#\pi_0\operatorname{EG}(s\tau\text{-}\operatorname{tilt}A)
 =2^{\delta_C(A)}.
\]
For a surface realization $A\cong\Lambda_{\mathbf R}$,
\[
 \delta_C(A)=d(\mathbf R)
 =\#\{i:|\mathbf C_i|\geq2,\
       \operatorname{rank}_{\mathbb Q}C_{\Lambda_{\mathbf C_i}}=1\}.
\]
The blocks in $\mathcal B_C(A)$ are exactly the puncture factors with
disconnected exchange graph. Each has Cartan matrix $2J_{|\mathbf C_i|}$
or $4J_{|\mathbf C_i|}$. Moreover, $\varepsilon_A$ is a bijection.
\end{theorem}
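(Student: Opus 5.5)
Fix a surface realization $A\cong\Lambda_{\mathbf R}$ and split $\mathbf R=\mathbf R^\partial\sqcup\mathbf C_1\sqcup\cdots\sqcup\mathbf C_s$. Applying Lemma~\ref{res:3.1} once for each puncture component gives
\[
 A\cong\Lambda_{\mathbf R^\partial}\times\prod_{i=1}^s\Lambda_{\mathbf C_i}.
\]
Lemma~\ref{res:5.1} then turns the exchange graph into the Cartesian product of the factor exchange graphs, so the count of components is the product of the factor counts. The factor $\Lambda_{\mathbf R^\partial}$ comes from a partial tagged triangulation connected to the boundary. Its exchange graph is therefore connected by \cite[Theorem~5.16]{HZZ} together with Theorem~\ref{res:2.6}; if $\mathbf R^\partial=\varnothing$ the factor is zero and contributes one vertex. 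For each puncture component, Theorem~\ref{res:3.8} gives one or two components according to whether $\mathbf C_i\sim_{\mathrm{flip}}\rho(\mathbf C_i)$. Multiplying, the exchange graph has $2^{d(\mathbf R)}$ components. Theorem~\ref{res:5.7} identifies $d(\mathbf R)$ with the number of $\mathbf C_i$ satisfying $|\mathbf C_i|\geq2$ and Cartan rank one, and shows each such factor has Cartan matrix $2J$ or $4J$. The equality with $\#\pi_0\operatorname{Ch}(\Sigma_g(A))$ is the chamber-graph identification from Section~1.

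Next I will show $d(\mathbf R)=\delta_C(A)$ by matching blocks. A rank-one Cartan factor $\Lambda_{\mathbf C_i}$ is connected, since a nontrivial product would have Cartan rank at least two. It is therefore a block of $A$, and it is weakly symmetric by Proposition~\ref{res:4.1}, so it lies in $\mathcal B_C(A)$. Conversely, let $B\in\mathcal B_C(A)$. By Corollary~\ref{res:5.4}, $B$ has at least two exchange components. Since the exchange graph of $B$ is a Cartesian factor of that of $A$, $B$ cannot sit inside a factor with connected exchange graph. This rules out every block inside $\Lambda_{\mathbf R^\partial}$ and inside each $\Lambda_{\mathbf C_j}$ with $\mathbf C_j\sim_{\mathrm{flip}}\rho(\mathbf C_j)$.

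The step I expect to need the most care is exactly this exclusion. It requires the exchange graph of a block to be a Cartesian factor of the exchange graph of any product factor containing it, which follows from Lemma~\ref{res:5.1} applied to that factor's own block decomposition. With that in place, $B$ must be a block of some $\Lambda_{\mathbf C_i}$ with $\mathbf C_i\not\sim_{\mathrm{flip}}\rho(\mathbf C_i)$. Such a $\Lambda_{\mathbf C_i}$ is connected by the rank-one argument above, so $B=\Lambda_{\mathbf C_i}$. Hence $\mathcal B_C(A)$ is precisely the set of puncture factors with disconnected exchange graph, and $\delta_C(A)=d(\mathbf R)$.

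Finally, the map $\varepsilon_A$ is well defined on components by Proposition~\ref{res:5.2}. Through the product decomposition it becomes the product of the maps $\varepsilon_B$ over $B\in\mathcal B_C(A)$; the connected factors contribute nothing. For each such $B$, the map $\varepsilon_B$ separates the two components of its exchange graph: the component of $(B,0)$ has sign $+1$ and that of $(0,B)$ has sign $-1$, and by Theorem~\ref{res:3.8} there are no others. So each $\varepsilon_B$ is a bijection onto $\{\pm1\}$. The components of the Cartesian product are products of factor components, so $\varepsilon_A$ is a bijection between sets of size $2^{\delta_C(A)}$.
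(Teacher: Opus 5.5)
Your proposal is correct and follows essentially the paper's own route. That route is the product decomposition from Lemma~\ref{res:3.1} and Lemma~\ref{res:5.1}, with \cite[Theorem~5.16]{HZZ} for the boundary factor and Theorems~\ref{res:3.8} and~\ref{res:5.7} for the puncture factors. It then excludes blocks of $\mathcal B_C(A)$ inside factors with connected exchange graph via Corollary~\ref{res:5.4}, and uses Proposition~\ref{res:5.2} for the bijectivity of $\varepsilon_A$; your remark that a block's exchange graph is a Cartesian factor of any product factor containing it simply makes explicit what that appeal to Corollary~\ref{res:5.4} uses.
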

\begin{proof}
For $A=0$, the exchange graph has one vertex and $\delta_C(A)=0$, as
required. Assume $A\neq0$.

Repeated application of Lemma~\ref{res:3.1} gives the decomposition
\[
 \Lambda_{\mathbf R}\cong
 \Lambda_{\mathbf R^\partial}\times
 \Lambda_{\mathbf C_1}\times\cdots\times\Lambda_{\mathbf C_s},
\]
with the boundary factor omitted when $\mathbf R^\partial$ is empty.
When present, the boundary factor has connected exchange graph by
\cite[Theorem~5.16]{HZZ}. For a puncture factor, Theorems~\ref{res:3.8}
and~\ref{res:5.7} give
\[
 \#\pi_0\operatorname{EG}
 (s\tau\text{-}\operatorname{tilt}\Lambda_{\mathbf C_i})
 =\begin{cases}
 2,&\mathbf C_i\not\sim_{\mathrm{flip}}\rho(\mathbf C_i),\\
 1,&\mathbf C_i\sim_{\mathrm{flip}}\rho(\mathbf C_i).
 \end{cases}
\]
Taking the product of these counts in Lemma~\ref{res:5.1} yields
\[
 \#\pi_0\operatorname{EG}
 (s\tau\text{-}\operatorname{tilt}\Lambda_{\mathbf R})
 =2^{d(\mathbf R)}.
\]

We next identify the factors counted by $d(\mathbf R)$ with the blocks in
$\mathcal B_C(A)$. A puncture factor with disconnected exchange graph has
at least two simple modules and Cartan rank one by
Theorem~\ref{res:5.7}. Rank one makes it a block, and
Proposition~\ref{res:4.1} gives weak symmetry. Thus this factor belongs to
$\mathcal B_C(A)$. Every remaining factor has connected exchange graph.
It contains no block in $\mathcal B_C(A)$, since such a block would force
the exchange graph of that factor to be disconnected by Corollary~\ref{res:5.4}.
Consequently,
\[
 \delta_C(A)=d(\mathbf R),
\]
and the component formula follows. Theorem~\ref{res:5.7} also gives the
geometric count and the stated Cartan matrices.

Finally, each block $B\in\mathcal B_C(A)$ has exactly two components,
containing $(B,0)$ and $(0,B)$, respectively.
Proposition~\ref{res:5.2} assigns sign $+1$ to the first and $-1$ to the
second. Every other block has connected exchange graph.
The Cartesian product in Lemma~\ref{res:5.1} therefore identifies each
component of the exchange graph of $A$ with one independent sign choice
for each $B\in\mathcal B_C(A)$. This proves that $\varepsilon_A$ is a
bijection.
\end{proof}

For a connected algebra, the block count $\delta_C(A)$ is either zero or
one. The component formula therefore gives the following criterion.

\begin{corollary}\label{res:5.9}
Let $A$ be a connected surface rigid algebra. The exchange graph of $A$
is disconnected if and only if
\[
 |A|\geq2,\qquad A\text{ is weakly symmetric}\quad\text{and}\quad
 \operatorname{rank}_{\mathbb Q}C_A=1.
\]
When disconnected, the graph has exactly two components, containing
$(A,0)$ and $(0,A)$, respectively, and $C_A=2J_{|A|}$ or $4J_{|A|}$.
\end{corollary}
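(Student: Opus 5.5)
The plan is to specialize Theorem~\ref{res:5.8} to a connected algebra. Choose a surface realization $A\cong\Lambda_{\mathbf R}$; the exchange graph is a Morita invariant, so nothing depends on this choice. Since $A$ is connected it is itself its only block. Hence $\mathcal B_C(A)$ is either empty or equal to $\{A\}$, and $\delta_C(A)\in\{0,1\}$.

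Theorem~\ref{res:5.8} gives $\#\pi_0\operatorname{EG}(s\tau\text{-}\operatorname{tilt}A)=2^{\delta_C(A)}$. The graph is therefore disconnected exactly when $\delta_C(A)=1$. By Definition~\ref{res:5.3}, this means $A$ itself satisfies $|A|\geq2$, is weakly symmetric and has $\operatorname{rank}_{\mathbb Q}C_A=1$. This proves the equivalence. In the disconnected case the count is $2^1=2$.

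The location of $(A,0)$ and $(0,A)$ comes from Proposition~\ref{res:5.2}: the sign $\varepsilon_A$ is preserved by mutation and takes the values $+1$ on $(A,0)$ and $-1$ on $(0,A)$. So these two pairs lie in different components, and since there are only two components, each contains one of them.

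For the Cartan matrix, Theorem~\ref{res:5.8} identifies the blocks in $\mathcal B_C(A)$ with puncture factors $\Lambda_{\mathbf C_i}$ whose Cartan matrix is $2J_{|\mathbf C_i|}$ or $4J_{|\mathbf C_i|}$. Because $A$ is connected and equals this block, $|\mathbf C_i|=|A|$. It follows that $C_A=2J_{|A|}$ or $4J_{|A|}$.

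The only point needing care is this last identification. The isomorphism between $A$ and the puncture factor preserves the chosen arc idempotents, so the Cartan matrices agree up to simultaneous permutation of rows and columns. Both $2J_n$ and $4J_n$ are unchanged by such a permutation, so the conclusion holds independently of the ordering. Everything else is direct bookkeeping from Theorem~\ref{res:5.8}.
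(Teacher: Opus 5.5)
Your proposal is correct and follows the paper's proof. Since $A$ is its own unique block, $\delta_C(A)\in\{0,1\}$, and the criterion, the two components containing $(A,0)$ and $(0,A)$, and the Cartan matrices all follow from Theorem~\ref{res:5.8} together with Proposition~\ref{res:5.2}; your remark that $2J_n$ and $4J_n$ are invariant under simultaneous permutation is bookkeeping the paper leaves implicit.
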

\begin{proof}
Since $A$ has one block, $\delta_C(A)$ is zero or one. The criterion and
component descriptions therefore follow from Theorem~\ref{res:5.8} and
Proposition~\ref{res:5.2}.
\end{proof}

\section{Completion graphs}

We determine the connected components of every completion graph and the
smallest residual rank for which such a graph can be disconnected.

\begin{theorem}\label{res:6.1}
Let $A=\Lambda_{\mathbf R}$ and let $u$ be a basic $\tau$-rigid pair. Then
\[
 \#\pi_0\operatorname{Ch}(\Sigma_g(A)/\sigma_u)
 =\#\pi_0\mathcal F_A(u)
 =2^{\delta_C(A_u)}.
\]
Write $A=\prod_{i=1}^{t}A_i$ for the decomposition into algebra blocks, and
let $u_i=(M_i,P_i)$ denote the restriction of $u$ to $A_i$. Then
\[
 \delta_C(A_u)
 =\#\{i:A_i\in\mathcal B_C(A),\ u_i=(0,0)\}.
\]
Thus the disconnected factors contributing to a completion graph are
precisely the blocks in $\mathcal B_C(A)$ on which both parts of $u$ vanish.
\end{theorem}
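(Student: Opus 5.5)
The first equality should follow formally from results already in place. Theorem~\ref{res:2.9} identifies $\Sigma_g(A)/\sigma_u$ with $\Sigma_g(A_u)$ via $\overline p_u$. Hence $\operatorname{Ch}(\Sigma_g(A)/\sigma_u)\cong\operatorname{Ch}(\Sigma_g(A_u))\cong\operatorname{EG}(s\tau\text{-}\operatorname{tilt}A_u)$, and Corollary~\ref{res:2.7} identifies the latter with $\mathcal F_A(u)$. By Theorem~\ref{res:2.5}, $A_u\cong\Lambda_{\mathbf R_u}$ is again a surface rigid algebra, possibly a product over the components of the cut surface (Corollary~\ref{res:2.14}). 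Theorem~\ref{res:5.8} then gives $\#\pi_0=2^{\delta_C(A_u)}$, with $A_u=0$ handled by the convention that the graph has one vertex.

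For the block formula, I would reduce to connected $A$. Decompose $A=\prod A_i$ and $u=\bigoplus u_i$. The support $\tau$-tilting pairs over $A$ containing $u$ are then products of those over $A_i$ containing $u_i$ (Lemma~\ref{res:5.1}), and Jasso reduction commutes with products, so $A_u\cong\prod_i(A_i)_{u_i}$. Blocks of a product are the union of the blocks of the factors, so $\delta_C$ is additive: $\delta_C(A_u)=\sum_i\delta_C((A_i)_{u_i})$. Alternatively, I could argue directly with $\mathcal F_A(u)\cong\mathop{\square}_i\mathcal F_{A_i}(u_i)$ and count components multiplicatively, combined with the equality already proved for each factor. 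Each $A_i$ is itself a surface rigid algebra, since blocks come from puncture components or the boundary part via Lemma~\ref{res:3.1} and the block analysis in Theorem~\ref{res:5.8}.

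So it suffices to show, for connected surface rigid $A_i$, that $\delta_C((A_i)_{u_i})$ equals $1$ if $A_i\in\mathcal B_C(A)$ and $u_i=(0,0)$, and $0$ otherwise. There are two cases.
\begin{itemize}
\item If $u_i=(0,0)$, then $(A_i)_{u_i}=A_i$, and $\delta_C(A_i)$ is $1$ or $0$ according to whether $A_i\in\mathcal B_C(A)$; note that $A_i\in\mathcal B_C(A)$ exactly when the connected algebra $A_i$ satisfies the defining conditions.
\item If $u_i\neq(0,0)$, Theorem~\ref{res:3.5} says $\mathcal F_{A_i}(u_i)$ is connected. The first equality applied to $A_i$ then forces $2^{\delta_C((A_i)_{u_i})}=1$, so $\delta_C((A_i)_{u_i})=0$.
\end{itemize}
Summing over $i$ gives the stated count. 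Since additivity is exact, this also shows which factors contribute.

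The main obstacle is bookkeeping rather than new mathematics. I must make sure each block $A_i$ is realized as a surface rigid algebra $\Lambda_{\mathbf R'}$ with $\mathbf R'\subseteq\mathbf R$, so that Theorem~\ref{res:3.5} applies to it. I also need Jasso reduction, the completion graph and the Bongartz completion to decompose compatibly over products. For the realization, I would take the boundary factor $\Lambda_{\mathbf R^\partial}$ (connected by Proposition~\ref{res:3.3}-type reasoning, or else decompose it further into puncture-free subcollections) together with the puncture factors $\Lambda_{\mathbf C_j}$. If a factor is not connected, I would apply Theorem~\ref{res:3.5} instead to each of its connected blocks, which are surface rigid algebras by Lemma~\ref{res:3.1} and the fact that $\Lambda_{\mathbf R^\partial}$ has a connected exchange graph. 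The product route via $\mathcal F_A(u)\cong\mathop{\square}_i\mathcal F_{A_i}(u_i)$ avoids needing reduction to commute with products explicitly.
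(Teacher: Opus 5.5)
Your outline follows the paper's proof. The chain $\operatorname{Ch}(\Sigma_g(A)/\sigma_u)\cong\operatorname{EG}(s\tau\text{-}\operatorname{tilt}A_u)\cong\mathcal F_A(u)$ comes from Theorem~\ref{res:2.9} and Corollary~\ref{res:2.7}, and Theorem~\ref{res:5.8} is applied to the surface rigid algebra $A_u$. The rest is the splitting $A_u\cong\prod_i(A_i)_{u_i}$, additivity of $\delta_C$, and the same two cases. The one difference is minor. For $u_i\neq(0,0)$ you get $\delta_C((A_i)_{u_i})=0$ by applying the first equality to $A_i$, which needs $(A_i)_{u_i}$ to be surface rigid. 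The paper uses Corollary~\ref{res:5.4} instead, which holds for any finite-dimensional algebra. Both routes work, provided each block $A_i$ is known to be a connected surface rigid algebra, so that Theorems~\ref{res:2.5} and~\ref{res:3.5} apply to it.

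That is exactly the step you flag as the main obstacle, and your justification of it does not work.
\begin{itemize}
\item Nothing in the paper makes $\Lambda_{\mathbf R^\partial}$ connected, and in general it is not. Take the compatible diagonals $m_1m_3$ and $m_5m_7$ of an unpunctured octagon. By Theorem~\ref{res:1.5} the Hom-spaces between them vanish in both directions, since neither diagonal meets the rotation of the other. So their endomorphism algebra is $k\times k$.
\item Connectedness of the exchange graph of $\Lambda_{\mathbf R^\partial}$ says nothing about whether its blocks are surface rigid.
\item Lemma~\ref{res:3.1} only splits off puncture components, not arbitrary blocks.
\end{itemize}

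The correct argument is short, and it is the one the paper uses. Let $c_i$ be the central idempotent of the block $A_i$. Since each $e_\gamma Ae_\gamma$ is local, the idempotent $c_ie_\gamma=e_\gamma c_ie_\gamma$ equals $0$ or $e_\gamma$. Hence $c_i=\sum_{\gamma\in\mathbf R_i}e_\gamma$ for some subcollection $\mathbf R_i\subseteq\mathbf R$. It follows that $A_i=c_iAc_i\cong\operatorname{End}_{\mathcal C(\mathbf S)}X(\mathbf R_i)=\Lambda_{\mathbf R_i}$. Here $\mathbf R_i$ is a partial tagged triangulation, because every subset of one is. So each block is directly a connected surface rigid algebra on the same surface, with no detour through $\mathbf R^\partial$ and the puncture components.
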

\begin{proof}
Theorem~\ref{res:2.9} and Corollary~\ref{res:2.7} give graph isomorphisms
\[
 \operatorname{Ch}(\Sigma_g(A)/\sigma_u)
 \cong\operatorname{EG}(s\tau\text{-}\operatorname{tilt}A_u)
 \cong\mathcal F_A(u).
\]
By Theorem~\ref{res:2.5}, $A_u$ is surface rigid. Applying
Theorem~\ref{res:5.8} to $A_u$ therefore gives the component count.

It remains to express $\delta_C(A_u)$ in terms of the original blocks.
For each $i$, let $\mathbf R_i\subseteq\mathbf R$ consist of the arcs whose
indecomposable projectives belong to $A_i$. Then
$A_i\cong\operatorname{End}_{\mathcal C(\mathbf S)}X(\mathbf R_i)$.
Hence each $A_i$ is a connected surface rigid algebra.
The Bongartz completion and the quotient defining $A_u$ split over these blocks.
Consequently,
\[
 A_u\cong\prod_{i=1}^{t}(A_i)_{u_i}
 \qquad\text{and}\qquad
 \mathcal F_A(u)\cong\mathop{\square}_{i=1}^{t}\mathcal F_{A_i}(u_i),
\]
with zero algebra factors omitted. There are two cases for each factor.
\begin{enumerate}
\item If $u_i=(0,0)$, then $(A_i)_{u_i}\cong A_i$. This factor contributes
one to $\delta_C(A_u)$ precisely when $A_i\in\mathcal B_C(A)$.
\item If $u_i\neq(0,0)$, Theorem~\ref{res:3.5} makes
$\mathcal F_{A_i}(u_i)$ connected. Corollary~\ref{res:2.7} then makes the
exchange graph of $(A_i)_{u_i}$ connected. By Corollary~\ref{res:5.4},
$\mathcal B_C((A_i)_{u_i})=\varnothing$. This factor contributes zero.
\end{enumerate}
Since $\delta_C$ is additive over algebra products, we obtain
\[
 \delta_C(A_u)
 =\sum_{i=1}^{t}\delta_C((A_i)_{u_i})
 =\#\{i:A_i\in\mathcal B_C(A),\ u_i=(0,0)\}.
\]
\end{proof}

The component labels of Theorem~\ref{res:5.8} also pass to completion
graphs through the same reduction.

\begin{corollary}\label{res:6.2}
For every basic $\tau$-rigid pair $u$ over a surface rigid algebra $A$, there
is a bijection
\[
 \pi_0\mathcal F_A(u)\overset{\sim}{\longrightarrow}\{\pm1\}^{\mathcal B_C(A_u)}.
\]
\end{corollary}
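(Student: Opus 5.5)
The plan is to transport the sign map $\varepsilon$ of Theorem~\ref{res:5.8} along the identifications of Corollary~\ref{res:2.7}. By Corollary~\ref{res:2.7}, $\mathcal F_A(u)\cong\operatorname{EG}(s\tau\text{-}\operatorname{tilt}A_u)$ as graphs. A graph isomorphism induces a bijection on connected components, so we obtain
\[
 \pi_0\mathcal F_A(u)\overset{\sim}{\longrightarrow}
 \pi_0\operatorname{EG}(s\tau\text{-}\operatorname{tilt}A_u).
\]
By Theorem~\ref{res:2.5}, $A_u\cong\Lambda_{\mathbf R_u}$ is a surface rigid algebra. Theorem~\ref{res:5.8}, applied to $A_u$, therefore provides the bijection
\[
 \varepsilon_{A_u}:\pi_0\operatorname{EG}(s\tau\text{-}\operatorname{tilt}A_u)
 \longrightarrow\{\pm1\}^{\mathcal B_C(A_u)}.
\]
The desired bijection is the composite of these two maps.

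Two points need checking. First, Theorem~\ref{res:5.8} is stated for a surface rigid algebra, while $A_u$ is only isomorphic to one. Blocks, weak symmetry, Cartan ranks and exchange graphs are all transported by algebra isomorphisms (and Morita invariance, as noted after Definition~\ref{res:5.3}), so $\mathcal B_C(A_u)$ and $\varepsilon_{A_u}$ are well defined and correspond under $A_u\cong\Lambda_{\mathbf R_u}$. Second, the cut surface may be disconnected. This is handled by the componentwise conventions together with Corollary~\ref{res:2.14} and Lemma~\ref{res:5.1}, which Theorem~\ref{res:5.8} already accommodates through the product decomposition. The degenerate case $A_u=0$, that is, $u$ $\tau$-tilting, gives a one-vertex graph and the one-element set $\{\pm1\}^{\varnothing}$, which is consistent.

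Optionally, I would make the labels explicit using Theorem~\ref{res:6.1}. The blocks of $A_u$ in $\mathcal B_C(A_u)$ are the blocks $A_i\in\mathcal B_C(A)$ with $u_i=(0,0)$, for which $(A_i)_{u_i}=A_i$. Under this identification, the component of a completion $T\oplus u$ is sent to the family of signs $\varepsilon_{A_i}(T_i)$. Here I use that reduction splits over blocks and is the identity on the factors where $u_i=(0,0)$.

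The main obstacle is this explicit description rather than the bare bijection. One must verify that Jasso's identification restricts on such a block to the identity of $s\tau\text{-}\operatorname{tilt}A_i$, so that the sign is read directly from the original block. For the existence of the bijection itself, the composite above suffices.
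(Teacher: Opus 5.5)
Your proposal is correct and is the paper's proof: the paper also composes the component bijection induced by the graph isomorphism $\mathcal F_A(u)\cong\operatorname{EG}(s\tau\text{-}\operatorname{tilt}A_u)$ of Corollary~\ref{res:2.7} with the bijection $\varepsilon_{A_u}$ of Theorem~\ref{res:5.8}, applied to $A_u$, which is surface rigid by Theorem~\ref{res:2.5}. Your remarks on disconnected cut surfaces, on the case $A_u=0$, and on the explicit block-wise labels agree with the paper but are not needed for the bijection itself.
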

\begin{proof}
The graph isomorphism in Corollary~\ref{res:2.7} identifies the components
of $\mathcal F_A(u)$ with those of the exchange graph of the reduction
algebra $A_u$. Applying Theorem~\ref{res:5.8} to $A_u$ gives the bijection.
\end{proof}

For a connected original algebra, every nonzero pair removes all
disconnected factors from the reduced exchange graph.

\begin{corollary}\label{res:6.3}
Let $A$ be a connected surface rigid algebra and let $u\neq(0,0)$ be a basic
$\tau$-rigid pair. Then $\delta_C(A_u)=0$, and both $\mathcal F_A(u)$ and
$\operatorname{EG}(s\tau\text{-}\operatorname{tilt}A_u)$ are connected.
\end{corollary}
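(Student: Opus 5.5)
This is a direct specialization of Theorem~\ref{res:6.1} to a single block. Since $A$ is connected, its block decomposition has one factor, $A_1=A$, and the restriction $u_1$ equals $u$. Theorem~\ref{res:6.1} gives
\[
 \delta_C(A_u)=\#\{i:A_i\in\mathcal B_C(A),\ u_i=(0,0)\}.
\]
The only candidate index is $i=1$, and it is excluded because $u_1=u\neq(0,0)$. Hence $\delta_C(A_u)=0$.

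The same theorem then gives $\#\pi_0\mathcal F_A(u)=2^{0}=1$. The graph is nonempty, since $u$ has a Bongartz completion by Proposition~\ref{res:2.1}, so $\mathcal F_A(u)$ is connected. Corollary~\ref{res:2.7} identifies $\mathcal F_A(u)$ with $\operatorname{EG}(s\tau\text{-}\operatorname{tilt}A_u)$, so the latter is connected too.

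There is also a more direct route: Theorem~\ref{res:3.5} already gives connectedness of $\mathcal F_A(u)$ for nonzero $u$ over a connected $A$. Connectedness of $\operatorname{EG}(s\tau\text{-}\operatorname{tilt}A_u)$ then follows by Corollary~\ref{res:2.7}. Corollary~\ref{res:5.4} applied to $A_u$ forces $2^{\delta_C(A_u)}\leq1$, that is, $\delta_C(A_u)=0$.

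One degenerate case needs a remark. If $u$ is $\tau$-tilting, then $A_u=0$, and $\delta_C(0)=0$ holds by convention. Both graphs then have a single vertex, as noted at the end of the proof of Proposition~\ref{res:2.2}. No step presents a real obstacle; the only care needed is to handle this zero-algebra case consistently with the conventions.
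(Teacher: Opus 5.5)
Your proposal is correct and matches the paper's proof. You specialize Theorem~\ref{res:6.1} to the single block, note that $u\neq(0,0)$ empties the counted set, so $\delta_C(A_u)=0$ and $\mathcal F_A(u)$ is connected, and then transfer connectedness to $\operatorname{EG}(s\tau\text{-}\operatorname{tilt}A_u)$ via Corollary~\ref{res:2.7}. Your alternative route through Theorem~\ref{res:3.5} and Corollary~\ref{res:5.4} is also valid, and it is exactly the argument in case~(2) of the proof of Theorem~\ref{res:6.1}.
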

\begin{proof}
Since $A$ has one block and $u\neq(0,0)$, the set counted in
Theorem~\ref{res:6.1} is empty. Hence $\delta_C(A_u)=0$ and
$\mathcal F_A(u)$ is connected. Corollary~\ref{res:2.7} gives connectedness
of the reduced exchange graph.
\end{proof}

\subsection{Residual ranks zero and one}

We next determine how small the residual rank of a disconnected completion
graph can be. The following two cases hold for every finite-dimensional
algebra.

\begin{proposition}\label{res:6.4}
Let $A$ be finite-dimensional and let $u=(U,Q)$ be a basic $\tau$-rigid pair.
\begin{enumerate}[label=(\roman*)]
\item If $r_A(u)=0$, then
      $\operatorname{lk}_{\Delta_\tau(A)}(u)=\{\varnothing\}$ and
      $\mathcal F_A(u)=\{*\}$.
\item If $r_A(u)=1$, then
      $\operatorname{lk}_{\Delta_\tau(A)}(u)$ consists of two isolated
      vertices. Consequently, $\operatorname{lk}_{\Delta_\tau(A)}(u)\cong S^0$
      and $\mathcal F_A(u)$ is isomorphic to the complete graph $K_2$ on two vertices.
\end{enumerate}
\end{proposition}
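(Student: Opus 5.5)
The plan is to work directly from the definitions of the link and the completion graph in Definition~\ref{res:1.2}, together with the completion theory of \cite{AIR}. No surface geometry is needed, since the statement concerns an arbitrary finite-dimensional algebra. Throughout, a simplex of $\Delta_\tau(A)$ is a basic $\tau$-rigid pair. I will use two facts from \cite{AIR}: every basic $\tau$-rigid pair has at most $|A|$ indecomposable summands, and every basic $\tau$-rigid pair is a direct summand of some basic $\tau$-tilting pair, for instance its Bongartz completion.

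For (i), suppose $r_A(u)=0$, so that $u$ is $\tau$-tilting. Let $v$ be a nonempty simplex of the link. Its vertices are disjoint from those of $u$, and $u\oplus v$ is basic $\tau$-rigid with more than $|A|$ summands, which contradicts the bound above. Hence the link is $\{\varnothing\}$. The vertices of $\mathcal F_A(u)$ are the basic $\tau$-tilting pairs containing $u$. Any such pair has exactly $|A|=|u|$ summands and therefore equals $u$. So $\mathcal F_A(u)$ is the single vertex $u$, and it has no edges because a vertex cannot be adjacent to itself.

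For (ii), suppose $r_A(u)=1$, so that $u$ is almost complete. By \cite[Theorem~2.18]{AIR}, $u$ has exactly two basic $\tau$-tilting completions $u\oplus X$ and $u\oplus Y$, where $X$ and $Y$ are nonisomorphic indecomposable pairs. These two completions are mutations of one another. The vertices of the link are the indecomposable pairs $w$ not in $u$ with $u\oplus w$ $\tau$-rigid. Each such $u\oplus w$ has $|A|$ summands, so it is $\tau$-tilting and hence equals one of the two completions. Therefore the vertices of the link are exactly $X$ and $Y$. There is no edge between them: if $\{X,Y\}$ were a simplex, then $u\oplus X\oplus Y$ would be $\tau$-rigid with $|A|+1$ summands, again contradicting the bound. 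Thus the link consists of two isolated vertices, which is $S^0$.

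It remains to identify $\mathcal F_A(u)$. Its two vertices $u\oplus X$ and $u\oplus Y$ share the almost complete summand $u$, so they are joined by an edge of $\operatorname{EG}(s\tau\text{-}\operatorname{tilt}A)$. The edge lies in the induced subgraph, giving $K_2$.

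The only subtle point is making sure the rank bound $|u\oplus w|\le |A|$ is cited correctly; this is \cite[Proposition~2.3 and Theorem~2.12]{AIR}, and it applies equally to pairs with nonzero projective part. Apart from this, the argument is bookkeeping.
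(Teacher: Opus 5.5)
Your proof is correct and follows the paper's argument. In (i), $u$ is itself $\tau$-tilting and is its only completion. In (ii), the two completions given by \cite[Theorem~2.18]{AIR} supply the two link vertices, and these cannot span an edge because $u\oplus v_1\oplus v_2$ would have $|A|+1$ summands. You state the bound $|u\oplus w|\le|A|$ for $\tau$-rigid pairs explicitly; the paper uses the same bound implicitly in both parts.
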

\begin{proof}
\textup{(i)} If $r_A(u)=0$, then $u$ is $\tau$-tilting and is the only completion of $u$.
Thus the link consists of the empty simplex and the completion graph has one
vertex.

\textup{(ii)} Suppose $r_A(u)=1$. Then $u$ has $|A|-1$ indecomposable summands.
Adding any vertex of $\operatorname{lk}_{\Delta_\tau(A)}(u)$ gives a $\tau$-tilting pair. By
\cite[Theorem~2.18 and Definition~2.19]{AIR}, there are exactly two such completions,
$u\oplus v_1$ and $u\oplus v_2$, and they are related by mutation.
The link therefore has exactly the two vertices $v_1,v_2$, whereas the
completion graph is the single edge between $u\oplus v_1$ and
$u\oplus v_2$. The vertices $v_1,v_2$ cannot span an edge in the link,
since $u\oplus v_1\oplus v_2$ would have $|A|+1$ summands.
\end{proof}

\begin{corollary}\label{res:6.5}
If $\mathcal F_A(u)$ is disconnected, then $r_A(u)\geq2$.
\end{corollary}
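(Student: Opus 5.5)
The argument is the contrapositive of Proposition~\ref{res:6.4}. Suppose $\mathcal F_A(u)$ is disconnected; we must rule out $r_A(u)=0$ and $r_A(u)=1$. The residual rank $r_A(u)=|A|-|U|-|Q|$ is a nonnegative integer, since a basic $\tau$-rigid pair has at most $|A|$ indecomposable summands \cite{AIR}. So it suffices to check that neither small value is compatible with disconnectedness.

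If $r_A(u)=0$, Proposition~\ref{res:6.4}(i) says that $\mathcal F_A(u)$ is a single vertex. A one-vertex graph is connected. If $r_A(u)=1$, Proposition~\ref{res:6.4}(ii) identifies $\mathcal F_A(u)$ with the complete graph $K_2$, which is also connected. Both cases contradict the hypothesis, so $r_A(u)\geq2$.

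The only point needing care is the convention that the empty graph is not considered. This does not arise here, because $\mathcal F_A(u)$ always contains the Bongartz completion of $u$ and is therefore nonempty. Nothing surface-specific is used, so the corollary holds for every finite-dimensional algebra $A$. I do not expect any step to be an obstacle; the content lies entirely in Proposition~\ref{res:6.4}.

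For surface rigid algebras, one could add that Theorem~\ref{res:6.1} shows the bound is attained. Take a block in $\mathcal B_C(A)$ with $|B|=2$ and $u$ vanishing on it; the residual rank contributed by that block is then $2$.
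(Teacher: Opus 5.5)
Your proposal is correct and matches the paper's proof: the paper derives this corollary directly from Proposition~\ref{res:6.4}, noting that residual rank $0$ gives a one-vertex completion graph and residual rank $1$ gives $K_2$, both connected. Your closing sharpness remark is not needed for the proof; the paper instead attains the bound with the sparse digon in Corollary~\ref{res:6.8}.
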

\begin{proof}
This follows from Proposition~\ref{res:6.4}.
\end{proof}

\subsection{Sharpness: the sparse digon}\label{subsec:sparse-digon}

We realize a disconnected graph of residual rank two by a sparse digon.
The corresponding algebra is computed in \cite[Section~6.3]{DIJ}.

Let $\mathbf S_{\mathrm{dig}}$ be an annulus with one marked point on each
boundary component and two punctures $p,q$ in the interior.
Choose two plain-tagged arcs $e_1,e_2$ joining $p$ to $q$, disjoint away from
their endpoints, whose union is an essential simple closed curve separating
the two boundary components. Let $\mathbf R_{\mathrm{dig}}=\{e_1,e_2\}$,
see Figure~\ref{fig:6}, where the boundary marked points are denoted by
$m_{\mathrm{in}}$ and $m_{\mathrm{out}}$.

\begin{figure}[htbp]
\centering
\includegraphics[width=100pt]{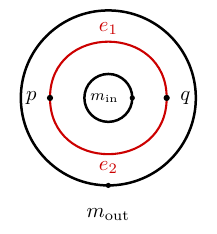}
\caption{The sparse digon $\mathbf R_{\mathrm{dig}}$.}
\label{fig:6}
\end{figure}

\FloatBarrier
\begin{proposition}\label{res:6.6}
The surface rigid algebra $A_{\mathrm{dig}}=\Lambda_{\mathbf R_{\mathrm{dig}}}$
is the Brauer graph algebra of the two-vertex two-parallel-edge ribbon graph.
It has quiver
\[
\begin{tikzcd}[column sep=3.5em]
1 \arrow[r,bend left=45,"{a}"]
  \arrow[r,bend right=13,"{c}"']
&
2 \arrow[l,bend right=13,"{b}"']
  \arrow[l,bend left=45,"{d}"]
\end{tikzcd}
\]
with relations
\[
 ab=cd,\qquad ba=dc,\qquad\text{and}\qquad ad=da=cb=bc=0.
\]
The algebra $A_{\mathrm{dig}}$ has dimension $8$, with basis
$e_1,e_2,a,b,c,d,ab,ba$, where $e_1,e_2$ denote the primitive idempotents
corresponding to the arcs of $\mathbf R_{\mathrm{dig}}$.
\end{proposition}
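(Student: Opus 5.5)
The strategy is to deduce the presentation from Proposition~\ref{res:4.9} rather than compute morphisms in $\mathcal C(\mathbf S_{\mathrm{dig}})$ directly. The arcs $e_1,e_2$ have distinct underlying arcs, plain taggings and all endpoints at punctures, so Proposition~\ref{res:4.9} applies with $\mathbf Q=\mathbf R_{\mathrm{dig}}$ and gives $\Lambda_{\mathbf R_{\mathrm{dig}}}\cong(\Delta^D_{\mathbf Q})^{\mathrm{op}}$. It then remains to check two things:
\begin{enumerate}[label=(\alph*)]
\item $\mathbf Q^\circ=\{e_1,e_2\}$ is sparse;
\item the presentation of $A(G_{\mathbf Q})$ has the stated form, and the opposite algebra does not change it.
\end{enumerate}

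For sparseness, the remark after Definition~\ref{def:sparse-partial-triangulation} reduces the question to showing that no unpunctured disk triangle has sides in $\mathbf Q^\circ$. Such a triangle would have three sides in $\{e_1,e_2\}$, so one arc would be used twice. Its corners lie in $\{p,q\}$, and each side joins $p$ to $q$, so the cyclic sequence of corners alternates between $p$ and $q$. An odd cycle of length three cannot alternate, so no disk triangle exists. This parity argument is the step I regard as the main obstacle, since it must also rule out degenerate disk polygons with identified sides, and I would check it against Demonet's definition. For the same reason, the union $e_1\cup e_2$ is an essential curve, so the only complementary regions are two annular pieces, each containing a boundary component, and neither is a disk. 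Hence $\Delta^D_{\mathbf Q}\cong A(G_{\mathbf Q})$.

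The ribbon graph $G_{\mathbf Q}$ has two vertices $p,q$, two parallel edges, and valency two at each vertex. With the two edges labelled $1,2$, the angle quiver has two arrows at each vertex, and each arrow joins edge $1$ to edge $2$ or edge $2$ to edge $1$. Name them $a,c\colon 1\to 2$ and $b,d\colon 2\to 1$, with $a,b$ the angles at $p$ and $c,d$ the angles at $q$. Definition~\ref{def:brauer-graph-algebra} then gives the following relations.
\begin{itemize}
\item The cycle relations $C_h=C_{\bar h}$ become $ab=cd$ and $ba=dc$.
\item The zero relations $\alpha_h\alpha_{\overline{\sigma h}}$ mix the two vertices, giving $ad=da=cb=bc=0$.
\end{itemize}
I would fix the labelling so that these come out exactly as stated; any other choice differs by renaming arrows. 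Passing to the opposite algebra, or equivalently using $G^{\mathrm{rev}}$, only reverses arrows, and after renaming gives the same presentation. This is the Brauer graph algebra of the two-vertex, two-parallel-edge graph.

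For the basis, I would count paths. In the basis, the length-one paths are $a,b,c,d$, and the length-two paths are $ab=cd$ at vertex $1$ and $ba=dc$ at vertex $2$; the products $ad,da,cb,bc$ vanish. Every path of length three is zero, for example $aba=a\,dc=0$. This gives the basis $e_1,e_2,a,b,c,d,ab,ba$, so the dimension is $8$. As a cross-check, the Cartan matrix is $2J_2$, consistent with Proposition~\ref{res:4.1} and Corollary~\ref{res:4.2}: each vertex contributes $2$ to its own diagonal entry, and each off-diagonal entry is $2$.
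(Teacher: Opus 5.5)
Your proposal is correct and takes essentially the same route as the paper. Sparsity follows from the parity obstruction to a closed three-step walk alternating between $p$ and $q$; Proposition~\ref{res:4.9} identifies $A_{\mathrm{dig}}$ with the opposite of $A(G_{\mathbf Q})$; the algebra is self-opposite because each cyclic order has length two, so $\sigma=\sigma^{-1}$; and the angle-quiver labelling and path count are the same. Your Cartan cross-check $C=B^tB=2J_2$ is a useful addition, because it gives $\dim A_{\mathrm{dig}}=8$ and hence the linear independence of the eight spanning elements, which the path count alone does not establish.
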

\begin{proof}
An oriented boundary of a disk triangle would trace a closed walk of
length three along $e_1,e_2$. Every such walk alternates between $p$ and
$q$. It cannot close after three steps, which excludes a disk triangle.
Since neither arc is a loop, neither bounds a once-punctured monogon.
Together with the boundary-polygon observation in Section~4.3, these facts
show that $\mathbf R_{\mathrm{dig}}$ is sparse.
By Proposition~\ref{res:4.9},
$A_{\mathrm{dig}}$ is the opposite of the Brauer graph algebra of the
two-parallel-edge graph. Both cyclic orders have length two and equal their
inverses. Thus the opposite algebra is isomorphic to the same Brauer graph
algebra, with the two arc idempotents fixed.

We apply the angle-quiver presentation of Section~4.3 to this ribbon graph.
Let $h_{ip},h_{iq}$ be the two half-edges of $e_i$, for $i=1,2$. Then
\[
 \bar h_{ip}=h_{iq},\qquad
 \sigma=(h_{1p}\ h_{2p})(h_{1q}\ h_{2q}).
\]
The quiver vertices $1,2$ correspond to $e_1,e_2$, and the arrows are
$a=\alpha_{h_{1p}}$, $b=\alpha_{h_{2p}}$,
$c=\alpha_{h_{1q}}$ and $d=\alpha_{h_{2q}}$.
The endpoint cycles in the definition are
\[
\begin{array}{c|cc}
 &\text{at quiver vertex }1&\text{at quiver vertex }2\\\hline
 p&ab&ba\\
 q&cd&dc
\end{array}
\]
The relations $C_h=C_{\bar h}$ therefore give $ab=cd$ and $ba=dc$.
The zero relations $\alpha_h\alpha_{\overline{\sigma h}}=0$, applied to
$h=h_{1p},h_{2p},h_{1q},h_{2q}$, give respectively
\[
 ad=0,\qquad bc=0,\qquad cb=0,\qquad da=0.
\]
These relations kill all paths of length three. Among the eight composable
paths of length two, four vanish and the other four give the two classes
$ab=cd$ and $ba=dc$. Thus the two idempotents, the four arrows and these
two classes form the stated basis.
\end{proof}

The two arcs have the same endpoints and plain taggings. Hence
\[
 B_{\mathbf R_{\mathrm{dig}}}=\begin{pmatrix}1&1\\1&1\end{pmatrix},
 \qquad
 C_{A_{\mathrm{dig}}}=B_{\mathbf R_{\mathrm{dig}}}^{t}B_{\mathbf R_{\mathrm{dig}}}
 =\begin{pmatrix}2&2\\2&2\end{pmatrix}.
\]
The nonzero off-diagonal Cartan entry shows that $A_{\mathrm{dig}}$ is
connected, and the Cartan matrix has rank one.
Corollary~\ref{res:5.9} gives a disconnected exchange graph. We now identify
the two components using the calculation in \cite[Section~6.3]{DIJ}.

\begin{proposition}[{\cite[Theorem~6.17]{DIJ}}]\label{res:6.7}
For the sparse-digon algebra,
\[
 \operatorname{EG}(s\tau\text{-}\operatorname{tilt}A_{\mathrm{dig}})
 \cong L_{\mathbb Z}\sqcup L_{\mathbb Z},
\]
where $L_{\mathbb Z}$ denotes the bi-infinite path with vertex set
$\mathbb Z$.
\end{proposition}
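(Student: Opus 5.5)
The plan is to combine the global structure already established with a local computation on one component. The two ingredients are the sign $\varepsilon$ of Proposition~\ref{res:5.2} and the completion graphs of residual rank one from Proposition~\ref{res:6.4}.

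\emph{Step 1: the number of components.} By Theorem~\ref{res:5.8}, $A_{\mathrm{dig}}$ is connected with $C=2J_2$ of rank one, is weakly symmetric, and has $|A_{\mathrm{dig}}|=2$. Hence the exchange graph has exactly two components. They are distinguished by $\varepsilon$ and contain $(A,0)$ and $(0,A)$, respectively.

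\emph{Step 2: each component is a bi-infinite path.} Since $|A|=2$, every vertex has degree exactly $2$, one edge for each summand. Two distinct $\tau$-tilting pairs share at most one indecomposable summand. So the graph is simple and $2$-regular, and each connected component is either a cycle or $L_{\mathbb Z}$.

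To rule out cycles, I would show that each component is infinite. I would use the $g$-vectors. In the component of $(A,0)$ every $g$-vector satisfies $v^tg>0$ with $v=(1,1)$. Moreover, $g^tCg=2(v^tg)^2$ equals $\dim\operatorname{End}X(\gamma)=2$ by Lemma~\ref{res:4.5}. Hence $v^tg=1$ for every ray in this component, and the rays lie on the affine line $x_1+x_2=1$. The chambers are cones spanned by adjacent pairs of lattice points on this line, and these cones have pairwise disjoint interiors since $\Sigma_g$ is a fan. A finite cycle would therefore cover the open half-plane $x_1+x_2>0$ with finitely many cones. This is impossible: such a union is closed, but the boundary rays $\pm(1,-1)$ are not attained. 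So each component is infinite.

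\emph{Step 3: explicit realization.} Alternatively, I would follow \cite[Section~6.3]{DIJ} and list the pairs explicitly. Starting from $(A,0)$ with $g$-vectors $e_1,e_2$, each mutation replaces $g_k$ by $2g_{\text{other}}-g_k$, which one can check using the Cartan form. This produces the sequence of $g$-vectors $(m+1,-m)$, $m\in\mathbb Z$, along the line. The other component is the negatives of these vectors.

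\emph{Main obstacle.} The hardest step is verifying the mutation rule, in other words showing that the exchange graph does not close up. I would rely on the cited computation \cite[Theorem~6.17]{DIJ} for this, since the $g$-vectors there are indexed by $\mathbb Z$ and are distinct. Injectivity of $g$-vectors \cite[Theorem~5.5]{AIR} then gives infinitely many vertices in each component, and with $2$-regularity this yields $L_{\mathbb Z}\sqcup L_{\mathbb Z}$.
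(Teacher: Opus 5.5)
Your proof is correct, but it takes a different route from the paper's. The paper rewrites the presentation of Proposition~\ref{res:6.6} using $x=a+d$, $y=c+b$ as $x^2=y^2=0$, $xy=yx$. It then identifies $A_{\mathrm{dig}}$ with the algebra of \cite[Section~6.3]{DIJ} and quotes the explicit enumeration in \cite[Theorem~6.17]{DIJ}.

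You instead argue inside the paper. Theorem~\ref{res:5.8} gives two components; its proof does not use this proposition, so there is no circularity. Since $|A_{\mathrm{dig}}|=2$, the graph is $2$-regular. The sign of Proposition~\ref{res:5.2}, together with $C=2vv^t$ for $v=(1,1)$, puts every ray generator of the component of $(A,0)$ on the line $x_1+x_2=1$. The lattice-basis property \cite[Theorem~5.1]{AIR} then makes each chamber a cone on two adjacent lattice points.

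Two details should be stated. First, Lemma~\ref{res:4.5} also allows the value $4$; this is excluded because $2(v^tg)^2=4$ has no integral solution. Second, your covering argument needs the fact that each ray of a chamber is shared with exactly one other chamber of the same component, lying on the opposite side.

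A more direct argument is available. The chamber spanned by $(m+1,-m)$ and $(m,1-m)$ can only be adjacent to the chambers with $m$ shifted by $\pm1$, and each of its two walls forces one of these to occur. Propagating from $(A,0)$ therefore gives all of $L_{\mathbb Z}$. Since every chamber of sign $+1$ has this form, this even recovers the component count without Theorem~\ref{res:5.8}.

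The fallback to \cite[Theorem~6.17]{DIJ} in your last paragraph is therefore unnecessary. If you did rely on it, you would need exactly the identification of algebras that the paper supplies.

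Your route is self-contained. With only the norm bookkeeping changed, it applies to every two-simple block in $\mathcal B_C(A)$, including two loops at one puncture with $C=4J_2$. The paper's route is shorter and inherits the explicit two-term silting complexes of \cite{DIJ}, not just their $g$-vectors.
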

\begin{proof}
In the presentation of Proposition~\ref{res:6.6}, set $x=a+d$ and $y=c+b$.
The relations give
\[
 x^2=ad+da=0,\qquad y^2=cb+bc=0,\qquad
 xy=ab+dc=cd+ba=yx.
\]
Conversely, the vertex-idempotent components of $x^2=0$, $y^2=0$ and
$xy=yx$ recover all the relations of Proposition~\ref{res:6.6}.
So $A_{\mathrm{dig}}$ is the algebra of \cite[Section~6.3]{DIJ}, with the
vertex idempotents fixed. By \cite[Theorem~6.17]{DIJ}, the two-term silting
graph of $A_{\mathrm{dig}}$ is the disjoint union of the two bi-infinite paths displayed there.
The two-term silting/$\tau$-tilting correspondence gives the stated
exchange graph.
\end{proof}

This example attains the lower bound in Corollary~\ref{res:6.5}.
The two components also distinguish the partial triangulation from the
tagged rotation.

\begin{corollary}\label{res:6.8}
The connected algebra $A_{\mathrm{dig}}$ and the pair $u=(0,0)$ satisfy
$r_{A_{\mathrm{dig}}}(u)=2$ and $\mathcal F_{A_{\mathrm{dig}}}(u)$ is disconnected.
The two components contain $(A_{\mathrm{dig}},0)$ and $(0,A_{\mathrm{dig}})$,
respectively, corresponding to $\mathbf R_{\mathrm{dig}}$ and
$\rho(\mathbf R_{\mathrm{dig}})$.
\end{corollary}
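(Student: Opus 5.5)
The plan is to combine the two preceding propositions with the identification of $(0,0)$ completions with the whole exchange graph. First I record that $u=(0,0)$ has $|u|=0$. Hence $r_{A_{\mathrm{dig}}}(u)=|A_{\mathrm{dig}}|=2$, since $A_{\mathrm{dig}}=\Lambda_{\mathbf R_{\mathrm{dig}}}$ has one simple module per arc of $\mathbf R_{\mathrm{dig}}=\{e_1,e_2\}$ by Proposition~\ref{res:6.6}. By Definition~\ref{res:1.2}, every $\tau$-tilting pair contains $(0,0)$ as a direct summand. So $\mathcal F_{A_{\mathrm{dig}}}(u)$ is the full exchange graph $\operatorname{EG}(s\tau\text{-}\operatorname{tilt}A_{\mathrm{dig}})$, which is also $\mathcal F_A(u)$ with $A_u=A$ by Proposition~\ref{res:2.2}.

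Disconnectedness comes from either of two earlier results. The computation $C_{A_{\mathrm{dig}}}=2J_2$ preceding Proposition~\ref{res:6.7} shows Cartan rank one and connectedness, and Proposition~\ref{res:4.1} gives weak symmetry. Proposition~\ref{res:5.2} then separates $(A_{\mathrm{dig}},0)$ from $(0,A_{\mathrm{dig}})$. Corollary~\ref{res:5.9} (or Proposition~\ref{res:6.7}, giving $L_{\mathbb Z}\sqcup L_{\mathbb Z}$) shows there are exactly two components. Since the sign $\varepsilon$ is $+1$ on the first pair and $-1$ on the second, the two components are the one containing $(A_{\mathrm{dig}},0)$ and the one containing $(0,A_{\mathrm{dig}})$.

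Finally, I translate to dissections via Theorem~\ref{res:1.9} and the map $\Phi$. The pair $(A,0)$ corresponds to $R=X(\mathbf R_{\mathrm{dig}})\in\operatorname{add}R$. The pair $(0,A)$ corresponds to $R[1]=X(\rho(\mathbf R_{\mathrm{dig}}))$ by $X(\rho\gamma)\cong X(\gamma)[1]$ in Theorem~\ref{res:1.5}. This is the same identification used in the proof of Proposition~\ref{res:5.5}. Theorem~\ref{res:2.6} then transports the components to $\operatorname{EG}^\times(\mathbf R_{\mathrm{dig}})$.

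I expect no genuine obstacle. The only point needing care is to check that the vertex correspondences $\mathbf R\leftrightarrow(A,0)$ and $\rho(\mathbf R)\leftrightarrow(0,A)$ match the definition of $\Phi$, namely that $W_1=R[1]$ is sent to $(0,H(R))=(0,A)$.
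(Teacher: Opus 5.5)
Your proposal is correct and takes essentially the paper's route: you identify $\mathcal F_{A_{\mathrm{dig}}}((0,0))$ with the full exchange graph, obtain exactly two components from Proposition~\ref{res:6.7} (or Corollary~\ref{res:5.9}), and separate $(A_{\mathrm{dig}},0)$ from $(0,A_{\mathrm{dig}})$ by the Cartan sign. The only difference is that the paper cites Theorem~\ref{res:5.7} for the separation, and the proof of its implication (2)$\Rightarrow$(1) is exactly your direct appeal to Proposition~\ref{res:5.2} combined with Theorem~\ref{res:2.6}.
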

\begin{proof}
For $u=(0,0)$, the completion graph is the exchange graph of
$A_{\mathrm{dig}}$ and $r_{A_{\mathrm{dig}}}(u)=|A_{\mathrm{dig}}|=2$.
Proposition~\ref{res:6.7} gives exactly two components.
Theorem~\ref{res:5.7} and the arc correspondence place the two displayed
vertices in different components.
\end{proof}

\section{Finite endpoint fans and surface geometry}\label{sec:7}

Let $\mathbf R=\{\gamma_1,\ldots,\gamma_n\}$ be a nonempty partial tagged
triangulation with all endpoints at punctures, and put
$A=\Lambda_{\mathbf R}$. We express the $g$-vectors in terms of arc
endpoints. In the $\tau$-tilting finite case, we determine the primitive
ray generators and the effect of the surface embedding.

\subsection{Endpoint charges and the ray dichotomy}
Let $B$ be the signed endpoint matrix of $\mathbf R$ and let $C=B^tB$.
Denote by $V(A)$ the set of isomorphism classes of indecomposable
$\tau$-rigid pairs, equivalently the vertex set of $\Delta_\tau(A)$.
By Theorem~\ref{res:1.9}, each $u\in V(A)$ is represented by an
$\mathbf R$-standard tagged arc $\gamma$, with $g(u)=g_{\mathbf R}(\gamma)$.
For an $\mathbf R$-standard tagged arc $\gamma$, define the
\emph{endpoint charge} $\eta(\gamma)$ by
\[
 \eta(\gamma)_p=
 \sum_{\substack{t\in\{0,1\}\\\gamma(t)=p}}\kappa_\gamma(t),
 \qquad p\in\mathbf P_{\mathbf R}.
\]
Thus $B=(\eta(\gamma_1),\ldots,\eta(\gamma_n))$.
For an arc from $p$ to $q$ with endpoint taggings $a,b$, this gives
\[
 \eta(\gamma)=
 \begin{cases}
  ae_p+be_q,&p\neq q,\\
  2ae_p,&p=q,
 \end{cases}
 \qquad a,b\in\{\pm1\}.
\]
We call these the \emph{short} and \emph{long} charges, respectively.
The following formula holds without a finiteness assumption.

\begin{lemma}\label{res:7.1}
For every $\mathbf R$-standard tagged arc $\gamma$,
\[
 Bg_{\mathbf R}(\gamma)=\eta(\gamma).
\]
\end{lemma}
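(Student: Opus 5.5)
The plan is to compute $CBg$-type data through the exact sequence already used in Lemma~\ref{res:4.4}. Then we reduce the claim to a statement about the kernel of $B^t$. Put $Y=X(\gamma)\in\mathcal E_R$ and $g=g_{\mathbf R}(\gamma)=\operatorname{ind}_RY$. The derivation of \eqref{eq:4.1} uses only the rigidity of $R$ and $R[-1]\cong R[1]$, not the rigidity of $X\oplus Y$. So it gives
\[
 (Cg)_i=\dim_k\operatorname{Hom}(R_i,Y)-\dim_k\operatorname{Hom}(R_i,Y[1]).
\]
By Theorem~\ref{res:1.5} and Lemma~\ref{res:4.3}, the first term is $\operatorname{Int}(\gamma_i,\rho\gamma)$ and the second is $\operatorname{Int}(\gamma_i,\gamma)$. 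Since $\rho\gamma$ and $\gamma$ have the same underlying arc, the interior crossings cancel. Only the tagged endpoint intersections survive.

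Next I would show, case by case as in the proof of Proposition~\ref{res:4.1}, that
\[
 \operatorname{Int}(\gamma_i,\rho\gamma)-\operatorname{Int}(\gamma_i,\gamma)=\eta(\gamma_i)^t\eta(\gamma).
\]
Each pair of ends at a common puncture contributes $+1$ to exactly one of the two counts: to the first when the taggings agree and to the second when they differ. That is, each such pair contributes $\kappa_{\gamma_i}(s)\kappa_\gamma(t)$ to the difference. The homotopic case needs separate care: $\gamma$ may equal $\gamma_i$, or be $\rho\gamma_i$, or be conjugate to $\gamma_i$. Here condition (2) of the tagged-intersection definition modifies the count, so these cases should be checked against Example~\ref{res:1.4}; by that example the totals are $\pm2$, $\pm4$ or $0$, as required. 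This yields $Cg=B^t\eta(\gamma)$. Since $C=B^tB$, this is equivalent to
\[
 B^t\bigl(Bg-\eta(\gamma)\bigr)=0 .
\]

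It remains to remove the kernel of $B^t$. By the proof of Lemma~\ref{res:4.6}, $\ker B^t$ is spanned by one vector $x_\Gamma$ for each balanced component $\Gamma$ of $\Gamma_{\mathbf R}$. This vector is supported on $V(\Gamma)$ and satisfies $x_q=\operatorname{sign}(e)x_p$ along each edge $e$. The difference $Bg-\eta(\gamma)$ lies in the column space of $B$ plus the span of the charges $\eta(\gamma)$. So it suffices to prove $x_\Gamma^t\eta(\gamma)=0$ for every $\mathbf R$-standard $\gamma$: then $Bg-\eta(\gamma)$ lies in $\ker B^t\cap(\ker B^t)^\perp=0$.

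This orthogonality is the step I expect to be the main obstacle. My first attempt is flip induction. The identity holds for $\gamma\in\mathbf R$ and for $\gamma\in\rho(\mathbf R)$, since $\eta(\rho\gamma_i)=-\eta(\gamma_i)$ is orthogonal to $x_\Gamma$. I would then show that $x_\Gamma^t\eta$ is additive across the exchange triangles of a flip, in the same way as the relative index. Every standard arc lies in a dissection, and by Proposition~\ref{res:3.7} and Theorem~\ref{res:3.8} dissections are reached by flips from these. Failing that, I would argue directly from Definition~\ref{res:1.8} and \cite[Definition~4.2]{HZZ}. The endpoints of a standard arc lie in $\mathbf P_{\mathbf R}$, and the shearing condition forces its tagging at $p$ relative to $\mathbf R$ to be correlated with its tagging at the other end. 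The relation is exactly $\kappa_\gamma(0)x_{\gamma(0)}=-\kappa_\gamma(1)x_{\gamma(1)}$, which is the balanced-sign rule. For a loop, both ends give the same value, so a loop component has no balanced vector.
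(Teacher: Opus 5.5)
Your first half is the paper's argument. Formula \eqref{eq:4.1} needs only the rigidity of $R$ and $R[-1]\cong R[1]$, so together with Theorem~\ref{res:1.5} and the end-by-end count it gives $Cg=B^t\eta(\gamma)$. The gap is the second half. You reduce the lemma to $x_\Gamma^t\eta(\gamma)=0$ for every balanced component $\Gamma$, but you never prove this, and neither route you sketch can finish it as stated.

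\emph{The flip induction is circular.} Additivity of $x_\Gamma^t\eta$ over an exchange triangle $X(\gamma)\to E\to X(\gamma')\to X(\gamma)[1]$ is not a formal property, because $\eta$ is defined from endpoints and not from triangles. Combining $Cg=B^t\eta$ with additivity of indices yields only $B^t\bigl(\eta(\gamma')-\eta(E)+\eta(\gamma)\bigr)=0$. Under the inductive hypothesis, $x_\Gamma^t$ of this vector is $x_\Gamma^t\eta(\gamma')$, which is exactly the quantity you want to control. To break the circle you would have to identify the middle terms of relative flips geometrically, and these are approximation triangles rather than ordinary quadrilateral flips.

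\emph{The shearing route is only asserted.} To carry it out you would at least have to rule out $\mathbf R$-standard loops at punctures of a balanced component, and standard arcs that leave a balanced component. The paper obtains these facts (Theorem~\ref{res:7.3}) as consequences of this lemma, not as inputs to it.

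The missing idea is that $\ker B^t$ never needs to be analysed. Lemma~\ref{res:4.5} gives $g^tCg=\dim_k\operatorname{End}X(\gamma)$. This is $2$ or $4$ according as $\gamma$ has distinct endpoints or is a loop, which is exactly $\|\eta(\gamma)\|^2$ for a short or a long charge. In your own setup, write $\eta(\gamma)=Bg+k$ with $k=\eta(\gamma)-Bg\in\ker B^t$. Since $k$ is orthogonal to $Bg\in\operatorname{im}B$, Pythagoras gives $\|\eta\|^2=\|Bg\|^2+\|k\|^2=g^tCg+\|k\|^2$, which forces $k=0$. Equivalently, as in the paper, $\|Bg-\eta\|^2=g^tCg-2g^tB^t\eta+\|\eta\|^2=\|\eta\|^2-g^tCg=0$. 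The orthogonality you were aiming for, and the absence of standard loops in balanced components, then follow as corollaries.
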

\begin{proof}
Write $g=g_{\mathbf R}(\gamma)$ and $\eta=\eta(\gamma)$.
We first show that $Cg=B^t\eta$.
By \eqref{eq:4.1} and Theorem~\ref{res:1.5}, the $i$th entry of $Cg$ is
\[
 \operatorname{Int}(\gamma_i,\rho^{-1}\gamma)
 -\operatorname{Int}(\gamma_i,\gamma).
\]
Rotation changes only the endpoint taggings. Hence the interior
intersection contributions cancel. Suppose the underlying arcs are
distinct. Two ends at the same puncture with taggings $a,b$ contribute
$(1+ab)/2$ to the first intersection number and $(1-ab)/2$ to the second.
Their difference is $ab$. Summing over pairs of ends gives $b_i^t\eta$,
where $b_i$ is the $i$th column of $B$.
For the same underlying arc with distinct endpoints, the difference is
$2,-2,0$ according as both endpoint taggings agree, both are reversed, or
exactly one is reversed. For a loop the two values are $4,-4$.
These values also equal $b_i^t\eta$, proving $Cg=B^t\eta$.

The definition of $\eta$ and Lemma~\ref{res:4.5} give
$\|\eta\|^2=g^tCg$. Therefore
\[
 \|Bg-\eta\|^2
 =g^tCg-2g^tB^t\eta+\|\eta\|^2
 =\|\eta\|^2-g^tCg=0.\qedhere
\]
\end{proof}

For the rest of this section, assume that $A$ is $\tau$-tilting finite.
By Theorem~\ref{res:4.7}, this is equivalent to $C$ being positive definite,
or to every component of $\Gamma_{\mathbf R}$ being a tree or an
unbalanced unicycle. In particular, $V(A)$ is finite. Set
$f_0(A)=|V(A)|$.
Since the columns of $B$ are linearly independent,
$B:\mathbb R^n\longrightarrow\operatorname{im}_{\mathbb R}B$ is a linear
isomorphism. This map sends the integer lattice $\mathbb Z^n$ to
$B\mathbb Z^n=\sum_i\mathbb Zb_i$ and the cone $\sigma_u$ of a basic
$\tau$-rigid pair $u$ to $B\sigma_u$.
The primitive ray generators in these endpoint coordinates are $Bg(u)$
for $u\in V(A)$, with respect to the lattice $B\mathbb Z^n$.

Lemma~\ref{res:7.1} restricts the ray generators to short and long charges.
Conversely, every integral vector of Cartan norm two is a ray generator.

\begin{lemma}\label{res:7.2}
Every $z\in\mathbb Z^n$ satisfying $z^tCz=2$ is the $g$-vector of a unique
vertex of $\Delta_\tau(A)$, equivalently the primitive generator of a
unique ray of $\Sigma_g(A)$.
\end{lemma}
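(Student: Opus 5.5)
The plan is to combine completeness of the $g$-fan with the nonnegativity of the Cartan pairing on compatible summands, exactly as in the proof of (v)$\Longrightarrow$(iii) in Theorem~\ref{res:4.7}, but now keeping track of integrality.

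\emph{Step 1: place $z$ in a cone with integer coordinates.} Since $A$ is assumed $\tau$-tilting finite, Theorem~\ref{res:4.7} makes $\Sigma_g(A)$ complete. Let $z\in\mathbb Z^n$ with $z^tCz=2$; in particular $z\neq0$. Choose a full-dimensional cone $\sigma_T$ containing $z$, where $T=T_1\oplus\cdots\oplus T_n$ is a basic $\tau$-tilting pair, and put $g_i=g(T_i)$. Then $z=\sum_i a_ig_i$ with $a_i\geq0$. By \cite[Theorem~5.1]{AIR}, equivalently Corollary~\ref{res:2.8} with $u$ $\tau$-tilting, the vectors $g_1,\ldots,g_n$ form a lattice basis of $\mathbb Z^n$. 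Hence the coefficients $a_i$ are nonnegative integers.

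\emph{Step 2: bound the coefficients by the norm.} The summands $T_i$ are pairwise compatible, so Lemma~\ref{res:4.4} gives $g_i^tCg_j\geq0$ for $i\neq j$. Lemma~\ref{res:4.5} gives $g_i^tCg_i\in\{2,4\}$. Since every $a_i\geq0$, we obtain
\[
 2=z^tCz=\sum_{i,j}a_ia_j\,g_i^tCg_j\geq\sum_i a_i^2\,g_i^tCg_i\geq2\sum_i a_i^2 .
\]
Thus $\sum_ia_i^2\leq1$. Because the $a_i$ are integers and $z\neq0$, exactly one $a_i$ equals $1$ and the others vanish. Therefore $z=g_i=g(T_i)$ for the indecomposable $\tau$-rigid pair $T_i$, which is a vertex of $\Delta_\tau(A)$. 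Equality in the chain also forces $g_i^tCg_i=2$, so this vertex is represented by an $\mathbf R$-standard arc with distinct endpoints, consistent with Lemma~\ref{res:4.5}.

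\emph{Step 3: uniqueness and the ray formulation.} The vertex is unique because indecomposable $\tau$-rigid pairs are determined by their $g$-vectors \cite[Theorem~5.5]{AIR}. The $g$-vectors of indecomposable $\tau$-rigid pairs are precisely the primitive ray generators of $\Sigma_g(A)$, as recalled in Section~1.5. So $z$ is the primitive generator of a unique ray, namely $\mathbb R_{\geq0}z$.

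The only delicate point is integrality of the coefficients in Step~1. Without it, the norm inequality would only give $\sum a_i^2\leq1$ over the reals, which does not force $z$ onto a ray. I would settle it by citing the unimodularity of $g$-matrices of $\tau$-tilting pairs, already used in Corollary~\ref{res:2.8}. Completeness is needed so that every integral $z$ lies in some cone; this is where the $\tau$-tilting finiteness hypothesis of this section enters.
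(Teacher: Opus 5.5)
Your proposal is correct and follows essentially the same route as the paper's proof. Both use completeness of the fan (Theorem~\ref{res:4.7}), integrality of the cone coordinates from the lattice-basis property, the norm bound $2\geq 2\sum_i a_i^2$ from Lemmas~\ref{res:4.4} and~\ref{res:4.5}, and injectivity of $g$-vectors for uniqueness.
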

\begin{proof}
By Theorem~\ref{res:4.7}, the $g$-fan is complete. Choose a maximal cone
containing $z$, and let $u=u_1\oplus\cdots\oplus u_n$ be the associated basic
$\tau$-tilting pair. Put $g_i=g(u_i)$. Then
\[
 z=\sum_{i=1}^{n}a_i g_i,\qquad a_i\in\mathbb R_{\geq0}.
\]
The vectors $g_1,\ldots,g_n$ form a $\mathbb Z$-basis. Thus every $a_i$ is
a nonnegative integer. Lemmas~\ref{res:4.4} and~\ref{res:4.5} give
\[
 2=z^tCz
 =\sum_i a_i^2 g_i^tCg_i+2\sum_{i<j}a_i a_jg_i^tCg_j
 \geq2\sum_i a_i^2.
\]
Since $z\neq0$, exactly one coefficient is one and all the others are zero.
Hence $z=g(u_i)$ for one $i$. Injectivity of $g$-vectors gives uniqueness.
\end{proof}

We treat the tree and unbalanced unicyclic cases separately.
Connectivity of $\Gamma_{\mathbf R}$ need
not imply connectivity of $A$: a bare conjugate pair is an example.
Fix an ordering of $\mathbf P_{\mathbf R}$.

\begin{theorem}\label{res:7.3}
Suppose $\Gamma_{\mathbf R}$ is a tree with $n$ edges. Choose
$\varepsilon\in\{\pm1\}^{n+1}$ spanning $\ker B^t$. Then $u\mapsto Bg(u)$ is
a bijection
\[
 V(A)\longrightarrow
 \{\,\pm(\varepsilon_pe_p-\varepsilon_qe_q):p<q\,\}.
\]
Consequently $f_0(A)=n(n+1)$, and every $\mathbf R$-standard arc has distinct
endpoints.
\end{theorem}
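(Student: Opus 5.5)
We argue entirely in endpoint coordinates, combining Lemma~\ref{res:7.1} with the norm characterization of Lemma~\ref{res:7.2}. Since $\Gamma_{\mathbf R}$ is a tree, it is connected and has no cycle, so Lemma~\ref{res:4.6} gives $\operatorname{rank}B=n$ with $B$ of size $(n+1)\times n$. Hence $\ker B^t$ is one-dimensional, spanned by the propagation solution from the proof of Lemma~\ref{res:4.6}. Along each edge $e$ from $p$ to $q$ with taggings $a,b$ this solution satisfies $x_q=\operatorname{sign}(e)x_p$, so it can be normalized to $\varepsilon\in\{\pm1\}^{n+1}$. The image $\operatorname{im}_{\mathbb R}B$ is then the hyperplane $\varepsilon^\perp$, and $B$ is injective because the columns are independent. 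For each $u\in V(A)$, represented by an $\mathbf R$-standard arc $\gamma$, we have $Bg(u)=\eta(\gamma)\in\varepsilon^\perp$ by Lemma~\ref{res:7.1}.

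The first step rules out long charges, which also gives the final assertion about distinct endpoints. A long charge $2ae_p$ satisfies $\varepsilon^t(2ae_p)=2a\varepsilon_p\neq0$, so it does not lie in $\varepsilon^\perp$; hence every $\mathbf R$-standard arc has distinct endpoints. Now consider a short charge $ae_p+be_q$ with $p\neq q$. Membership in $\varepsilon^\perp$ means $a\varepsilon_p+b\varepsilon_q=0$, that is $b=-a\varepsilon_p\varepsilon_q$. Writing $a=\pm\varepsilon_p$, the charge becomes $\pm(\varepsilon_pe_p-\varepsilon_qe_q)$. Therefore $u\mapsto Bg(u)$ is an injective map (injective since $B$ is, and $g$-vectors determine pairs) from $V(A)$ into the displayed set.

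For surjectivity, fix $w=\pm(\varepsilon_pe_p-\varepsilon_qe_q)$. Then $w\in\varepsilon^\perp=\operatorname{im}_{\mathbb R}B$, so $w=Bz$ for a unique real $z$. We must show $z\in\mathbb Z^n$. Take the tree path $e_1,\dots,e_r$ from $p$ to $q$. The column of each $e_j$ has the form $\pm(\varepsilon_{x}e_x-\varepsilon_ye_y)$ for its endpoints $x,y$, by the same computation as above, since columns lie in $\varepsilon^\perp$. Hence the telescoping signed sum of these columns equals $w$, so $z$ is an integral $\pm1$ combination of the coordinate vectors of $e_1,\dots,e_r$.

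The resulting vector satisfies $z^tCz=\|Bz\|^2=\|w\|^2=2$. Lemma~\ref{res:7.2} then says $z$ is the $g$-vector of a vertex $u$, with $Bg(u)=w$. Counting the displayed set gives $2\binom{n+1}{2}=n(n+1)$ vectors, all distinct since $p<q$ and the signs differ, so $f_0(A)=n(n+1)$.

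The main point requiring care is the sign bookkeeping: every column of $B$ must have the form $\pm(\varepsilon_xe_x-\varepsilon_ye_y)$, so that the path sum telescopes. This follows directly from the relation $\varepsilon^tB=0$ applied column by column. No other obstacle is expected, since finiteness, and hence Lemma~\ref{res:7.2}, is given by Theorem~\ref{res:4.7} for trees.
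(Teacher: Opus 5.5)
Your proof is correct and follows the paper's argument. Lemma~\ref{res:7.1} places every charge in $\varepsilon^\perp$, which excludes long charges and fixes the short ones, and Lemma~\ref{res:7.2} realizes each displayed vector via its norm-two preimage. The only difference is the integrality step: you telescope the columns $\pm(\varepsilon_xe_x-\varepsilon_ye_y)$ along the tree path from $p$ to $q$, whereas the paper proves the full lattice equality $B\mathbb Z^n=\{x\in\mathbb Z^{n+1}:\varepsilon^tx=0\}$ by leaf-peeling. Your version has the small bonus of exhibiting each $g$-vector explicitly as a $\pm1$ vector supported on the path edges.
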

\begin{proof}
For an $\mathbf R$-standard arc $\gamma$, Lemma~\ref{res:7.1} gives
$\eta(\gamma)=Bg_{\mathbf R}(\gamma)$. Hence
$\varepsilon^t\eta(\gamma)=0$.
A long charge $2ae_p$ has scalar product $2a\varepsilon_p\neq0$ with
$\varepsilon$. Thus $\gamma$ has distinct endpoints.
For a short charge $ae_p+be_q$, orthogonality means
$a\varepsilon_p+b\varepsilon_q=0$. The two possible charges are therefore
$\pm(\varepsilon_pe_p-\varepsilon_qe_q)$.

To prove that all these vectors occur, we identify the integral image of $B$:
\[
 B\mathbb Z^n
 =\{x\in\mathbb Z^{n+1}:\varepsilon^tx=0\}.
\]
The forward inclusion follows from $B^t\varepsilon=0$.
For the reverse inclusion, let $x$ belong to the right-hand lattice.
Choose a leaf $p$ and let $b_e$ be the column for the incident edge, with entry
$a=\pm1$ at $p$. The vector $x-a x_p b_e$ has zero $p$-coordinate and
remains orthogonal to $\varepsilon$.
Delete $p$ and the incident edge, and repeat on the remaining tree.
At the final vertex, orthogonality forces the remaining coordinate to
vanish. The coefficients subtracted at each step are integers.
Thus $x$ is an integral combination of the columns of $B$.

Each displayed short charge $x$ is integral and orthogonal to
$\varepsilon$. By the lattice equality, there exists $z\in\mathbb Z^n$
with $Bz=x$. The independent columns of $B$ make this preimage unique.
Moreover, $z^tCz=\|Bz\|^2=\|x\|^2=2$.
By Lemma~\ref{res:7.2}, $z$ is the $g$-vector of a unique vertex.
There are $\binom{n+1}{2}$ choices of $p<q$ and two signs for each choice.
Therefore $f_0(A)=2\binom{n+1}{2}=n(n+1)$.
\end{proof}

\begin{theorem}\label{res:7.4}
Suppose $\Gamma_{\mathbf R}$ is unicyclic and unbalanced, with $n$ edges and
$n$ vertices. Then $B$ is invertible and
\[
 B\mathbb Z^n=\{x\in\mathbb Z^n:\textstyle\sum_p x_p\text{ is even}\}.
\]
There is a subset $L=L_{\mathbf R}\subseteq\{\pm2e_p:p\in\mathbf P_{\mathbf R}\}$ such that
\[
 Bg(V(A))=\{ae_p+be_q:p<q,\ a,b\in\{\pm1\}\}\cup L.
\]
For each puncture $p$ and sign $a$, exactly one of the following occurs:
\begin{enumerate}
\item $2ae_p\in L$, represented by a unique $\mathbf R$-standard loop at
$p$ with tagging $a$;
\item there is a unique conjugate pair $\{\gamma,\gamma'\}$ of
$\mathbf R$-standard arcs with common tagging $a$ at $p$, and
\[
 g_{\mathbf R}(\gamma)+g_{\mathbf R}(\gamma')=B^{-1}(2ae_p).
\]
\end{enumerate}
\end{theorem}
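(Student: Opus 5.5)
The plan has four steps: the lattice $B\mathbb Z^n$, the short charges, the dichotomy at each $(p,a)$, and the two alternatives, in that order. Throughout I use $Bg=\eta$ from Lemma~\ref{res:7.1}, the norm identities of Lemmas~\ref{res:4.4}, \ref{res:4.5} and~\ref{res:7.2}, and the fact that $B\mathbb Z^n$ is the lattice in which Lemma~\ref{res:7.2} is applied.

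\textbf{Invertibility and the lattice.} Invertibility of $B$ follows from Lemma~\ref{res:4.6}, since $B$ is square with independent columns. Every column $\eta(\gamma_i)$ has even coordinate sum, so $B\mathbb Z^n$ lies in the even-sum lattice $\Lambda_{\mathrm{ev}}$. By Corollary~\ref{res:4.8}, $|\det B|=2$, and $\Lambda_{\mathrm{ev}}$ has index $2$ in $\mathbb Z^n$. Hence $B\mathbb Z^n=\Lambda_{\mathrm{ev}}$.

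\textbf{Short charges.} Each $x=ae_p+be_q$ with $p\neq q$ lies in $\Lambda_{\mathrm{ev}}$. So $x=Bz$ for a unique $z\in\mathbb Z^n$, and $z^tCz=\|x\|^2=2$. Lemma~\ref{res:7.2} then makes $z$ the $g$-vector of a unique vertex. Conversely, Lemma~\ref{res:7.1} shows that every vertex has a short or a long charge. I define $L$ to be the set of long charges that occur, which gives the displayed description of $Bg(V(A))$.

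\textbf{The dichotomy at $(p,a)$.} Fix $p$ and $a$, put $x=2ae_p\in\Lambda_{\mathrm{ev}}$ and $z=B^{-1}x$. Then $z^tCz=4$, and $z$ lies in a unique minimal cone of the complete fan. Write $z=\sum_i a_ig_i$ with $a_i$ nonnegative integers and the $g_i$ compatible. The expansion used in Lemma~\ref{res:7.2} gives
\[
 4=\sum_i a_i^2g_i^tCg_i+2\sum_{i<j}a_ia_jg_i^tCg_j .
\]
Every term is nonnegative, and each $g_i^tCg_i$ is $2$ or $4$. This leaves only three possibilities:
\begin{enumerate}
\item one coefficient equal to $1$ with $g_i^tCg_i=4$, which is a loop with $\eta=x$;
\item one coefficient equal to $2$ with a short $g_i$, which forces $2\eta(g_i)=x$; this is impossible, since $x/2=ae_p$ is not a short charge;
\item two coefficients equal to $1$, with both $g_i$ short and orthogonal, so that $\eta_1+\eta_2=2ae_p$.
\end{enumerate}
In the last case $\eta_1+\eta_2=2ae_p$ forces $\eta_1=ae_p+be_q$ and $\eta_2=ae_p-be_q$ for some $q\neq p$. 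Two compatible arcs with these charges join the same pair of punctures.

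\textbf{The two alternatives.} Uniqueness and exclusivity come from uniqueness of the minimal cone containing $z$. In case (1), injectivity of $g$-vectors gives the unique loop; any loop with charge $x$ has $g$-vector $z$ and so must be this one. In case (3), I then show that the two arcs $\gamma,\gamma'$ have the same underlying arc. They are compatible with the same endpoints $p,q$, the same tagging at $p$ and opposite taggings at $q$, so if their underlying arcs differed, compatibility at $q$ would fail. They therefore form a conjugate pair, and by construction
\[
 g_{\mathbf R}(\gamma)+g_{\mathbf R}(\gamma')=z=B^{-1}(2ae_p).
\]

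\textbf{Main obstacle.} The step I expect to be hardest is the end of the case analysis. One must check that in case (3) the cross term $g_1^tCg_2$ is $0$ and not $1$; this follows from the norm count $2+2+2g_1^tCg_2=4$. One must also check that the two arcs really have the same endpoint $q$. This comes from $\eta_1+\eta_2=2ae_p$, since two short charges summing to $2ae_p$ must share their second endpoint with opposite signs. Finally, the $(p,a)$ pair must be unique, which follows because the pair is determined by the minimal cone containing $z$.
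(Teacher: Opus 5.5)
Your proof is correct and follows the paper's argument essentially step for step. It establishes the index-two lattice from $|\det B|=2$, obtains the short charges from Lemma~\ref{res:7.2}, and decomposes $B^{-1}(2ae_p)$ in a cone by the norm-four count into either a loop or a compatible pair with charges $ae_p\pm be_q$, which compatibility at $q$ forces to be a conjugate pair. The differences are cosmetic: your coefficient-$2$ case is already excluded by the norm count ($a_i^2g_i^tCg_i\geq 8>4$), and where you use uniqueness of the minimal cone containing $z$, the paper uses the equivalent fact that a basic $\tau$-rigid pair is determined by its $g$-vector.
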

\begin{proof}
Corollary~\ref{res:4.8} gives $|\det B|=2$.
Each column of $B$ has even coordinate sum. Hence $B\mathbb Z^n$ is
contained in the lattice displayed above. The even and odd parity classes show
that this lattice has index two in $\mathbb Z^n$.
Since $[\mathbb Z^n:B\mathbb Z^n]=|\det B|=2$, the inclusion is an equality.

By Lemma~\ref{res:7.1}, the endpoint coordinates of a ray generator
equal the charge of the representing arc. For each short charge $x$,
the coordinate sum is even and $\|x\|^2=2$.
The lattice equality gives an integral preimage $z$ with
$z^tCz=\|x\|^2=2$, and invertibility of $B$ gives uniqueness.
By Lemma~\ref{res:7.2}, $z$ is the $g$-vector of a unique vertex.
All other arc charges have the form $\pm2e_p$, proving the asserted
formula for $Bg(V(A))$.

Fix a long charge $2ae_p$ and put $z=B^{-1}(2ae_p)$.
By the lattice equality, $z\in\mathbb Z^n$ and
$z^tCz=\|2ae_p\|^2=4$.
Choose a maximal cone containing $z$ and write $z=\sum_i a_i g_i$
as in Lemma~\ref{res:7.2}. The nonnegative integer coefficients satisfy
\[
 4\geq2\sum_i a_i^2.
\]
Since $z\neq0$, there are exactly two possibilities:
\begin{enumerate}
\item One coefficient is one and the others are zero. Then $z=g_i$ has
Cartan norm four. By Lemma~\ref{res:4.5}, the representing arc is a loop;
Lemma~\ref{res:7.1} gives basepoint $p$ and tagging $a$.
\item Two coefficients are one and the others are zero. Then $z=g_i+g_j$.
The two diagonal norms are at least two and their pairing is nonnegative.
Since $z^tCz=4$, we obtain
\[
 g_i^tCg_i=g_j^tCg_j=2,\qquad g_i^tCg_j=0.
\]
Since $g_i,g_j$ generate rays of the same cone, the two arcs are compatible.
By Lemma~\ref{res:7.1}, their short charges sum to $2ae_p$.
Hence these charges are $ae_p+be_q$ and $ae_p-be_q$ for some $q\neq p$.
Their taggings at $q$ are opposite. If their underlying arcs were distinct,
this would give a tagged intersection at $q$, contrary to compatibility.
Thus their arcs form the conjugate pair in assertion~(2).
\end{enumerate}
Each case gives a basic $\tau$-rigid pair with $g$-vector $z$.
Since a basic $\tau$-rigid pair is uniquely determined by the $g$-vector,
the two cases cannot occur simultaneously, and the loop or conjugate pair
is unique.
For $n=1$, there are no short charges or conjugate pairs. Both long charges
therefore occur, giving exactly two vertices.
\end{proof}

\subsection{The surface criterion and reduction to one stem}
\label{subsec:surface-charge-criterion}
Throughout this subsection, assume that $\Gamma_{\mathbf R}$ is connected,
unbalanced and unicyclic, and put $n=|\mathbf R|$.
The unique cycle is called the \emph{core}. The core is \emph{exceptional}
if the core arcs form either a conjugate pair or the boundary of an
embedded unpunctured disk triangle.
By Theorem~\ref{res:7.4}, every short charge occurs. It remains to decide
which long charges occur. Put
$D_n=\{\pm e_p\pm e_q:p<q\}$ and
$C_n=D_n\cup\{\pm2e_p:p\in\mathbf P_{\mathbf R}\}$.
The following criterion determines the ray set from the core and the
surface embedding.

\begin{theorem}\label{res:7.5}
Suppose $\Gamma_{\mathbf R}$ is connected, unbalanced and unicyclic. Then
\[
 L=
 \begin{cases}
 \varnothing,&\text{if the core is exceptional},\\
 \{\pm2e_p:p\in\mathbf P_{\mathbf R}\},&\text{otherwise}.
 \end{cases}
\]
Thus $Bg(V(A))$ is the root set $D_n$ if the core is exceptional and
$C_n$ otherwise, with respectively $2n(n-1)$ and $2n^2$ ray generators.
\end{theorem}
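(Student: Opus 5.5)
By Theorem~\ref{res:7.4}, all short charges occur. For each pair $(p,a)$ it then remains to decide which alternative of that theorem holds: an $\mathbf R$-standard loop at $p$ with tagging $a$, or an $\mathbf R$-standard conjugate pair with common tagging $a$ at $p$. The counts $2n(n-1)$ and $2n^2$ then follow at once, since $|D_n|=4\binom n2$ and $|C_n|=|D_n|+2n$. So the content of the theorem is that the choice between the two alternatives is uniform in $(p,a)$, and that it is governed only by whether the core is exceptional. I would prove this in three steps: a propagation step along the trees attached to the core, a local analysis at the core, and a geometric construction of loops in the nonexceptional case.

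\emph{Propagation.} First I would show that the alternative at $(p,a)$ is the same at both ends of every edge of $\Gamma_{\mathbf R}$. More precisely, for an arc $\gamma_i\in\mathbf R$ from $p$ to $q$ with taggings $a,b$, the alternative at $(p,a)$ should coincide with the alternative at $(q,b)$. The tool is the explicit description of $\mathbf R$-standard arcs as arcs whose elementary laminate shears $\mathbf R$ (Definition~\ref{res:1.8}). Take an $\mathbf R$-standard loop $\ell$ at $p$. I would slide its basepoint along $\gamma_i$, concatenating with $\gamma_i$ at both ends and retagging by $b$ at $q$. Lemma~\ref{res:7.1} then predicts the new charge: $2ae_p-B(\text{stuff})$ becomes $2be_q$. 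I expect the shearing condition to be checked segment by segment, because the new segments cut out the same angles at the corners of $\gamma_i$.

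The same slide should turn a conjugate pair at $p$ into a conjugate pair at $q$. Alternatively, I could argue algebraically, which avoids surgery. The candidate vector $B^{-1}(2be_q)$ differs from $B^{-1}(2ae_p)$ by a combination of the columns $g$ of short charges. If needed, I would then use the cone decomposition in the proof of Theorem~\ref{res:7.4}, showing that the decomposition type (one summand versus two) is preserved by a mutation joining adjacent cones; by Theorem~\ref{res:2.6} such mutations are flips. This reduces everything to the pairs $(p,\pm1)$ with $p$ on the core. It also reduces the problem, as the subsection title suggests, to a core with at most one attached stem.

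\emph{The core.} On the core I would treat the exceptional cases first. If the core is a conjugate pair $\{\alpha,\beta\}$ with common tagging $a$ at $p$, then $\alpha,\beta$ are themselves $\mathbf R$-standard, so alternative (2) holds at $(p,a)$. Applying $\rho$ gives alternative (2) at $(p,-a)$, because rotation exchanges taggings and negates $g$-vectors on two-periodic objects (Lemma~\ref{res:4.3}). The same argument works at the other endpoint, with the roles of the taggings adjusted. If the core bounds an unpunctured disk triangle, the algebra of the core alone is the classical type $D$ trivial extension. There, by the type $D_3=A_3$ count, no loop is standard; equivalently, any loop at a corner would have to cross the interior of the triangle, which is not an angle. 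I would verify this locally by the shearing condition and then propagate.

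\emph{The nonexceptional case.} If the core is nonexceptional, I would construct for each core puncture $p$ the loop $\ell_p$ that follows the core cycle once around, based at $p$ and tagged $a$. I would check three things: $\ell_p$ is an essential tagged arc (it does not cut out a once-punctured monogon); its laminate $e(\ell_p^{\mathbf R})$ shears $\mathbf R$; and its charge is $2ae_p$. Unbalancedness of the core, which by Lemma~\ref{res:4.6} makes the parity correct, is what allows the loop to close up consistently with the taggings.

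I expect this construction to be the main obstacle. Nonexceptionality must be used exactly here: for a triangle bounding an unpunctured disk, $\ell_p$ is contractible onto a side or fails to shear. For a conjugate pair there is no loop at all. For a longer cycle, or a triangle enclosing a puncture or topology, $\ell_p$ should be essential and every segment should cut out an angle at a core corner. I would first try to verify this by a careful local picture near each core puncture.
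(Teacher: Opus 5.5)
Your reduction of the statement to deciding which long charges occur is the right frame. Your treatment of the core punctures of a conjugate-pair core is also sound. The compatible pairs $\{\alpha,\beta\}$, $\{\rho\alpha,\rho\beta\}$, $\{\alpha,\rho\beta\}$, $\{\rho\alpha,\beta\}$ consist of arcs of $\mathbf R$ or of the dissection $\rho(\mathbf R)$. They realize alternative~(2) of Theorem~\ref{res:7.4} at both core punctures, for both taggings, whatever trees are attached.

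Beyond that, every load-bearing step is only asserted.

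(i) Propagation. Lemma~\ref{res:7.1} only records that a loop's charge is fixed by its basepoint and tagging. What must be shown is that the slid curve is again $\mathbf R$-standard, and in both directions, since you need the converse to transfer \emph{nonexistence} of loops to stem punctures. No shear check is given. The algebraic variant is not an argument: $B^{-1}(2ae_p)$ and $B^{-1}(2be_q)$ lie in unrelated cones, and no mutation joins them along which ``one summand versus two'' could be tracked.

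(ii) Triangle core. Your obstruction is false as stated. Inside the unpunctured disk, a segment running from one side to an adjacent side cuts out the angle at their common corner, so crossing the triangle is not itself a failure of shearing. The bare-triangle count also says nothing once trees are attached, unless you first show that the existence question for $B^{-1}(2ae_p)$ involves only the core.

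(iii) Nonexceptional core. Constructing the loops $\ell_p$ and checking that their laminates shear $\mathbf R$ is the whole content, and you leave it open.

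The paper needs no surface surgery.

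In the nonexceptional case it counts. Without a conjugate pair the core is odd, and $\mathbf R^\circ$ is sparse exactly when the core does not bound an unpunctured disk triangle (Lemma~\ref{res:7.6}). So Proposition~\ref{res:4.9} identifies $A$, up to opposite, with the multiplicity-one Brauer graph algebra of an odd unicycle, and \cite[Theorem~8.4(b)]{AHIKM} gives $2n^2$ rays. Theorem~\ref{res:7.4} leaves room only for $2n(n-1)$ short charges and at most $2n$ long ones, so every long charge lies in $L$.

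In the exceptional case, Lemma~\ref{res:7.7} shows that $B^{-1}(2ae_p)$ is supported on the core together with the stem from $p$. A minimal presilting complex of this index has disjoint terms supported there, so by idempotent truncation membership in $L_{\mathbf R}$ can be tested over $\Lambda_{\mathbf I_p}$. This reduces to a single stem, not to core punctures, so stems of arbitrary length must still be treated. Lemmas~\ref{lem:conjugate-stem-splitting} and~\ref{lem:triangle-stem-splitting} show that generic differentials of that index split. Lemma~\ref{res:7.8} (a presilting differential has an open orbit) then excludes an indecomposable presilting complex of that index.

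To complete your plan you would need either this algebraic machinery, or a genuine proof of the shear conditions together with a substitute for the reduction to $\mathbf I_p$.
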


The nonexceptional case follows from the Brauer graph ray theorem once
sparsity is established. For an exceptional core, we reduce to one
attached path and use the local splittings in
Section~\ref{subsec:local-splitting}.

\begin{lemma}\label{res:7.6}
The core and the attached trees satisfy the following properties.
\begin{enumerate}
\item If $\mathbf R$ contains a conjugate pair, that pair is the core.
Every attached tree meets the core at the endpoint where the pair has
the same tagging.
\item If $\mathbf R$ has no conjugate pair, the core has odd length, and
$\mathbf R^\circ$ is sparse if and only if the core does not bound an
embedded unpunctured disk triangle. If the core bounds such a disk,
every attached tree lies outside the disk.
\end{enumerate}
\end{lemma}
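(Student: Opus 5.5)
The plan is to argue directly from compatibility of tagged arcs and the combinatorics of the signed endpoint graph $\Gamma_{\mathbf R}$, treating (1) and (2) separately.

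\emph{Part (1).} Let $\{\alpha,\beta\}\subseteq\mathbf R$ be a conjugate pair joining $p\neq q$, with common tagging $a$ at $p$ and opposite taggings at $q$. The two parallel edges form a cycle of length two in $\Gamma_{\mathbf R}$. Its sign product is $(-a\kappa_\alpha(1))(-a\kappa_\beta(1))=-1$, so the cycle is unbalanced. Since $\Gamma_{\mathbf R}$ is unicyclic, this cycle is the core. Uniqueness of the cycle also shows that there is at most one conjugate pair. Every other arc $\gamma$ is compatible with both $\alpha$ and $\beta$. Remark~\ref{rem:conjugate-endpoints} therefore shows that $\gamma$ has no endpoint at $q$, and that if $\gamma$ meets $p$, its tagging there is $a$. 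Hence every attached tree meets the core only at $p$.

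\emph{Part (2), parity and the disk.} Without conjugate pairs, the underlying arcs of $\mathbf R$ are pairwise distinct. The computation in the proof of Theorem~\ref{res:4.7} then shows that a cycle is unbalanced exactly when its length is odd, so the core has odd length. By the discussion after Definition~\ref{def:sparse-partial-triangulation}, $\mathbf R^\circ$ is sparse if and only if there is no unpunctured disk triangle with sides in $\mathbf R^\circ$. The boundary of such a triangle traces a closed walk of length three in $\Gamma_{\mathbf R}$. Because $\Gamma_{\mathbf R}$ is unicyclic, I would show this walk is the core. A closed walk of length three with no backtracking cannot live in a tree part. A degenerate walk, with a repeated edge or a loop together with an edge, would give a monogon or digon configuration. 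Such a configuration either bounds a once-punctured monogon, which is excluded for tagged arcs, or it is not the boundary of a disk triangle. Conversely, if the core bounds an embedded unpunctured disk triangle, sparsity fails. This proves the equivalence.

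\emph{Part (2), attached trees.} Suppose the core bounds an embedded unpunctured disk triangle $\Delta$ and some tree arc $\gamma$ enters the interior of $\Delta$. Since arcs of $\mathbf R$ have no interior crossings, $\gamma$ must stay inside $\Delta$. Its endpoints are punctures, and the open disk contains no punctures, so both ends of $\gamma$ lie at the corners of $\Delta$. An arc inside a triangle joining two corners is homotopic to a side, or it is null-homotopic. Either option contradicts distinctness of underlying arcs or the definition of an arc. Hence every attached tree lies outside the disk.

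\emph{Main obstacle.} The hardest step is the topological claim in the middle part: that any disk triangle with sides in $\mathbf R^\circ$ has boundary equal to the core, with no degenerate identifications of sides or vertices. I would handle this by checking the small cases directly, namely a loop followed by an edge, and a walk with a repeated side. In each case I would show it either forces a once-punctured monogon or produces a null-homotopic or repeated arc.
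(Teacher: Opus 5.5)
Your proposal is correct and follows essentially the same route as the paper. The conjugate pair is an unbalanced $2$-cycle, and Remark~\ref{rem:conjugate-endpoints} forces every attached tree to meet it at the common-tagging end. The odd length of the core comes from the sign product $(-1)^\ell$. Sparsity is decided by analysing the closed $3$-walk traced by a disk triangle: the loop-plus-edge case gives a once-punctured monogon, and unicyclicity forces the nondegenerate case onto the core.

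There are two small differences. First, the paper excludes a triangle using only one underlying edge by citing Demonet's distinctness of oriented sides. Your ``repeated side'' check would have to reproduce that step directly, for instance via orientation preservation and injectivity on the interior, or by torsion-freeness of $\pi_1$. Second, the paper keeps attached trees out of the disk by noting that returning to a corner would create a second cycle. You instead use homotopy to a side or null-homotopy, and either argument works.
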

\begin{proof}
(1) Let $\{\alpha,\beta\}$ be a conjugate pair, with common tagging at
$v$ and opposite taggings at $q$; see Figure~\ref{fig:4}(i).
The two edges form an unbalanced cycle. Since $\Gamma_{\mathbf R}$ has
only one cycle, the pair is the core. By Remark~\ref{rem:conjugate-endpoints},
no other arc ends at $q$. Hence every attached tree meets the core at $v$.

(2) The underlying arcs are pairwise distinct. Compatibility gives a
common tagging at each puncture. The sign product around a cycle of
length $\ell$ is therefore $(-1)^\ell$, as in
Theorem~\ref{res:4.7}. The unbalanced core has odd length.

By the sparse criterion in Section~4.3, it suffices to classify the
unpunctured disk triangles with sides in $\mathbf R^\circ$.
Let $P$ be such a triangle. The three oriented sides are distinct by
\cite[Proposition~2.10]{D}. Since one edge has only two orientations,
at least two underlying edges occur.

If there are two underlying edges, the boundary walk has the form
$\delta,\epsilon,\epsilon^{-1}$, where $\delta$ is a loop and
$\epsilon$ joins the basepoint of $\delta$ to another puncture $q$.
Two distinct loop edges would give two cycles. Identifying the two
copies of $\epsilon$ gives a disk bounded by $\delta$, with $q$ in
the interior; see Figure~\ref{fig:core-stem}(a).
Since the interior of $P$ is unpunctured, no other puncture lies in the
disk. Thus $\delta$ cuts out a once-punctured monogon, which is
excluded for the underlying arc of a tagged arc.

If there are three underlying edges, they form a closed three-step
walk in $\Gamma_{\mathbf R}$. A repeated vertex would give a loop
and a second cycle. Hence the three vertices are distinct and the
three edges form the core. No boundary sides of $P$ are identified,
so the core bounds an embedded unpunctured disk triangle.
Conversely, such a disk violates sparsity.
An attached tree cannot enter the disk: there is no interior puncture
at which to end, and a return to the boundary would create a second
cycle. Thus every attached tree lies outside the disk, as in
Figure~\ref{fig:core-stem}(b).
\end{proof}

\begin{figure}[htbp]
 \centering
 \includegraphics[width=.55\linewidth]{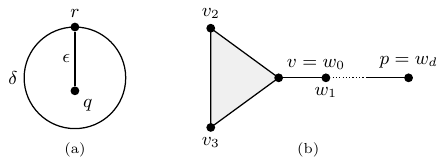}
 \caption{(a) The forbidden monogon. (b) A disk-triangle core with a stem.}
 \label{fig:core-stem}
\end{figure}

For $p\in\mathbf P_{\mathbf R}$, let $\mathbf I_p\subseteq\mathbf R$
consist of the core arcs and the arcs on the unique path from $p$ to
the core. We call this path the \emph{stem}; the stem is empty when
$p$ lies on the core. Figure~\ref{fig:core-stem}(b) shows a triangle
core with a nonempty stem.

For $T=(T^{-1}\xrightarrow{d}T^0)$, write
$[T]=[T^0]-[T^{-1}]\in\mathbb Z^n$ in the projective basis.
By \cite{AIR} and Lemma~\ref{res:7.1},
\[
 2ae_p\in L_{\mathbf R}
 \quad\Longleftrightarrow\quad
 \text{there is an indecomposable two-term presilting $T$ with }
 [T]=B^{-1}(2ae_p).
\]
We next reduce this existence question to $\Lambda_{\mathbf I_p}$.

\begin{lemma}\label{res:7.7}
Let $p\in\mathbf P_{\mathbf R}$, $a\in\{\pm1\}$ and
$z=B^{-1}(2ae_p)$.
\begin{enumerate}
\item The nonzero coordinates of $z$ are precisely those indexed by
$\mathbf I_p$. The absolute values are one on the core and two on the stem.
\item One has
\[
 2ae_p\in L_{\mathbf R}
 \quad\Longleftrightarrow\quad 2ae_p\in L_{\mathbf I_p}.
\]
\end{enumerate}
\end{lemma}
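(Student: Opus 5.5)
The plan is to prove (1) by solving $Bz=2ae_p$ row by row along the graph $\Gamma_{\mathbf R}$. For (2), I would show that two-term presilting complexes with class $z$ live entirely over the idempotent subalgebra attached to $\mathbf I_p$, and then identify that subalgebra with $\Lambda_{\mathbf I_p}$.

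For (1), read $Bz=2ae_p$ as one equation per puncture: the row of a vertex $x$ says that $\sum_e b_{xe}z_e$ equals $2a$ if $x=p$ and $0$ otherwise.
\begin{enumerate}
\item Prune the attached trees first. Take a leaf $x\neq p$ of $\Gamma_{\mathbf R}$ that is not on the core. Its row reads $\pm z_e=0$ for the unique incident edge $e$, so $z_e=0$. Delete $x$ and $e$ and repeat. This kills every coordinate outside $\mathbf I_p$, since a vertex of degree at least two that is not on $\mathbf I_p$ becomes a leaf after its outer branches are deleted.
\item Treat the stem next. If the stem is nonempty, $p$ is now a leaf, and its row gives $z_{e_1}=\pm2$ for the first stem edge $e_1$. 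At each interior stem vertex the row has exactly two terms, so the value $\pm2$ propagates to the next stem edge. At the attaching vertex $v$ of the core, this leaves the core equation $B_{\mathrm{core}}w=\pm2e_v$. If the stem is empty, the core equation is $B_{\mathrm{core}}w=2ae_p$ directly.
\item Solve the core equation as in the proof of Corollary~\ref{res:4.8}. For a loop core the column is $\pm2e_v$, so $w=\pm1$. For a cycle of length $\ell\geq2$, rescale the columns to $e_{p_i}-s_ie_{p_{i+1}}$. The homogeneous rows force $|w_i|$ to be constant around the cycle. The unbalanced condition $\prod_i s_i=-1$ makes the two contributions at $v$ add rather than cancel, so $2|w_i|=2$. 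Hence every core coordinate has absolute value one, and none vanishes, which gives (1).
\end{enumerate}

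For (2), put $e=\sum_{\gamma\in\mathbf I_p}e_\gamma$. Then $eAe\cong\operatorname{End}X(\mathbf I_p)=\Lambda_{\mathbf I_p}$, and $\mathbf I_p$ is again connected, unbalanced, unicyclic and has all endpoints at punctures, so $L_{\mathbf I_p}$ is defined. Its signed endpoint matrix is the submatrix $B_{\mathbf I_p}$ of $B$ on the rows $\mathbf P_{\mathbf I_p}$ and columns $\mathbf I_p$. By (1), the restriction $z|_{\mathbf I_p}$ solves $B_{\mathbf I_p}z|_{\mathbf I_p}=2ae_p$. The reason is that the rows of $B$ outside $\mathbf P_{\mathbf I_p}$ only see columns where $z$ vanishes.

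The functor $\operatorname{Hom}_A(eA,-)$ restricts to an equivalence $\operatorname{add}eA\simeq\operatorname{proj}eAe$. A morphism space $\operatorname{Hom}_{K^b}(T,T'[1])$ between two-term complexes is computed from the Hom-spaces between their terms modulo homotopy. Therefore this functor identifies two-term complexes over $A$ with terms in $\operatorname{add}eA$ with two-term complexes over $eAe$. The identification preserves presilting, indecomposability and classes, with $[T]$ corresponding to $[T]|_{\mathbf I_p}$. This proves ($\Leftarrow$) directly.

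For ($\Rightarrow$), take an indecomposable presilting $T$ over $A$ with $[T]=z$. If $T$ is a stalk $P[1]$, the claim is clear. Otherwise $T$ is the minimal projective presentation of an indecomposable $\tau$-rigid module. In that case $T^0$ and $T^{-1}$ have no common indecomposable summand, by \cite[Proposition~2.5]{AIR} or the sign-coherence of $g$-vectors of $\tau$-rigid modules. Hence every summand $P_j$ of $T^0$ or $T^{-1}$ satisfies $z_j\neq0$, so $j\in\mathbf I_p$. Thus $T$ has terms in $\operatorname{add}eA$, and the equivalence above transports it to $\Lambda_{\mathbf I_p}$.

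I expect the main obstacle to be this no-common-summand step. It has to be applied correctly to both the module case and the stalk case of indecomposable presilting complexes, and one must check that minimality of the presentation is compatible with passing to $eAe$. The sign bookkeeping in the core computation of (1) is the secondary point to verify carefully.
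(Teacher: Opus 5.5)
Your proposal is correct and follows the paper's proof essentially step for step. For (1) you use leaf-pruning followed by sign propagation along the stem and around the unbalanced core. For (2) you combine the no-common-summand property of minimal presilting complexes with the idempotent equivalence $K^b(\operatorname{proj}eAe)\simeq K^b(\operatorname{add}eA)$. The paper derives that property directly from $\operatorname{Hom}_A(T^{-1},T^0)=\operatorname{End}_A(T^0)d+d\operatorname{End}_A(T^{-1})\subseteq\operatorname{rad}$, which also covers your stalk case. The minimality concern you raise is harmless: that equivalence is a full embedding of homotopy categories, so presilting, indecomposability and classes transfer without re-minimizing.
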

\begin{proof}
(1) At a leaf $q\neq p$ with incident edge $\delta$, the equation
$Bz=2ae_p$ reads $\pm z_\delta=0$. Delete that leaf and edge, and
repeat. This removes all edges outside $\mathbf I_p$, with zero
coordinates.
If the stem is nonempty, the equation at $p$ gives
$\pm z_\delta=2a$. At every internal stem vertex, the two incident
coordinates satisfy $\pm z_\delta\pm z_\epsilon=0$.
Thus all stem coordinates have absolute value two.

On the core, the remaining equations have one nonzero right side
$\pm2$, at the attachment vertex, or at $p$ when the stem is empty.
The other equations express each core coordinate as $\pm t$ for a
fixed coordinate $t$. Since the core is unbalanced, the final equation
is $\pm2t=\pm2$. Hence every core coordinate has absolute value one.
For a loop, the single equation is $\pm2z_\delta=\pm2$; the same
calculation also covers a two-edge core.

(2) Let $T=(T^{-1}\xrightarrow dT^0)$ be a minimal two-term presilting
complex with $[T]=z$. Minimality means that $d$ belongs to the radical
of the category of projectives. The vanishing
$\operatorname{Hom}_{K^b}(T,T[1])=0$ gives
\[
 \operatorname{Hom}_A(T^{-1},T^0)
 =\operatorname{End}_A(T^0)d+d\operatorname{End}_A(T^{-1})
 \subseteq\operatorname{rad}(T^{-1},T^0);
\]
see \cite[Proposition~2.5]{AIR}.
If an indecomposable projective occurred in both terms, the map given
by the identity on that summand and zero on the others would not lie
in the radical. Therefore the two terms have no common indecomposable
projective summand. By (1), both terms belong to
$\operatorname{add}eA$, where
$e=\sum_{\gamma_i\in\mathbf I_p}e_i$.

The tensor functor induces an equivalence
\[
 -\otimes_{eAe}eA:
 K^b(\operatorname{proj}eAe)\xrightarrow{\ \sim\ }
 K^b(\operatorname{add}eA)\subseteq K^b(\operatorname{proj}A);
\]
see \cite[Theorem~4.18]{AHIKM}.
Here $eAe\cong\operatorname{End}_{\mathcal C}X(\mathbf I_p)
=\Lambda_{\mathbf I_p}$.
This equivalence preserves two-term presilting complexes and
indecomposability, and extends index coordinates by zero outside
$\mathbf I_p$. The restricted vector satisfies
\[
 B_{\mathbf I_p}(z|_{\mathbf I_p})=2ae_p.
\]
Thus an indecomposable two-term presilting complex of index $z$ exists
over $A$ if and only if one of index $z|_{\mathbf I_p}$ exists over
$\Lambda_{\mathbf I_p}$. By the characterization of long-charge
membership above, this proves (2).
\end{proof}

\subsection{Local splitting for exceptional cores}
\label{subsec:local-splitting}
Retain the assumptions of Section~\ref{subsec:surface-charge-criterion}.
By Lemma~\ref{res:7.7}, it suffices to consider a core with one stem.
We exclude indecomposable presilting complexes with the index of a long charge by
constructing a nonempty open set of decomposable differentials.

Fix the indecomposable projectives $P_1,\ldots,P_n$. For
$z=(z_i)\in\mathbb Z^n$, put
\[
 P^-(z)=\bigoplus_{z_i<0}P_i^{\oplus(-z_i)},
 \qquad
 P^+(z)=\bigoplus_{z_i>0}P_i^{\oplus z_i},
 \qquad
 E_z=\operatorname{Hom}_A(P^-(z),P^+(z)).
\]
Every $d\in E_z$ defines a minimal complex
$T_d=(P^-(z)\xrightarrow dP^+(z))$ of index $z$.
Conversely, the disjointness argument in Lemma~\ref{res:7.7}(2) shows
that every minimal two-term presilting complex of index $z$ has these
terms. We use the Zariski topology on the affine space $E_z$.
In the calculations below, the open conditions are the nonvanishing
of specified scalars and determinants.

\begin{lemma}\label{res:7.8}
Let $G_z=\operatorname{Aut}_A(P^+(z))\times
\operatorname{Aut}_A(P^-(z))$ act on $E_z$ by
$(g^+,g^-)\cdot d=g^+d(g^-)^{-1}$.
\begin{enumerate}
\item If $T_d$ is presilting, the orbit $G_z\cdot d$ is open in $E_z$.
\item If every differential in a nonempty open subset $U\subseteq E_z$
defines a decomposable complex, no indecomposable two-term presilting
complex has index $z$.
\end{enumerate}
\end{lemma}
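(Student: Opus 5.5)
Part (1) is the standard openness argument for presilting complexes, and I would run it through the tangent space of the group action at $d$. The Lie algebra of $G_z$ is $\operatorname{End}_A(P^+(z))\times\operatorname{End}_A(P^-(z))$. The differential of the orbit map $G_z\to E_z$, $g\mapsto g\cdot d$, at the identity is
\[
 (x^+,x^-)\longmapsto x^+d-dx^-.
\]
Presilting gives $\operatorname{Hom}_{K^b}(T_d,T_d[1])=0$. A morphism $T_d\to T_d[1]$ is a map $h\in\operatorname{Hom}_A(P^-(z),P^+(z))$, and it is null-homotopic exactly when $h=x^+d+dx^-$ for some endomorphisms $x^{\pm}$; this is \cite[Proposition~2.5]{AIR}, already quoted in Lemma~\ref{res:7.7}(2). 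Up to the sign of $x^-$, the vanishing therefore says that this differential is surjective onto $E_z$.

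The orbit map is a morphism of irreducible varieties. Here $G_z$ is an open subset of an affine space, since automorphisms are the elements with nonzero determinant, and the ground field $k$ is algebraically closed. A morphism whose differential is surjective at one point is dominant. Its image, the orbit, is then constructible and dense, so it contains a nonempty open subset of $E_z$. Since the orbit is homogeneous under $G_z$, it is open. If one prefers to avoid the differential criterion, the dimension count gives the same conclusion. The stabilizer has Lie algebra equal to the kernel of the differential, so $\dim G_z\cdot d=\dim G_z-\dim\operatorname{Stab}\geq\dim E_z$, and an orbit of full dimension in an irreducible variety is open.

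Part (2) then follows by contradiction. Suppose some indecomposable two-term presilting complex $T$ has index $z$. First reduce to the minimal case: split off any contractible summands $(P\xrightarrow{1}P)$, which leaves an indecomposable minimal presilting complex with the same index. By the disjointness argument of Lemma~\ref{res:7.7}(2), this complex has the terms $P^-(z)$ and $P^+(z)$, so it equals $T_d$ for some $d\in E_z$. By part (1), the orbit $G_z\cdot d$ is a nonempty open subset of the irreducible space $E_z$, so it meets $U$. A point $g\cdot d$ of this orbit gives a complex $T_{g\cdot d}\cong T_d$, which is indecomposable. This contradicts the assumption that every differential in $U$ defines a decomposable complex.

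The main point needing care is the sign convention that identifies null-homotopic maps with the image of the tangent map. Both signs give the same subspace, since one can replace $x^-$ by $-x^-$. Beyond that, the argument only needs the openness of the orbit in an irreducible affine space.
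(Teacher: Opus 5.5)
Your proof is correct and follows the paper's argument. Both identify the cokernel of $(x^+,x^-)\mapsto x^+d-dx^-$ with $\operatorname{Hom}_{K^b}(T_d,T_d[1])$ to get an orbit of full dimension, then intersect the open orbit with $U$ for part (2). The paper reaches full dimension through the stabilizer $G_z\cap\ker\Phi_d$ and local closedness; you use the tangent map, dominance and homogeneity. Your dominance step holds in any characteristic because $G_z$ and $E_z$ are smooth, so a surjective differential makes the orbit map smooth and hence open.
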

\begin{proof}
(1) Write $P^\pm=P^\pm(z)$ and define the linear map
\[
 \begin{aligned}
 \Phi_d:\operatorname{End}_A(P^+)\oplus\operatorname{End}_A(P^-)
 &\longrightarrow E_z,\\
 (f^+,f^-)&\longmapsto f^+d-df^-.
 \end{aligned}
\]
The stabilizer of $d$ is $G_z\cap\ker\Phi_d$, an open subset of
$\ker\Phi_d$. The orbit dimension formula therefore gives
$\dim(G_z\cdot d)=\operatorname{rank}\Phi_d$.
The description of chain homotopies gives
\[
 \operatorname{coker}\Phi_d
 =\frac{\operatorname{Hom}_A(P^-,P^+)}
 {\operatorname{End}_A(P^+)d+d\operatorname{End}_A(P^-)}
 \cong\operatorname{Hom}_{K^b}(T_d,T_d[1]);
\]
see \cite[Section~2 and Definition~4.6]{DF}.
Since $T_d$ is presilting, the cokernel vanishes. Hence the orbit has
dimension $\dim E_z$. An algebraic-group orbit is locally closed,
so this orbit is open in the irreducible affine space $E_z$.

(2) Let $T$ be an indecomposable two-term presilting complex of index
$z$, represented minimally. Then $T=T_d$ for some $d\in E_z$.
By (1), the orbit $\mathcal O=G_z\cdot d$ is open.
The two nonempty open subsets $\mathcal O$ and $U$ of $E_z$ intersect.
Differentials in one orbit give isomorphic complexes. Thus $T$ is
isomorphic to a decomposable complex, a contradiction.
\end{proof}

In the local presentations below, paths are read from left to right and
$P_i=e_iA$ is a right projective. An arrow $a:i\longrightarrow j$ defines
\[
 f_a:P_j\longrightarrow P_i,\quad x\longmapsto ax,
 \qquad f_{uv}=f_u\circ f_v.
\]
Products of projective maps are composed from right to left.

\begin{lemma}\label{lem:conjugate-stem-splitting}
Suppose that $\mathbf R$ consists of a conjugate pair $a,b$, with columns
$e_v+e_q,e_v-e_q$, and a stem
$v=v_0,v_1,\ldots,v_d$. Let $P_a,P_b$ denote the core projectives and let
$Q_i$ be the projective indexed by $v_{i-1}v_i$, with
$B[Q_i]=e_{v_{i-1}}+e_{v_i}$. Set
$S_d=\sum_{i=1}^d(-1)^{d-i}[Q_i]$. Then
\begin{equation}\label{eq:7.1}
 B^{-1}(2e_{v_d})=z_d=(-1)^d([P_a]+[P_b])+2S_d.
\end{equation}
On a nonempty open subset of
$\operatorname{Hom}_A(P^-(z_d),P^+(z_d))$, the complexes split into two
nonzero minimal complexes of classes
\[
 (-1)^d[P_a]+S_d,\qquad (-1)^d[P_b]+S_d.
\]
For $-z_d$, the same assertion holds with both classes negated.
For $d=0$, the indices $\pm([P_a]-[P_b])$ also give only decomposable
minimal complexes.
\end{lemma}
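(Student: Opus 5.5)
The argument has two parts: a linear-algebra check of \eqref{eq:7.1}, and an explicit generic splitting of the differential. For \eqref{eq:7.1}, apply $B$ to the proposed vector and use $B[P_a]+B[P_b]=2e_v$. For the stem, the alternating sum telescopes:
\[
 B S_d=\sum_{i=1}^d(-1)^{d-i}(e_{v_{i-1}}+e_{v_i})
 =e_{v_d}+(-1)^{d-1}e_{v_0}.
\]
Hence
\[
 B z_d=(-1)^d\,2e_v+2e_{v_d}+2(-1)^{d-1}e_v=2e_{v_d}.
\]
Invertibility of $B$ (Theorem~\ref{res:7.4}) gives uniqueness. The two displayed classes sum to $z_d$. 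By the same telescoping, their images under $B$ are $e_{v_d}\pm e_q$, which are short charges. It then suffices to show that a generic $d\in E_{z_d}$ is a direct sum of differentials of these two classes.

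For the splitting, I would write down the local quiver with relations of $\Lambda_{\mathbf R}$ near $v$, using Proposition~\ref{res:4.1} to control dimensions. By Proposition~\ref{res:4.1}, $\operatorname{Hom}(P_a,P_b)=0$, and every arc sharing an endpoint has a one-dimensional Hom space with Cartan entry $1$ along the stem. The terms of $T_d$ alternate: the sign of $(-1)^d$ decides whether $P_a,P_b$ sit in degree $0$ or $-1$, and the $Q_i$ appear with multiplicity two in alternating degrees. A differential $d$ is then a block matrix. Its blocks are $2\times2$ matrices of scalars times the basis morphisms $Q_i\to Q_{i\pm1}$, together with the components between $Q_1$ and $P_a,P_b$. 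The core components are maps $Q_1^{2}\to P_a\oplus P_b$ (or the reverse). Because $P_a,P_b$ are orthogonal, these components form a $2\times2$ diagonal-type datum: one row to $P_a$ and one to $P_b$, each a scalar multiple of the unique basis morphism, ignoring radical terms, which do not affect the generic splitting argument.

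On the open set where the relevant scalars and the $2\times2$ determinants of the stem blocks are nonzero, I would use $\operatorname{Aut}(Q_i^{2})$ row and column operations, working inductively from the core outward along the stem. The first step puts the core components in the form $\mathrm{diag}$, with $P_a$ receiving only the first copy of $Q_1$ and $P_b$ only the second. Each subsequent stem block is then diagonalized compatibly, propagating the splitting to $v_d$. Genericity ensures that each block is invertible over the scalars, so the diagonalization is possible. The result is $T_d\cong T'\oplus T''$ with the stated classes, and both summands are nonzero and minimal.

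The main obstacle is the radical correction terms. When longer paths exist, for example $ab$-type loops at $v$ or compositions through the puncture cycle, a block can pick up radical components that a naive diagonalization does not clear. I would handle these by a filtration argument: first diagonalize the top (arrow) parts, then remove the radical parts with unipotent automorphisms $1+\operatorname{rad}$. This is possible because those parts factor through already-split components. The case $-z_d$ follows by applying the same argument to the shifted or dual situation, which negates all classes. For $d=0$ and index $\pm([P_a]-[P_b])$, the space $E_z=\operatorname{Hom}(P_b,P_a)$ or its reverse is zero by the $2I_2$ Cartan block, so $T_d=P_a\oplus P_b[1]$ (or the reverse) is decomposable for every $d$.
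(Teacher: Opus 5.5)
Your proposal is correct and is essentially the paper's proof. It has the same ingredients: the telescoping check of \eqref{eq:7.1}; generic invertibility of the $2\times2$ scalar coefficient matrices at the core and along the stem, followed by successive $\mathrm{GL}_2(k)$ base changes in the $Q_i^{\oplus2}$ outward from the core (for $-z_d$, the same with the reversed Hom-spaces, which are equally one-dimensional, rather than any shift); and $\operatorname{Hom}(P_a,P_b)=\operatorname{Hom}(P_b,P_a)=0$ for $d=0$. The ``radical correction terms'' you flag as the main obstacle do not occur. Since $P^-(z_d)$ and $P^+(z_d)$ have disjoint projective supports, no endomorphism ring (e.g.\ loops at $v$) enters $E_{z_d}$, and by Proposition~\ref{res:4.1} every Hom-space that does enter is one-dimensional or zero. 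So each entry is exactly a scalar multiple of a basis morphism, and your filtration step, which as written is unjustified, can simply be dropped.
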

\begin{proof}
Equation~\eqref{eq:7.1} follows by applying $B$: the internal stem
coordinates cancel, leaving $2e_{v_d}$.
For $d\geq1$, Proposition~\ref{res:4.1} gives one-dimensional Hom-spaces
in both directions along
\[
 P_a\longleftrightarrow Q_1\longleftrightarrow Q_2
 \longleftrightarrow\cdots\longleftrightarrow Q_d,
 \qquad P_b\longleftrightarrow Q_1,
\]
and all other off-diagonal Hom-spaces vanish.
The core part of a differential of index $z_d$ is
\[
 \begin{cases}
  P_a\oplus P_b\longrightarrow Q_1^{\oplus2},&d\text{ odd},\\
  Q_1^{\oplus2}\longrightarrow P_a\oplus P_b,&d\text{ even}.
 \end{cases}
\]
Choose a basis in each one-dimensional Hom-space. The coefficients at
the core then form a $2\times2$ matrix. The coefficients
between $Q_i^{\oplus2}$ and $Q_{i+1}^{\oplus2}$ also form $2\times2$
matrices. Since all entries are independent parameters, the
invertibility of these matrices defines a nonempty open subset.

Change the basis in $Q_1^{\oplus2}$ to make the core matrix the identity.
Successively change the basis in each $Q_i^{\oplus2}$ to make the
remaining stem matrices identities. This gives a direct sum: one block
uses $P_a$ and one copy of every $Q_i$, and the other uses $P_b$ and the
remaining copies. The blocks are nonzero and have the asserted classes.
For $-z_d$, source and target are exchanged, and the same basis changes
apply to the reversed Hom-spaces.

For $d=0$, equation~\eqref{eq:7.1} gives
$z_0=[P_a]+[P_b]$, whose complexes are direct sums of stalk complexes.
The other two indices are $\pm([P_a]-[P_b])$.
Since $\operatorname{Hom}(P_a,P_b)=\operatorname{Hom}(P_b,P_a)=0$,
the differential prescribed by either index is zero. These complexes
also split as two stalk complexes.
\end{proof}

\begin{lemma}\label{lem:triangle-stem-splitting}
Suppose that $\mathbf R$ consists of an unpunctured disk triangle and a
stem $w_0=v_1,w_1,\ldots,w_d$. Label the core projectives so that
\[
 B[P_1]=e_{v_1}+e_{v_2},\quad
 B[P_2]=e_{v_2}+e_{v_3},\quad
 B[P_3]=e_{v_3}+e_{v_1},
\]
and let $Q_i$ be the projective indexed by $w_{i-1}w_i$, with
$B[Q_i]=e_{w_{i-1}}+e_{w_i}$. Put
$S_d=\sum_{i=1}^d(-1)^{d-i}[Q_i]$ and
$z_0=[P_1]-[P_2]+[P_3]$. Then
\begin{equation}\label{eq:7.3}
 B^{-1}(2e_{w_d})=z_d=(-1)^dz_0+2S_d.
\end{equation}
On a nonempty open subset of
$\operatorname{Hom}_A(P^-(z_d),P^+(z_d))$, the complexes split into two
nonzero minimal complexes with the following classes:
\[
 \begin{array}{c|cc}
 d&\text{first class}&\text{second class}\\\hline
 \text{odd}&-[P_1]+S_d&[P_2]-[P_3]+S_d\\
 \text{even}&[P_1]-[P_2]+S_d&[P_3]+S_d.
 \end{array}
\]
For $-z_d$, the same assertion holds with $S_d$ replaced by $-S_d$ and
the two parity rows interchanged.
\end{lemma}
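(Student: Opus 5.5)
I will follow the pattern of Lemma~\ref{lem:conjugate-stem-splitting}, replacing the conjugate-pair core by the triangle core.

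\textbf{Step 1: the index formula.} To verify \eqref{eq:7.3}, apply $B$ to the right-hand side. One has
\[
 Bz_0=(e_{v_1}+e_{v_2})-(e_{v_2}+e_{v_3})+(e_{v_3}+e_{v_1})=2e_{v_1}=2e_{w_0}.
\]
Along the stem,
\[
 2BS_d=2\sum_{i=1}^d(-1)^{d-i}\bigl(e_{w_{i-1}}+e_{w_i}\bigr).
\]
This sum telescopes to $2e_{w_d}+(-1)^{d-1}2e_{w_0}$, so
\[
 B\bigl((-1)^dz_0+2S_d\bigr)=2e_{w_d}.
\]
Invertibility of $B$ from Theorem~\ref{res:7.4} then gives \eqref{eq:7.3}.

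\textbf{Step 2: the Hom-spaces.} By Proposition~\ref{res:4.1}, $C=B^tB$. The nonzero off-diagonal entries are all equal to one. They occur between $P_1,P_2,P_3$ pairwise, between $P_1,Q_1$ and $P_3,Q_1$ (both contain $v_1$), and between $Q_i,Q_{i+1}$. All other off-diagonal entries vanish.

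\textbf{Step 3: the terms of the complex and the core block.} The signs of $z_d$ determine the terms. For $d$ even, $z_d$ contributes $P_1\oplus P_3$ in degree $0$, $P_2$ in degree $-1$, and $Q_i^{\oplus2}$ with alternating degrees. For $d$ odd, everything is reversed.

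I first treat $d$ even. The core part of a differential is a map $P_2\to P_1\oplus P_3$, given by two scalars $(\lambda_1,\lambda_3)$ on fixed basis maps. Next come the maps between $Q_1^{\oplus2}$ and $P_1\oplus P_3$; note that $Q_1$ sits in degree $-1$ when $d$ is even. These maps form a $2\times2$ matrix $M_1$, followed by stem matrices $M_i$ between $Q_{i-1}^{\oplus2}$ and $Q_i^{\oplus2}$.

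The open set is defined by $\lambda_1\lambda_3\neq0$, $\det M_i\neq0$, and one additional genericity condition. To decouple, rescale the basis maps so that the core map becomes $(1,1)^t$. Then use the base change of $P_1\oplus P_3$ fixing that vector, namely automorphisms whose off-diagonal entries are scalar multiples of basis maps $P_3\to P_1$ or $P_1\to P_3$.

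\textbf{Step 4: the obstacle.} The basis maps $P_3\to P_1$ compose with $P_2\to P_3$ to give elements of $\operatorname{Hom}(P_2,P_1)$. In the disk-triangle Jacobian relations, this composite should be a nonzero multiple of the basis map, or else zero; I need to decide which. This is the main obstacle, and I would settle it from the local presentation via Proposition~\ref{res:4.9}, where the triangle gives composable arrows with relations. If the composite vanishes, I would instead perform the base change in $Q_1^{\oplus2}$.

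\textbf{Step 5: the splitting.} After normalizing, I aim to reduce $M_1$, by a base change of $Q_1^{\oplus2}$ and the residual base change of the core, to a form in which one copy of $Q_1$ maps only to $P_1$ and the core maps to $P_3$. Then the summand $\{P_1,P_2,$ one $Q_1\}$ separates from $\{P_3,$ one $Q_1\}$. The propagation down the stem by identity matrices is exactly as in Lemma~\ref{lem:conjugate-stem-splitting}. This gives the classes $[P_1]-[P_2]+S_d$ and $[P_3]+S_d$ for $d$ even. I will check that $S_d$ collects one copy of each $Q_i$ with sign $(-1)^{d-i}$, matching the degree assignments.

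\textbf{Step 6: the odd case and $-z_d$.} For $d$ odd, the direction reverses. The core becomes $P_1\oplus P_3\to P_2$, and $Q_1$ now pairs with the degree-$(-1)$ side. The natural split attaches $Q_1$ to $P_1$ alone, which gives $-[P_1]+S_d$ and $[P_2]-[P_3]+S_d$. For $-z_d$, source and target are exchanged, the parity pattern of the core flips relative to the stem, and the same base changes apply to the reversed Hom-spaces. This gives the interchanged rows with $-S_d$.

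\textbf{Step 7: the zero-parameter locus.} Finally, I check that each block is nonzero and minimal, with radical differential. This holds because all maps used are basis maps between distinct indecomposables.
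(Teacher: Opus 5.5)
\textbf{There is a genuine gap in the core block, which is the whole content of the lemma.} Your Steps 1--2 are correct: the telescoping of $BS_d$ and the list of nonzero off-diagonal Cartan entries are right. The decompositions you aim for in Steps 5--6 are also the right ones. What is missing is any determination of how the one-dimensional Hom-spaces compose, and the manoeuvre you propose cannot produce the splitting. Suppose the core differential is normalized to $(s,us)^t\colon P_2\to P_1\oplus P_3$ and you only use automorphisms fixing it. Then $P_2$ keeps nonzero components into both $P_1$ and $P_3$, so $\{P_1,P_2,Q_1\}$ cannot split off from $\{P_3,Q_1\}$. Your fallback, a base change of $Q_1^{\oplus2}$, cannot remove that component either, because $\operatorname{Hom}(P_2,Q_1)=\operatorname{Hom}(Q_1,P_2)=0$. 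The composite you single out in Step 4 is in fact zero. In the local presentation $ab=d_0e$, $bcd_0=eca=cabc=0$, the map $P_2\to P_3\to P_1$ is given by the path $abca=d_0eca=0$.

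What you need is the explicit local presentation. It comes from Proposition~\ref{res:4.9} together with Demonet's presentation including the polygon relations, since the triangle core is not sparse and the Brauer graph presentation does not apply. From it you get the composition pattern. Set $s=f_a$, $u=f_c$, $t=f_b$, $h=f_e$, $k=f_{d_0}$. Then $\operatorname{Hom}(P_2,P_3)$ is spanned by $us$ and $\operatorname{Hom}(Q_1,P_3)$ by $uk$, so both factor through $u\colon P_1\to P_3$. Dually, $\operatorname{Hom}(P_1,P_2)$ is spanned by $tu$ and $\operatorname{Hom}(P_1,Q_1)$ by $hu$.

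For even $d$, the core part is $\begin{pmatrix}\alpha s&xk\\\beta us&yuk\end{pmatrix}$. Apply the target automorphism $\begin{pmatrix}1&0\\-(\beta/\alpha)u&1\end{pmatrix}$ of $P_1\oplus P_3$. This \emph{kills} the $P_2\to P_3$ entry rather than fixing the core vector, and changes the $Q_1$-coefficients only by $y\mapsto y-(\beta/\alpha)x$. A scalar base change of $Q_1^{\oplus2}$ then gives $P_2\oplus Q_1\xrightarrow{(\alpha s,k)}P_1$ and $Q_1\xrightarrow{uk}P_3$. For odd $d$, the source automorphism with entry $-(\alpha/\beta)u$ kills the $P_1\to P_2$ entry. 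It gives $P_1\xrightarrow{hu}Q_1$ and $P_3\xrightarrow{(\beta t,h)^t}P_2\oplus Q_1$.

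These operations act unimodularly on the $Q_1$-coefficient matrix. So the open set is just $\alpha\neq0$ (resp.\ $\beta\neq0$) together with invertibility of all $2\times2$ coefficient matrices, and your extra genericity condition is unnecessary. Without these composition facts, however, your Step~5 remains an unproved assertion.
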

\begin{proof}
Equation~\eqref{eq:7.3} follows by applying $B$, as in
Lemma~\ref{lem:conjugate-stem-splitting}.

\emph{(1) Projective maps.}
For one stem edge, Proposition~\ref{res:4.9} and
\cite[Theorem~4.3]{D} give the presentation
\[
 \begin{gathered}
 a:1\longrightarrow2,\quad b:2\longrightarrow3,\quad
 c:3\longrightarrow1,\quad d_0:1\longrightarrow4,\quad
 e:4\longrightarrow3,\\
 ab=d_0e,\qquad bcd_0=eca=cabc=0.
 \end{gathered}
\]
Vertices $1,2,3,4$ index $P_1,P_2,P_3,Q_1$, respectively.
To obtain this presentation from the angle-quiver relations, retain the
reverse angles $a^\vee:2\longrightarrow1$,
$b^\vee:3\longrightarrow2$ and the terminal angle loop
$\ell:4\longrightarrow4$. The polygon and endpoint relations give
\[
 a^\vee=bc,\qquad b^\vee=ca,\qquad \ell=ecd_0,
 \qquad ab=d_0e.
\]
Eliminating $a^\vee,b^\vee,\ell$ turns the switch relations into
\[
 a^\vee d_0=0\ \Longleftrightarrow\ bcd_0=0,\quad
 eb^\vee=0\ \Longleftrightarrow\ eca=0,\quad
 b^\vee a^\vee=0\ \Longleftrightarrow\ cabc=0.
\]
The remaining endpoint-cycle and termination relations follow; for
example,
\[
 abc=d_0ec,\quad cab=cd_0e,\quad
 d_0\ell=a(bcd_0)=0,\quad \ell e=(eca)b=0.
\]
For $d\geq2$, add
$f:4\longrightarrow5$ and $g:5\longrightarrow4$, with
$fg=ecd_0$ and $d_0f=ge=0$.
At the remaining stem vertices, the relations are the Brauer graph
relations of Section~4.3.
For $d=0$, the presentation is $k\overrightarrow C_3/J^4$, where $J$
is the arrow ideal. The nonzero puncture coefficients are normalized
as in Proposition~\ref{res:4.9}. If the opposite presentation occurs,
exchanging $P_1,P_3$ and $v_2,v_3$ gives the displayed presentation.

Write $(s,u,t,h,k)=(f_a,f_c,f_b,f_e,f_{d_0})$. Thus
\[
 P_2\overset s\longrightarrow P_1
 \overset u\longrightarrow P_3\overset t\longrightarrow P_2,
 \qquad P_3\overset h\longrightarrow Q_1
 \overset k\longrightarrow P_1,
 \qquad st=kh.
\]
The off-diagonal Hom-spaces are one-dimensional or zero. Bases are
listed below, with sources along the rows and targets along the columns:
\begin{equation}\label{eq:7.2}
 \begin{array}{c|cccc}
       &P_1&P_2&P_3&Q_1\\\hline
 P_1   & *&tu&u&hu\\
 P_2   & s&*&us&0\\
 P_3   & st=kh&t&*&h\\
 Q_1   & k&0&uk&*
 \end{array}
\end{equation}
For $d=0$, omit $Q_1,h,k$. Beyond $Q_1$, the only nonzero off-diagonal
Hom-spaces join successive $Q_i$ and are one-dimensional in each
direction. The projective supports of $P^-(z_d)$ and $P^+(z_d)$ are
disjoint, so no diagonal Hom-space marked $*$ enters a differential.

\emph{(2) No stem.}
For $z_0$ and $-z_0$, the differentials have the respective forms
\[
 P_2\xrightarrow{(\alpha s,\,\beta us)^t}P_1\oplus P_3,
 \qquad
 P_1\oplus P_3\xrightarrow{(\alpha tu,\,\beta t)}P_2.
\]
For the first map, the target operation subtracting
$(\beta/\alpha)u$ times the $P_1$-component from the $P_3$-component is
invertible on $\alpha\neq0$. The resulting complex is
$(P_2\overset{\alpha s}\longrightarrow P_1)\oplus P_3$.
For the second map, on $\beta\neq0$, subtract
$(\alpha/\beta)u$ times the $P_1$-component from the $P_3$-component in
the source. The result is
$P_1[1]\oplus(P_3\overset{\beta t}\longrightarrow P_2)$.
These decompositions give the claimed classes for $d=0$.

\emph{(3) Nonempty stem.}
For odd $d$, the core part of a differential of index $z_d$ is
\begin{equation}\label{eq:7.4}
 P_1\oplus P_3\longrightarrow P_2\oplus Q_1^{\oplus2},
 \qquad
 \begin{pmatrix}\alpha tu&\beta t\\xhu&yh\end{pmatrix},
 \qquad x,y\in k^2.
\end{equation}
On $\beta\neq0$, multiply on the right by the displayed source
automorphism:
\[
 \begin{pmatrix}\alpha tu&\beta t\\xhu&yh\end{pmatrix}
 \begin{pmatrix}1&0\\-(\alpha/\beta)u&1\end{pmatrix}
 =\begin{pmatrix}0&\beta t\\x'hu&yh\end{pmatrix},
 \qquad x'=x-(\alpha/\beta)y.
\]
When $x',y$ are independent, change the basis of $Q_1^{\oplus2}$ to
send them to the standard basis. The matrix becomes
\[
 \begin{pmatrix}
 0&\beta t\\hu&0\\0&h
 \end{pmatrix}.
\]
The two columns now lie in complementary target summands. They give
$P_1\overset{hu}\longrightarrow Q_1$ and
$P_3\xrightarrow{(\beta t,h)^t}P_2\oplus Q_1$.

For positive even $d$, the core part is
\begin{equation}\label{eq:7.5}
 P_2\oplus Q_1^{\oplus2}\longrightarrow P_1\oplus P_3,
 \qquad
 \begin{pmatrix}\alpha s&xk\\\beta us&yuk\end{pmatrix},
 \qquad x,y\in(k^2)^*.
\end{equation}
On $\alpha\neq0$, multiply on the left by the target automorphism
\[
 \begin{pmatrix}1&0\\-(\beta/\alpha)u&1\end{pmatrix}
 \begin{pmatrix}\alpha s&xk\\\beta us&yuk\end{pmatrix}
 =\begin{pmatrix}\alpha s&xk\\0&y'uk\end{pmatrix},
 \qquad y'=y-(\beta/\alpha)x.
\]
When $x,y'$ are independent covectors, a basis change in $Q_1^{\oplus2}$
gives
\[
 \begin{pmatrix}\alpha s&k&0\\0&0&uk\end{pmatrix}.
\]
The two rows use complementary source summands. They give
$P_2\oplus Q_1\xrightarrow{(\alpha s,k)}P_1$ and
$Q_1\overset{uk}\longrightarrow P_3$.

For either parity, impose invertibility on every subsequent
$2\times2$ coefficient matrix along the stem. Starting with the chosen
basis of $Q_1^{\oplus2}$, change bases successively along the stem to
make these matrices identities. Each core block then extends using one
copy of every later $Q_i$. This gives two nonzero minimal direct
summands of the full complex, with the classes in the statement.
All imposed conditions hold on a nonempty open set: take the relevant
scalar to be one, the adjusted vectors or covectors to be the standard
basis, and the later coefficient matrices to be identities.
For $-z_d$, odd $d$ uses~\eqref{eq:7.5} and even $d$
uses~\eqref{eq:7.4}. The same calculations give the asserted classes.
\end{proof}

\begin{proposition}\label{res:7.9}
If $\mathbf R$ consists of an exceptional core and one attached stem,
possibly empty, then $L=\varnothing$.
\end{proposition}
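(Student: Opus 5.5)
We must show that no long charge $2ae_p$ lies in $L$ when $\mathbf R$ consists of an exceptional core with one stem. By the characterization stated just before Lemma~\ref{res:7.7}, $2ae_p\in L_{\mathbf R}$ holds exactly when some indecomposable two-term presilting complex has index $z=B^{-1}(2ae_p)$. By Lemma~\ref{res:7.8}(2), it therefore suffices to exhibit a nonempty Zariski open subset of $E_z$ on which every differential gives a decomposable complex. The local splitting lemmas provide exactly such open sets. The remaining work is to check that every pair $(p,a)$ falls under one of them.

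The plan is to fix $p$ and $a$ and to note that $\mathbf I_p\subseteq\mathbf R$. By Lemma~\ref{res:7.7}(2), we may replace $\mathbf R$ by $\mathbf I_p$, which consists of the core together with the stem from $p$ to the core. This partial triangulation is still an exceptional core with one stem. After relabeling, $p$ is the far end $v_d$ (respectively $w_d$) of the stem, and $d=0$ when $p$ lies on the core.

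\emph{Conjugate core.} With columns $e_v\pm e_q$, Lemma~\ref{lem:conjugate-stem-splitting} treats $z=\pm z_d=B^{-1}(\pm2e_{v_d})$ for every $d\geq0$. This covers every puncture of the stem and the attachment vertex $v$. The remaining puncture is $q$, where $B^{-1}(\pm2e_q)=\pm([P_a]-[P_b])$; the final clause of that lemma handles these indices for $d=0$. The stem never passes through $q$, by Remark~\ref{rem:conjugate-endpoints} and Lemma~\ref{res:7.6}(1). The columns of $B$ may carry tagging signs, and the sign conventions of the lemma can be reached from them by rescaling rows of $B$ by the weak signature. That rescaling only changes which of $\pm z_d$ occurs, so both signs are covered.

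\emph{Triangle core.} Lemma~\ref{lem:triangle-stem-splitting} treats $\pm z_d$ for stems starting at $v_1$, including the case $d=0$. This covers every stem puncture and the core vertex $v_1$. For $p=v_2$ or $v_3$, note that $d=0$ means the stem is empty. Cyclically relabeling the core moves $p$ to the position $v_1$; the presentation must then be checked to keep the same form, or to become its opposite, which the lemma's proof explicitly allows by exchanging $P_1$ and $P_3$. Again, signs coming from the taggings only switch $z_d$ and $-z_d$.

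In every case the lemmas give a nonempty open set of decomposable differentials, so Lemma~\ref{res:7.8}(2) gives $2ae_p\notin L_{\mathbf R}$, and hence $L=\varnothing$. The main obstacle I expect is the bookkeeping: every $(p,a)$, including the puncture $q$ of the conjugate pair and the vertices $v_2,v_3$ of the triangle, must really match a normalized form in the lemmas. This requires checking that relabeling and rescaling by the signature preserve the presentations used there. I would verify this case by case, first for $p$ on the core, where $d=0$.
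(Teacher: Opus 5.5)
Your proposal is correct and follows essentially the same route as the paper: normalize the row signs of $B$ so that only $\pm z_d$ occur, reduce to $\Lambda_{\mathbf I_p}$ via Lemma~\ref{res:7.7}, apply Lemmas~\ref{lem:conjugate-stem-splitting} and~\ref{lem:triangle-stem-splitting} (cyclic relabeling for the other triangle vertices, the final clause for the indices at $q$), and conclude with Lemma~\ref{res:7.8}(2). One small slip: the row sign at $q$ cannot be the weak signature, which is $0$ there, but either sign at $q$ works after possibly swapping the labels of the two conjugate arcs.
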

\begin{proof}
Multiply each row of $B$ and the corresponding charge coordinate by
the same sign. This leaves $A$, the projective classes and $C=B^tB$
unchanged.
Choose these signs to make every column outside a conjugate pair
$e_x+e_y$, and the conjugate columns $e_v+e_q,e_v-e_q$.

Fix a puncture $p$ and $a\in\{\pm1\}$, and put
$z=B^{-1}(2ae_p)$. By Lemma~\ref{res:7.7}, it suffices to exclude an
indecomposable two-term presilting complex of index
$z|_{\mathbf I_p}$ over $\Lambda_{\mathbf I_p}$.
For a nonempty stem in $\mathbf I_p$, $p$ is the terminal puncture.
Lemmas~\ref{lem:conjugate-stem-splitting} and
\ref{lem:triangle-stem-splitting} give a nonempty open set of decomposable
minimal complexes of this index over $\Lambda_{\mathbf I_p}$.
The same lemmas treat punctures on the core by using an empty stem;
for the triangle use cyclic relabeling, and for the conjugate pair use
the two indices at $q$ in the final assertion of
Lemma~\ref{lem:conjugate-stem-splitting}.
Lemma~\ref{res:7.8} excludes an indecomposable two-term presilting complex
of index $z|_{\mathbf I_p}$. By Theorem~\ref{res:7.4} and
Lemma~\ref{res:7.7}, we have $2ae_p\notin L_{\mathbf R}$.
Since $p$ and $a$ were arbitrary, $L_{\mathbf R}=\varnothing$.
\end{proof}

\begin{proof}[Proof of Theorem~\ref{res:7.5}]
Suppose that the core is exceptional. For each puncture $p$ and
$a\in\{\pm1\}$,
Lemma~\ref{res:7.7} gives
$2ae_p\in L_{\mathbf R}\Longleftrightarrow2ae_p\in L_{\mathbf I_p}$.
Proposition~\ref{res:7.9} gives $L_{\mathbf I_p}=\varnothing$.
Hence $L_{\mathbf R}=\varnothing$.

Suppose that the core is nonexceptional.
There is no conjugate pair, so the underlying arcs are pairwise distinct.
Lemma~\ref{res:7.6}(2) gives sparsity of $\mathbf R^\circ$ and odd
length of the core.
Proposition~\ref{res:4.9} identifies $A$, up to opposite, with the
multiplicity-one Brauer graph algebra of this odd unicycle.
If the opposite algebra occurs, reverse all cyclic orders; the
underlying graph is unchanged. Thus $A$ has $2n^2$ rays by
\cite[Theorem~8.4(b)]{AHIKM}.
Theorem~\ref{res:7.4} gives $4\binom n2=2n(n-1)$ short charges and allows
only the $2n$ long charges as further rays. Therefore
\[
 |L|=2n^2-2n(n-1)=2n,
 \qquad L=\{\pm2e_p:p\in\mathbf P_{\mathbf R}\}.\qedhere
\]
\end{proof}

\begin{corollary}\label{res:7.10}
If the core is exceptional, no $\mathbf R$-standard arc is a loop.
For each puncture $p$ and $a\in\{\pm1\}$,
Theorem~\ref{res:7.4}(2) gives a unique conjugate
pair with common tagging $a$ at $p$. After the coordinate normalization
used in Proposition~\ref{res:7.9}, the other endpoint is as follows.
\begin{enumerate}
\item For a conjugate core with opposite taggings at $q$ and common
tagging at $v$, the other endpoint is $q$ if $p\neq q$, and $v$ if $p=q$.
\item For an unpunctured disk triangle, use the labels in
Lemma~\ref{lem:triangle-stem-splitting}. If $p=w_d$ lies on the stem
from $v_1$, the other endpoint is
\[
 \begin{array}{c|cc}
  &2e_p&-2e_p\\\hline
  d\text{ even}&v_3&v_2\\
  d\text{ odd}&v_2&v_3.
 \end{array}
\]
Use cyclic relabeling for stems attached at $v_2$ or $v_3$.
\end{enumerate}
If the core is nonexceptional, each puncture supports a unique
$\mathbf R$-standard loop of each tagging. In particular $L=-L$.
\end{corollary}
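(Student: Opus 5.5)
The plan is to read off each assertion from the dichotomy in Theorem~\ref{res:7.4}, combined with the value of $L$ in Theorem~\ref{res:7.5}.

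\emph{Exceptional core: no loops, and a unique conjugate pair.} Suppose the core is exceptional, so $L=\varnothing$ by Theorem~\ref{res:7.5}. An $\mathbf R$-standard loop at $p$ with tagging $a$ has charge $2ae_p$ by Lemma~\ref{res:7.1}. Its $g$-vector is a ray generator, so $2ae_p\in Bg(V(A))$, and hence $2ae_p\in L$, which is impossible. Thus no standard arc is a loop. For every $p$ and $a$, alternative~(1) of Theorem~\ref{res:7.4} fails. Therefore alternative~(2) holds: there is a unique conjugate pair $\{\gamma,\gamma'\}$ with common tagging $a$ at $p$, and
\[
 g_{\mathbf R}(\gamma)+g_{\mathbf R}(\gamma')=z:=B^{-1}(2ae_p).
\]

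\emph{Locating the other endpoint.} Next I would compare the complex $T=T_\gamma\oplus T_{\gamma'}$ with the generic splittings of Lemmas~\ref{lem:conjugate-stem-splitting} and~\ref{lem:triangle-stem-splitting}. I would work over $\Lambda_{\mathbf I_p}$, using the restriction equivalence from the proof of Lemma~\ref{res:7.7}(2); both summands restrict there because their minimal terms lie in the support of $z$.

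The complex $T$ is presilting of index $z$, so its $G_z$-orbit is open by Lemma~\ref{res:7.8}(1). This orbit therefore meets the open set on which the complex splits into two nonzero minimal summands with the listed classes. By Krull--Schmidt in $K^b(\operatorname{proj})$, the two indecomposable summands of $T$ have exactly those two classes. The listed pieces are themselves indecomposable, since there are two of them and $T$ has only two indecomposable summands.

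Applying $B$ to the listed classes then gives the charges of $\gamma$ and $\gamma'$, and so the other endpoint.
\begin{itemize}
 \item Conjugate core with $p=v_d$ on the stem: $BS_d$ telescopes to $e_{v_d}-(-1)^de_{v_0}$. Hence
 \[
  B\bigl((-1)^d[P_a]+S_d\bigr)=e_{v_d}+(-1)^de_q,
 \]
 so the other endpoint is $q$. This includes $p=v$, which is the case $d=0$.
 \item Conjugate core with $p=q$: the index is $\pm([P_a]-[P_b])$, which splits as $P_a$ and $P_b[1]$ (or their negatives). The charges are then $\pm(e_v+e_q)$ and $\pm(e_q-e_v)$, so the other endpoint is $v$.
 \item Triangle core: the same computation applied to the table in Lemma~\ref{lem:triangle-stem-splitting} gives, for instance,
 \[
  B\bigl([P_3]+S_d\bigr)=e_{v_3}+e_{w_d}-e_{v_1}\ (\text{for even }d,\ \text{after telescoping}),
 \]
 and so on through the other entries. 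This yields the displayed table, with the $-z_d$ row coming from the interchanged parities. Cyclic relabeling handles stems attached at $v_2$ or $v_3$.
\end{itemize}
The main obstacle is this bookkeeping. One must check that the telescoped charges really have the form $ae_p\pm be_x$, and track which parity row corresponds to $\pm2e_p$. I would verify each case by direct evaluation of $B$ on the basis classes, using the normalization from Proposition~\ref{res:7.9}.

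\emph{Nonexceptional core.} Here $L=\{\pm2e_p\}$ by Theorem~\ref{res:7.5}, so alternative~(1) of Theorem~\ref{res:7.4} holds for every $p$ and $a$. That alternative already provides a unique standard loop at $p$ with tagging $a$. Finally, $L=-L$ is immediate from the explicit description of $L$ in both cases.
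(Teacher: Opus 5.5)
Your proposal is correct and follows the paper's proof essentially step by step: $L=\varnothing$ forces alternative~(2) of Theorem~\ref{res:7.4}, the open orbit from Lemma~\ref{res:7.8}(1) meets the splitting sets of Lemmas~\ref{lem:conjugate-stem-splitting} and~\ref{lem:triangle-stem-splitting} over $\Lambda_{\mathbf I_p}$, Krull--Schmidt identifies the two summands, and $B$ is then applied to the listed classes. There is one arithmetic slip: for even $d$ one has $B([P_3]+S_d)=(e_{v_3}+e_{v_1})+(e_{w_d}-e_{v_1})=e_{w_d}+e_{v_3}$, not $e_{v_3}+e_{w_d}-e_{v_1}$ (which is not even a short charge), and with this correction the evaluation does give the displayed table.
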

\begin{proof}
By Theorem~\ref{res:7.5}, $L=\varnothing$ for an exceptional core.
Fix $2ae_p$ and put $z=B^{-1}(2ae_p)$.
The conjugate pair in Theorem~\ref{res:7.4}(2) gives a basic two-term
presilting complex $T$ of index $z$. By Lemma~\ref{res:7.7}, we may
work over $\Lambda_{\mathbf I_p}$. All index classes below are extended
by zero outside $\mathbf I_p$.
The differential of a minimal representative of $T$ has an open orbit
by Lemma~\ref{res:7.8}(1). This orbit meets the nonempty open set in
Lemma~\ref{lem:conjugate-stem-splitting} or
Lemma~\ref{lem:triangle-stem-splitting}.
Hence $T$ is isomorphic to the direct sum constructed in those lemmas.
Since $T$ has exactly two indecomposable summands and both factors are
nonzero, the Krull--Schmidt theorem shows that each factor is
indecomposable. Thus these factors represent the conjugate pair in
Theorem~\ref{res:7.4}(2).

(1) Write $S_d=\sum_{i=1}^d(-1)^{d-i}[Q_i]$.
For $2e_{v_d}$, the two classes in
Lemma~\ref{lem:conjugate-stem-splitting} have charges
\[
 B\bigl((-1)^d[P_a]+S_d\bigr)=e_{v_d}+(-1)^de_q,
 \qquad
 B\bigl((-1)^d[P_b]+S_d\bigr)=e_{v_d}-(-1)^de_q.
\]
Thus the other endpoint is $q$. Negating both classes gives the
same endpoint for $-2e_{v_d}$.
For $2e_q$, the classes $[P_a]$ and $-[P_b]$ give charges
$e_q+e_v$ and $e_q-e_v$. Negating both classes treats $-2e_q$.
This proves (1).

(2) Applying $B$ to the two classes in
Lemma~\ref{lem:triangle-stem-splitting} gives
\[
 \begin{array}{c|cc}
  &\text{first charge}&\text{second charge}\\\hline
  d\text{ odd}&e_{w_d}-e_{v_2}&e_{w_d}+e_{v_2}\\
  d\text{ even}&e_{w_d}-e_{v_3}&e_{w_d}+e_{v_3}.
 \end{array}
\]
For $-2e_{w_d}$, the opposite parity row and $-S_d$ give the charges
$-e_{w_d}\pm e_{v_3}$ when $d$ is odd, and
$-e_{w_d}\pm e_{v_2}$ when $d$ is even.
This proves (2). Undoing the row sign changes restores the original
endpoint taggings and does not change the endpoints.

Finally, Theorems~\ref{res:7.4} and~\ref{res:7.5} give the loop
assertions and $L=-L$.
\end{proof}

The algebras with one stem in Proposition~\ref{res:7.9} also admit
classical descriptions as trivial extensions $T(H)=H\ltimes DH$.
For a conjugate core and a stem of
length $d\geq1$, they are the zigzag algebras $T(kQ)$ for a source--sink
orientation of $D_{d+2}$, with $D_3=A_3$; the two-term silting
complexes of these algebras are covered by \cite{AA,Ao}. For a triangle
and $d\geq1$, let $n=d+3$ and take the following orientation of the
Dynkin diagram $D_n$, with the alternating tail ending at vertex $n$:
\[
 3\overset{c}{\longrightarrow}1\overset{a}{\longrightarrow}2,
 \qquad
 1\overset{d_0}{\longrightarrow}4\overset{g}{\longleftarrow}5\longrightarrow6\longleftarrow7
 \longrightarrow\cdots.
\]
In $T(kQ)$ the dual basis elements for $ca,cd_0$ give the return
arrows $b,e$. Each tail arrow that is a maximal path of length one
acquires an opposite arrow from the dual basis element. These generators
satisfy $ab=d_0e$, the relation
$fg=ecd_0$ when the second stem edge is present, and the switch relations
used above. They therefore define a surjection from the surface
presentation to $T(kQ)$. The Cartan form gives $\dim A=4n+2$,
and the path basis gives $\dim kQ=2n+1$. Thus
$\dim T(kQ)=2\dim kQ=4n+2=\dim A$, and the surjection is an
isomorphism. The bare triangle is $T(k\overrightarrow A_3)$.
The models with at least four vertices lie within the classical type-$D$
presentations of \cite[Section~1.1]{MH}; the low-rank models are
identified directly above. Rong and Li also compute the two-term
complexes for the $D_4$ models in \cite[Examples~5.1--5.2]{RL}.
The explicit splitting proof additionally determines the companion
endpoint in the original surface coordinates; see
Corollary~\ref{res:7.10}.

\subsection{Root sets and canonical polytopes}
For connected $\Gamma_{\mathbf R}$, Theorems~\ref{res:7.3}
and~\ref{res:7.5} give the following primitive ray generators in endpoint
coordinates. In the tree case, multiply the
$p$-coordinate by $\varepsilon_p$ to obtain the usual $A_n$ root set.
\[
 \begin{array}{c|cc}
 \text{configuration of }\mathbf R&\text{ray generators}&f_0\\\hline
 \Gamma_{\mathbf R}\text{ a tree}&A_n&n(n+1)\\
 \text{nonexceptional core}&C_n&2n^2\\
 \text{exceptional core}&D_n&2n(n-1)
 \end{array}
\]
The unicyclic cases allow arbitrary trees attached to the core. Here
$D_n=\{\pm e_p\pm e_q:p<q\}$ and
$C_n=D_n\cup\{\pm2e_p\}$; the conventions $D_2=A_1\times A_1$
and $D_3=A_3$ include the low-rank cases.
For several components of $\Gamma_{\mathbf R}$, group the arc idempotents
by these components. Proposition~\ref{res:4.1} gives zero Cartan entries
between different groups, hence an algebra product. The ray counts
therefore add over these factors, and the simplicial complexes form a join
by Proposition~\ref{prop:product-complex-v79}.
The table identifies primitive ray generators. The cones are determined
by tagged compatibility. The root-set identification does not assert
that $\Sigma_g(A)$ is the Coxeter fan.

The surface embedding can change the ray set while leaving $B$ fixed.
For three plain-tagged arcs forming a three-cycle, the Cartan matrix and
ray counts are
\[
 C=B^tB=I_3+J_3,
 \qquad
 \begin{array}{c|cc}
 \text{surface configuration}&\text{ray set}&f_0\\\hline
 \text{unpunctured disk triangle}&D_3&12\\
 \text{no unpunctured disk triangle}&C_3&18.
 \end{array}
\]
In the second case the partial triangulation is sparse.
Thus the same endpoint matrix and Cartan form can give different ray sets.
For a bare conjugate pair, $C=2I_2$ and
$A\cong k[x]/(x^2)\times k[y]/(y^2)$. The four rays of $A$ form $D_2$, although
$\Gamma_{\mathbf R}$ is connected.

Recall that $P(A)$ is the union of the simplices spanned by $0$ and
the ray generators of each maximal cone. Thus convexity of $P(A)$ also depends
on which rays belong to a common cone.
For trees and sparse odd unicycles, $P(A)$ is convex by
Proposition~\ref{res:4.9} and \cite[Theorem~8.4]{AHIKM}.
Convexity can fail for an exceptional core, as the following exchange
calculation shows.

\begin{example}\label{res:7.11}
There is a partial tagged triangulation $\mathbf R$ with all endpoints
at punctures such that $Bg(V(\Lambda_{\mathbf R}))=D_4$ and
$P(\Lambda_{\mathbf R})$ is nonconvex.
\end{example}
\begin{proof}
Take a conjugate pair with common endpoint $v$ and other endpoint $q$,
and attach the two-edge path $v$--$r$--$s$.
Label the arcs by $a,b,c,d$, with columns
\[
 B_a=e_v+e_q,\qquad B_b=e_v-e_q,\qquad
 B_c=e_v+e_r,\qquad B_d=e_r+e_s.
\]
Theorem~\ref{res:7.5} gives $L=\varnothing$.
Hence $Bg(V(A))=D_4$, which has twenty-four elements. The Cartan matrix is
\[
 C_A=B^tB=
 \begin{pmatrix}
 2&0&1&0\\
 0&2&1&0\\
 1&1&2&1\\
 0&0&1&2
 \end{pmatrix}.
\]
Let $P_i$ be the projective indexed by arc $i$, for $i=a,b,c,d$, and put
$g_i=[P_i]$.
For $i=a,b,d$, choose a nonzero generator
$f_i\in\operatorname{Hom}(P_c,P_i)$.
The Cartan matrix shows that each of these Hom-spaces is one-dimensional,
and that no maps occur between distinct projectives among $P_a,P_b,P_d$.
Hence
\[
 f=\begin{pmatrix}f_a\\f_b\\f_d\end{pmatrix}:
 P_c\longrightarrow P_a\oplus P_b\oplus P_d
\]
is a left $\operatorname{add}(P_a\oplus P_b\oplus P_d)$-approximation.
To check minimality, suppose $hf=f$ for an endomorphism $h$ of the target.
The vanishing Hom-spaces force $h$ to be diagonal.
If a diagonal entry $h_i$ were a nonunit, then $1-h_i$ would be invertible
in the local ring $\operatorname{End}(P_i)$.
The equation $(1-h_i)f_i=0$ would then contradict $f_i\neq0$.
Thus every $h_i$ is invertible and $f$ is minimal.

Two-term silting mutation \cite{AIR} gives the exchange triangle
\[
 P_c\overset f\longrightarrow P_a\oplus P_b\oplus P_d
 \longrightarrow P_c^*\longrightarrow P_c[1].
\]
Writing $g_c^*=[P_c^*]$, the Grothendieck-group relation of this triangle is
\[
 g_c+g_c^*=g_a+g_b+g_d.
\]
By \cite[Definition~5.9 and Theorem~5.10(b)]{AHIKM}, convexity of $P(A)$
requires each exchange middle term to have at most two indecomposable
summands. Here the middle term has three distinct summands.
Therefore $P(A)$ is nonconvex.

The map $B$ is invertible and sends the ray generators to $D_4$.
Since $D_4=-D_4$, the convex hull of $D_4$ contains $0$. Thus every defining
simplex of $BP(A)$ is contained in $\operatorname{conv}(D_4)$.
Conversely, every root of $D_4$ belongs to $BP(A)$.
Consequently
\[
 \operatorname{conv}(BP(A))=\operatorname{conv}(D_4),
 \qquad BP(A)\subsetneq\operatorname{conv}(D_4).
\]
The inclusion is strict because $BP(A)$ is nonconvex.
\end{proof}

\FloatBarrier

\section*{Acknowledgments}
This work is supported by grants from the National Natural Science Foundation
of China (Nos.~1250011863 and~12371034).

\section*{Declaration of AI use}
The authors developed the main ideas, central concepts, and overall proof
strategies, and established the results with assistance from ChatGPT and
Codex. They checked the mathematical arguments and revised the manuscript,
and take full responsibility for the content and mathematical correctness
of the paper.

\end{document}